\newtheorem{thm}{Theorem}[section]
\newtheorem{lem}[thm]{Lemma}
\newtheorem{cor}[thm]{Corollary}
\newtheorem{prop}[thm]{Proposition}
\theoremstyle{definition}
\newtheorem{defi}[thm]{Definition}
\theoremstyle{remark}
\newtheorem{rem}[thm]{Remark}
\numberwithin{equation}{section}
\definecolor{esperance}{rgb}{0.0,0.5,0.0}
\newcommand{\bp}{\mathbf{p}}
\newcommand{\bs}{\mathbf{s}}
\newcommand{\bx}{\mathbf{x}}
\newcommand{\by}{\mathbf{y}}
\newcommand{\R}{\mathbb{R}}
\newcommand{\N}{\mathbb{N}}
\newcommand{\Z}{\mathbb{Z}}
\newcommand{\s}{\sigma}
\newcommand{\de}{\delta}
\newcommand{\Ad}{\operatorname{Ad}}
\DeclareMathOperator{\diam}{diam}
\newcommand{\Bad}{\mb{Bad}}
\newcommand{\al}{\alpha}
\newcommand{\ga}{\gamma}
\newcommand{\Ga}{\Gamma}
\newcommand{\del}{\delta}
\newcommand{\Del}{\Delta}
\newcommand{\lam}{\lambda}
\newcommand{\eps}{\epsilon}
\newcommand{\Om}{\Omega}
\newcommand{\vphi}{\varphi}
\newcommand{\cA}{\mathcal{A}}
\newcommand{\cB}{\mathcal{B}}
\newcommand{\cC}{\mathcal{C}}
\newcommand{\cD}{\mathcal{D}}
\newcommand{\cE}{\mathcal{E}}
\newcommand{\cJ}{\mathcal{J}}
\newcommand{\cL}{\mathcal{L}}
\newcommand{\cP}{\mathcal{P}}
\newcommand{\cQ}{\mathcal{Q}}
\newcommand{\cS}{\mathcal{S}}
\newcommand{\bR}{\mathbb{R}}
\newcommand{\bZ}{\mathbb{Z}}
\newcommand{\bQ}{\mathbb{Q}}
\newcommand{\bN}{\mathbb{N}}
\newcommand{\bT}{\mathbb{T}}
\newcommand{\SL}{\operatorname{SL}}
\newcommand{\ASL}{\operatorname{ASL}}
\newcommand\ov[1]{\overline{#1}}
\newcommand\set[1]{\left\{#1\right\}}
\newcommand\idist[1]{\langle#1\rangle}
\newcommand\on[1]{\operatorname{#1}}
\newcommand\mb[1]{\mathbf{#1}}
\newcommand\crly[1]{\mathscr{#1}}
\newcommand{\wstar}{\overset{\on{w}^*}{\lra}}
\newcommand{\Supp}{\on{Supp}}
\newcommand{\defn}{\overset{\on{def}}{=}}
\newcommand{\lra}{\longrightarrow}
\newcommand{\onto}{\xymatrix{\ar@{>>}[r]&}}
\newcommand{\eq}[1]
{
\begin{equation*}
{#1}
\end{equation*}
}
\newcommand{\eqlabel}[2]
{
\begin{equation}
{#2}\label{#1}
\end{equation}
}
\newcommand*{\rom}[1]{\expandafter\@slowromancap\romannumeral #1@}
\begin{document}

\title{Dimension estimates for badly approximable affine forms}

%\date{November 30, 2021}

\author{Taehyeong Kim}
\address{Taehyeong ~Kim. Department of Mathematical Sciences, Seoul National University, {\it kimth@snu.ac.kr}}
\author{Wooyeon Kim}
\address{Wooyeon Kim. Department of Mathematics, ETH Z\"{u}rich, {\it wooyeon.kim@math.ethz.ch}}
\author{Seonhee Lim}
\address{S.~Lim. Department of Mathematical Sciences and Resesarch Institute of Mathematics, Seoul National University,
{\it slim@snu.ac.kr}}

% \thanks will become a 1st page footnote.
\thanks{}

%\author{}

\keywords{}

\def\thefootnote{}
\footnote{2020 {\it Mathematics
Subject Classification}: Primary 11J20; Secondary 28D20, 37A17.}   %%d 
\def\thefootnote{\arabic{footnote}}
\setcounter{footnote}{0}

\begin{abstract}
For given $\eps>0$ and $b\in\bR^m$, we say that a real $m\times n$ matrix $A$ is 
$\eps$-badly approximable for the
target $b$ if  
$$\liminf_{q\in\bZ^n, \|q\|\to\infty} \|q\|^n \idist{Aq-b}^m \geq \eps,$$
where $\idist{\cdot}$ denotes the distance from the nearest integral vector.
In this article, we obtain upper bounds for the Hausdorff dimensions of the set of $\epsilon$-badly approximable 
matrices for fixed target $b$ and the set of $\eps$-badly approximable targets for fixed matrix $A$. 
Moreover, we give a Diophantine condition of $A$ equivalent to the full Hausdorff dimension of the set of $\eps$-badly approximable targets for fixed $A$. 
The upper bounds are established by effectivizing entropy rigidity in homogeneous dynamics, 
which is of independent interest. 
For the $A$-fixed case, our method also works for the weighted setting where the supremum norms are replaced by
certain weighted quasinorms. 
%It was known that for almost every matrix $A$, the Hausdorff dimension of the set $Bad_A(\epsilon)$ of 
%$\epsilon$-badly approximable target $b$ is not full, and that for dimension 1, i.e. for real numbers $\alpha$, 
%$\dim_H Bad_\alpha(\epsilon)=1$ if and only if $\alpha$ is singular on average. 
%We show that $\dim_H Bad_A(\epsilon)=m$ if and only if $A$ is singular on average.
\end{abstract}

\maketitle
\section{Introduction}
\subsection{Main results} 
In classical Diophantine approximation, one wants to approximate an irrational number $\alpha$ by rationals $p/q$ 
for $p,q \in \mathbb{Z}$. Dirichlet theorem says that for every $N \in \mathbb{N}$, there exist 
$p,q \in \mathbb{Z}$ with $0<q<N$, such that
$$|q\alpha-p|<1/N < 1/q.$$
In this way, one can see classical Diophantine approximation as studying distribution of $q\alpha$ modulo 
$\mathbb{Z}$ near zero. Diophantine approximation for irrational numbers has been generalized to investigating 
vectors, linear forms, and more generally matrices, and have become classical subjects in metric number theory. 

In this article, we consider the inhomogenous Diophantine approximation: the distribution of $q\alpha$ modulo 
$\mathbb{Z}$ near a ``target" $b \in \mathbb{R}$. Although Dirichlet theorem does not hold anymore, 
there exist infinitely many $q\in\bZ$ such that 
\[
|q\al-b-p| < 1/|q| \quad\text{for some }p\in\bZ 
\]
for almost every $(\al,b)\in\bR^2$ and moreover,  
\[
\liminf_{p,q\in\bZ, |q|\to \infty} |q||q\al -b-p|=0 
\]
for almost every $(\al,b)\in\bR^2$ by inhomogeneous Khintchine theorem (\cite[Theorem \rom{2} in 
Chapter \rom{7}]{Cas57}). 

% Let $M_{m,n}(\bR)$ be the set of $m\times n$ real matrices, and let $\widetilde{M}_{m,n}(\bR)$ be the direct product of $M_{m,n}(\bR)$ and $\bR^m$. 
Similarly to numbers, for an $m \times n$ real matrix $A\in M_{m,n}(\bR)$, we study $Aq \in \R^m$  
modulo $\mathbb{Z}^m$ near the target $b \in \mathbb{R}^m$ for vectors $q \in \mathbb{Z}^n$.
In this general situation as well, using inhomogeneous Khintchine-Groshev theorem (\cite[Theorem1]{Sch64} 
or \cite[Chapter1, Theorem 15]{Spr79}), we have
%$$||q||^{n/m} \idist{Aq-b} <1,$$
%for infinitely many $q \in \mathbb{Z}^m$. Furthermore, 
$$\liminf_{q\in\bZ^n, \|q\|\to \infty} \|q\|^{n}\idist{Aq-b}^{m}=0$$
for almost every $(A,b) \in {M}_{m,n}(\bR) \times \bR^m$. 
Here, $\idist{v}: =\displaystyle\inf_{p\in \bZ^m} \|v-p\|$ denotes the distance from $v \in \bR^m$ 
to the nearest integral vector with respect to the supremum norm $\|\cdot \|$.

The exceptional set of the above equality is our object of interest. We will consider the exceptional set with weights 
in the following sense. Let us first fix, throughout the paper, an $m$-tuple and an $n$-tuple of positive reals 
$\mathbf{r}=(r_1,\cdots,r_m)$, $\bs=(s_1,\cdots,s_n)$ such that $r_1\geq \cdots \geq r_m$, $s_1\geq \cdots \geq s_n$, and $\displaystyle\sum_{1\leq i\leq m}r_i=1=\displaystyle\sum_{1\leq j\leq n}s_j$. The special case where $r_i=1/m$ and  $s_j=1/n$ for all $i=1,\dots,m$ 
and $j=1,\dots,n$ is called the unweighted case.

Define the $\mathbf{r}$-quasinorm of $\bx\in\bR^m$ and $\bs$-quasinorm of $\by\in\bR^n$ by
$$\|\bx\|_{\mathbf{r}}:=\max_{1\leq i\leq m}|x_i|^{\frac{1}{r_i}} \quad\textrm{and}\quad \|\by\|_{\mathbf{s}}:=\max_{1\leq j\leq n}|y_j|^{\frac{1}{s_j}}.$$
Denote $\idist{\bx}_\mathbf{r}: =\displaystyle\inf_{p\in \bZ^m} \|\bx-p\|_{\mathbf{r}}$. 
We call $A$ $\eps$-\textit{bad} for $b\in\mathbb{R}^m$ if
\eqlabel{eq1523}{
\liminf_{q\in\bZ^n, \|q\|_{\mb{r}} \to \infty} \|q\|_{\mathbf{s}}\idist{Aq-b}_{\mathbf{r}}\ge \eps
.}
Denote 
\begin{align*}
\mb{Bad}(\eps)&\defn\set{(A,b)\in {M}_{m,n}(\bR) \times \bR^m :A\textrm{ is $\eps$-bad for $b$}},\\ 
\mb{Bad}_A(\eps)&\defn\set{b\in\bR^m:A\textrm{ is $\eps$-bad for $b$}}, \;\;\mb{Bad}_A\defn\bigcup_{\eps>0}\mb{Bad}_A(\eps),\\
\mb{Bad}^b(\eps)&\defn\set{A\in M_{m,n}(\bR):A\textrm{ is $\eps$-bad for $b$}}, \;\; \mb{Bad}^b\defn\bigcup_{\eps>0}\mb{Bad}^b(\eps).
%\mb{Bad}^{b}(\eps)&\defn\bigcup_{b\in\bR^m}\mb{Bad}^b(\eps).
\end{align*}

The set $\mb{Bad}^0$ can be seen as the set of badly approximable systems of $m$ linear forms in $n$ variables.
This set is of Lebesgue measure zero \cite{Gro38}, but has full Hausdorff dimension $mn$ \cite{Sch69}. 
See \cite{PV02,KTV06,KW10} for the weighted setting.

For any $b$, $\mb{Bad}^b$ also has zero Lebesgue measure \cite{Sch} and full Hausdorff dimension for every $b$
\cite{ET}. Indeed, it is shown that $\mb{Bad}^b$ is a winning set \cite{ET}  and even a hyperplane winning set
\cite{HKS}, a property which implies full Hausdorff dimension. On the other hand, the set $\mb{Bad}_A$  also has full
Hausdorff dimension for every $A$ \cite{BHKV10}. See \cite{Har12,HM17,BM17} for the weighted setting. 
%but can have positive Lebesgue measure. \seon{reference needed} 

The sets $\mb{Bad}^b$ and $\mb{Bad}_A$ are unions of subsets $\mb{Bad}^b(\eps)$ and 
$\mb{Bad}_A(\eps)$ over $\eps>0$, respectively, thus a more refined question is whether the Hausdorff dimension
of $\mb{Bad}^b(\eps)$, $\mb{Bad}_A(\eps)$ could still be of full dimension. 
For the homogeneous case ($b=0$), the Hausdorff dimension $\mb{Bad}^0(\eps)$ is less than the full dimension 
$mn$ (see \cite{BK13, Sim} for the unweighted case and \cite{KM19} for the weighted case). 
Thus, a natural question is whether $\mb{Bad}^b(\eps)$ can have full Hausdorff dimension for some $b$. 
Our first main result says that in the unweighted case, $\mb{Bad}^b(\eps)$ cannot have full Hausdorff dimension 
for any $b$. We provide an effective bound on the dimension in terms of $\eps$ as well.

\begin{thm}\label{corb1} For the unweighted case, i.e. $r_i=1/m$ and  $s_j=1/n$ for all $i=1,\dots,m$ 
and $j=1,\dots,n$, there exist $c_0>0$ and $M_0>0$ depending only on $d$ such that for any $\eps>0$
and $b\in\bR^m$, $$\dim_H \mb{Bad}^b(\eps)\leq mn-c_0\eps^{M_0}.$$
\end{thm}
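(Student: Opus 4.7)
The plan is to translate the Diophantine condition into a dynamical statement on a homogeneous space and then apply an effective form of entropy rigidity. Set $d=m+n$, and let $Y=G/\Gamma$ be the space of unimodular \emph{affine} lattices, where $G=\SL_d(\bR)\ltimes\bR^d$ and $\Gamma=\SL_d(\bZ)\ltimes\bZ^d$. Let $a=\diag{e^{1/m}I_m,\,e^{-1/n}I_n}\in\SL_d(\bR)$ act on $Y$ by left translation, and let $U\cong M_{m,n}(\bR)$ be the unstable horospherical subgroup of $a$. To each pair $(A,b)$ associate the natural affine lattice $y_{A,b}\in Y$, and let $K_\eta\subset Y$ denote the compact set of affine lattices whose shortest nonzero vector has norm at least $\eta$.

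First, I would invoke the inhomogeneous Dani--Kleinbock correspondence to rewrite $\mb{Bad}^b(\eps)$ as the set of $A$ for which $\{a^k y_{A,b}\}_{k\ge 0}$ eventually lies in $K_\eta$, with $\eta=c\eps^{1/d}$. Since $b$ is fixed and $A$ parametrizes an unstable slice of the map $(A,b)\mapsto y_{A,b}$, bounding $\dim_H\mb{Bad}^b(\eps)$ is equivalent to bounding the Hausdorff dimension of the $U$-slice
\[
Z_\eta\defn\set{y\in U.y_{0,b}:a^k y\in K_\eta\text{ for all sufficiently large }k}.
\]

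Next, a Bowen--Katok-type variational argument converts any lower bound $\dim_H Z_\eta\ge mn-\delta$ into the existence of an $a$-invariant probability measure $\mu$ on $Y$, supported (after a slight enlargement of $\eta$) in $K_\eta$, with
\[
h_\mu(a)\ge h_{\max}(a)-C\delta,
\]
where $h_{\max}(a)$ is the topological entropy of $a$ on $Y$. The input is that dimension along the uniformly expanded leaf $U$ is controlled from above by measure-theoretic entropy via a Ledrappier--Young-type identity. The novel ingredient, flagged in the abstract, is an effective entropy rigidity statement: any $a$-invariant probability measure $\mu$ on $Y$ supported in $K_\eta$ satisfies
\[
h_\mu(a)\le h_{\max}(a)-c_1\eta^{M_1}
\]
for constants $c_1,M_1$ depending only on $d$. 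Combining the two estimates forces $\delta\gtrsim\eps^{M_1/d}$, yielding the theorem.

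The main obstacle is the effective entropy rigidity itself. Classical rigidity results identify Haar as the unique measure of maximal entropy, but the standard proofs (via leafwise conditional measures or Ledrappier--Young) do not furnish a quantitative gap. I would attack it by choosing a countable partition $\cP$ of $Y$ subordinate to the unstable foliation whose atoms cleanly cut off the depth-$\eta$ cusp region. A comparison of $h_\mu(a,\cP)$ with the maximal-entropy computation via the leafwise entropy formula then reduces the desired inequality to showing that the unstable conditional measures of $\mu$ must miss, by an amount polynomial in $\eta$, the portions of unstable balls that protrude into the cusp. Propagating these local estimates uniformly across the non-compact space $Y$, and keeping every constant explicit in $\eps$, is where the bulk of the technical work will lie.
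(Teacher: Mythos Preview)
Your overall architecture matches the paper's: Dani correspondence, build an $a$-invariant measure of near-maximal $U$-entropy from the dimension hypothesis, then derive a contradiction via an effective variational principle. But two genuine gaps separate your sketch from a proof.

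First, $K_\eta$ (the paper's $\cL_\eps$) is \emph{not} compact in the space of affine lattices: for any unimodular lattice, however degenerate, one can translate it so that no grid point lies near the origin, so $\cL_\eps$ surjects onto an unbounded part of $X$. Consequently the Bowen--Katok step cannot produce a probability measure on $Y$ without separately controlling escape of mass. In the $A$-fixed problem this is automatic because the projection to $X$ is a single orbit, but fixing $b$ gives no such control. The paper closes this gap with an extra input (Proposition~\ref{KKLM'}, derived from the KKLM covering theorem): the set of $A$ whose $a$-orbit on $X$ has more than $\eta$-escape-on-average already has dimension $\le mn-c\eta$. Intersecting $\Bad^b(\eps)$ with this good set \emph{before} constructing the measure forces the mass at infinity to satisfy $\ov\mu(\{\infty\})\le\eta_0\asymp mn-\dim_H\Bad^b(\eps)$, so the escape is bounded by the very quantity one is estimating.

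Second, your mechanism for the entropy gap is misdirected. The deficit does not come from unstable balls ``protruding into the cusp''; it comes from $\Supp\mu\subset\cL_\eps$ together with $m_Y(Y\setminus\cL_\eps)\asymp\eps$. The paper's effective variational principle (Proposition~\ref{effEL}) gives $H_\mu(\cA|a^j\cA)\le J+\int\log\tau_y^{a^j\cA}(B\cdot\Supp\mu)\,d\mu$, so what is needed is a lower bound on $1-\tau_y^{a^j\cA}(\cL_{\eps/2})$: the normalized Haar measure on a long $U$-plaque must hit $Y\setminus\cL_{\eps/2}$ with density polynomial in $\eps$. This is obtained in Proposition~\ref{btauest} via \emph{effective equidistribution} of expanding $U$-translates on $Y$ and on the congruence covers $X_q\simeq\SL_d(\bR)/\Gamma_q$ (Theorems~\ref{Teffirr} and~\ref{Teffrat}); the congruence covers are needed precisely to handle the case where $b$ is well-approximated by rationals. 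This equidistribution input is the analytically new ingredient for the $b$-fixed case and is invisible in your sketch.
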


As for the set $\mb{Bad}_A(\eps)$, the third author, together with U. Shapira and N. de Saxc\'e, showed that
Hausdorff dimension of $\mb{Bad}_A(\eps)$ is less than the full dimension $m$ for almost every $A$ \cite{LSS}. 
In fact, it was shown that one can associate to $A$ a certain point $x_A$ in the space of unimodular lattices 
$\SL_d(\R)/\SL_d(\Z)$ such that if $x_A$ has no escape of mass on average for a certain diagonal flow 
(see Section~\ref{sec:1.2} for more details), which is satisfied by almost every point, then the Hausdorff dimension 
of $\mb{Bad}_A(\eps)$ is less than $m$.

In this article, we provide an effective bound on the dimension in terms of $\epsilon$ and a certain Diophantine property of $A$ as follows. 
We say that an $m\times n$ matrix $A$ is $\textit{singular on average}$ if for any $\eps>0$
$$\lim_{N\to\infty}\frac{1}{N}\left| \set{l\in\set{1,\cdots,N}:\exists q\in\Z^n \ \text{s.t.} \ 
\idist{Aq}_{\mb{r}}<\eps 2^{-l} \ \textrm{and} \ 0<\|q\|_{\mb{s}}<2^l}\right| =1.$$
\begin{thm}\label{thmEff1}
For any $A\in M_{m,n}(\bR)$ which is not singular on average, there exists a constant $c(A)>0$ depending on $A$
such that for any $\eps>0$, $\dim_H \mb{Bad}_{A}(\eps)\leq m-c(A)\frac{\eps}{\log(1/\eps)}.$
\end{thm}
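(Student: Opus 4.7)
The plan is to translate the question into a dynamical one about an unstable leaf of a diagonal flow, and then to apply an effective entropy–dimension bound. Set $d=m+n$ and let $Y_d$ denote the space of affine unimodular lattices $\ASL_d(\R)/\ASL_d(\Z)$, equipped with the weighted diagonal flow $a_t=\diag{e^{r_1 t},\dots,e^{r_m t},e^{-s_1 t},\dots,e^{-s_n t}}$. By a Dani-type correspondence for inhomogeneous approximation, condition \eqref{eq1523} at level $\eps$ is equivalent to the forward orbit $\{a_t y_{A,b}:t\ge 0\}$ of the associated affine lattice $y_{A,b}$ staying in a fixed compact set $K_\eps\subset Y_d$ whose complement has Haar measure of order $\eps$. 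For fixed $A$, the target $b\in\bR^m$ parametrizes the unstable horospherical leaf through $y_{A,0}$, so $\mb{Bad}_A(\eps)$ becomes exactly the set of leaf parameters whose trajectory never enters the $\eps$-cusp neighborhood of $Y_d$.

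With this setup, I would construct a dynamical Moran cover of $\mb{Bad}_A(\eps)$ by unstable Bowen balls at time $N$, discarding those whose representative trajectories visit the cusp. A standard mass-distribution / variational argument then bounds
\[
\dim_H \mb{Bad}_A(\eps)\ \le\ \frac{1}{\sum_i r_i}\,\sup_\mu h_\mu(a_1),
\]
the supremum taken over $a_t$-invariant probability measures on $Y_d$ arising as weak-$*$ limits of such Moran-cover empirical measures; in particular, measures with $\mu(K_\eps)\ge 1-O(\eps)$ whose projection to $\SL_d(\R)/\SL_d(\Z)$ records the orbit of the lattice $x_A$ associated to $A$. Since $\sum r_i=1$, the trivial uniform bound on $h_\mu$ only recovers $\dim\le m$, so an effective entropy gap is required.

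The main obstacle, and the key new input, is an \emph{effective entropy rigidity} statement: any $a_t$-invariant probability measure $\mu$ on $Y_d$ with $\mu(K_\eps)\ge 1-\eps$ satisfies
\[
h_\mu(a_1)\ \le\ m\ -\ c\cdot\frac{\eps}{\log(1/\eps)}
\]
for some absolute $c>0$. I would prove this by effectivizing the leafwise-measure and Abramov-type analysis behind the qualitative entropy-rigidity theorems of Einsiedler–Katok–Lindenstrauss: partition the unstable leaf at scale $\sim\eps$, decompose $h_\mu(a_1)$ along this partition via a conditional-entropy argument, and isolate the entropy deficit from those atoms forced to undergo a cusp excursion. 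Each such excursion forfeits $\gtrsim\eps$ of conditional entropy, while the $\log(1/\eps)$ factor accounts for the return time needed to detect one; closing this logarithmic gap is precisely where the hard combinatorics lie.

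Finally, the hypothesis that $A$ is \emph{not} singular on average is used to produce a fixed $\eps_0=\eps_0(A)>0$ and a positive density of times at which the orbit of $x_A$ stays in $K_{\eps_0}$. This pins down a uniform lower bound on $\mu(K_{\eps_0})$ for every limit measure $\mu$ that can appear in the Moran construction, which upgrades the absolute constant $c$ above into the $A$-dependent constant $c(A)>0$ of the theorem. Combining the effective entropy rigidity with the variational bound then yields the stated dimension estimate.
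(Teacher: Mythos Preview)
Your outline has the right architecture—Dani correspondence, a measure built from separated sets on the relevant leaf, and an effective entropy gap—but the central quantity is misidentified, and this makes the scheme as written unworkable. The paper does \emph{not} use the full dynamical entropy $h_\mu(a_1)$; it uses the \emph{relative} entropy $h_\mu(a\mid\cA^W_\infty)$, i.e.\ the entropy contribution of the abelian subgroup $W<G^+$ that parametrizes $b$, whose maximum is $\log|\det(\Ad_a|_{\mathfrak w})|=\sum_i r_i=1$, not $m$. Your proposed rigidity statement $h_\mu(a_1)\le m-c\,\eps/\log(1/\eps)$ is simply false: Haar measure $m_Y$ on $Y$ satisfies $m_Y(\cL_\eps)\ge 1-C\eps$ and yet has $h_{m_Y}(a_1)\ge m+n+1>m$. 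The correct chain, after the separated-set construction on the $W$-leaf (Proposition~\ref{prop5}) and Proposition~\ref{algexiA}, is
\[
\frac{r_1}{1-\eta_A}\bigl(m-\dim_H\Bad_A(\eps)\bigr)\ \ge\ 1-H_\mu(\cA^W\mid a\cA^W),
\]
and the right-hand side is bounded below via the effective variational principle for the $W$-subordinate $\sigma$-algebra (Corollary~\ref{effELcor}) together with the leafwise volume estimate of Lemma~\ref{volest}. There is no step in which an ``absolute'' entropy bound on $Y$ is invoked.

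Two subsidiary points also need correction. First, the target set $\cL_\eps$ is closed but \emph{non-compact}, and the Dani correspondence (Proposition~\ref{prop1}) only says the orbit is \emph{eventually} in $\cL_\eps$; this is not a cusp-avoidance condition, and you are conflating $\cL_\eps^c$ (grids with a point near the origin) with the cusp of $Y$ (which is governed by the projection $\pi:Y\to X$). Second, the hypothesis ``not singular on average'' is used in a different place than you indicate: it guarantees $\eta_A<1$, so that $\pi_*\ov\mu=\mu_A$ retains positive mass on a fixed compact set of $X$. That compact set supplies a height bound $H$, independent of $\eps$, which feeds into Lemma~\ref{volest} and determines the $A$-dependent constant $c(A)$; it is not a return-time density to your $K_\eps$.
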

Here, the constant $c(A)$, which depends on $\eta_A$ in Proposition \ref{prop5} and $H$ in \eqref{eqH}, encodes the quantitative singularity on average.

On the other hand, the third author, together with Y. Bugeaud, D. H. Kim and M. Rams, showed that 
in the one-dimensional case ($m=n=1$), $\Bad_\al (\eps)$ has full Hausdorff dimension for some $\eps>0$ if and 
only if $\al\in\bR$ is singular on average \cite{BKLR}.
We generalize this characterization to the general dimensional setting.
\begin{thm}\label{thmA1}
Let $A\in M_{m,n}(\bR)$ be a matrix. Then the following are equivalent:
\begin{enumerate}
\item\label{S1} For some $\eps > 0$, the set $\mb{Bad}_{A}(\eps)$ has full Hausdorff dimension.
\item\label{S3} $A$ is singular on average.
\end{enumerate}
\end{thm}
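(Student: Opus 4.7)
The plan is to prove the two implications separately. The direction $(1)\Rightarrow(2)$ follows immediately from Theorem~\ref{thmEff1}: if $A$ is not singular on average then for every $\eps\in(0,1)$ one has $\dim_H \mb{Bad}_A(\eps)\leq m-c(A)\eps/\log(1/\eps)<m$, and for $\eps\geq 1$ the bound transfers via the monotonicity $\mb{Bad}_A(\eps)\subseteq \mb{Bad}_A(\eps')$ for $\eps'\leq\eps$. Hence no $\eps>0$ satisfies $\dim_H\mb{Bad}_A(\eps)=m$, so (1) forces $A$ to be singular on average.

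The substance of the theorem lies in $(2)\Rightarrow(1)$. The plan is to fix auxiliary parameters $\eps,\eps_0>0$ with $\eps\ll\eps_0$ and construct inside $[0,1]^m$ a Cantor-like subset $K\subseteq\mb{Bad}_A(\eps)$ whose Hausdorff dimension can be made arbitrarily close to $m$. Applying the singular-on-average hypothesis at the threshold $\eps_0$, for each $\delta>0$ I would first extract a subset $L=L(\delta,\eps_0)\subseteq\mathbb{N}$ of lower density at least $1-\delta$, together with vectors $q_l\in\mathbb{Z}^n$ for $l\in L$ satisfying $0<\|q_l\|_{\mathbf{s}}<2^l$ and $\idist{Aq_l}_{\mathbf{r}}<\eps_0\,2^{-l}$. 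These $q_l$ play the role of continued-fraction convergents in the one-dimensional argument of \cite{BKLR}.

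Next I would inductively build a nested sequence of $\mathbf{r}$-weighted box coverings $\mathcal{B}_1\supseteq\mathcal{B}_2\supseteq\cdots$ of $[0,1]^m$ by subdividing each surviving box at stage $l-1$ into boxes of sidelengths proportional to $2^{-lr_i}$ and, when $l\in L$, discarding every sub-box that meets the forbidden set of $b$'s with $\|q\|_{\mathbf{s}}\idist{Aq-b}_{\mathbf{r}}<\eps$ for some $q$ whose $\mathbf{s}$-quasinorm lies in the $l$-th dyadic window. A Minkowski-type argument using the near-return vector $q_l$ shows that this forbidden set inside each parent box is a union of narrow affine strips of total relative volume at most $C\eps/\eps_0$ for some dimensional constant $C$; choosing $\eps/\eps_0$ small enough therefore preserves a definite fraction of children at every refinement. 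Setting $K=\bigcap_l\bigcup\mathcal{B}_l$ and applying a mass distribution principle, one obtains $\dim_H K\geq m-O(\delta+\eps/\eps_0)$, and letting $\delta\to 0$ and $\eps/\eps_0\to 0$ yields full dimension for a single $\eps>0$ fixed at the outset.

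The main obstacle will be handling integer vectors $q$ whose $\mathbf{s}$-quasinorm falls in a ``gap'' scale $l\notin L$, since the construction only excludes bad targets at good scales. For $b\in K$ one must argue separately that only finitely many such off-scale $q$ can violate $\|q\|_{\mathbf{s}}\idist{Aq-b}_{\mathbf{r}}\geq\eps$, which should follow from the vanishing density of gap scales combined with a Borel--Cantelli-type estimate over the excluded dyadic shells. A secondary subtlety is the precise matching between the weighted geometries on the $b$-side (governed by $\mathbf{r}$) and on the $q$-side (governed by $\mathbf{s}$), so that a single near-return $q_l$ controls an entire $\mathbf{s}$-dyadic shell on the $q$-side.
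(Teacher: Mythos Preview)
Your derivation of $(1)\Rightarrow(2)$ from Theorem~\ref{thmEff1} is correct and is exactly the paper's argument.

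The sketch for $(2)\Rightarrow(1)$ has a genuine gap at the step you label a ``Minkowski-type argument''. At scale $l$ the forbidden set is a union over \emph{all} integer vectors $q$ in the $\mathbf{s}$-dyadic shell $\|q\|_{\mathbf{s}}\in[2^{l-1},2^l)$ of small $\mathbf{r}$-boxes around $Aq+\bZ^m$; there are $\asymp 2^l$ such $q$, each box has volume $\asymp \eps\,2^{-l}$, and a count gives relative volume of order $\eps$ inside each parent box, with no factor of $\eps_0^{-1}$. A single near-return $q_l\in\bZ^n$ with $\idist{Aq_l}_{\mathbf{r}}$ small does not force the remaining points $Aq\bmod\bZ^m$ in the shell to cluster or cancel; there is no higher-dimensional analogue of the three-distance theorem to invoke here. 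Moreover, your limiting regime $\eps/\eps_0\to 0$ with $\eps$ fixed sends $\eps_0\to\infty$, and for large $\eps_0$ the inequality $\idist{Aq}_{\mathbf{r}}<\eps_0 2^{-l}$ is satisfied for every $l$ by Dirichlet's theorem regardless of whether $A$ is singular on average, so the hypothesis is no longer doing any work. Even granting a relative-volume bound of order $\eps$, the resulting Cantor set would only have dimension $\geq m-O(\eps)$, not $m$.

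The paper avoids this by passing to the dual. It first transfers the singular-on-average property to ${}^tA$ (Corollary~\ref{eqCor}), then reformulates it via weighted best approximations $(\mb{y}_k)_{k\geq 1}\subset\bZ^m$ to ${}^tA$ as the statement that $\{k:M_kY_{k+1}>\eps\}$ has density zero in the $\log Y_k$ scale (Proposition~\ref{propA1}). From this one extracts a modified Bugeaud--Laurent subsequence $(\mb{y}_{\vphi(i)})$ with $M_{\vphi(i)}Y_{\vphi(i+1)}\leq R$ for a \emph{fixed} $R$ and arbitrarily fast growth $\limsup k/\log Y_{\vphi(k)}\leq 1/\log S$ (Proposition~\ref{lem:MBL}). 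The transference step (Proposition~\ref{prop:CGGMS}) then gives, for a fixed $\alpha\in(0,1/2)$, the inclusion $\{\,b:|\mb{y}_{\vphi(i)}\cdot b|_{\bZ}\geq\alpha\ \forall i\,\}\subseteq\mb{Bad}_A(\eps)$ with $\eps=\eps(R,\alpha)$ independent of $S$. The point is that this replaces ``avoid $\asymp 2^l$ boxes at scale $l$'' by ``avoid one hyperplane strip $|\mb{y}_{\vphi(i)}\cdot b|_{\bZ}<\alpha$ at stage $i$'', for which a standard Cantor construction (Proposition~\ref{prop:CGGMS_2}) yields $\dim_H\geq m-C(\alpha)/\log S$. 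Sending $S\to\infty$ with $R,\alpha$ (hence $\eps$) held fixed gives full dimension.
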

Note that the implication (\ref{S1}) $\implies$ (\ref{S3}) of Theorem \ref{thmA1} follows from 
Theorem \ref{thmEff1}. The other direction will be shown in Section \ref{sec6}.

\subsection{Discussion of the proofs}\label{sec:1.2}
We mainly use entropy rigidity in homogeneous dynamics, which is a principle that the measure with maximal 
entropy is invariant \cite{EL}. The main tool in \cite{LSS} is a relative version of entropy rigidity.
In this article, we effectivize this phenomenon in terms of static entropy and conditional measures. 
To use the effective version of the entropy rigidity, for each invariant measure, we construct a ``well-behaved" partition and a $\sigma$-algebra, well-behaved in the sense that the ``dynamical $\del$-boundary'' has small measure which is controlled uniformly (see Definition \ref{dynbdy} and Lemma \ref{Exceptional}).
We then compare the associated dynamical entropy and the static entropy. Section \ref{sec2} consists of these results
in the general setting of real Lie groups such as in \cite{EL}, which are of independent interest.

To describe the scheme of the proofs for main theorems, we consider more specific homogeneous space as follows.
For $d=m+n$, let us denote by $\ASL_d(\bR)=\SL_d(\bR)\ltimes\bR^d$ the set of area-preserving affine
transformations and denote by $\ASL_d(\bZ)=\SL_d(\bZ)\ltimes\bZ^d=\operatorname{Stab}_{\ASL_d(\bR)}(\bZ^d)$ 
the stabilizer of the standard lattice $\bZ^d$.
We view $\ASL_d(\bR)$ as a subgroup of $\SL_{d+1}(\bR)$ by $\ASL_d(\bR)=\set{\left(\begin{matrix}
g & v\\
0 & 1\\
\end{matrix}\right): g\in \SL_d(\bR), v\in\bR^d},$ and take a lift of the element $g\in\SL_d(\bR)$ to $\ASL_d(\bR)\subset \SL_{d+1}(\bR)$ by 
$g\longmapsto\left(\begin{matrix}
g & 0\\
0 & 1\\
\end{matrix}\right),$
denoted again by $g$.
For given weights $\mb{r}\in\bR^{m}_{>0}$ and $\mb{s}\in\bR^{n}_{>0}$, we consider the $1$-parameter diagonal subgroup 
$$\set{a_t=\mathrm{diag} (e^{r_1t},\cdots,e^{r_mt},e^{-s_1t},\cdots,e^{-s_nt})}_{t \in \mathbb R}$$ 
in $\SL_d(\bR)$ and  Let $a\defn a_1$ be the time-one map of the diagonal flow $a_t$.
We consider 
$$U=\set{ \left(\begin{matrix}
I_m & A & 0\\
0 & I_n & 0\\
0 & 0 & 1\\
\end{matrix}\right)
:A\in M_{m,n}(\bR)};\; \; 
W=\set{
\left(\begin{matrix}
I_m & 0 & b\\
0 & I_n & 0\\
0 & 0 & 1\\
\end{matrix}\right)
:b\in \bR^m},$$ both of which are unstable horospherical subgroups in $\ASL_d(\bR)$ for $a$.

The homogeneous spaces $\SL_d(\bR)/\SL_d(\bZ)$ and $\ASL_d(\bR)/\ASL_d(\bZ)$ can be seen 
as the space of unimodular lattices and the space of unimodular grids, i.e. unimodular lattices translated by a vector in $\bR^d$, respectively.
We say that a point $x\in \SL_d(\bR)/\SL_d(\bZ)$ has \emph{$\del$-escape of mass on average} (with respect to the diagonal flow $a_t$) if for any compact set $Q$ in $\SL_d(\bR)/\SL_d(\bZ)$,
$$\displaystyle\liminf_{N\to\infty}\frac{1}{N}|\set{\ell\in\set{1,\dots,N}: a_\ell x\notin Q}|\ge\del.$$
A point $x \in X$ has no escape of mass on average if it does not have $\del$-escape of mass on average for any $\del>0$.

For $A\in M_{m,n}(\bR)$ and $(A,b)\in {M}_{m,n}(\bR) \times \bR^m$, we associate points 
$$x_A:=
\left(\begin{matrix}
I_m & A\\
0 & I_n\\
\end{matrix}\right)\SL_d(\bZ)\quad \text{and}\quad y_{A,b}:=
\left(\begin{matrix}
I_m & A & -b\\
0 & I_n & 0\\
0 & 0 & 1\\
\end{matrix}\right)\ASL_d(\bZ),$$ respectively.
In \cite{LSS}, it was shown that $\dim_H \mb{Bad}_A(\eps)<m$ for all $\eps>0$ if $x_A$ is $\textit{heavy}$ which is a condition equivalent to no escape of mass on average. Note that $x_A$ is heavy for almost every $A\in M_{m,n}(\R)$. 
On the other hand, we remark that $A$ is singular on average if and only if 
the corresponding point $x_A$ has $1$-escape of mass on average (with respect to the diagonal flow $a_t$) by Dani's correspondence (see also \cite{KKLM}).
%
%For $m=n=1$, $A$ is singular on average if and only $\mb{Bad}_A(\eps)$ has full Hausdorff dimension for some $\eps>0$ \cite{BKLR}. 
%For  $(m,n)\neq (1,1)$, nothing was known about $\dim_H \mb{Bad}_A(\eps)$ for $A$ which has $\eta$-escape of mass on average for some $0<\eta<1$. Theorem~\ref{thmEff1} above deals with this case.

Now we give the outline of the proofs for Theorem \ref{corb1} and Theorem \ref{thmEff1}.
From the Dani correspondence, we characterize the Diophantine property 
$(A,b)\in \Bad(\eps)$ by the dynamical property that the orbit $(a_t y_{A,b})_{t\geq 0}$ is eventually 
in some target $\cL_\eps$ (see Subsection \ref{sec3.2}).
Using this characterization, we construct $a$-invariant measures with large dynamical entropies relative to
$W$ and $U$ (Proposition \ref{prop5} and Proposition \ref{prop2}), which are related to
the Hausdorff dimensions of $\Bad_A(\eps)$ and $\Bad^b(\eps)$, respectively.
Here, we use ``well-behaved" $\sigma$-algebra constructed in Proposition \ref{algebracst}.
Then we associate the dynamical entropies with the static entropies (Lemma \ref{algexiA}). 
Finally, we obtain effective upper bounds for the Hausdorff dimensions 
of $\Bad_A(\eps)$ and $\Bad^b(\eps)$ using an effective version of the variational principle (Proposition \ref{effEL}).

To treat $\Bad_A(\eps)$ and $\Bad^b(\eps)$ at the same time, we need to consider the entropy relative to arbitrary expanding 
closed subgroup $L$ normalized by $a$, which is more general than \cite{LSS}: in \cite{LSS}, the special case $L=W$ whose orbits stay in the compact fiber of $\ASL_d(\bR)/\ASL_d(\bZ) \to \SL_d(\bR)/\SL_d(\bZ)$ is considered. 

For $\Bad_A(\eps)$, we treat the case when $x_A$ has some escape of mass on average whereas $x_A$ has no escape of mass on average in \cite{LSS}. We need to consider $\cL_\eps\subset \ASL_d(\bR)/\ASL_d(\bZ)$, which is non-compact,
whereas in \cite{LSS}, for heavy $x_A$, it was enough to consider the set of fibers over a compact part of $\SL_d(\bR)/\SL_d(\bZ)$. In the case of $\Bad^b(\eps)$, as fixing $b$ does not determine the amount of excursion in the cusp, we need an additional step (Proposition \ref{KKLM'}) to control the measure near the cusp allowing a small amount of escape of mass.

Another new feature of this article is the use of the effective equidistribution of expanding translates under the diagonal action
on $\ASL_d(\bR)/\ASL_d(\bZ)$ and $\SL_d(\bR)/\Gamma_q$, where $\Gamma_q$ is a congruence subgroup of $\SL_d(\bZ)$,
in the case of $\Bad^b(\eps)$.
The former result is proved by the second author in \cite{Kim21}, and the latter result is a slight modification of \cite{KM21}.

Note that \cite{Kim21, KM21} hold in the weighted setting and the only reason we consider the unweighted setting for the 
Hausdorff dimension of $\Bad^b(\eps)$ is the covering estimate in Theorem \ref{KKLMcov} (Theorem 1.5 of \cite{KKLM}). 
%
%We remark that the set of matrices which are singular on average has Hausdorff dimension at most $mn-\frac{mn}{m+n}$ \cite{KKLM}. We also remark that the case where the best approximation vectors $Q_k$ of $A$ satisfies $\lim_{k \to \infty} |Q_k|^{1/k} = \infty$, then $\dim_H \mb{Bad}_A(\eps)=m$ for some $\eps>0$ \cite{BKLR}. Thus the only remaining open question is whether $\dim_H   \mb{Bad}_A(\eps)$ can be full is the case where $A$ is singular on average but $ |Q_k|^{1/k}$ is bounded, which is proved in Theorem~\ref{thmA1}.

The article is organized as follows. In Section \ref{sec2}, we introduce entropy, relative entropy, and a general setup. In this general setup, we construct a partition with a well-behaved ``dynamical $\del$-boundary'' and a $\sigma$-algebra in a quantitative sense. From this construction, we compare the dynamical entropy and the static entropy. Finally, we prove an effective version of the variational principle for relative entropy in the spirit 
of \cite[7.55]{EL}. In Section \ref{sec3}, we introduce preliminaries for the proofs of dimension upper bounds 
including properties of dimensions with respect to quasi-metrics. We also reduce badly approximable properties to dynamical properties in the space of grids in $\bR^{m+n}$. 
In Section \ref{sec:entropyboundA} and Section \ref{sec:entropyboundb}, we construct $a$-invariant measures on 
$\ASL_d(\bR)/\ASL_d(\bZ)$ with large relative entropy and estimate dimension upper bounds of Theorem \ref{thmEff1} and Theorem \ref{corb1} using the effective variational principle. We conclude the paper with Section \ref{sec6}, characterizing the singular on average property in terms of best approximations and show (\ref{S3})$\implies$(\ref{S1}) part in Theorem \ref{thmA1} using a modified version of Bugeaud-Laurent sequence in \cite{BL}.

\section{Effective version of entropy rigidity}\label{sec2}
In this section, we will establish an effective version of entropy rigidity in \cite[Section 7]{EL}.
There have been effective uniqueness results along the line of \cite{EL} in various settings: \cite{Pol} for toral automorphisms, \cite{Kad} for hyperbolic maps on Riemannian manifolds, \cite{Ruh} on p-adic homogeneous spaces, and 
\cite{Kha} for a $p$-adic diagonal action in the $S$-arithmetic setting.
However, in all of the above results as well as in \cite{KLP}, there exists a partition compatible with the given map or flow
in the sense that images under the iteration have boundaries of small measure with respect to any invariant measure
of interest.  

In our setting of a diagonal action on a quotient of real Lie groups, one of the main technical difficulty is that there is no such  partition for all the invariant measures we consider. We thus construct a partition $\cP$ for each invariant measure $\mu$ and control the $\mu$-measure of its ``dynamical $\del$-boundary'' $E_\del$ constructed out of images of thickenings 
of the boundary $\cP$. The value $\mu(E_\del)$ is bounded above uniformly over the partition $\cP$ and the measure $\mu$. See Lemma \ref{Exceptional}.

\subsection{Entropy and relative entropy}\label{sec2.1}
In this subsection, we recall the definitions of the entropy and the relative entropy for $\sigma$-algebras which we use in the later sections. We refer the reader to \cite[Chapter 1 \& 2]{ELW} for basic properties of the entropy.

\begin{defi}
Let $(X,\cB,\mu,T)$ be a measure-preserving system on a Borel probability space, and let $\cA, \cC \subseteq\cB$ be sub-$\sigma$-algebras. 
Suppose that $\cC$ is countably generated.
Note that there exists an $\cA$-measurable conull set $X'\subset X$ and a system $\set{\mu_x^\cA|x\in X'}$ of measures on $X$, 
referred to as \emph{conditional measures}, given for instance by \cite[Theorem 2.2]{ELW}. 
The \emph{information function} of $\cC$ given $\cA$ with respect to $\mu$ is defined by
$$I_\mu(\cC|\cA)(x)=-\log\mu_x^\cA([x]_{\cC}),$$
where $[x]_\cC$ is the atom of $\cC$ containing $x$. 

\begin{enumerate}
    \item  The \emph{conditional (static) entropy of $\cC$ given $\cA$} is defined by
$$H_{\mu}(\cC|\cA):=\int_X I_\mu(\cC|\cA)(x)d\mu(x),$$
which is the average of the information function. If the $\sigma$-algebra $\cA$ is trivial, then we denote by $H_{\mu}(\cC)=H_{\mu}(\cC|\cA)$, 
which is called the \emph{(static) entropy of $\cC$}. Note that the entropy of the countable partition $\xi=\set{A_1,A_2,\dots}$ of $X$ is given by
$$H_\mu(\xi)=H(\mu(A_1),\dots)=-\displaystyle\sum_{i\ge 1}\mu(A_i)\log\mu(A_i)\in[0,\infty],$$
where $0\log0=0$.

\item Let $\cA\subseteq\cB$ be a sub-$\sigma$-algebra such that $T^{-1}\cA = \cA$. For any countable partition $\xi$ of $X$, let
$$h_\mu(T,\xi):=\displaystyle\lim_{n\to\infty}\frac{1}{n}H_\mu (\xi_0^{n-1})
=\displaystyle\inf_{n\ge 1}\frac{1}{n}H_\mu(\xi_0^{n-1}),$$
$$h_\mu(T,\xi|\cA):=\displaystyle\lim_{n\to\infty}\frac{1}{n}H_\mu(\xi_0^{n-1}|\cA)
=\displaystyle\inf_{n\ge 1}\frac{1}{n}H_\mu(\xi_0^{n-1}|\cA),$$
where $\xi_0^{n-1}= \bigvee_{i=0}^{n-1}T^{-i}\xi$.
Then the \emph{(dynamical) entropy of $T$} is
$$h_\mu(T):=\displaystyle\sup_{\xi:H_\mu(\xi)<\infty}h_\mu(T,\xi).$$
Moreover, the \emph{conditional (dynamical) entropy of $T$ given $\cA$} is
$$h_\mu(T|\cA):=\displaystyle\sup_{\xi:H_\mu(\xi)<\infty}h_\mu(T,\xi|\cA).$$
\end{enumerate}

\end{defi}

\subsection{General setup}\label{sec2.2}
Let $G$ be a closed real linear group (or connected, simply connected real Lie group) and let $\Ga<G$ be a lattice.
We consider the quotient $Y=G/\Ga$ with a $G$-invariant probability measure $m_Y$ and call it 
Haar measure on $Y$. Let $d_G$ be a right invariant metric on $G$, which induces the metric $d_Y$ on 
the space $Y=G/\Ga$, which is locally isometric to $G$.
Let $r_y$ be the maximal injectivity radius at $y\in Y$, which is the supremum of $r>0$ such that the
map $g\mapsto gy$ is an isometry from the open $r$-ball $B^G_r$ around the identity in $G$ 
onto the open $r$-ball $B^Y_r(y)$ around $y\in Y$.
For any $r>0$, we denote $$Y(r):=\{y \in Y : r_y \geq r\}.$$ 
It follows from the continuity of the injectivity radius that $Y(r)$ is compact.
Let us denote 
$$r_{\max} := \inf\{r>0 : r_y \leq r \text{ for all } y \in Y\}. $$
Since $\Gamma$ is a lattice, $r_{\max}<\infty$. Hence we now assume that $r_{\max}\leq 1$ 
by rescaling the right invariant metric $d_G$ on $G$.  
Note that for any $r>1$, $Y(r) = \varnothing$.
For any closed subgroup $L<G$, we consider the right invariant metric $d_L$ by restricting $d_G$ on $L$, and similarly denote by $B^L_r$ the open $r$-ball around the identity in $L$. 
In this section, we fix an element $a\in G$ which is $\Ad$-diagonalizable over $\bR$. 
Let $G^{+} = \set{g\in G|a^k g a^{-k}\to id \ \textrm{as} \ k\to -\infty}$ be the unstable (resp. stable) horospherical subgroup associated to $a$ (resp. $a^{-1}$), which is always a closed subgroup of $G$ in our setting.

\subsection{Construction of $a^{-1}$-descending, subordinate algebra and its entropy properties}\label{sec2.3}
In this subsection, our goal is to strengthen results of \cite[\S 7]{EL} for our quantitative purposes.
%\seon{Check this sentence.}
%\seon{Do we need stable in the folllowing definition?}
\begin{defi}[7.25. of \cite{EL}]\label{algdef}
Let $G^+ <G$ be the unstable horospherical subgroup associated to $a$. Let $\mu$ be an $a$-invariant measure on $Y$ and $L<G^+$ be a closed subgroup normalized by $a$.
\begin{enumerate}
\item We say that a countably generated $\s$-algebra $\cA$ is \emph{subordinate to $L$} (mod $\mu$) if for $\mu$-a.e. $y$, there exists $\de > 0$ such that
\eqlabel{Subordef}{B^{L}_\de\cdot y \subset [y]_{\cA} \subset B^{L}_{\de^{-1}}\cdot y.}
\item
We say that $\cA$ is \emph{$a^{-1}$-descending} if $(a^{-1})^{-1}\cA= a\cA \subseteq \cA$.
\end{enumerate}
\end{defi}

For each $L<G^+$ and $a$-invariant ergodic probability measure $\mu$ on $Y$, there exists a countably generated $\sigma$-algebra $\cA$ which is $a^{-1}$-descending and subordinate to $L$ \cite[Proposition 7.37]{EL}. We will prove that such a $\sigma$-algebra can be constructed so that we also have an explicit upper bound of the measure of the set violating \eqref{Subordef} for fixed $\del>0$. In order to prove an effective version of the variational principle later, we need this quantitative estimate independent of $\mu$.

We first introduce some notations that will be used in this subsection.
For a subset $B\subset Y$ and $\del>0$, we denote by $\partial_\del B$ the $\del$-neighborhood of the boundary of $B$, i.e.
$$\partial_\del B\defn\set{y\in Y: \displaystyle\inf_{z\in B}d_Y(y,z)+\displaystyle\inf_{z\notin B}d_Y(y,z)<\del}.$$
We also define the neighborhood of the boundary of a countable partition $\cP$ by
$$\partial_\del \cP\defn\displaystyle\bigcup_{P\in\cP}\partial_\del P.$$
Here, we deal with the entropy with respect to $a^{-1}$, so we write for any extended integers $\ell \leq \ell'$ in $\mathbb{Z}\cup\set{\pm\infty}$,
$$\cP_{\ell}^{\ell'}=\displaystyle\bigvee_{k=\ell}^{\ell'} a^{k}\cP,$$
for a given partition $\cP$ of $Y$. We will use this notation also for $\sigma$-algebras. 

We first construct a finite partition which has small measures on neighborhoods of the boundary. The following lemma is the main ingredient of the effectivization in this section. A key feature is that the measure estimate below is independent of $\mu$.
\begin{lem}\label{partitioncst}
There exists a constant $0<c<\frac{1}{10}$ depending only on $G$ such that the following holds. 
Let $\mu$ be a probability measure on $Y$. For any $r>0$ and any measurable subset $\Omega\subset Y(2r)$, 
there exist a measurable subset $K\subset Y$ and a partition $\cP=\set{P_1,\cdots,P_N}$ of $K$ such that
\begin{enumerate}
    \item\label{partprop1}  $\Omega\subseteq K\subseteq B^G_{\frac{11}{10}r}\Omega$,
    \item\label{partprop2} For each $1\leq i\leq N$, there exists $z_i\in B^G_{\frac{r}{10}}\Omega$ such that $$B_{\frac{r}{5}}^G\cdot z_i\subseteq P_i\subseteq B_{r}^G\cdot z_i,\qquad K=\bigcup_{i=1}^{N}B_{r}^G\cdot z_i,$$
    \item\label{partprop3} $\mu(\partial_\del\cP)\leq \left(\frac{\del}{r}\right)^{\frac{1}{2}}\mu(B^G_{\frac{12}{10}r}\Omega)$ for any $0<\del<cr$.
\end{enumerate}
\end{lem}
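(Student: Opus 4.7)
I will construct the partition as a Voronoi decomposition around a sufficiently separated net of centers in a small thickening of $\Omega$, and then extract the boundary bound from an averaging argument over a randomized construction.

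Choose a maximal $\frac{2r}{5}$-separated subset $\{z_1,\dots,z_N\}$ of $B^G_{r/10}\Omega$. Maximality together with the triangle inequality in $G$ gives $B^G_{r/10}\Omega\subseteq\bigcup_i B^G_{2r/5}(z_i)$, so setting $K:=\bigcup_i B^G_r(z_i)$ one has $\Omega\subseteq K\subseteq B^G_{11r/10}\Omega$, which is (1). The hypothesis $\Omega\subset Y(2r)$, together with $r_{\max}\leq 1$ and $z_i\in B^G_{r/10}\Omega$, guarantees that each $z_i$ has injectivity radius at least $r$, so that the Voronoi cells
\eq{P_i\ :=\ \bigl\{y\in K:\ d_Y(y,z_i)\leq d_Y(y,z_j)\text{ for all }j\bigr\}}
(with an arbitrary measurable tie-breaking rule) are well defined, and identify isometrically with the corresponding subsets of $G$. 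The $\frac{2r}{5}$-separation of the net forces $B^G_{r/5}(z_i)\subseteq P_i$: if $y\in B^G_{r/5}(z_i)$ and $j\neq i$, then $d_Y(y,z_j)\geq d_Y(z_i,z_j)-d_Y(y,z_i)>\frac{2r}{5}-\frac{r}{5}=\frac{r}{5}>d_Y(y,z_i)$. The inclusion $P_i\subseteq B^G_r(z_i)$ is automatic from the defining inequality combined with $y\in K=\bigcup_j B^G_r(z_j)$, and (2) follows.

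For (3), any deterministic Voronoi partition produces a $\delta$-boundary whose measure is not controlled uniformly in $\mu$, so I will introduce randomness into the construction. A convenient recipe is to perturb each center independently: replace $z_i$ by $\exp(v_i)\cdot z_i$ with $v_i$ uniformly distributed on the ball of some small radius $c_0 r$ in $\mathrm{Lie}(G)$, and apply a matching randomization to the radius used to cut off the outer boundary of $K$. For each fixed point $y\in Y$ and each fixed pair $(i,j)$ of nearby centers, a one-dimensional slicing in the Lie-algebra direction transverse to the $(i,j)$-wall shows that the probability $y$ lies within distance $\delta$ of that wall is at most $C\delta/r$. Because $d_G$ is doubling at scale $r$, each $y$ has only a bounded number of neighboring centers (a bound depending only on $G$), and Fubini then gives
\eq{\mathbb E\bigl[\mu(\partial_\delta\cP)\bigr]\ \leq\ C_G\,\frac{\delta}{r}\,\mu\bigl(B^G_{12r/10}\Omega\bigr).}
Choosing $c<\frac{1}{10}$ small enough that $C_G\sqrt{c}\leq 1$ gives $C_G(\delta/r)\leq(\delta/r)^{1/2}$ for all $\delta<cr$, and selecting any realization of the random construction attaining at most the expected value produces a partition satisfying (3).

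The main obstacle is that the bound in (3) must depend on $\mu$ only through the mass of the fixed thickening $B^G_{12r/10}\Omega$, with no dependence on the fine-scale behaviour of $\mu$ at scale $\delta$; this uniformity is precisely what is needed when the lemma is later applied to a family of varying invariant measures, and it is exactly what forces the randomization step. A secondary, purely geometric concern is that the whole construction must live in a region of injectivity radius $\geq r$; the hypothesis $\Omega\subset Y(2r)$ together with the bound $\delta<cr$ leaves a safety margin that covers the thickenings $B^G_{11r/10}\Omega$ and $B^G_{12r/10}\Omega$ appearing in the statement, and justifies replacing $d_Y$ by $d_G$ throughout the argument.
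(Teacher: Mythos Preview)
Your overall strategy --- a random perturbation of a separated net followed by an averaging argument --- is the right one, and is in spirit what the paper does (though the paper uses a sequential ball-based partition rather than Voronoi cells, which makes the boundary computation cleaner since each piece of the boundary depends on a single perturbed center rather than a pair).

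The genuine gap is in the last step. Your expectation bound
\[
\mathbb E\bigl[\mu(\partial_\delta\cP)\bigr]\ \leq\ C_G\,\frac{\delta}{r}\,\mu\bigl(B^G_{12r/10}\Omega\bigr)
\]
holds for each \emph{fixed} $\delta$, but property (3) requires a \emph{single} realization of the partition satisfying $\mu(\partial_\delta\cP)\leq(\delta/r)^{1/2}\mu(B^G_{12r/10}\Omega)$ for \emph{all} $0<\delta<cr$ simultaneously. A realization attaining at most the expectation for one $\delta$ need not do so for any other $\delta$; the function $\delta\mapsto\mu(\partial_\delta\cP)$ is random, and there is no reason it should lie below its pointwise mean everywhere. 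The paper closes this gap with a Chebyshev plus dyadic union bound: from the expectation estimate one gets
\[
\mathbb P\Bigl(\mu(\partial_{2^{-k}\delta}\cP)\geq \tfrac{1}{2}\bigl(\tfrac{2^{-k}\delta}{r}\bigr)^{1/2}\mu(B^G_{12r/10}\Omega)\Bigr)\ \ll\ \bigl(\tfrac{2^{-k}\delta}{r}\bigr)^{1/2},
\]
and summing over $k\geq 0$ shows the probability that the desired bound fails at \emph{some} dyadic scale is $O\bigl((\delta/r)^{1/2}\bigr)$; taking $\delta=cr$ with $c$ small enough makes this less than $1$, producing a good realization. Monotonicity of $\partial_\delta\cP$ in $\delta$ then extends the bound from dyadic scales to all $\delta<cr$. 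Your argument omits this step entirely, and without it the conclusion (3) does not follow.

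A secondary point: for the Voronoi construction the $(i,j)$-wall moves when either $z_i$ or $z_j$ is perturbed, so the ``one-dimensional slicing'' you invoke is not as immediate as stated; you would need to argue that conditioning on all centers but one still leaves enough randomness to move each relevant wall transversally. The paper sidesteps this by building the partition from boundaries of balls $B^G_r\cdot z_i$ and $B^G_{r/2}\cdot z_i$, each of which depends on a single center.
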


\begin{proof}
Choose a maximal $\frac{9}{10}r$-separated set $\set{y_1,\cdots,y_N}$ of $\Omega$. \vspace{0.3cm}\\ 
%Note that $\Omega\subseteq \bigcup_{i}B_{\frac{9}{10}r}^G\cdot y_i\subseteq B^G_{\frac{9}{10}r} \Omega$.
\textbf{Claim}\; There exist a constant $0<c<\frac{1}{10}$ depending only on $G$, and $\{g_i\}_{i=1}^N \subset B^G_{\frac{r}{10}}$ such that for $z_i=g_iy_i$ and for any $0<\del<cr$,
\eqlabel{eqcst1}{\sum_i\left(\mu(\partial_\del (B_r^G\cdot z_i))+\mu(\partial_\del (B_{\frac{r}{2}}^G\cdot z_i))\right)\leq \left(\frac{\del}{r}\right)^{\frac{1}{2}}\mu(B^G_{\frac{12}{10}r}\Omega).}
\begin{proof}[Proof of Claim]
To prove this claim, we randomly choose each $g_i$ with the independent uniform distribution on $B^G_{\frac{r}{10}}$. 
For $0<\del<\frac{r}{10}$ fixed, we have
\eq{
\begin{aligned}
\mathbb{E}&\left(\sum_i\mu(\partial_\del (B_r^G\cdot z_i))\right)=\sum_i\frac{1}{m_G(B^G_{\frac{r}{10}})}\int_{B^G_{\frac{r}{10}}}\int_{Y}\mathds{1}_{B^G_{r+\del}\cdot g_iy_i\setminus B^G_{r-\del}\cdot g_iy_i}(y)d\mu(y) dm_G(g_i)\\
&\asymp\sum_i\frac{1}{r^{\dim G}}\int_Y m_G\left(\set{g_i\in B^G_{\frac{r}{10}}: r-\del\leq d(g_iy_i,y)<r+\del}\right)d\mu(y)\\
&\ll \sum_i\frac{1}{r^{\dim G}}\int_{B^G_{\frac{11}{10}r+\del}\cdot y_i}\del r^{\dim G-1}d\mu
\leq \frac{\del}{r}\int_{B_{\frac{12}{10}r}^G\Om}\sum_i\mathds{1}_{B_{\frac{12}{10}r}\cdot y_i}(y)d\mu(y).
\end{aligned}
}
For any $y\in B_{\frac{12}{10}r}^G\Om$, the number of $y_i$'s contained in $B_{\frac{12}{10}r}^G \cdot y$ is at most $\left(\frac{33}{9}\right)^{\dim G}$ 
since $B_{\frac{9}{20}r}^G \cdot y_i$'s are disjoint and contained in $B_{\frac{33}{20}r}^G \cdot y$. It implies that $\sum_i\mathds{1}_{B_{\frac{12}{10}r}\cdot y_i}(y)\leq 4^{\dim G}$ for any $y\in B_{\frac{12}{10}r}^G\Om$. It follows that
$$\mathbb{E}\left(\sum_i\mu(\partial_\del (B_r^G\cdot z_i))\right)\ll \frac{\del}{r}\int_{B_{\frac{12}{10}r}^G\Om}4^{\dim G}d\mu(y)\ll \frac{\del}{r}\mu(B_{\frac{12}{10}r}^G\Om),$$
where the implied constant depends only on $G$.

Applying the same argument for $\partial_\del (B_{\frac{r}{2}}^G\cdot z_i)$ instead of $\partial_\del (B_r^G\cdot z_i)$,
$$\mathbb{E}\left(\sum_i\left(\mu(\partial_\del (B_r^G\cdot z_i))+\mu(\partial_\del (B_{\frac{r}{2}}^G\cdot z_i))\right)\right)\ll\frac{\del}{r}\mu(B^G_{\frac{12}{10}r}\Omega).$$
It follows from Chebyshev's inequality that 
$$\mathbb{P}\left(\sum_i\left(\mu(\partial_\del (B_r^G\cdot z_i))+\mu(\partial_\del (B_{\frac{r}{2}}^G\cdot z_i))\right)\ge \frac{1}{2}\left(\frac{\del}{r}\right)^{\frac{1}{2}}\mu(B^G_{\frac{12}{10}r}\Omega)\right)\ll \left(\frac{\del}{r}\right)^{\frac{1}{2}}.$$
Hence, we have
\eqlabel{eqpcst}{
\begin{split}
\mathbb{P}&\left(\bigcap_{k\geq 0}\left\{\sum_i\left(\mu(\partial_{2^{-k}\del} (B_r^G\cdot z_i))+\mu(\partial_{2^{-k}\del} (B_{\frac{r}{2}}^G\cdot z_i))\right)< \frac{1}{2}\left(\frac{2^{-k}\del}{r}\right)^{\frac{1}{2}}\mu(B^G_{\frac{12}{10}r}\Omega)\right\}\right)\\&>1-O\left( \left(\frac{\del}{r}\right)^{\frac{1}{2}}\right).
\end{split}
}
Thus, there exists $0<c<\frac{1}{10}$ so that the right-hand side of \eqref{eqpcst} is positive for any $\del<cr$. It follows that we can find $\set{g_i}_{i=1}^{N}$ such that $z_i=g_iy_i$'s satisfy \eqref{eqcst1} for any $0<\del<cr$. 
\end{proof}

Let $c>0$ and $\{g_i\}_{i=1}^N \subset B^G_{\frac{r}{10}}$ be as in \textbf{Claim}. The set $\set{z_i = g_i y_i}_{i=1}^N$ is $\frac{7}{10}r$-separated since $\set{y_i}_{i=1}^N$ is $\frac{9}{10}r$-separated. Let $K:=\bigcup_{i=1}^N B_r^G\cdot z_i$. Since $B_{\frac{9}{10}r}^G\cdot y_i\subseteq B_r^G\cdot z_i\subseteq B_{\frac{11}{10}r}^G\cdot y_i$, we have
$$\Omega\subseteq\bigcup_{i=1}^N B_{\frac{9}{10}r}^G\cdot y_i\subseteq K\subseteq \bigcup_{i=1}^N B_{\frac{11}{10}r}^G\cdot y_i\subseteq B_{\frac{11}{10}r}^G \Omega.$$
Now we define a partition $\cP$ of $K$ inductively as follows:
$$P_i=B^G_r\cdot z_i\setminus\left(\displaystyle\bigcup_{j=1}^{i-1}P_j\cup\displaystyle\bigcup_{j=i+1}^{N}B^G_{\frac{r}{2}}\cdot z_j\right)$$
for $1\leq i\leq N$.
It is clear from the construction that $B_{\frac{r}{5}}^G\cdot z_i\subseteq P_i\subseteq B_{r}^G\cdot z_i$ and $z_i\in B^G_{\frac{r}{10}}\Omega$ for $1\leq i\leq N$. We also observe that the $\del$-neighborhood of $\cP$ is contained in $\displaystyle\bigcup_{i=1}^N \left(\partial_\del(B_r^G\cdot z_i) \cup \partial_\del(B_\frac{r}{2}^G\cdot z_i)\right)$. Hence it follows from \textbf{Claim} that for any $0<\del<cr$,
$$\mu(\partial_\del\cP)\leq \sum_i\left(\mu(\partial_\del (B_r^G\cdot z_i))+\mu(\partial_\del (B_{\frac{r}{2}}^G\cdot z_i))\right)\leq \left(\frac{\del}{r}\right)^{\frac{1}{2}}\mu(B^G_{\frac{12}{10}r}\Omega).$$
\end{proof}

\begin{rem}\label{basepoint}
Under the same setting in Lemma \ref{partitioncst}, let $y\in\Omega$ be given. 
If we start the proof of the lemma with a maximal $\frac{9}{10}r$-separated set $\{y=y_1,\dots,y_N\}$ of $\Omega$,
then we can conclude not only \eqref{partprop1}, \eqref{partprop2}, and \eqref{partprop3} in the lemma 
but also $y \in B_{\frac{r}{10}}^G \cdot z_1 \subset B_{\frac{r}{5}}^G\cdot z_1 \subset P_1$.
Hence, $y \notin \partial \cP$, which will be used in proofs of Proposition \ref{prop5} and Proposition \ref{prop2}.
\end{rem}

%
%\begin{lem}\label{thicklem}
%For any $r>\del>0$, we have
%\[ B_{\del}^G Y(r) \subset Y(r-\del)\quad \text{and}\quad B_\del^G Y(r)^c \subset Y(r+\del)^c. \]
%\end{lem}
%\begin{proof}
%For any $g\in B_\del^G$ and $y\in Y(r)$, we need to show $r_{gy}\geq r-\del$.
%Suppose that $r_{gy}<r-\del$. Then there exist $g_1, g_2 \in B_{r-\del}^G$ such that $g_1gy =g_2 gy$, or equivalently, 
%$g^{-1}g_2^{-1}g_1 g y=y$. But it follows from $y\in Y(r)$ that $g^{-1}g_2^{-1}g_1g \notin B_r^{G}$, hence using the triangular inequality and the right invariance of $d_G$, 
%\[
%\begin{split}
%r &\leq d_G(g^{-1}g_2^{-1}g_1g,id) \leq d_G(g_1g,id)+d_G(g_2g,id)\\
%&\leq d_G(g_1,id) + d_G(g_2,id) + 2d_G(g,id) < r,
%\end{split}
%\] which is a contradiction. This concludes the first assertion. The second assertion follows similarly.
%\end{proof}

We need the following thickening properties.  
It can easily be checked that for any $r>\del>0$, we have
\eqlabel{thicklem}{
B_{\del}^G Y(r) \subset Y(r-\del)\qquad \text{and}\qquad B_\del^G Y(r)^c \subset Y(r+\del)^c.
}

Using Lemma \ref{partitioncst} inductively, we have the following partition of $Y$ 
with its subpartition having small boundary measures. 
Recall that $Y(r) = \varnothing$ for any $r>1$ by our choice of the right invariant metric $d_G$ on G.
\begin{lem}\label{partitioncst'}
Let $0<r_0\leq 1$ be given and $\mu$ be a probability measure on $Y$. 
There exists a partition $\set{K_k}_{k=1}^{\infty}$ of $Y$ such that for each $k\geq 1$, the following statements hold:
\begin{enumerate}
    \item $K_k\subseteq Y(2^{-k})\setminus Y(2^{-k+2});$
    \item there exist a partition $\cP_k=\set{P_{k1},\cdots,P_{kN_k}}$ of $K_k$ and a point 
    $z_i\in B^G_{\frac{1}{10}r_0 2^{-k-1}}K_k$ for each $1\leq i\leq N_k$ satisfying
    $$B_{\frac{1}{5}r_0 2^{-k-1}}^G\cdot z_i\subseteq P_{ki}\subseteq B_{r_0 2^{-k-1}}^G\cdot z_i;$$
    \item $\mu(\partial_\del\cP_k)\leq (r_0^{-1} 2^{k+4}\del)^{\frac{1}{2}}\mu(Y(2^{-k-1})\setminus Y(2^{-k+3}))$ for any 
    $0<\del<cr_02^{-k-2}$, where $c>0$ is the constant in Lemma \ref{partitioncst}. 
\end{enumerate}
\end{lem}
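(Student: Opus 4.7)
The plan is to apply Lemma~\ref{partitioncst} inductively at the scales $r_k := r_0 2^{-k-1}$, sweeping $Y$ from the thick part toward the cusp. At step $k$, set $\Omega_k := Y(2^{-k+1}) \setminus \bigcup_{j<k} K_j$. Since $r_0 \leq 1$, one has $\Omega_k \subseteq Y(2^{-k+1}) \subseteq Y(r_0 2^{-k}) = Y(2r_k)$, so Lemma~\ref{partitioncst} applies and yields a set $\tilde K_k \supseteq \Omega_k$ together with a partition $\tilde\cP_k = \{\tilde P_{ki}\}_i$ whose atoms satisfy the ball containments at scale $r_k$ around centers $z_{ki} \in B^G_{r_k/10}\Omega_k$. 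Define
\[
K_k := \tilde K_k \setminus \textstyle\bigcup_{j<k} K_j \quad\text{and}\quad \cP_k := \bigl\{\tilde P_{ki} \setminus \textstyle\bigcup_{j<k} K_j\bigr\}_i.
\]
A direct induction using $\tilde K_k \supseteq \Omega_k$ shows $\bigcup_{j\leq k} K_j \supseteq Y(2^{-k+1})$ at each step, so $\{K_k\}$ partitions $Y$, because $Y = \bigcup_k Y(2^{-k+1})$ (every point has positive injectivity radius).

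For property~(1), the thickening estimate \eqref{thicklem} combined with $r_0 \leq 1$ gives
\[
K_k \subseteq \tilde K_k \subseteq B^G_{11r_k/10} Y(2^{-k+1}) \subseteq Y(2^{-k+1}-11r_k/10) \subseteq Y(2^{-k}),
\]
while the lower bound $K_k \cap Y(2^{-k+2}) = \varnothing$ follows from the inductive invariant $\bigcup_{j<k} K_j \supseteq Y(2^{-k+2})$ together with $K_k \cap \bigcup_{j<k}K_j = \varnothing$. For property~(3), the invariant additionally forces $\Omega_k \subseteq Y(2^{-k+1}) \setminus Y(2^{-k+2})$; by the $1$-Lipschitz property of $y \mapsto r_y$ and a direct numeric check using $r_0 \leq 1$, the perturbed radii lie in $(2^{-k+1}-12r_k/10,\, 2^{-k+2}+12r_k/10) \subseteq [2^{-k-1},\, 2^{-k+3})$, so $B^G_{12r_k/10}\Omega_k \subseteq Y(2^{-k-1}) \setminus Y(2^{-k+3})$. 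Combining this with Lemma~\ref{partitioncst}(3) and the crude weakening $(\delta/r_k)^{1/2} \leq (r_0^{-1} 2^{k+4}\delta)^{1/2}$ produces the required estimate.

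The hard part is property~(2), specifically the inner inclusion $B^G_{r_k/5}\cdot z_{ki} \subseteq P_{ki}$. With the above choice of $\Omega_k$, the center $z_{ki}$ need only lie within $r_k/10$ of $\Omega_k$, and since $\Omega_k$ is merely set-theoretically disjoint from $\bigcup_{j<k}K_j$ (no distance buffer), the ball $B^G_{r_k/5}\cdot z_{ki}$ may cross into the previously covered region, whence the truncated atom $P_{ki}$ loses its inner containment. The remedy is to tighten the construction by imposing a genuine distance buffer: replace $\Omega_k$ by $Y(2^{-k+1}) \setminus B^G_{r_k/3}\bigcup_{j<k}K_j$. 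This forces $d(z_{ki}, \bigcup_{j<k}K_j) \geq r_k/3 - r_k/10 > r_k/5$, so $B^G_{r_k/5}\cdot z_{ki}$ is genuinely separated from the previously covered region. The resulting collar---the thin layer of width at most $r_k/3$ around $\bigcup_{j<k}K_j$ excluded from $\Omega_k$---must then be redistributed to the previously constructed atoms; this is arranged by invoking Remark~\ref{basepoint} at the earlier levels to position enough centers close to the eventual collar, so that the upper containments $P_{(k-1)j} \subseteq B^G_{r_{k-1}}\cdot z_{(k-1)j}$ are preserved (here the relation $r_{k-1} = 2r_k$ leaves the necessary geometric slack). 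The additional boundary measure introduced is absorbed by the factor $2^{3/2}$ between $(\delta/r_k)^{1/2}$ and the looser constant $(r_0^{-1}2^{k+4}\delta)^{1/2}$ in~(3), which is exactly where the weakening in the stated bound pays off.
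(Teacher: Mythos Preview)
Your inductive scheme is the right shape, and you correctly isolate the obstruction to property~(2): truncating $\tilde P_{ki}$ by $\bigcup_{j<k}K_j$ can cut into the inner ball $B^G_{r_k/5}\cdot z_{ki}$. But your proposed repair via a buffer and backward redistribution does not work. First, Remark~\ref{basepoint} lets you place \emph{one} prescribed point in the interior of one atom; it gives no mechanism for positioning ``enough centers'' along an entire region. Second, the collar at step $k$ is only determined once $\bigcup_{j<k}K_j$ is built, so you cannot anticipate it when constructing level $k-1$; the dependency is circular. Third, even waiving those objections, the numerics fail: a collar point $p$ lies within $r_k/3=r_{k-1}/6$ of some $q\in K_{k-1}$, and $q$ lies within $r_{k-1}$ of its center $z_{(k-1)j}$, so $d(p,z_{(k-1)j})$ can be as large as $7r_{k-1}/6>r_{k-1}$. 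Attaching $p$ to $P_{(k-1)j}$ therefore destroys the upper containment $P_{(k-1)j}\subseteq B^G_{r_{k-1}}\cdot z_{(k-1)j}$; the ``slack from $r_{k-1}=2r_k$'' is not there.

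The paper's fix goes in the opposite direction: subtract the \emph{next} level rather than the previous ones. One carries a provisional pair $(K_k',\cP_k')$ from Lemma~\ref{partitioncst} applied to $\Omega_k=Y(2^{-k+1})\setminus\bigcup_{j<k}K_j$, then builds $(K_{k+1}',\cP_{k+1}')$ at scale $r_{k+1}=r_k/2$ from $\Omega_{k+1}=Y(2^{-k})\setminus(\bigcup_{j<k}K_j\cup K_k')$, and only then finalizes $K_k:=K_k'\setminus K_{k+1}'$ and $P_{ki}:=P_{ki}'\setminus K_{k+1}'$. Since $K_{k+1}'\subseteq B^G_{11r_{k+1}/10}(Y\setminus K_k')$ and $B^G_{r_k}\cdot z_{ki}\subseteq K_k'$, any $y\in K_{k+1}'$ satisfies $d(y,z_{ki})>r_k-11r_k/20=9r_k/20>r_k/5$; thus $K_{k+1}'$ misses every inner ball $B^G_{r_k/5}\cdot z_{ki}$, and property~(2) survives the truncation intact. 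This look-ahead also keeps property~(3) clean: one has $\partial_\delta\cP_k\subseteq\partial_\delta\cP_k'\cup\partial_\delta\cP_{k+1}'$, two terms at adjacent scales both controlled by Lemma~\ref{partitioncst}\eqref{partprop3}, and summing them produces the factor $2^{k+4}$. In your scheme the truncation by $\bigcup_{j<k}K_j$ imports boundary from \emph{all} earlier levels, which your argument for~(3) does not account for; the single factor $2^{3/2}$ cannot absorb that.
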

\begin{proof}
We will construct $\set{K_k}_{k\ge1}$ and $\set{\cP_k}_{k\ge 1}$ using Lemma \ref{partitioncst} inductively. 
For each $k\geq 1$, let us say that $K_k\subset Y$ and $\cP_k$ satisfy $(\spadesuit_k)$ if they satisfy the three conditions in the statement.
We will also need auxiliary bounded sets $K_k'\subset Y$'s and corresponding partitions $\cP_k'$'s during the inductive process. 
Let us say that $K_k'$ and a partition $\cP_k'$ of $K_k'$ satisfy $(\clubsuit_k)$ if they satisfy the following three conditions.
\begin{enumerate}
    \item $Y(2^{-k+1})\setminus\bigcup_{j=1}^{k-1}K_j\subseteq K_k'\subseteq B^G_{\frac{11}{10}r_02^{-k-1}}(Y(2^{-k+1})
    \setminus\bigcup_{j=1}^{k-1}K_j)$,
    \item For each $1\leq i\leq N_{k}$, there exists $z_{ki}\in B^G_{\frac{1}{10}r_0 2^{-k-1}}K_k'$ such that 
    $$B_{\frac{1}{5}r_0 2^{-k-1}}^G\cdot z_{ki}\subseteq P_{ki}'\subseteq B_{r_0 2^{-k-1}}^G\cdot z_{ki}\quad\text{and}
    \quad K_k'=\bigcup_{i=1}^{N}B_{r_02^{-k-1}}^G\cdot z_{ki},$$
    \item $\mu(\partial_\del\cP_k')\leq (r_0^{-1}2^{k+1}\del)^{\frac{1}{2}}\mu(Y(2^{-k})\setminus Y(2^{-k+3}))$ for any $0<\del<cr_02^{-k-1}$.
\end{enumerate}
Here, $\bigcup_{j=1}^{0} K_j$ means the empty set.

Let us start with the initial step.
We first choose $\Omega_1 = Y(1)$ and apply Lemma \ref{partitioncst} with $r=r_02^{-2}$ 
and $\Omega=\Omega_1 \subset Y(\frac{r_0}{2})$. Then we have a subset $K_1' \subset Y$ and 
a partition $\cP_1'$ of $K_1'$ satisfying (1), (2) of $(\clubsuit_{1})$, and 
$\mu(\partial_\del\cP_1')\leq (r_0^{-1} 2^2\del)^{\frac{1}{2}}\mu(B_{\frac{12}{10}r_02^{-2}}^G \Omega_1)$ 
for any $0<\del<cr_02^{-2}$. 
It follows from \eqref{thicklem} that $B_{\frac{12}{10}r_02^{-2}}^G Y(1) \subset Y(\frac{1}{2})$, 
which implies (3) of $(\clubsuit_{1})$ since $Y(4)=\varnothing$. 
Note that $K_1' \subset B_{\frac{11}{10}r_02^{-2}}^G Y(1)\subset Y(\frac{1}{2})$. 

Now let $\Omega_2 = Y(\frac{1}{2}) \setminus K_1'$ and apply Lemma \ref{partitioncst} again with $r=r_02^{-3}$ and 
$\Omega=\Omega_2 \subset Y(\frac{r_0}{4})$. We have a subset $K_2' \subset Y$ and a partition $\cP_2'$ of $K_2'$
satisfying $\Omega_2 \subset K_2' \subset B_{\frac{11}{10}r_0 2^{-3}}^G \Omega_2$, (2) of $(\clubsuit_{2})$, and 
$\mu(\partial_\del\cP_2')\leq (r_0^{-1} 2^{3}\del)^{\frac{1}{2}}\mu(B_{\frac{12}{10}r_0 2^{-3}}^G \Omega_2)$ 
for any $0<\del<cr_02^{-3}$. Set $K_1 = K_1' \setminus K_2 '$, then (1) of  $(\clubsuit_{2})$ and (1) of $(\spadesuit_1)$ 
follow since $Y(2)=\varnothing$. 
Since $K_1' \supset Y(1)$, it follows from \eqref{thicklem} that 
$B_{\frac{12}{10}r_02^{-3}}^G \Omega_2 \subset Y(\frac{1}{4})\setminus Y(2)$, which implies (3) of $(\clubsuit_{2})$.
Define a partition $\cP_1 = \{P_{11},\dots, P_{1N_1}\}$ from 
$\cP_1' = \{ P_{11}',\dots, P_{1N_1}'\}$ by $P_{1i}=P_{1i}' \setminus K_{2}'$ for each $1\leq i\leq N_1$.
For each $1\leq i\leq N_1$ and $y\in B^G_{\frac{1}{5}r_02^{-2}}\cdot z_{1i}$, observe that $y\notin K_{2}'$ 
since $B^G_{r_02^{-2}}\cdot z_{1i}\subset K_1'$ and $K_{2}'\subset B^G_{\frac{11}{10}r_02^{-3}}\Omega_2 \subset
B^G_{\frac{11}{10}r_0 2^{-3}}(Y\setminus K_1')$. Hence, $B^G_{\frac{1}{5}r_02^{-2}}\cdot z_{1i}\subset P_{1i}$ holds, 
so (2) of $(\spadesuit_1)$ follows. Since $P_{1i}=P_{1i}' \setminus K_{2}'$ for each $1\leq i\leq N_1$, we have
\[
\begin{split}
\mu(\partial_\del \cP_1) &\leq \mu(\partial_\del \cP_1') + \mu(\partial_\del \cP_2') 
\leq (r_0^{-1}2^2 \del)^{\frac{1}{2}}\mu(Y(2^{-1})\setminus Y(2^2)) + (r_0^{-1}2^3 \del)^{\frac{1}{2}}\mu(Y(2^{-2})\setminus Y(2)) \\
&\leq (r_0^{-1}2^5 \del)^{\frac{1}{2}}\mu(Y(2^{-2})\setminus Y(2^2))
\end{split}
\] 
for any $0<\del<cr_02^{-3}$.
Hence (3) of $(\spadesuit_1)$ follows.

Our desired disjoint sets $\set{K_k}_{k\ge 1}$ and partitions $\set{\cP_k}_{k\ge 1}$ will be obtained by applying this process repeatedly.\vspace{0.3cm}\\
\textbf{Claim}\; 
For $k\ge 2$, suppose that we have disjoint bounded sets $K_j$ of $Y$ and corresponding partitions 
$\cP_j$ satisfying $(\spadesuit_j)$ for $j=1,\dots, k-1$, 
and a subset $K_k' \subset Y$ and a partition $\cP_k'$ satisfying $(\clubsuit_k)$. 
Then we can find $K_k\subseteq K_k'$ and a partition $\cP_k$ of $K_k$ satisfying $(\spadesuit_k)$, 
and $K_{k+1}' \subset Y$ and a partition $\cP_{k+1}'$ of $K_{k+1}'$ 
satisfying $(\clubsuit_{k+1})$.

\begin{proof}[Proof of Claim]
Note that $K_k' \subset B^G_{\frac{11}{10}r_02^{-k-1}}Y(2^{-k+1})\subset Y(2^{-k})$ and 
$K_j \subset Y(2^{-j}) \subset Y(2^{-k})$ for each $j=1,\dots, k-1$.
 Let $\Omega_{k+1}=Y(2^{-k})\setminus (\bigcup_{j=1}^{k-1} K_j\cup K_k')$ and apply Lemma \ref{partitioncst} 
 with $r=r_02^{-k-2}$ and $\Omega=\Omega_{k+1} \subset Y(r_02^{-k-1})$. Then there exist 
 $K_{k+1}'\subset Y$ and a partition $\cP_{k+1}'=\set{P_{(k+1)1}',\cdots,P_{(k+1)N_{k+1}}'}$ of $K_{k+1}'$ satisfying 
$\Omega_{k+1} \subset K_{k+1}' \subset B_{\frac{11}{10}r_02^{-k-2}}^G \Omega_{k+1}$, (2) of $(\clubsuit_{k+1})$, and 
$\mu(\partial_\del\cP_{k+1}')\leq (r_0^{-1}2^{k+2}\del)^{\frac{1}{2}}\mu(B_{\frac{12}{10}r_02^{-k-2}}^G \Omega_{k+1})$ 
for any $0<\del<cr_02^{-k-2}$. We set $K_k=K_k'\setminus K_{k+1}'$, then (1) of  $(\clubsuit_{k+1})$ follows. 
Since $\bigcup_{j=1}^{k-1} K_j \supset Y(2^{-k+2})$ and $K_k \subset K_k' \subset Y(2^{-k}) \setminus \bigcup_{j=1}^{k-1}K_j$,
(1) of $(\spadesuit_k)$ follows. It follows from $\bigcup_{j=1}^{k-1} K_j \cup K_k' \supset Y(2^{-k+1})$ and 
\eqref{thicklem} that $B_{\frac{12}{10}r_02^{-k-2}}^G \Omega_{k+1} \subset Y(2^{-k-1})\setminus Y(2^{-k+2})$,
which implies (3) of $(\clubsuit_{k+1})$.
Define a partition $\cP_k=\set{P_{k1},\cdots, P_{kN_k}}$ from $\cP_k'=\set{P_{k1}',\cdots,P_{kN_k}'}$ by
 $P_{ki}=P_{ki}'\setminus K_{k+1}'$ for any $1\leq i\leq N_k$. For each $1\leq i\leq N_k$ and 
 $y\in B^G_{\frac{1}{5}r_02^{-k-1}}\cdot z_{ki}$, observe that $y\notin K_{k+1}'$ since 
 $B^G_{r_02^{-k-1}}\cdot z_{ki}\subseteq K_k'$ and $K_{k+1}'\subseteq B^G_{\frac{11}{10}r_02^{-k-2}}\Omega_{k+1}
 \subset B^G_{\frac{11}{10}r_02^{-k-2}}(Y\setminus K_k')$. 
 Hence, $B^G_{\frac{1}{5}r_02^{-k-1}}\cdot z_{ki}\subset P_{ki}$ holds, so (2) of $(\spadesuit_k)$ follows. 
Since $P_{ki}=P_{ki}' \setminus K_{k+1}'$ for each $1\leq i\leq N_k$, we have
\[
\begin{split}
\mu(\partial_\del \cP_k) &\leq \mu(\partial_\del \cP_k') + \mu(\partial_\del \cP_{k+1}') \\
&\leq (r_0^{-1}2^{k+1}\del)^{\frac{1}{2}}\mu(Y(2^{-k})\setminus Y(2^{-k+3})) +
(r_0^{-1}2^{k+2}\del)^{\frac{1}{2}}\mu(Y(2^{-k-1})\setminus Y(2^{-k+2})) \\
&\leq (r_0^{-1}2^{k+4}\del)^{\frac{1}{2}}\mu(Y(2^{-k-1})\setminus Y(2^{-k+3}))
\end{split}
\] for any $0<\del<cr_02^{-k-2}$.
Hence (3) of $(\spadesuit_k)$ follows.
\end{proof}
This claim concludes the proof of Lemma \ref{partitioncst'}.
\end{proof}

By \cite[Lemma 7.29 and 7.45]{EL}, there are constants $\alpha>0$ and $d_0>0$ depending on $a$ and $G$ such that for every $r\in(0,1]$, 
\eqlabel{excEq}{a^{-k}B_{r}^{G^+}a^{k}\subset B_{d_0 e^{-k\alpha}r}^{G}}
for any $k\in\bZ$. It implies that $a^{k}B_{r}^{G}a^{-k}\subset B_{d_0 e^{k\alpha}r}^{G}$ for $k\geq 0$.

The following lemma is a quantitative strengthening of \cite[Lemma 7.31]{EL}. We remark that the constants below are independent of $\mu$ and $\cP$ while the ``dynamical $\del$-boundary'' $E_\del$ depends on $\mu$.

\begin{defi}\label{dynbdy}
We define the dynamical $\del$-boundary of the partition $\cP$
by $$E_\del=\bigcup_{k=0}^\infty a^k\partial_{d_0e^{-k\alpha}\del}\cP.$$
\end{defi}

\begin{lem}\label{Exceptional}
Given $0<r_0\leq 1$ and an $a$-invariant probability measure $\mu$ on $Y$, 
let $\set{K_j}_{j\ge 1}$ and $\set{\cP_j}_{j\ge 1}$ be the sets and the partitions we constructed in Lemma \ref{partitioncst'}. 
Set the countable partition $\cP=\bigcup_{j=1}^\infty \cP_j$ of $Y$. 
Then there exist $C_1,C_2>0$ such that the following holds:
Let $c>0$ and $d_0>0$ be the constants in Lemma \ref{partitioncst} and \eqref{excEq}. 
    For any $0<\del<\min((\frac{cr_0}{16d_0})^{2},1)$, the dynamical $\del$-boundary $E_\del\subset Y$ satisfies 
    $$\mu(E_{\del})<\mu(Y\setminus Y(C_1\del^{\frac{1}{2}}))+C_2\del^{\frac{1}{4}}$$ and 
$B^{G^+}_{\del}\cdot y\subset [y]_{\cP_0^{\infty}}$ for any $y\in Y\setminus E_\del$. 
%
%\begin{enumerate}
%    \item\label{partupper} For any $P\in\cP$ there exists $j\ge 1$ such that $P\subseteq Y(2^{-j})\setminus Y(2^{-j+2})$.
%     Moreover, there exists $z\in P$ such that $$B^G_{\frac{1}{5}r_02^{-j-1}}\cdot z\subseteq P
%     \subseteq B^G_{r_02^{-j-1}}\cdot z.$$
%    \item\label{partlower} Let $c>0$ and $d_0>0$ be the constants in Lemma \ref{partitioncst} and \eqref{excEq}. 
%    For any $0<\del<\min((\frac{cr_0}{16d_0})^{2},1)$, there exists $E_\del\subset Y$ such that 
%    $$\mu(E_{\del})<\mu(Y\setminus Y(C_1\del^{\frac{1}{2}}))+C_2\del^{\frac{1}{2}}$$ and 
%$B^{G^+}_{\del}\cdot y\subset [y]_{\cP_0^{\infty}}$ for any $y\in Y\setminus E_\del$. 
%\end{enumerate}
Here, the constants $C_1,C_2 $ depend only on $r_0$, $a$, and $G$. 
\end{lem}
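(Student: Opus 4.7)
The lemma has two assertions. The set-theoretic inclusion $B^{G^+}_\delta\cdot y \subset [y]_{\cP_0^\infty}$ for $y \notin E_\delta$ is essentially formal, so I would handle it first. For any $u \in B^{G^+}_\delta$, the inclusion \eqref{excEq} gives $a^{-k}ua^k \in B^G_{d_0 e^{-k\alpha}\delta} = B^G_{\delta_k}$ for every $k \geq 0$. Writing $a^{-k}(uy) = (a^{-k}ua^k)\cdot a^{-k}y$, this forces $d_Y(a^{-k}(uy), a^{-k}y) < \delta_k$. The hypothesis $y \notin E_\delta$ unfolds to $a^{-k}y \notin \partial_{\delta_k}\cP$ for every $k \geq 0$, which by the very definition of the thickened boundary means that the open $\delta_k$-ball around $a^{-k}y$ is contained in the $\cP$-atom of $a^{-k}y$. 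Intersecting the resulting equalities of atoms over $k \geq 0$ yields $uy \in [y]_{\cP_0^\infty}$.

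For the measure bound, I start from the union bound combined with $a$-invariance of $\mu$:
\[
\mu(E_\delta) \leq \sum_{k \geq 0}\mu(a^k \partial_{\delta_k}\cP) = \sum_{k \geq 0}\mu(\partial_{\delta_k}\cP) = \sum_{k\geq 0}\sum_{j\geq 1}\mu(\partial_{\delta_k}\cP_j).
\]
The plan is to split each inner sum at a $\delta$-dependent threshold $j_{\mathrm{th}}\approx 3 - \log_2(C_1\delta^{1/2})$ and use different bounds in the two regimes. For $j \leq j_{\mathrm{th}}$, the hypothesis $\delta < (cr_0/(16 d_0))^2$ ensures that the admissibility condition $\delta_k < cr_0 2^{-j-2}$ of Lemma~\ref{partitioncst'}(3) holds uniformly in $k \geq 0$. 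Applying that bound, the factor $(r_0^{-1}2^{j+4}\delta_k)^{1/2}$ is maximized at $j = j_{\mathrm{th}}$, where a direct computation gives $(128/(r_0 C_1))^{1/2}\,\delta_k^{1/2}/\delta^{1/4}$. Summing over $j \leq j_{\mathrm{th}}$ using the bounded overlap $\sum_j \mu(Y(2^{-j-1})\setminus Y(2^{-j+3})) \leq 4$, and then geometrically over $k\geq 0$ using $\sum_k \delta_k^{1/2} = O(\delta^{1/2})$ (from $\delta_k = d_0 e^{-k\alpha}\delta$), the small-$j$ contribution sums to $O(\delta^{1/4})$, producing the $C_2\delta^{1/4}$ error term.

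For $j > j_{\mathrm{th}}$, the partition pieces $K_j \subset Y(2^{-j})\setminus Y(2^{-j+2})$ together with the $1$-Lipschitz property of the injectivity radius give the geometric inclusion $\partial_{\delta_k}\cP_j \subset Y \setminus Y(2^{-j+2}+\delta_k) \subset Y \setminus Y(C_1\delta^{1/2}/2)$, so the boundary pieces sit inside a fixed thin set \emph{before} translation by $a^k$. The main obstacle is that this inclusion is not preserved after applying $a^k$: the two-sided conjugation bound $r_{a^k z} \leq d_0 e^{k\beta}\,r_z$ obtained from $\Ad$-diagonalizability of $a$ (for some $\beta>0$) allows the injectivity radius to grow, so $a^k \partial_{\delta_k}\cP_j$ need not remain in $Y\setminus Y(C_1\delta^{1/2})$ for large $k$. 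I would resolve this by choosing $C_1$ large relative to the geometric constants $(d_0, \alpha, \beta)$ of $(a, G)$, forcing the large-$j$ contribution of each $k$ to be absorbed into $Y\setminus Y(C_1\delta^{1/2})$; the restriction $\delta < \min((cr_0/(16d_0))^2, 1)$ is precisely what keeps $j_{\mathrm{th}}$ inside the admissibility range for Lemma~\ref{partitioncst'}(3) for every $k\geq 0$, so that the two regimes combine cleanly.
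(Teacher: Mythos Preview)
Your argument for the inclusion $B^{G^+}_\delta\cdot y\subset[y]_{\cP_0^\infty}$ is correct and matches the paper's. Your treatment of the small-$j$ contribution is also essentially right: once the admissibility condition of Lemma~\ref{partitioncst'}(3) holds, the factor $\delta_k^{1/2}=(d_0e^{-k\alpha}\delta)^{1/2}$ gives geometric decay in $k$, and the bounded overlap in $j$ yields the $C_2\delta^{1/4}$ term.

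The genuine gap is in the large-$j$ part. You apply the union bound and $a$-invariance \emph{first}, reducing to $\sum_{k\geq 0}\sum_{j>j_{\mathrm{th}}}\mu(\partial_{\delta_k}\cP_j)$ with a threshold $j_{\mathrm{th}}$ that does not depend on $k$. Your geometric inclusion $\partial_{\delta_k}\cP_j\subset Y\setminus Y(C_1\delta^{1/2}/2)$ carries no decay in $k$, so this double sum diverges whenever $\mu(Y\setminus Y(C_1\delta^{1/2}/2))>0$. Your diagnosis (``the inclusion is not preserved after applying $a^k$'') is off --- you already removed $a^k$ via invariance --- and your proposed fix of enlarging $C_1$ cannot help: $C_1$ must be independent of $k$ while the expansion factor $e^{k\alpha}$ (or $e^{k\beta}$) is unbounded.

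The paper resolves this by making the threshold $k$-dependent, roughly $i_k\approx\frac{\alpha}{\log 2}k+\frac{1}{2}\log_2(1/\delta)$, and by \emph{not} applying a union bound over $k$ on the large-index part. The point is that for $i\geq i_k$ one has $e^{k\alpha}2^{-i+2}\leq\delta^{1/2}$, so the expansion by $a^k$ is exactly compensated: using $a^k Y(r)^c\subset Y(d_0e^{k\alpha}r)^c$, one gets a \emph{set-theoretic} inclusion $\bigcup_{k\geq 0} a^k\bigl(\bigcup_{i\geq i_k}\partial_{\delta_k}\cP_i\bigr)\subset Y\setminus Y(C_1\delta^{1/2})$ with $C_1=d_0+d_0^2$. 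This gives $\mu(E'_\delta)\leq\mu(Y\setminus Y(C_1\delta^{1/2}))$ as a single term, with no sum over $k$. On the complementary piece $E''_\delta$ one then swaps the order of summation (summing over $i$ first, then $k\geq k_i$), and the very same $k$-dependent cutoff is what makes the admissibility condition of Lemma~\ref{partitioncst'}(3) hold for every pair $(i,k)$ that appears.
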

\begin{proof}
%Then $\cP$ is a countable partition of $Y$ and the condition (1) directly follows from Lemma \ref{partitioncst'}.
We split $E_\del$ into two subsets
$$E_\del'=\bigcup_{k=0}^\infty a^k\left(\bigcup_{i= 2+\lceil\frac{\alpha}{\log 2}k-\frac{\log\del}{2\log2}\rceil}^\infty\partial_{d_0e^{-k\alpha}\del}\cP_i\right),\qquad
E_\del''=\bigcup_{k=0}^\infty a^k\left(\bigcup_{i= 1}^{1+\lceil\frac{\alpha}{\log 2}k-\frac{\log\del}{2\log2}\rceil}\partial_{d_0e^{-k\alpha}\del}\cP_i\right).$$

We claim that $E_\del'\subset Y\setminus Y((d_0+d_0^2)\del^{\frac{1}{2}})$. 
To see this, let $y\in E_\del'$. Then there exist $k\geq 0$ and 
$P\in \cP_i$ for some $i \geq 2+\lceil\frac{\alpha}{\log 2}k-\frac{\log\del}{2\log2}\rceil$ 
such that $y\in a^k\partial_{d_0e^{-k\alpha}\del}P$. 
By Lemma \ref{partitioncst'}, $P \subset K_i \subset Y(2^{-i})\setminus Y(2^{-i+2}) \subset Y(2^{-i+2})^c$.
It follows from \eqref{thicklem} that 
\eqlabel{eqbeforea}{
\partial_{d_0e^{-k\alpha}\del}P \subset B_{d_0e^{-k\alpha}\del}^G P \subset B_{d_0e^{-k\alpha}\del}^G Y(2^{-i+2})^c
\subset Y(2^{-i+2}+d_0e^{-k\alpha}\del)^c.
}
Using \eqref{excEq}, for any $0<r<1,$ $a^k Y(r)^c \subset Y(d_0e^{k\alpha}r)^c.$
Since $e^{k\alpha}2^{-i+2}\leq \del^{\frac{1}{2}}$, combining with \eqref{eqbeforea}, 
\[
a^k\partial_{d_0e^{-k\alpha}\del}P \subset a^k Y(2^{-i+2}+d_0e^{-k\alpha}\del)^c \subset Y((d_0+d_0^2)\del^{\frac{1}{2}})^c.
\]
This proves the claim.
It follows that 
\eqlabel{Edel'est}{
\mu(E_\del') \leq \mu(Y\setminus Y(C_1 \del^{\frac{1}{2}}))
}
where $C_1=d_0 +d_0^{2}$ is a constant depending only on $a$ and $G$.

Next we estimate $\mu(E_\del'')$. 
It follows from the $a$-invariance of $\mu$ that
\eqlabel{Edel''est1}{
\mu(E_\del'') \leq\sum_{k=0}^{\infty}\sum_{i=1}^{1+\lceil\frac{\alpha}{\log 2}k-\frac{\log\del}{2\log2}\rceil}
\mu(\partial_{d_0e^{-k\alpha}\del}\cP_i)= \sum_{i=1}^{\infty}\sum_{k=k_i}^{\infty}\mu(\partial_{d_0e^{-k\alpha}\del}\cP_i),
}
where $k_i\in\bN$ denotes the smallest number of $k$ such that 
$1+\lceil\frac{\alpha}{\log 2}k-\frac{\log\del}{2\log2}\rceil \geq i$. 
Note that $k_i\geq \frac{\log 2}{\alpha}(i-2)+\frac{\log\del}{2\alpha}$.

On the other hand, by Lemma \ref{partitioncst'} we have
\eqlabel{Edel''est2}{\mu(\partial_{d_0e^{-k\alpha}\del}\cP_i)\leq (r_0^{-1}2^{i+4}d_0e^{-k\alpha}\del)^{\frac{1}{2}}
\mu(Y(2^{-i-1})\setminus Y(2^{-i+3}))}
for any $k\geq k_i$, since $d_0e^{-k\alpha}\del\leq d_02^{-i+2}\del^{\frac{1}{2}}<cr_02^{-i-2}$.
Hence, we obtain from \eqref{Edel''est1} and \eqref{Edel''est2}
\eqlabel{Edel''est3}{\begin{aligned}
\mu(E_\del'')&\leq \sum_{i=1}^{\infty}\sum_{k=k_i}^{\infty}\mu(\partial_{d_0e^{-k\alpha}\del}\cP_i)
\leq \sum_{i=1}^{\infty}\sum_{k=k_i}^{\infty}(r_0^{-1}2^{i+4}d_0e^{-k\alpha}\del)^{\frac{1}{2}}
\mu(Y(2^{-i-1})\setminus Y(2^{-i+3}))\\
&=\sum_{i=1}^{\infty}(r_0^{-1}2^{i+4}e^{-k_i\alpha}\del)^{\frac{1}{2}}(1-e^{-\al/2})^{-1}\mu(Y(2^{-i-1})\setminus Y(2^{-i+3}))\\
&\leq r_0^{-\frac{1}{2}}2^{3}\del^{\frac{1}{4}}(1-e^{-\al/2})^{-1}\sum_{i=1}^{\infty}\mu(Y(2^{-i-1})\setminus Y(2^{-i+3}))\leq C_2 \del^{\frac{1}{4}},
\end{aligned}}
where $C_2 = 2^{5}r_0^{-\frac{1}{2}}(1-e^{-\al/2})^{-1}$ is a constant depending only on $r_0$, $a$, and $G$.
%In the last inequality we used the fact that $Y(2^{-i-1})\setminus Y(2^{-i+3})$'s can be overlapped at most four times. 
Combining \eqref{Edel'est} and \eqref{Edel''est3}, we finally have
$$\mu(E_\del)<\mu(Y\setminus Y(C_1\del^{\frac{1}{2}}))+C_2\del^{\frac{1}{4}}$$
and the constants $C_1,C_2>0$ depend only on $r_0$, $a$, and $G$.

It remains to check that $B^{G^+}_\del\cdot y\subset[y]_{\cP_0^\infty}$ for any $y\in Y\setminus E_{\del}$. 
Let $h\in B^{G^+}_{\del}$ and suppose $[hy]_{\cP_0^{\infty}}\neq[y]_{\cP_0^{\infty}}$. 
Then there is some $k\geq 0$ such that $a^{-k}hy$ and $a^{-k}y$ belong to different elements of the partition $\cP$. 
Since $a^{-k}ha^{k}\in a^{-k}B_{\del}^{G^+}a^k \subset B_{d_0e^{-k\alpha}\del}^G$ by \eqref{excEq}, we have
\[d_Y (a^{-k}hy, a^{-k}y) \leq d_G (a^{-k}ha^{k},id) \leq d_0e^{-k\alpha}\del.\]
It follows that both $a^{-k}hy$ and $a^{-k}y$ belong to $\partial_{d_0e^{-k\alpha}\del}\cP$, 
hence $y\in E_\del$.
It concludes that $B^{G^+}_{\del}\cdot y\subset [y]_{\cP_0^{\infty}}$ for any $y\in Y\setminus E_\del$.
\end{proof}

The following proposition is a quantitative version of \cite[Proposition 7.37]{EL}. Given $a$-invariant measure $\mu$, the proposition provides a $\sigma$-algebra which is $a^{-1}$-descending and subordinate to $L$ in the following quantitative sense.

\begin{prop}\label{algebracst}
Let $0<r_0\leq 1$ be given, $\mu$ be an $a$-invariant probability measure on $Y$,
and $L<G^+$ be a closed subgroup normalized by $a$.
There exists a countably generated sub-$\sigma$-algebra $\cA^L$ of Borel $\sigma$-algebra of $Y$ satisfying 
\begin{enumerate}
\item\label{algebracstprop1} $a\cA^L \subset \cA^L$, that is, $\cA^L$ is $a^{-1}$-descending,
\item\label{algebracstprop2} $[y]_{\cA^L}\subset B_{r_02^{-k+1}}^{L}\cdot y$ for any $y\in Y(2^{-k})\setminus Y(2^{-k+2})$ with $k\geq 1$,
\item\label{algebracstprop3} if $0<\del<\min((\frac{cr_0}{16d_0})^{2},1)$, then $B_\del^{L}\cdot y\subset [y]_{\cA^L}$ 
for any $y\in Y(\del)\setminus E_\del$, where $c, d_0>0$ are the constants in Lemma \ref{partitioncst} and \eqref{excEq}, 
and $E_\del$ is the dynamical $\del$-boundary defined in Lemma \ref{Exceptional}.
\end{enumerate} 
In particular, the $\sigma$-algebra $\cA^L$ is $L$-subordinate modulo $\mu$.
\end{prop}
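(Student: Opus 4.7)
The plan is to apply Lemma \ref{partitioncst'} with the prescribed $r_0$ and the given measure $\mu$ to produce the countable partition $\cP = \bigcup_{j\ge 1}\cP_j$ of $Y$, then form $\tilde{\cA} := \cP_0^\infty = \bigvee_{k\ge 0}a^k\cP$ and refine it to an $L$-subordinate sub-$\sigma$-algebra $\cA^L$ whose atoms are $[y]_{\cA^L} = L\cdot y \cap [y]_{\tilde\cA}$, following the strategy of \cite[Proposition 7.37]{EL}. All the quantitative improvements over \cite{EL} come from the estimates already built into Lemma \ref{partitioncst'} and Lemma \ref{Exceptional}.

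The $\sigma$-algebra $\tilde\cA$ is countably generated since $\cP$ is countable, and $a\tilde\cA = \bigvee_{k\ge 1}a^k\cP \subset \tilde\cA$, so $\tilde\cA$ is $a^{-1}$-descending. Formally, I would obtain $\cA^L$ by choosing a countable family of Borel subsets of $Y$ that distinguishes $L$-orbits (which exists because the $L$-action on $Y$ is smooth in the Borel sense) and adjoining it to $\tilde\cA$; this gives a countably generated sub-$\sigma$-algebra of Borel sets whose atoms are exactly $L\cdot y \cap [y]_{\tilde\cA}$. Since $L$ is $a$-normalized, the map $a$ sends $L$-orbits to $L$-orbits, and combined with $a\tilde\cA \subset \tilde\cA$ this yields $a\cA^L \subset \cA^L$, proving (\ref{algebracstprop1}).

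For (\ref{algebracstprop2}), suppose $y \in Y(2^{-k}) \setminus Y(2^{-k+2})$. Then $y \in K_k$, and by Lemma \ref{partitioncst'}(2) there is a center $z$ with $P := [y]_{\cP_k} \subset B_{r_0 2^{-k-1}}^G\cdot z$; right-invariance of $d_G$ then gives $P \subset B_{r_0 2^{-k}}^G \cdot y$. Since the injectivity radius at $y$ is at least $2^{-k} \ge r_0 2^{-k}$, the orbit map $g\mapsto gy$ is injective on $B_{r_0 2^{-k}}^G$, so $L\cdot y \cap P \subset B_{r_0 2^{-k}}^L \cdot y \subset B_{r_0 2^{-k+1}}^L \cdot y$. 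Since $\tilde\cA$ refines $\cP$, we have $[y]_{\cA^L} = L\cdot y \cap [y]_{\tilde\cA} \subset L\cdot y \cap P$, giving the conclusion. For (\ref{algebracstprop3}), pick $0<\del<\min((cr_0/16d_0)^2,1)$ and $y \in Y(\del) \setminus E_\del$. Lemma \ref{Exceptional} gives $B_\del^{G^+}\cdot y \subset [y]_{\tilde\cA}$, and $y \in Y(\del)$ ensures the orbit map is injective on $B_\del^L$, whence
\[
B_\del^L \cdot y \subset L\cdot y \cap B_\del^{G^+}\cdot y \subset L\cdot y \cap [y]_{\tilde\cA} = [y]_{\cA^L}.
\]
The ``in particular'' statement follows by combining these: (\ref{algebracstprop2}) supplies the upper bound for every $y\in Y$ (since $\{K_k\}_{k\ge 1}$ covers $Y$), while (\ref{algebracstprop3}) supplies the lower bound for $\mu$-almost every $y$, because $\mu(Y\setminus Y(\del))\to 0$ and $\mu(E_\del)\to 0$ as $\del\to 0$ by Lemma \ref{Exceptional}.

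The main technical obstacle is not any of the geometric estimates above but the formal measure-theoretic construction of $\cA^L$ as a genuine countably generated sub-$\sigma$-algebra of Borel sets with the prescribed $L$-plaque atoms; this requires a standard but delicate Borel cross-section argument for the $L$-action, which in our Lie-group setting is exactly what is carried out in \cite[Proposition 7.37]{EL}, and we invoke it directly.
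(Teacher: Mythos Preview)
Your construction of $\cA^L$ has a genuine gap. You define the atoms of $\cA^L$ to be $L\cdot y\cap[y]_{\tilde\cA}$, i.e., the \emph{global} $L$-orbit intersected with the $\tilde\cA$-atom, and you justify this by claiming the $L$-action on $Y$ is smooth in the Borel sense. But for $L<G^+$ unipotent acting on $Y=G/\Gamma$ with $\Gamma$ a lattice, $L$-orbits are typically dense (think of the horocycle flow), so the action is \emph{not} smooth and no countable Borel family separates $L$-orbits. This is not what \cite[Proposition~7.37]{EL} does, and it is not what the paper does either.

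Even setting aside the existence issue, your verification of (\ref{algebracstprop2}) breaks. You assert ``the orbit map $g\mapsto gy$ is injective on $B_{r_02^{-k}}^G$, so $L\cdot y\cap P\subset B_{r_02^{-k}}^L\cdot y$''. Injectivity on the small ball only says that distinct elements of $B_{r_02^{-k}}^G$ give distinct points of $Y$; it does \emph{not} prevent a large $\ell\in L$ from satisfying $\ell y\in P$ via a lattice return (the unique $g\in B_{r_02^{-k}}^G$ with $gy=\ell y$ need not lie in $L$). The paper avoids this by first lifting each $P\in\cP$ to a small set $B_P\subset G$ and defining $\cP^L=\sigma\big(\{\pi_Y(B_P\cap S):P\in\cP,\ S\in\cB_{G/L}\}\big)$: the atoms of $\cP^L$ are then \emph{local} $L$-plaques $V_y\cdot y$ with $V_y\subset B_{r_02^{-j}}^L$, automatically bounded. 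Only afterward does one set $\cA^L=(\cP^L)_0^\infty$, and (\ref{algebracstprop2}) follows immediately from $[y]_{\cA^L}\subset[y]_{\cP^L}$. The order---plaques first, then refinement by forward iterates---is essential and cannot be reversed to ``$\bigvee a^k\cP$ first, then intersect with $L$-orbits''. (A smaller slip: your claim ``$y\in K_k$'' for $y\in Y(2^{-k})\setminus Y(2^{-k+2})$ is not correct either, since Lemma~\ref{partitioncst'} only gives $K_j\subset Y(2^{-j})\setminus Y(2^{-j+2})$; the paper observes that $y\in K_j$ for some $j$ with $|j-k|\le1$, which is why the bound in (\ref{algebracstprop2}) is $r_02^{-k+1}$ rather than $r_02^{-k}$.)
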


\begin{proof}
For a given $a$-invariant probability measure $\mu$ on $Y$, let $\cP$ be the countable partition of $Y$ constructed in Lemma \ref{Exceptional}. 
We will construct a countably generated $\sigma$-algebra $\cP^L$ by taking $L$-plaques in each $P\in \cP$ as atoms of $\cP^L$.
Then $\cA^L:=(\cP^L)_0^\infty$ will be the desired $\sigma$-algebra.

For each $P\in \cP$, by Lemma \ref{partitioncst'}, there exist $j\geq 1$ and $z\in P$ 
such that $P\in Y(2^{-j})\setminus Y(2^{-j+2})$ and $B_{\frac{1}{5}r_02^{-j-1}}^G \cdot z \subseteq P \subseteq 
B_{r_02^{-j-1}}^G \cdot z$. We can find $B_P \subset G$ with $\diam(B_P)\leq r_02^{-j}$ such that $P = \pi_Y (B_P)$,
where $\pi_Y : G \to Y$ is the natural quotient map.
Let $\cB_{G/L}$ be the Borel $\sigma$-algebra of the quotient $G/L$. Note that since $L$ is closed, $\cB_{G/L}$ is countably generated. 
Define the $\sigma$-algebra
$$\cP^L=\sigma\left(\left\{\pi_Y(B_P \cap S): P\in\cP,\ S\in \cB_{G/L}\right\}\right).$$
Then $\cP^L$ is a refinement of $\cP$ such that atoms of $\cP^L$ are open $L$-plaques, 
i.e. for any $y\in P \in \cP$, $[y]_{\cP^L}=[y]_\cP \cap B_{r_02^{-j}}^L\cdot y=V_y\cdot y$, 
where $V_y\subset B_{r_02^{-j}}^L$ is an open bounded set. 

It is clear that $\cP^L$ is countably generated, hence $\cA^L=(\cP^L)_0^\infty$ is also countably generated. 
By construction, we have $a\cA^L=(\cP^L)_1^\infty\subset \cA^L$, which proves the assertion \eqref{algebracstprop1}. 

For any $y\in Y(2^{-k})\setminus Y(2^{-k+2})$ with $k\geq 1$, take $P\in\cP$ such that $y\in P$.  
By Lemma \ref{partitioncst'}, there exist $j\geq 1$ and $z\in P$ such that 
$P\in Y(2^{-j})\setminus Y(2^{-j+2})$ and $P \subseteq B_{r_02^{-j-1}}^G \cdot z$.
Observe that $2^{-j+2} > 2^{-k}$ and $2^{-j} < 2^{-k+2}$, that is, $j-2 < k < j+2 $. Hence we have 
$$[y]_{\cA^L}\subset [y]_{\cP^L}=V_y\cdot y \subset B_{r_02^{-j}}^L\cdot y \subset B_{r_02^{-k+1}}^L \cdot y,$$
which proves the assertion \eqref{algebracstprop2}.

For a given $0<\del<\min((\frac{cr_0}{16d_0})^{2},1)$ and $y\in Y(\del)\setminus E_\del$, assume that $z=hy$ with $h\in B_{\del}^L$. By Lemma \ref{Exceptional}, 
$B_\del^{G^+}\cdot y\subset[y]_{\cP_0^\infty}$. Hence it follows that for any $k\geq 0$, 
$a^{-k}y$ and $a^{-k}z$ belong to the same atom $P_k \subset \cP$. Then we have
\[
a^{-k}y,\; a^{-k}z=a^{-k}ha^{k}\cdot(a^{-k}y)\; \in P_k.
\]
Note that for any $y\in Y(\del)$ the map $B_{\del}^{G^+}\ni g \mapsto g y$ is injective, hence
the map $a^{-k}B_{\del}^{G^+}a^k \ni g \mapsto ga^{-k}y$ is injective.
Since $a^{-k}ha^{k} \in a^{-k}B_{\del}^L a^k$, $a^{-k}y$ and $a^{-k}z$ belong to the same atom of $\cP^L$. 
This proves the assertion \eqref{algebracstprop3}.
\end{proof}

As in \cite[Lemma 3.4]{LSS}, we need to compare the dynamical entropy and the static entropy. 
In \cite{LSS}, the $\sigma$-algebra $\pi^{-1}(\cB_X)$ is used to deal with the entropy relative to $X$, where
$\cB_X$ is the Borel $\sigma$-algebra of $X$. In order to deal with the entropy relative to the general closed subgroup
$L<G^+$ normalized by $a$, we consider the following tail $\sigma$-algebra with respect to $\cA^L$ in 
Proposition \ref{algebracst}: Denote by
\eqlabel{eq:tailalg}{
\cA_\infty^L :=\bigcap_{k=1}^{\infty}a^{k}\cA^L= \bigcap_{k=1}^{\infty}\left(\cP^L\right)_{k}^{\infty}.
}
This tail $\sigma$-algebra may not be countably generated but it satisfies strict $a$-invariance, i.e. $a\cA_\infty^L=\cA_\infty^L=a^{-1}\cA_\infty^L$.
 
\begin{lem}\label{atomfree}
Let $0<r_0\leq 1$ be given, $\mu$ be an $a$-invariant probability measure on $Y$,  
$L<G^+$ be a closed subgroup normalized by $a$, and $\cA^L$ be as in Proposition \ref{algebracst}. 
Then the $\sigma$-algebra $(\cA^L)_{-\infty}^\infty$ is the Borel $\sigma$-algebra of $Y$ modulo $\mu$. 
\end{lem}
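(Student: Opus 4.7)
The plan is to show that for $\mu$-a.e.\ $y\in Y$, the atom $[y]_{(\cA^L)_{-\infty}^\infty}$ reduces to $\{y\}$. Since
$$(\cA^L)_{-\infty}^\infty=\bigvee_{m\in\bZ}a^m\cA^L=\bigvee_{m\in\bZ}a^m\cP^L$$
is countably generated on the standard Borel space $Y$, the $\mu$-a.e.\ triviality of atoms yields agreement with the full Borel $\sigma$-algebra modulo $\mu$ by the Rokhlin correspondence for countably generated $\sigma$-algebras on Polish spaces.

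Suppose $z\in[y]_{(\cP^L)_{-\infty}^\infty}$. Recall from the proof of Proposition \ref{algebracst} that atoms of $\cP^L$ are open $L$-plaques: for any $y'\in K_{j(y')}$ one has $[y']_{\cP^L}=V_{y'}\cdot y'$ with $V_{y'}\subset B_{r_0 2^{-j(y')}}^L$ open and bounded. For $\mu$-a.e.\ $y$ we have $y\in K_{j(y)}\subset Y(2^{-j(y)})$, so the injectivity radius satisfies $r_y\ge 2^{-j(y)}\ge r_0 2^{-j(y)}$. Applying the atom condition at $m=0$, the point $z$ lies in an $L$-plaque of the $\cP$-atom of $y$, an atom of diameter at most $r_0 2^{-j(y)}\le r_y$. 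Hence the local inverse of $g\mapsto gy$ on $B_{r_y}^G$ produces a unique $w\in V_y\subset L$ with $z=wy$. Since $L$ is normalized by $a$, $a^m w a^{-m}\in L$ for every $m\in\bZ$.

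The decisive step is Poincar\'e recurrence. Since $Y=\bigcup_{\eps>0}Y(\eps)$ and $\mu$ is a probability measure, there exists $\eps_0>0$ with $\mu(Y(\eps_0))>0$; by Poincar\'e recurrence, for $\mu$-a.e.\ $y$ there is a sequence $m_k\to+\infty$ with $a^{m_k}y\in Y(\eps_0)$. At each such time, $a^{m_k}y\in K_{j_k}$ for some $j_k$ bounded above by a constant depending only on $\eps_0$, hence the $L$-diameter of the $\cP^L$-atom at $a^{m_k}y$ is bounded by $C=C(\eps_0,r_0)$ and $a^{m_k}wa^{-m_k}\in B_C^L$ for all $k$. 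On the other hand, $w\in L\subset G^+$ together with the $\mathrm{Ad}$-diagonalizability of $a$ implies that $\mathrm{Ad}(a)$ acts on $\lie(L)$ with strictly positive weights; in particular, the expansion counterpart of \eqref{excEq} forces $d_G(a^m wa^{-m},e)\to\infty$ as $m\to+\infty$ whenever $w\ne e$. This contradicts the uniform bound along $(m_k)$, so $w=e$ and hence $z=y$.

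The main obstacle I anticipate is the coherent local lifting combined with the weight-space analysis: one must ensure that the local inverse producing $w$ is well defined on a conull set of $y$ and that the conjugation orbit $\{a^m wa^{-m}\}$ can be controlled globally (not just infinitesimally) from the weight decomposition of $\lie(G^+)$. Both issues are resolved by the uniform bound $r_y\ge 2^{-j(y)}$ built into the construction of $\cP^L$ and by $L$ being a closed subgroup of the nilpotent horospherical group $G^+$ on which $\Ad(a)$ is strictly expanding, so the escape-to-infinity of nontrivial conjugation orbits follows from the eigenvalue analysis alone.
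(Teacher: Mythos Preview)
Your overall strategy---Poincar\'e recurrence to a set of bounded injectivity radius, together with the uniform bound on the $L$-diameter of $\cP^L$-atoms there---is exactly what the paper uses. However, the inference ``$a^{m_k}wa^{-m_k}\in B_C^L$ for all $k$'' is not justified, and this is where your argument breaks. Knowing that $a^{m_k}z$ lies in the atom $[a^{m_k}y]_{\cP^L}=V_{a^{m_k}y}\cdot a^{m_k}y$ with $V_{a^{m_k}y}\subset B_C^L$ only tells you that $a^{m_k}z=h_k\cdot a^{m_k}y$ for \emph{some} $h_k\in B_C^L$; it does not say that $h_k=a^{m_k}wa^{-m_k}$. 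These two elements differ by something in $L\cap\mathrm{Stab}_G(a^{m_k}y)$, and in the applications this intersection is genuinely nontrivial: for $L=W$ in $Y=\ASL_d(\bR)/\ASL_d(\bZ)$ the $W$-orbits are tori, so $W\cap g\Gamma g^{-1}\simeq\bZ^m$. Thus as $a^{m_k}wa^{-m_k}$ blows up it may well land back in the atom after wrapping around the torus, and no contradiction arises.

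The paper sidesteps this by running the contraction rather than the expansion. From $[a^{m_k}y]_{\cA^L}\subset B_C^L\cdot a^{m_k}y$ one pulls back to obtain
\[
[y]_{(\cP^L)_{-m_k}^\infty}=a^{-m_k}[a^{m_k}y]_{\cA^L}\subset (a^{-m_k}B_C^L a^{m_k})\cdot y\subset B^L_{d_0e^{-\alpha m_k}C}\cdot y,
\]
using \eqref{excEq}. This is a statement about sets in $Y$ and requires no injectivity at the expanded scale. Letting $m_k\to\infty$ gives $[y]_{(\cP^L)_{-\infty}^\infty}=\{y\}$ directly. If you want to keep your element $w$, the same containment says $z=wy$ lies in $B^L_{d_0e^{-\alpha m_k}C}\cdot y$ for every $k$; for $k$ large this ball is inside $B^L_{r_y}$, and then the local injectivity at $y$ that you \emph{did} establish forces $w\in B^L_{d_0e^{-\alpha m_k}C}$ for all large $k$, hence $w=e$. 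Either way, the argument must go through the contracting direction.
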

\begin{proof}
Let $\cP^L$ be as in the proof of Proposition \ref{algebracst}.
Since $(\cA^L)_{-\infty}^\infty=(\cP^L)_{-\infty}^{\infty}$ and $Y=\bigcup_{k\geq 1}Y(2^{-k})\setminus Y(2^{-k+2})$,
it is enough to show that for each $k\geq 1$ and for $\mu$-a.e. $y\in Y(2^{-k})\setminus Y(2^{-k+2})$, 
we have $[y]_{(\cP^L)_{-\infty}^{\infty}}=\{y\}$.

For fixed $k\geq 1$, it follows from Poincar\'{e} recurrence (e.g. see \cite[Theorem 2.11]{EW}) that for $\mu$-a.e. 
$y\in Y(2^{-k})\setminus Y(2^{-k+2})$, there exists an increasing sequence $(k_i)_{i\geq 1} \subset \bN$ such that 
\[
a^{k_i}y\in Y(2^{-k})\setminus Y(2^{-k+2}) \quad \text{and}\quad k_i \to \infty\ \text{as}\ i\to \infty.
\]
By Proposition \ref{algebracst}\eqref{algebracstprop2}, it follows that for each $i\geq 1$
$$[a^{k_i}y]_{\cA^L}=[a^{k_i}y]_{(\cP^L)_0^{\infty}}\subset B_{r_02^{-k+1}}^L \cdot a^{k_i}y.$$
Since
$[a^{k_i}y]_{(\cP^L)_0^{\infty}}=a^{k_i}[y]_{a^{-k_i}(\cP^L)_0^{\infty}}=a^{k_i}[y]_{(\cP^L)_{-k_i}^{\infty}}$,
using \eqref{excEq}, we have
$$[y]_{(\cP^L)_{-k_i}^{\infty}} \subset a^{-k_i}B_{r_02^{-k+1}}^L \cdot a^{k_i}y =a^{-k_i}B_{r_02^{-k+1}}^L a^{k_i}\cdot y
\subset B^L_{d_0e^{-\alpha k_i}r_02^{-k+1}}\cdot y.$$ 
Taking $i\to\infty$, we conclude that $[y]_{(\cP^L)_{-\infty}^{\infty}}=\{y\}$.
\end{proof}

\begin{prop}\label{algexiA}
Let $0<r_0 \leq 1$ be given, $\mu$ be an $a$-invariant probability measure on $Y$,
$L<G^+$ be a closed subgroup normalized by $a$, $\cA^L$ be as in Proposition \ref{algebracst}, 
and $\cA^L_\infty$ be as in \eqref{eq:tailalg}. Then we have
\eqlabel{entropycompare}{h_\mu(a|\cA_\infty^L)=h_{\mu}(a^{-1}|\cA_\infty^L) = H_{\mu}(\cA^L|a\cA^L).}
Moreover, \eqref{entropycompare} holds for almost every ergodic component of $\mu$.
\end{prop}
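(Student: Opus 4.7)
The plan is to derive the second equality from a standard telescoping identity for the conditional entropy of an $a^{-1}$-descending $\sigma$-algebra, and the first equality from time-reversal symmetry of conditional dynamical entropy.

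The first step is to check that $\cA_\infty^L$ is strictly $a$-invariant. Since $a\cA^L\subset\cA^L$ by Proposition~\ref{algebracst}\eqref{algebracstprop1}, the sequence $\{a^k\cA^L\}_{k\geq 0}$ is decreasing, so
\[
a\cA_\infty^L = \bigcap_{k\geq 1}a^{k+1}\cA^L = \bigcap_{k\geq 2}a^k\cA^L = \bigcap_{k\geq 1}a^k\cA^L = \cA_\infty^L,
\]
and hence $a^{-1}\cA_\infty^L = \cA_\infty^L$ as well.

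Next, I would establish $h_\mu(a|\cA_\infty^L) = H_\mu(\cA^L|a\cA^L)$ by invoking the standard identity (see \cite[Section 7]{EL}): for any countably generated $\sigma$-algebra $\cA$ with $a\cA\subset\cA$ whose bi-infinite span equals the Borel $\sigma$-algebra modulo $\mu$, one has $h_\mu(a|\bigcap_{n\geq 1}a^n\cA) = H_\mu(\cA|a\cA)$. The two hypotheses are precisely Proposition~\ref{algebracst}\eqref{algebracstprop1} and Lemma~\ref{atomfree}. The underlying mechanism is the telescoping identity $H_\mu\bigl(a^{-(n-1)}\cA^L\,\big|\,a\cA^L\bigr) = n\cdot H_\mu(\cA^L|a\cA^L)$, which follows from the chain rule for conditional entropy together with the $a$-invariance of $\mu$, combined with martingale convergence to replace $\cA^L$ by an arbitrary finite partition $\xi$ and to pass from conditioning on $a\cA^L$ to conditioning on $\cA_\infty^L$ in the limit; this approximation is the main technical obstacle, since $H_\mu(\cA^L)$ may be infinite and a direct substitution of $\cA^L$ for $\xi$ in the definition of $h_\mu(a|\cA_\infty^L)$ is not available.

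For the remaining equality $h_\mu(a|\cA_\infty^L) = h_\mu(a^{-1}|\cA_\infty^L)$, I use the strict $a$-invariance of $\cA_\infty^L$: for any finite partition $\xi$ with $H_\mu(\xi) < \infty$, the $a$-invariance of $\mu$ together with $a^{n-1}\cA_\infty^L = \cA_\infty^L$ yield
\[
H_\mu\Bigl(\bigvee_{i=0}^{n-1}a^{-i}\xi\,\Big|\,\cA_\infty^L\Bigr) = H_\mu\Bigl(\bigvee_{j=0}^{n-1}a^{j}\xi\,\Big|\,\cA_\infty^L\Bigr),
\]
and dividing by $n$, letting $n\to\infty$, and taking the supremum over $\xi$ completes the argument. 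For the ergodic decomposition $\mu = \int\mu_\omega\,d\nu(\omega)$, all three ingredients used above -- the descending condition, the strict $a$-invariance of $\cA_\infty^L$, and the generation statement of Lemma~\ref{atomfree} (whose proof uses only Poincar\'e recurrence, which holds for $\mu_\omega$ for $\nu$-a.e. $\omega$) -- pass to the components, so the same argument yields the identity for each ergodic component.
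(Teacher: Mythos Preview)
Your proposal is correct and follows essentially the same route as the paper: both use the strict $a$-invariance of $\cA_\infty^L$, Lemma~\ref{atomfree}, and a Kolmogorov--Sina\u{\i}/future-formula mechanism to identify the dynamical entropy with $H_\mu(\cA^L|a\cA^L)$. The paper spells out the approximation step in more detail---taking an increasing sequence of finite partitions $\cP_k^L\nearrow\cP^L$, applying Kolmogorov--Sina\u{\i} (\cite[Proposition 2.20]{ELW}) and the future formula (\cite[Proposition 2.19(8)]{ELW}), and then squeezing via monotonicity and continuity---whereas you package this as a standard identity from \cite[Section 7]{EL} with a brief sketch of the telescoping/martingale mechanism; the two are the same argument at different levels of detail. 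You are more explicit than the paper about the time-reversal equality $h_\mu(a|\cA_\infty^L)=h_\mu(a^{-1}|\cA_\infty^L)$, which the paper simply asserts.
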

\begin{proof}
Let $\cP^L$ be as in the proof of Proposition \ref{algebracst}.
Since $\cP^L$ is countably generated, we can take an increasing sequence of finite partitions $(\cP_k^L)_{k\geq 1}$ of $Y$
such that $\cP_k^L \nearrow \cP^L$.
By Lemma \ref{atomfree}, we have $\cB_Y=(\cP^L)_{-\infty}^{\infty}=\bigvee_{k=1}^{\infty}(\cP_k^L)_{-\infty}^{\infty}$ 
modulo $\mu$, where $\cB_Y$ is the Borel $\sigma$-algebra of $Y$.
It is clear that $(\cP_k^L)_{-\infty}^{\infty}\subseteq (\cP_{k+1}^L)_{-\infty}^{\infty}$ for all $k\in\bN$.
Hence it follow from Kolmogorov-Sina\u{\i} Theorem \cite[Proposition 2.20]{ELW} that
\[
h_{\mu}(a^{-1}|\cA_\infty^L)=\lim_{k\to\infty}h_{\mu}(a^{-1},\cP_k^L|\cA_\infty^L).
\]
Using the future formula \cite[Proposition 2.19 (8)]{ELW}, we have
\[
\lim_{k\to\infty}h_{\mu}(a^{-1},\cP_k^L|\cA_\infty^L)=\lim_{k\to\infty}H_{\mu}(\cP_k^L|(\cP_k^L)_1^\infty \vee \cA_\infty^L)
\]
It follows from monotonicity and continuity of entropy \cite[Proposition 2.10, 2.12, and 2.13]{ELW} that for any fixed $k\geq 1$
\[
\lim_{\ell\to\infty}H_{\mu}(\cP_k^L|(\cP_\ell^L)_1^\infty \vee \cA_\infty^L)\leq H_{\mu}(\cP_k^L|(\cP_k^L)_1^\infty \vee \cA_\infty^L)
\leq \lim_{\ell\to\infty}H_{\mu}(\cP_\ell^L|(\cP_k^L)_1^\infty \vee \cA_\infty^L),
\]
hence we have 
\[
H_{\mu}(\cP_k^L|(\cP^L)_1^\infty \vee \cA_\infty^L)\leq H_{\mu}(\cP_k^L|(\cP_k^L)_1^\infty \vee \cA_\infty^L)
\leq H_{\mu}(\cP^L|(\cP_k^L)_1^\infty \vee \cA_\infty^L).
\]
Taking $k\to\infty$, it follows that
\[
\lim_{k\to\infty}H_{\mu}(\cP_k^L|(\cP_k^L)_1^\infty \vee \cA_\infty^L)
=H_{\mu}(\cP^L|(\cP^L)_1^\infty \vee \cA_\infty^L)= H_{\mu}(\cA^L|a\cA^L),
\]
which concludes \eqref{entropycompare}.

Note that $\cB_Y=(\cP^L)_{-\infty}^{\infty}=\bigvee_{k=1}^{\infty}(\cP_k^L)_{-\infty}^{\infty}$ 
modulo almost every ergodic component of $\mu$. Thus following the same argument as above, we can conclude
\eqref{entropycompare} for almost every ergodic component of $\mu$. 
\end{proof}

The quantity $H_\mu(\cA^L|a\cA^L)$ is called \textit{empirical entropy} and is the average of the \textit{conditional information function}
$$I_\mu(\cA^L|a\cA^L)(x)=-\log \mu_x^{a\cA^L}([x]_\cA),$$
and indeed the \textit{entropy contribution} of $L$ (see \cite[7.8]{EL} for definition). 

\subsection{Effective variational principle}\label{sec2.4}
This subsection is to effectivize the variational principle. 
We first recall the following ineffective variational principle. Combining \cite[Proposition 7.34]{EL} and \cite[Theorem 7.9]{EL}, 
we have the following upper bound of an empirical entropy (or entropy contribution), and the entropy rigidity.
\begin{thm}[\cite{EL}]\label{thmEL}
Let $L<G^{+}$ be a closed subgroup normalized by $a$, and let $\mathfrak{l}$ denote the Lie algebra of $L$. Let $\mu$ be an $a$-invariant ergodic probability measure on $Y$. If $\cA$ is a countably generated sub-$\sigma$-algebra of the Borel $\sigma$-algebra which is $a^{-1}$-descending and $L$-subordinate, then $$H_\mu(\cA|a\cA)\leq \log|\det(Ad_a|_\mathfrak{l})|$$
and equality holds if and only if $\mu$ is $L$-invariant.
\end{thm}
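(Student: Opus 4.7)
The plan is to split the theorem into its two halves (the upper bound and the rigidity statement), both treated inside the framework of \emph{leafwise (conditional) measures} $\mu_x^L$ on $L$. These are locally finite Radon measures, unique up to normalization (say $\mu_x^L(B^L_1)=1$), defined for $\mu$-a.e.\ $x$ and encoding how $\mu$ looks along $L$-orbits.

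\textbf{Step 1: Translate the entropy into a leafwise ratio.} Since $\cA$ is $L$-subordinate, for $\mu$-a.e.\ $x$ there is a bounded open set $V_x\subset L$ with $[x]_\cA=V_x\cdot x$. Since $\cA$ is $a^{-1}$-descending we have $[x]_{a\cA}=a\cdot [a^{-1}x]_\cA=V_x'\cdot x$ where $V_x'=aV_{a^{-1}x}a^{-1}$. The defining property of leafwise measures, namely that restricted to an atom of an $L$-subordinate $\sigma$-algebra the conditional measure $\mu_x^{a\cA}$ is the push-forward of the restriction of $\mu_x^L$, gives
\[
I_\mu(\cA|a\cA)(x)=-\log\mu_x^{a\cA}([x]_\cA)=\log\frac{\mu_x^L(V_x')}{\mu_x^L(V_x)}.
\]

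\textbf{Step 2: Upper bound via $a$-equivariance and Jensen.} The leafwise measures are $a$-equivariant: $\mu_{ax}^L$ agrees, up to a measurable scalar $c(x)>0$, with the push-forward of $\mu_x^L$ under the conjugation automorphism $c_a\colon h\mapsto aha^{-1}$ of $L$ (which makes sense because $L$ is normalized by $a$). I would use this equivariance to rewrite $\mu_x^L(V_x')=\mu_x^L(aV_{a^{-1}x}a^{-1})$ in terms of $\mu_{a^{-1}x}^L(V_{a^{-1}x})$, picking up a Radon--Nikodym factor. Averaging against the $a$-invariant measure $\mu$ and applying Jensen's inequality to $\log$ produces, after the $a$-invariance cancels the "$V_x$ vs.\ $V_{a^{-1}x}$" terms, exactly the volume-distortion bound
\[
H_\mu(\cA|a\cA)=\int I_\mu(\cA|a\cA)\,d\mu\leq \log\bigl|\det(\operatorname{Ad}_a|_{\mathfrak{l}})\bigr|,
\]
because the relative expansion of $c_a$ on $L$, measured by Haar, is precisely this determinant.

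\textbf{Step 3: Rigidity.} Equality in the Jensen step forces the relevant Radon--Nikodym derivative to be $\mu$-a.e.\ constant, which translates into $\mu_x^L$ being (a scalar multiple of) left Haar measure $m_L$ on $L$ for $\mu$-a.e.\ $x$. The classical criterion for leafwise measures (e.g.\ \cite[Theorem 6.3]{EL}) then says that $\mu_x^L\propto m_L$ almost surely is equivalent to $L$-invariance of $\mu$. The converse is straightforward: if $\mu$ is $L$-invariant, then $\mu_x^L$ is Haar and a direct Fubini computation along $L$-plaques shows $\mu_x^{a\cA}([x]_\cA)=m_L(V_x)/m_L(V_x')=|\det(\operatorname{Ad}_a|_{\mathfrak{l}})|^{-1}$ for $\mu$-a.e.\ $x$, giving equality.

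\textbf{Main obstacle.} The delicate point is the rigorous setup of leafwise measures and the bookkeeping of their $a$-equivariance, so that the distortion factor appearing in Step 2 is \emph{exactly} $|\det(\operatorname{Ad}_a|_{\mathfrak{l}})|$ rather than some larger bound governed only by exponential volume growth of $L$. Handling the fact that atoms $V_x\cdot x$ are only bounded $L$-plaques (not all of $L\cdot x$), while $\mu_x^L$ is merely locally finite, requires care at the boundary of the shapes $V_x$ and their conjugates $aV_xa^{-1}$. The rigidity half further rests on the non-trivial equivalence "leafwise measures are Haar $\Longleftrightarrow$ $\mu$ is $L$-invariant", whose proof uses essentially that $a$ normalizes $L$ so that the equivariance involves an honest automorphism of $L$.
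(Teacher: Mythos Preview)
The paper does not give its own proof of this theorem; it is quoted from \cite{EL} (specifically, the paper says it follows by combining \cite[Proposition 7.34]{EL} and \cite[Theorem 7.9]{EL}). Your sketch is a faithful outline of how the argument runs in \cite{EL}: expressing the information function via leafwise measures $\mu_x^L$, exploiting $a$-equivariance $\mu_{ax}^L\propto (c_a)_*\mu_x^L$, and concluding via Jensen's inequality with equality forcing $\mu_x^L$ to be Haar, hence $\mu$ to be $L$-invariant.

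It is worth noting, however, that the paper's \emph{effective} version (Proposition~\ref{effEL}), which is the result actually used later, is proved by a slightly different but closely related bookkeeping. Rather than working directly with the infinite leafwise measures $\mu_x^L$ and their equivariance, the paper introduces the auxiliary probability measures $\tau_y^{a^j\cA}$ (normalized Haar on the shape of the $a^j\cA$-atom) and compares $p(y)=\mu_y^{a^j\cA}([y]_\cA)$ against $p^{Haar}(y)=\tau_y^{a^j\cA}([y]_\cA)$. One checks $-\int \log p^{Haar}\,d\mu=J$ directly from $m_L(a^jBa^{-j})=e^J m_L(B)$ and telescoping under the ergodic theorem, and then bounds $H_\mu(\cA|a^j\cA)-J$ by decomposing $[y]_{a^j\cA}$ into countably many $\cA$-atoms and applying convexity of the logarithm to the sum $\sum_i \mu_y^{a^j\cA}([x_i]_\cA)\log\bigl(\tau_y^{a^j\cA}([x_i]_\cA)/\mu_y^{a^j\cA}([x_i]_\cA)\bigr)$. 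This sidesteps the global leafwise-measure machinery you invoke and is better suited to producing the explicit error term $\int \log\tau_y^{a^j\cA}((Y\setminus K)\cup B\operatorname{Supp}\mu)\,d\mu(y)$, which is the whole point of the effective statement. Your approach recovers the ineffective bound and rigidity cleanly, but would need additional work to yield the quantitative residual the paper needs.
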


Let $L<G^+$ be a closed subgroup normalized by $a$, $m_L$ be the Haar measure on $L$, and $\mu$ be an $a$-invariant probability measure on $Y$. Let $\cA$ be a countably generated sub-$\sigma$-algebra of Borel $\sigma$-algbera which is $a^{-1}$-descending and $L$-subordinate modulo $\mu$. Note that for any $j\in\bZ_{\geq 0}$, the sub-$\sigma$-algebra $a^j \cA$ is also countably generated, $a^{-1}$-descending, and $L$-subordinate modulo $\mu$.

For $y\in Y$, denote by $V_y \subset L$ the shape of the $\cA$-atom at $y\in Y$ so that $V_y \cdot y=[y]_{\cA}$. It has positive $m_L$-measure for $\mu$-a.e. $y\in Y$ since $\cA$ is $L$-subordinate modulo $\mu$. 
Note that for any $j\in\bZ_{\geq 0}$, we have $[y]_{a^{j}\cA}=a^{j}V_{a^{-j}y}a^{-j}\cdot y$.

As in \cite[7.55]{EL} which is the proof of \cite[Theorem 7.9]{EL}, let us define $\tau_{y}^{a^j \cA}$ for $\mu$-a.e $y\in Y$ to be the normalized push forward of $m_L|_{a^j V_{a^{-j}y}a^{-j}}$ under the orbit map, i.e.,
\[
\tau_{y}^{a^j \cA}=\frac{1}{m_L (a^j V_{a^{-j}y}a^{-j})}m_L|_{a^j V_{a^{-j}y}a^{-j}}\cdot y,
\] 
which is a probability measure on $[y]_{a^j\cA}$. 

The following proposition is an effective version of Theorem \ref{thmEL}. 

\begin{prop}\label{effEL}
Let $L<G^+$ be a closed subgroup normalized by $a$ and $\mu$ be an $a$-invariant ergodic probability measure on $Y$. 
Fix $j\in\bN$ and denote by $J\ge 0$ the maximal entropy contribution of $L$ for $a^j$, that is,
\[ J= \log|\det(Ad_{a^j}|_\mathfrak{l})|.\]
Let $\cA$ be a countably generated sub-$\sigma$-algebra of Borel $\sigma$-algbera which is $a^{-1}$-descending and $L$-subordinate. 
Suppose there exist a measurable subset $K\subset Y$ and a symmetric measurable subset $B\subset L$ such that 
$[y]_{\cA}\subset B\cdot y$ for any $y\in K$. 
Then we have $$H_\mu(\cA|a^{j}\cA)\leq J+\int_Y \log\tau_y^{a^{j}\cA}\big((Y\setminus K) \cup B\Supp\mu\big)d\mu(y).$$
\end{prop}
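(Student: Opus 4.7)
The plan is to effectivize the proof of \cite[Proposition 7.34]{EL} (corresponding to Theorem \ref{thmEL}) by carefully tracking the slack in Jensen's inequality coming from the restricted support of $\mu$. First I would expand the conditional entropy atomwise: since $\cA$ is countably generated and $a\cA \subset \cA$, for $\mu$-a.e.\ $y$ the atom $[y]_{a^j\cA}$ decomposes into at most countably many $\cA$-atoms $\{[z_i]_\cA\}_i$ of positive conditional mass $p_i(y) := \mu_y^{a^j\cA}([z_i]_\cA)$, summing to $1$. Setting $q_i(y) := \tau_y^{a^j\cA}([z_i]_\cA)$, the Gibbs form of Jensen's inequality gives
$$-\sum_i p_i \log p_i \;\leq\; -\sum_i p_i \log q_i \;+\; \log \sum_i q_i,$$
and integrating against $\mu$ puts $H_\mu(\cA|a^j\cA)$ on the left.

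Next I would show that the Jensen error is controlled by $\tau_y^{a^j\cA}((Y\setminus K)\cup B\Supp\mu)$. Every representative $z_i$ lies in $\Supp(\mu_y^{a^j\cA}) \subset \Supp \mu$; I claim each $[z_i]_\cA \subset (Y\setminus K) \cup B\Supp\mu$. Indeed, for any $w \in [z_i]_\cA \cap K$ the hypothesis gives $[w]_\cA \subset B\cdot w$, and since $[w]_\cA = [z_i]_\cA$ this forces $z_i = b\cdot w$ for some $b\in B$; symmetry of $B$ then yields $w \in B\cdot z_i \subset B\Supp\mu$. Since the $[z_i]_\cA$'s are disjoint, summing gives $\sum_i q_i \leq \tau_y^{a^j\cA}((Y\setminus K)\cup B\Supp\mu)$.

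For the main term, the subgroup $L < G^+$ is nilpotent (hence unimodular), so $m_L$ is bi-invariant. Partitioning $a^jV_{a^{-j}y}a^{-j}$ by the shapes of the sub-atoms (translated suitably) yields $\sum_i m_L(V_{z_i}) = m_L(a^jV_{a^{-j}y}a^{-j}) = e^J m_L(V_{a^{-j}y})$ and consequently $\tau_y^{a^j\cA}([z_i]_\cA) = m_L(V_{z_i})/(e^J m_L(V_{a^{-j}y}))$. Therefore
$$-\sum_i p_i \log q_i \;=\; J + \log m_L(V_{a^{-j}y}) - \sum_i p_i \log m_L(V_{z_i}).$$
The third term is the $\mu_y^{a^j\cA}$-conditional expectation of $z\mapsto \log m_L(V_z)$, so integrating against $\mu$ via the disintegration $\mu = \int \mu_y^{a^j\cA}\,d\mu(y)$ gives $\int_Y \log m_L(V_y)\,d\mu(y)$, which cancels the middle term by the $a^j$-invariance of $\mu$. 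Combined with the previous step, the claimed bound follows.

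The main technical obstacle will be the integrability of $\log m_L(V_y)$, which may fail when atoms shrink near the cusp (as in the construction of Lemma \ref{partitioncst'}). I would address this by truncating to $Y(r)$, carrying out the cancellation there, and letting $r \to 0$: the containment $V_y \subset a^j V_{a^{-j}y}a^{-j}$ gives the one-sided bound $\log m_L(V_y) - \log m_L(V_{a^{-j}y}) \leq J$, which together with $a^j$-invariance justifies subtracting the two (possibly infinite) integrals as a genuine cancellation rather than an ill-defined difference. Alternatively, one can pass to ergodic components and invoke the Birkhoff ergodic theorem applied to the coboundary $\log m_L(V_y) - \log m_L(V_{a^{-j}y})$, as in \cite[7.55]{EL}.
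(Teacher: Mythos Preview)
Your proposal is correct and follows essentially the same route as the paper's proof: both decompose $[y]_{a^j\cA}$ into countably many $\cA$-atoms, compare $\mu_y^{a^j\cA}$ with $\tau_y^{a^j\cA}$ atomwise via Jensen/Gibbs, and use the hypothesis on $K$ and $B$ together with symmetry of $B$ to confine the contributing atoms to $(Y\setminus K)\cup B\Supp\mu$. The only cosmetic difference is that the paper packages the main term as $-\int\log p^{Haar}(y)\,d\mu(y)=J$ directly (invoking the ergodic theorem for the telescoping coboundary, exactly as in \cite[7.55]{EL}), whereas you expand it as $J+\log m_L(V_{a^{-j}y})-\sum_i p_i\log m_L(V_{z_i})$ and cancel after integration; your more explicit discussion of the integrability of $\log m_L(V_y)$ is a welcome clarification of a point the paper leaves implicit.
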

\begin{proof}
%For $y\in Y$, denote by $V_y\subset L$ the shape of $\cA$-atom so that $V_y\cdot y=[y]_{\cA}$ a.e. 
By for instance \cite[Theorem 5.9]{EL}, for $\mu$-a.e. $y\in Y$, $\mu_y^{a^{j}\cA}$ is a probability measure on $[y]_{a^{j}\cA}=a^{j}V_{a^{-j}y}a^{-j}\cdot y$, and $H_\mu(\cA|a^{j}\cA)$ can be written as
\eq{H_\mu(\cA|a^{j}\cA)=-\int_Y\log \mu_y^{a^{j}\cA}([y]_{\cA})d\mu(y).}
%Let $m_L$ be the Haar measure of $L$. For $\mu$-a.e. $y\in Y$, $\tau_y^{a^{j}\cA}$ is the normalized push-forward of $m_L|_{a^{j}V_{a^{-j}y}a^{-j}}$, that is, $$\tau_y^{a^{j}\cA}=\frac{1}{m_L(a^{j}V_{a^{-j}y}a^{-j})}m_L|_{a^{j}V_{a^{-j}y}a^{-j}}\cdot y.$$ 
Note that $m_L(a^jBa^{-j})=e^{J}m_L(B)$ for any measurable $B\subset L$. Let $$p(y):= \mu_y^{a^{j}\cA}([y]_{\cA}) \qquad\text{and}\qquad p^{Haar}(y):= \tau_y^{a^{j}\cA}([y]_{\cA}).$$ Then we have
$$p^{Haar}(y)=\frac{m_L(V_y)}{m_L(a^jV_{a^{-j}y} a^{-j})}=\frac{m_L(V_y)}{m_L(V_{a^{-j}y})}e^{-J},$$
hence, applying the ergodic theorem, we have $-\int_Y\log p^{Haar}(y)d\mu(y)=J$. 

Now we estimate an upper bound of $H_\mu(\cA|a^{j}\cA)-J$ following the computation in \cite[7.55]{EL}.
% Since
%\eqlabel{entdif}{
%\begin{aligned}
%J-H_\mu(\cA|a^{j}\cA)&=-\int_Y (\log p^{Haar}(y)-\log p(y))\mu(y),
%\end{aligned}
%}
%we will estimate the value of $\log p^{Haar}(y)-\log p(y)$ for each $y\in Y$. 
Following \cite[7.55]{EL}, we can partition $[y]_{a^{j}\cA}$ into a countable union of $\cA$-atoms as follows:
$$[y]_{a^{j}\cA}=\bigcup_{i=1}^{\infty}[x_i]_\cA\cup N_y,$$
where $N_y$ is a null set with respect to $\mu_y^{a^{j}\cA}$. 
Note that $\mu_y^{a^{j}\cA}$ is supported on $\Supp\mu$ for $\mu$-a.e $y$.
Since $B\subset L$ is symmetric, if $x_i\in K\setminus B\Supp\mu$, then 
$[x_i]_{\cA}\subset B\cdot x_i\subset K\setminus\Supp\mu$, hence we have $\mu^{a^{j}\cA}_y([x_i]_\cA)=0$. 
If $x_i \in (Y\setminus K) \cup B\Supp\mu$ and $[x_i]_\cA \nsubset (Y\setminus K)\cup B\Supp\mu$, then
there exists $x_i' \in [x_i]_\cA$ such that $x_i' \in K\setminus B\Supp\mu$, hence $\mu^{a^{j}\cA}_y([x_i]_\cA)=\mu^{a^{j}\cA}_y([x_i']_\cA)=0$.
Thus we denote by $Z$ the set of $x_i$'s in $(Y\setminus K) \cup B\Supp\mu$ such that $[x_i]_\cA \subset (Y\setminus K)\cup B\Supp\mu$.
It follows that
%Then for $\mu$-a.e. $y$,
%\eqlabel{logest}{\begin{aligned}
%\int_Y\log p^{Haar}(y)-\log p(y)d\mu(y)&=\int_Y(\log p^{Haar}(y)-\log p(y))d\mu^{a^{j}\cA}_y (y)\\
%&=\displaystyle\sum_{i=1}^{\infty}\log\left(\frac{\tau^{a^{j}\cA}_y([x_i]_\cA)}{\mu^{a^{j}\cA}_y([x_i]_\cA)}\right)\mu^{a^{j}\cA}_y([x_i]_\cA)\\
%&=\displaystyle\sum_{x_i\in Z}\log\left(\frac{\tau^{a^{j}\cA}_y([x_i]_\cA)}{\mu^{a^{j}\cA}_y([x_i]_\cA)}\right)\mu^{a^{j}\cA}_y([x_i]_\cA).
%\end{aligned}}
%By the convexity of $\log t$ and taking $t_i=\frac{\tau^{a^{j}\cA}_y([x_i]_\cA)}{\mu^{a^{j}\cA}_y([x_i]_\cA)}$ in the last line, we have
%\eqlabel{convex}{
%\begin{aligned}
%\displaystyle\sum_{x_i\in Z}\log(t_i)\mu_y^{a^{j}\cA}([x_i]_{\cA})
%&\leq\log\left(\displaystyle\sum_{x_i\in Z}\tau_y^{a^{j}\cA}([x_i]_{\cA})\right)\\
%&\leq\log \tau_y^{a^{j}\cA}(Z).
%\end{aligned}
%}
%Combining \eqref{entdif}, \eqref{logest}, and \eqref{convex}, we obtain the desired inequality.
\[
\begin{split}
H_{\mu}(\cA|a^j \cA)-J &= -\int_Y \left(\log p(z) - \log p^{Haar}(z) \right)d\mu(z)\\
&= \int_Y \int_Y \left(\log p^{Haar}(z) - \log p(z) \right)d\mu_y^{a^j \cA}(z)d\mu(y)\\
&= \int_Y \sum_{x_i \in Z} \int_{z\in [x_i]_\cA} \left(\log p^{Haar}(z) - \log p(z) \right)d\mu_y^{a^j \cA}(z)d\mu(y)\\
&= \int_Y \sum_{x_i \in Z} \log\left(\frac{\tau^{a^{j}\cA}_y([x_i]_\cA)}{\mu^{a^{j}\cA}_y([x_i]_\cA)}\right)\mu^{a^{j}\cA}_y([x_i]_\cA)d\mu(y)\\
&\leq \int_Y \log\left(\sum_{x_i\in Z}\tau_y^{a^{j}\cA}([x_i]_{\cA})\right) d\mu(y)\\
&\leq \int_Y \log \tau_y^{a^{j}\cA}((Y\setminus K) \cup B \Supp\mu) d\mu(y).
\end{split}
\]
The second last inequality follows from the convexity of the logarithm. 
This proves the proposition.
\end{proof}

In particular, if $\cA$ is of the form $a^{k}\cA^L$ for $k\in\bZ$, then Proposition \ref{effEL} still holds without assuming the ergodicity of $\mu$.

\begin{cor}\label{effELcor}
Let $0<r_0 \leq 1$ be given, $\mu$ be an $a$-invariant probability measure on $Y$,
$L<G^+$ be a closed subgroup normalized by $a$, and $\cA^L$ be as in Proposition \ref{algebracst}. 
Then Proposition \ref{effEL} holds for $\cA$ of the form $a^{k}\cA^L$ for $k\in\bZ$.
\end{cor}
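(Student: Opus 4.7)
The plan is to apply the ergodic decomposition $\mu=\int\mu_\xi\,d\nu(\xi)$ and reduce to the ergodic case of Proposition \ref{effEL}.

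First I verify that, for $\nu$-a.e.\ $\xi$, the $\sigma$-algebra $\cA=a^{k}\cA^{L}$ continues to satisfy the hypotheses of Proposition \ref{effEL} relative to the ergodic component $\mu_\xi$. The $a^{-1}$-descending property is structural and independent of the measure, by Proposition \ref{algebracst}\eqref{algebracstprop1}. The upper bound on atoms in Proposition \ref{algebracst}\eqref{algebracstprop2} is a pointwise statement. For the lower containment $B_\delta^L\cdot y\subset[y]_{\cA^L}$, Proposition \ref{algebracst}\eqref{algebracstprop3} together with the estimate $\mu(E_\delta)\to 0$ from Lemma \ref{Exceptional} and Fatou's lemma applied to the ergodic decomposition yields $\mu_\xi(E_\delta)\to 0$ as $\delta\to 0$ for $\nu$-a.e.\ $\xi$, so $\cA^L$ (and hence $\cA$) is $L$-subordinate modulo $\mu_\xi$.

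Next, applying the ergodic case of Proposition \ref{effEL} to each $\mu_\xi$ with the same $K$ and $B$, and using $\Supp\mu_\xi\subseteq\Supp\mu$, I obtain
\[
\begin{aligned}
H_{\mu_\xi}(\cA|a^{j}\cA)
&\leq J+\int_{Y}\log\tau_{y}^{a^{j}\cA}\bigl((Y\setminus K)\cup B\Supp\mu_\xi\bigr)\,d\mu_\xi(y)\\
&\leq J+\int_{Y}\log\tau_{y}^{a^{j}\cA}\bigl((Y\setminus K)\cup B\Supp\mu\bigr)\,d\mu_\xi(y),
\end{aligned}
\]
where the second inequality uses monotonicity of $\log$. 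Integrating over $\nu$ gives
\[
\int H_{\mu_\xi}(\cA|a^{j}\cA)\,d\nu(\xi)\leq J+\int_{Y}\log\tau_{y}^{a^{j}\cA}\bigl((Y\setminus K)\cup B\Supp\mu\bigr)\,d\mu(y).
\]

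To conclude, I must identify the left-hand side with $H_\mu(\cA|a^{j}\cA)$. The chain rule combined with $a$-invariance gives both $H_\mu(\cA|a^{j}\cA)=jH_\mu(\cA^{L}|a\cA^{L})$ and $H_{\mu_\xi}(\cA|a^{j}\cA)=jH_{\mu_\xi}(\cA^{L}|a\cA^{L})$, since $a^{k}\cA^{L}$ is $a^{-1}$-descending. By Proposition \ref{algexiA} applied both to $\mu$ and to almost every ergodic component, these empirical entropies coincide with the conditional dynamical entropy $h(a^{-1}|\cA_\infty^{L})$, and the latter is affine under ergodic decomposition because $\cA_\infty^{L}$ is strictly $a$-invariant. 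This yields $H_\mu(\cA^{L}|a\cA^{L})=\int H_{\mu_\xi}(\cA^{L}|a\cA^{L})\,d\nu(\xi)$ and hence the desired equality. The main subtle point is precisely this affinity of the empirical entropy: it fails for a generic $a^{-1}$-descending $L$-subordinate $\sigma$-algebra, but holds for $\cA=a^{k}\cA^{L}$ thanks to its identification with dynamical entropy in Proposition \ref{algexiA}.
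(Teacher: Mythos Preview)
Your proof is correct and follows essentially the same strategy as the paper: ergodic decomposition, application of Proposition \ref{effEL} to each ergodic component (using $\Supp\mu_\xi\subseteq\Supp\mu$), and identification of $\int H_{\mu_\xi}(\cA|a^{j}\cA)\,d\nu(\xi)$ with $H_\mu(\cA|a^{j}\cA)$ via Proposition \ref{algexiA} and affinity of the conditional dynamical entropy. Your explicit verification that $\cA^{L}$ remains $L$-subordinate modulo $\mu_\xi$ (using that $\mu(E_\delta)\to 0$ forces $\mu_\xi(E_\delta)\to 0$ for a.e.\ $\xi$ by monotonicity of $E_\delta$ in $\delta$) fills in a step that the paper leaves implicit.
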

\begin{proof}
Writing the ergodic decomposition $\mu=\int \mu_{z}^{\cE}d\mu(z)$, we have
$$h_\mu(a^j|\cA^L_\infty)=\int h_{\mu_z^\cE}(a^j|\cA^L_\infty)d\mu(z),$$
where $\cA^L_\infty$ is the $\sigma$-algebra as in \eqref{eq:tailalg}.
By Proposition \ref{algexiA}, we also have 
$$H_\mu(\cA^L|a^j\cA^L)=\int H_{\mu_z^\cE}(\cA^L|a^j\cA^L)d\mu(z).$$
It follows from the $a$-invariance of $\mu$ and $\mu_z^\cE$ that 
$$H_\mu(\cA|a^j\cA)=\int H_{\mu_z^\cE}(\cA|a^j\cA)d\mu(z).$$
Applying Proposition \ref{effEL} for each $\mu_{z}^{\cE}$ we obtain
\eq{\begin{aligned}
H_\mu(\cA|a^j\cA)  = \int H_{\mu_z^\cE}(\cA|a^j\cA)d\mu(z)
&\leq J+\int_Y\int_Y \log\tau_y^{a^{j}\cA}(B^2\Supp\mu_z^{\cE})d\mu_z^\cE(y)d\mu(z)\\
&\leq J+\int_Y \log\tau_y^{a^{j}\cA}(B^2\Supp\mu)d\mu(y).
\end{aligned}}
\end{proof}

\section{Preliminaries for the upper bound}\label{sec3}
From now on, we fix the following notations:
$$d=m+n,\; G=\ASL_d(\bR),\; \Ga=\ASL_d(\bZ),\; \text{and}\; Y=G/\Ga.$$
%$\|g-id\| \leq d_G(g,id)$ for $g$ in the sufficiently small ball $B_r^G(id)$,
%where $\|\cdot\|$ is the supremum norm on $M_{d+1,d+1}(\bR)$.
We use all notations in Subsection \ref{sec2.2} with this setting. 
In particular, we choose a right invariant metric $d_G$ on $G$ so that $r_{max} \leq 1$.
Denote by $d_\infty$ the metric on $G$ induced from the max norm on $M_{d+1,d+1}(\bR)$.
Since $d_G$ and $d_\infty$ are locally bi-Lipschitz, there are constants $0<r_0<1$ and $C_0 \geq 1$
such that for any $x,y\in B_{r_0}^G$
\eqlabel{eqbilip}{
\frac{1}{C_0} d_\infty (x,y) \leq d_{G}(x,y)\leq C_0 d_\infty (x,y).
} 
Note that $r_0$ and $C_0$ depend only on $G$. In the rest of the article, all the statements from Lemma \ref{partitioncst'} 
to Proposition \ref{algexiA} will be applied to this $r_0$.

Recall the notations $a_t$, $a=a_1$, $U$, and $W$ in the introduction.
Then the subgroups $U$ and $W$ are closed subgroups in $G^+$ normalized by $a$,
where $G^+$ is the unstable horospherical subgroup associated to $a$. 
Denote by $\mathfrak{u}$ and $\mathfrak{w}$ the Lie algebras of $U$ and $W$, respectively.
We now consider the following quasinorms on $\mathfrak{u}=\bR^{mn}=M_{m,n}(\bR)$ and $\mathfrak{w}= \bR^{m}$: 
For $A\in M_{m,n}(\bR)$ and $b\in\bR^m$, define
$$\|A\|_{\mb{r}\otimes\mb{s}}=\max_{\substack{1\leq i\leq m\\1\leq j\leq n}}
|A_{ij}|^{\frac{1}{r_i +s_j}}\quad \text{and}\quad \|b\|_{\mb{r}}=\max_{1\leq i\leq m}
|b_i|^{\frac{1}{r_i}}.$$ 
We call these quasinorms $\mb{r}\otimes\mb{s}$-quasinorm and $\mb{r}$-quasinorm, respectively.

We remark that for $A, A' \in M_{m,n}(\bR)$ and $b, b' \in\bR^m$,
using the convexity of functions $s\mapsto s^{\frac{1}{r_i + s_j}}$ and $s\mapsto s^{\frac{1}{r_i}}$, 
\eqlabel{quasitriang}{
\begin{split}
\|A+A'\|_{\mb{r}\otimes\mb{s}} &\leq 2^{\frac{1-(r_m+s_n)}{r_m +s_n}} 
(\|A\|_{\mb{r}\otimes\mb{s}}+\|A'\|_{\mb{r}\otimes\mb{s}});\\
\|b+b'\|_{\mb{r}} &\leq 2^{\frac{1-r_m}{r_m}} (\|b\|_{\mb{r}}+\|b'\|_{\mb{r}}).
\end{split}
}
It also satisfies that
$$\|\Ad_{a_t} A\|_{\mb{r}\otimes\mb{s}}=e^t\|A\|_{\mb{r}\otimes\mb{s}}\quad\text{and}
\quad \|\Ad_{a_t} b\|_{\mb{r}}=e^t\|b\|_{\mb{r}},$$
for any $A\in M_{m,n}(\bR)$ and $b\in \bR^m$.

By a \emph{quasi-metric} on a space $Z$, we mean a map $d_Z: Z \times Z \to \mathbb R_{\geq0}$ which is 
a symmetric, positive definite map such that, for some constant $C$, for all $x,y\in Z$, 
$d_Z (x,y)\leq C(d_Z (x,z)+d_Z(z,y))$. 
The $\mb{r}\otimes\mb{s}$-quasinorm (resp. $\mb{r}$-quasinorm) induces 
the quasi-metric $d_{\mb{r}\otimes\mb{s}}$ (resp. $d_{\mb{r}}$) on $\mathfrak{u}$ (resp. $\mathfrak{w}$). 
Note that the logarithm map is defined on $U$ and $W$, hence the quasi-metric $d_{\mb{r}\otimes\mb{s}}$ 
(resp. $d_{\mb{r}}$) induce the quasi-metric on $U$ (resp. $W$) via the logarithm map.
For simplicity, we keep the notations 
$d_{\mb{r}\otimes\mb{s}}$ and $d_{\mb{r}}$ for the quasi-metrics on $U$ and $W$, respectively.
We similary denote by $B^{U,\mb{r}\otimes\mb{s}}_r$ (resp. $B^{W,\mb{r}}_r$)
the open $r$-ball around the identity in $U$ (resp. $W$) with respect to the quasi-metric $d_{\mb{r}\otimes\mb{s}}$ 
(resp. $d_{\mb{r}}$).
For any $y\in Y$, we also denote by $d_{\mb{r}\otimes\mb{s}}$ (resp. $d_{\mb{r}}$) the induced quasi-metric on the fiber $B_{r_y}^U\cdot y$ (resp. $B_{r_y}^W\cdot y$). 

As in Theorem \ref{thmEL}, we can explicitly compute the maximum entropy contribitions for $L=U$ and $W$. For $L=U$, the restricted adjoint map is the expansion $\Ad_a:(A_{ij})\mapsto (e^{r_i+s_j}A_{ij})$ of $A\in M_{m,n}(\bR)$, hence
\eq{\log|\det(Ad_a|_\mathfrak{u})|=\displaystyle\sum_{i=1}^m \sum_{j=1}^n (r_i+s_j)=m+n.} For $L=W$, the restricted adjoint map is the expansion $Ad_a:(b_i)\mapsto (e^{r_i}b_{i})$ of $b\in \bR^m$, hence
\eq{\log|\det(Ad_a|_\mathfrak{w})|=\displaystyle\sum_{i=1}^m r_i=1.}

Denote by $X=\SL_d(\bR)/\SL_d(\bZ)$ and by $\pi:Y\to X$ the natural projection 
sending a translated lattice $x +v$ to the lattice $x$. 
Equivalently, it is defined by $\pi\left(
\left(\begin{matrix}
g & v\\
0 & 1\\
\end{matrix}\right)\Gamma\right)
=g \SL_d(\bZ)$ for $g\in \SL_d(\bR)$ and $v\in \bR^d.$
We also use the following notation:
$ w(v)=\left(\begin{matrix}
I_d & v\\
0 & 1\\
\end{matrix}\right)$ for $v\in \bR^d$.

\subsection{Dimensions}\label{sec3.1}
Let $Z$ be a space endowed with a quasi-metric $d_Z$.
For a bounded subset $S\subset Z$, the lower Minkowski dimension $\underline{\dim}_{d_Z} S$ with respect to the quasi-metric $d_Z$ is defined by
\[ 
\underline{\dim}_{d_Z} S \defn \liminf_{\delta\to 0} \frac{\log N_{\delta}(S)}{\log1/\delta},\]
where $N_{\delta}(S)$ is the maximal cardinality of a $\delta$-separated subset of $S$ for $d_Z$.
%If $S$ is unbounded, we let $\underline{\dim}_{d_Z} S = \sup\{\underline{\dim}_{d_Z} S\cap K\ ;\ K\ \mbox{compact}\}$.

In the begining of this section, we consider Lie algebras $\mathfrak{u}$ and $\mathfrak{w}$ endowed with $\mb{r}\otimes\mb{s}$-quasinorm and $\mb{r}$-quasinorm, which induce the quasi-metrics
$d_{\mb{r}\otimes\mb{s}}$ and $d_{\mb{r}}$ on $\mathfrak{u}$ and $\mathfrak{w}$, repectively.

Now, for subsets $S \subset \mathfrak{u}=\bR^{mn}$ and $S' \subset \mathfrak{w}=\bR^m$, we denote the lower Minkowski dimensions of these subsets as follows:
$$\underline{\dim}_{\mb{r}\otimes\mb{s}} S \defn \underline{\dim}_{d_{\mb{r}\otimes\mb{s}}} S, \qquad \underline{\dim}_{\mb{r}} S' \defn \underline{\dim}_{d_{\mb{r}}} S'.$$
We will also consider Hausdorff dimensions $\dim_H S$ and $\dim_{H} S'$, always defined with respect to the standard metric.
%We refer the reader to \cite{falconer} for general properties of Minkowski or Hausdorff dimensions, such as the inequality

%Following \cite{LSS}, we will relate dimension $\underline{\dim}_M$ to entropy, and further to Hausdorff dimension using $\underline{\dim}_{\mb{r}\otimes\mb{s}}$ and $\underline{\dim}_{\mb{r}}$ via the following lemma.

\begin{lem}\cite[Lemma 2.2]{LSS}\label{relating dimensions}
For subsets $S \subset \mathfrak{u}$ and $S' \subset \mathfrak{w}$,
\begin{enumerate}
\item $\underline{\dim}_{\mb{r}\otimes\mb{s}} \mathfrak{u} = \sum_{i,j}(r_{i}+s_{j}) = m+n$ and $\underline{\dim}_{\mb{r}} \mathfrak{w} = \sum_{i}r_{i}=1$,
\item $\underline{\dim}_{\mb{r}\otimes\mb{s}} S \geq (m+n) - (r_1 +s_1)(mn - \dim_H S)$,
\item $\underline{\dim}_{\mb{r}} S' \geq 1 - r_1 (m-\dim_H S' )$.
\end{enumerate}
\end{lem}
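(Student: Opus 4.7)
The plan is to reduce everything to a direct comparison between quasi-metric balls and standard Euclidean cubes. A quasi-metric $\delta$-ball in $\mathfrak{u}=\bR^{mn}$ is the axis-parallel box $\{A:|A_{ij}|<\delta^{r_i+s_j}\}$, whose Lebesgue volume is $\delta^{\sum_{i,j}(r_i+s_j)}=\delta^{m+n}$ because $\sum_{i,j}(r_i+s_j)=n\sum_i r_i+m\sum_j s_j=m+n$. A standard covering/packing argument on any bounded open region then yields (1); the identical computation in $\mathfrak{w}=\bR^m$ (sides $\delta^{r_i}$, volume $\delta^{\sum_i r_i}=\delta$) gives $\underline{\dim}_{\mb{r}}\mathfrak{w}=1$.

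For (2), the key point is to exploit the \emph{smallest} side of a quasi-metric $\delta$-ball, which is $\delta^{r_1+s_1}$: since the weights are ordered $r_1\geq\cdots\geq r_m$ and $s_1\geq\cdots\geq s_n$, the exponent $r_i+s_j$ is maximal at $(i,j)=(1,1)$, so for $\delta<1$ the side $\delta^{r_1+s_1}$ is the shortest. I will then partition the ambient space into standard cubes of side $\delta^{r_1+s_1}$. A single quasi-metric $\delta$-ball, having sides $\delta^{r_i+s_j}\geq \delta^{r_1+s_1}$, intersects at most
\[
O\!\left(\prod_{i,j}\delta^{(r_i+s_j)-(r_1+s_1)}\right)=O\!\left(\delta^{(m+n)-mn(r_1+s_1)}\right)
\]
such cubes. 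Consequently, if $N^{qm}_\delta(S)$ is the minimal number of quasi-metric $\delta$-balls covering $S$, then $S$ admits a cover by $O\!\left(N^{qm}_\delta(S)\,\delta^{(m+n)-mn(r_1+s_1)}\right)$ standard cubes of diameter $\asymp\delta^{r_1+s_1}$.

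On the other hand, for each $t<\dim_H S$, positivity of the $t$-dimensional Hausdorff content of $S$ forces any cover of $S$ by sets of diameter $\asymp\delta^{r_1+s_1}$ to consist of at least $\gtrsim \delta^{-t(r_1+s_1)}$ elements. Combining the two inequalities gives
\[
N^{qm}_\delta(S)\;\gtrsim\;\delta^{-(m+n)+(mn-t)(r_1+s_1)},
\]
so taking $\log/\log(1/\delta)$, then sending $\delta\to 0$, and finally letting $t\nearrow\dim_H S$, delivers (2). Part (3) is proved by the identical argument with $\mathfrak{u}$ replaced by $\mathfrak{w}$, the weights $r_i+s_j$ replaced by $r_i$, $mn$ replaced by $m$, and $r_1+s_1$ replaced by $r_1$. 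I expect no serious obstacle; the only computation requiring care is the cube-counting estimate for a single quasi-metric ball, and the conversion from a Hausdorff lower bound to a cardinality lower bound for covers at the fixed scale $\delta^{r_1+s_1}$.
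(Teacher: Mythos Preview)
Your argument is correct. The paper does not supply its own proof of this lemma; it simply cites \cite[Lemma~2.2]{LSS}, so there is nothing to compare in detail. Your approach---realising quasi-metric $\delta$-balls as axis-parallel boxes with sides $\delta^{r_i+s_j}$ (resp.\ $\delta^{r_i}$), subdividing each into standard cubes of the smallest side-length $\delta^{r_1+s_1}$ (resp.\ $\delta^{r_1}$), and then invoking positivity of the $t$-dimensional Hausdorff content for $t<\dim_H S$ to lower-bound the number of standard cubes in any cover---is exactly the standard route and is presumably what the cited reference does as well. One small bookkeeping point: the paper defines $N_\delta(S)$ as the maximal cardinality of a $\delta$-separated set rather than the minimal covering number you use, but since a maximal $\delta$-separated set always furnishes a cover by $\delta$-balls, your lower bound on the covering number immediately transfers to a lower bound on $N_\delta(S)$, so nothing is lost.
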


\subsection{Correspondence with dynamics}\label{sec3.2} 
For $y=\left(\begin{matrix} g & v \\ 0 & 1 \\ \end{matrix}\right)\Ga \in Y$ with $g\in \SL_d(\bR)$ and 
$v\in \bR^d$, denote by $\Lambda_y$ the corresponding unimodular grid $g\bZ^d + v$ in $\bR^d$.
We denote the $(\mathbf{r},\mathbf{s})$-quasinorm of $v=(\bx,\by)\in \bR^m\times\bR^n$ by 
$\|v\|_{\mathbf{r},\mathbf{s}}=\max\{\|\bx\|_{\mathbf{r}}^{\frac{d}{m}},\|\by\|_{\mathbf{s}}^{\frac{d}{n}}\}$.
Let
$$\cL_\eps\defn\set{y\in Y : \forall v\in\Lambda_y, \|v\|_{\mathbf{r},\bs}\geq\eps},$$
which is a (non-compact) closed subset of $Y$. 
Following \cite[Section 1.3]{Kle99}, we say that the pair $(A,b)\in M_{m,n}(\bR)\times \bR^m$ is 
\textit{rational} if there exists some $(p,q) \in \bZ^m\times \bZ^n$ such that 
$Aq-b+p=0$, and \textit{irrational} otherwise.

\begin{prop}\label{prop1}
For any irrational pair $(A,b)\in M_{m,n}(\bR)\times \bR^m$, 
$(A,b)\in \mb{Bad}(\eps)$ if and only if the $a_t$-orbit of the point $y_{A,b}$ is eventually in $\cL_\eps$, 
i.e., there exists $T\ge 0$ such that $a_t y_{A,b}\in \cL_\eps$ for all $t\ge T$.
\end{prop}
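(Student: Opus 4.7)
The plan is to reduce both sides of the equivalence to a statement about the family of pairs $(p,q)\in\bZ^m\times\bZ^n$ parametrizing the affine grid $\Lambda_{y_{A,b}}$. Reading off the matrix form of $y_{A,b}$ gives $\Lambda_{y_{A,b}}=\{(p+Aq-b,\,q):(p,q)\in\bZ^m\times\bZ^n\}$. Applying $a_t$ multiplies the first $m$ coordinates by $e^{r_it}$ and the last $n$ by $e^{-s_jt}$; using the homogeneity of $\|\cdot\|_{\mb r}$ and $\|\cdot\|_{\bs}$ under the diagonal action, together with the identity $m+n=d$, the condition $a_t y_{A,b}\in\cL_\eps$ unwinds to
\[
\|p+Aq-b\|_{\mb r}\ge e^{-t}\eps^{m/d}\quad\text{or}\quad \|q\|_{\bs}\ge e^t\eps^{n/d}\qquad\text{for every }(p,q)\in\bZ^m\times\bZ^n.
\]
The key algebraic observation is that the conjunction of the two strict negations multiplies to $\|q\|_{\bs}\|p+Aq-b\|_{\mb r}<\eps$, and conversely this inequality opens up a nonempty ``bad-time'' interval $\bigl(\log(\|q\|_{\bs}/\eps^{n/d}),\,\log(\eps^{m/d}/\|p+Aq-b\|_{\mb r})\bigr)$ on which $a_t y_{A,b}\notin\cL_\eps$.

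For the ($\Leftarrow$) direction I would assume $a_t y_{A,b}\in\cL_\eps$ for all $t\ge T$. If $(A,b)\notin \mb{Bad}(\eps)$, pick $\eps'<\eps$ and a sequence $q_k\in\bZ^n$ with $\|q_k\|_{\bs}\to\infty$ admitting best integer approximations $p_k\in\bZ^m$ satisfying $\|q_k\|_{\bs}\|p_k+Aq_k-b\|_{\mb r}<\eps'<\eps$. By the interval computation, choose $t_k$ in the bad interval associated to $(p_k,q_k)$; since $t_k>\log(\|q_k\|_{\bs}/\eps^{n/d})\to\infty$, we find $a_{t_k} y_{A,b}\notin\cL_\eps$ at arbitrarily large times, contradicting the hypothesis.

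For the ($\Rightarrow$) direction I argue the contrapositive: if the orbit is not eventually in $\cL_\eps$ then some $t_k\to\infty$ has $a_{t_k} y_{A,b}\notin\cL_\eps$, producing witnesses $(p_k,q_k)$ with $\|q_k\|_{\bs}\|p_k+Aq_k-b\|_{\mb r}<\eps$. Here the irrationality hypothesis is essential: if $\{q_k\}$ were bounded then some value $q$ would repeat infinitely often and $\|p_k+Aq-b\|_{\mb r}<e^{-t_k}\eps^{m/d}\to 0$ would force $p_k$ to stabilize at an integer vector $p$ satisfying $p+Aq-b=0$, contradicting irrationality. Hence, along a subsequence, $\|q_k\|_{\bs}\to\infty$, and $\|q_k\|_{\bs}\idist{Aq_k-b}_{\mb r}<\eps$ violates $(A,b)\in\mb{Bad}(\eps)$.

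The main technical point is the irrationality step, which is the inhomogeneous analogue of the classical Dani argument: instead of excluding the zero vector of a lattice one excludes integer solutions of $Aq-b+p=0$. Everything else is bookkeeping using the scaling of the quasinorms under $a_t$ and the normalization $\sum r_i=\sum s_j=1$, through which the exponents $m/d$ and $n/d$ enter the characterization of $\cL_\eps$.
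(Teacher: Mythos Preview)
Your proof is correct and follows essentially the same route as the paper's: both arguments unwind the condition $a_t y_{A,b}\notin\cL_\eps$ into the pair of inequalities $\idist{Aq-b}_{\mb r}<e^{-t}\eps^{m/d}$ and $\|q\|_{\bs}<e^t\eps^{n/d}$, and both directions proceed by contrapositive. Your use of irrationality (if $\{q_k\}$ were bounded, a repeated $q$ would force $p_k\to p$ with $Aq-b+p=0$) is exactly equivalent to the paper's observation that $\idist{Aq-b}_{\mb r}\neq 0$ for every $q$, and the explicit ``bad-time interval'' you write down is implicit in the paper's choice of $t$.
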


\begin{proof}
Suppose that there exist arbitrarily large $t$'s satisfying $a_t y_{A,b}\notin \cL_\eps$. Denote
$e^{\mathbf{r} t}:= \textrm{diag}(e^{r_1t},\cdots,e^{r_mt})\in M_{m,m}(\bR)$ and 
$e^{\mathbf{s}t}:=\textrm{diag}(e^{s_1t},\cdots,e^{s_nt})\in M_{n,n}(\bR).$ 
%As
%$$a_t x_{A,b}=
%\left(\begin{matrix}
%e^{\mathbf{r} t}I_m & 0 & 0 \\
%0 & e^{-\bs t}I_n & 0\\
%0 & 0 & 1\\
%\end{matrix}\right)
%y_{A,b}=
%\left(\begin{matrix}
%e^{\mathbf{r} t}I_m & e^{\mathbf{r} t}A & -e^{\mathbf{r} t}b\\
%0 & e^{-\bs t}I_n & 0\\
%0 & 0 & 1\\
%\end{matrix}\right)y_{O, 0},
%$$
Then the vectors in the grid $\Lambda_{a_t y_{A,b}}$ can be represented as 
$$a_t \left(\left(\begin{matrix} I_m & A \\ 0 & I_n \\ \end{matrix}\right) 
\left(\begin{matrix} p \\ q \\ \end{matrix}\right)
+\left(\begin{matrix} -b \\ 0 \\ \end{matrix}\right)\right)
=\left(\begin{matrix} e^{\mathbf{r} t}(Aq+p-b)\\ e^{-\bs t}q\\ \end{matrix}\right)$$
for $(p,q)\in\bZ^m\times\bZ^n$.
Therefore $a_t x_{A,b}\notin \cL_\eps$ implies that for some $q\in \bZ^n$, 
\eqlabel{eqDani}{e^{t}\idist{Aq-b}_{\mathbf{r}}<\eps^{\frac{m}{d}} \quad 
\text{and}\quad e^{-t}\|q\|_{\bs}<\eps^{\frac{n}{d}},}
thus $\|q\|_{\bs}\idist{Aq-b}_{\mathbf{r}}<\eps$. Since $\idist{Aq-b}_\mb{r}\neq0$ for all $q$, we use the condition $\idist{Aq-b}_{\mathbf{r}}<e^{-t}\eps^{\frac{m}{d}}$ for arbitrarily large $t$ to conclude that $\|q\|_{\bs}\idist{Aq-b}_{\mathbf{r}}<\eps$ holds for infinitely many $q$'s. This is a contradiction to the assumption that $(A,b)\in \mb{Bad}(\eps)$.

On the other hand, if $(A,b)\notin \Bad(\eps)$, then since $(A,b)$ is irrational, there are infinitely many $q\in\bZ^n$ such that $\|q\|_{\bs}\idist{Aq-b}_{\mathbf{r}}<\eps$. Thus we can choose arbitrarily large $t$ so that \eqref{eqDani} hold, which contradicts to the assumption that the $a_t$-orbit of the point $y_{A,b}$ is eventually in $\cL_\eps$.
\end{proof}

\begin{rem}\label{rmkdim}
We claim that for a fixed $b\in \bR^m$, the subset $\mb{Bad}_{0}^b(\eps)$ of rational $(A,b)$'s in $\mb{Bad}^b(\eps)$ 
is a subset of $\mb{Bad}^0(\eps).$ Indeed, if $A\in \mb{Bad}^b(\eps)$ for some $b$ and 
$(A,b)$ is rational, then $\idist{Aq_0-b}_{\mathbf{r}}=0$ for some $q_0\in\bZ^m$ and 
$\displaystyle\liminf_{\|q\|_{\mb{s}}\to \infty} \|q\|_{\mb{s}}\idist{Aq-b}_{\mathbf{r}}\ge \eps$, 
thus $\displaystyle\liminf_{\|q\|_{\mb{s}}\to \infty} \|q\|_{\bs}\idist{A(q-q_0)}_{\mathbf{r}}\ge \eps$. Therefore, we have 
$$\dim_H\mb{Bad}_{0}^{b}(\eps)\leq\dim_H\mb{Bad}^0(\eps)
=mn-c_{m,n}\frac{\eps}{\log 1/\eps}<mn$$
for some constant $c_{m,n}>0$ \cite{KM19}. 
For a fixed $A\in M_{m,n}(\bR)$, the subset of $\mb{Bad}_A(\eps)$ such that $(A,b)$ is rational is of the form $Aq+p$ for some $q,p\in\bZ^m$ thus has Hausdorff dimension zero.
\end{rem}
In the rest of the article, we will focus on the elements $y_{A,b}$ that are eventually in $\cL_\eps$.

\subsection{Covering counting lemma}\label{sec3.3}

To construct measures of large entropy in Proposition \ref{prop5} and Proposition \ref{prop2}, we will need 
the following counting lemma, which is a generalization of \cite[Lemma 2.4]{LSS}. 

Here, we consider two cases: $L=U$ and $L=W$. 
%Fix a standard basis $\{e_i: i=1,\dots, \dim \mathfrak{l}\}$ on $\mathfrak{l}$.
Denote by $\mb{c}=(c_1,\dots,c_{\dim \mathfrak{l}})$ either $\mb{r}\otimes\mb{s}$ (for $L=U$) and $\mb{r}$ (for $L=U$),
and denote by $\|\cdot\|_{\mb{c}}$ either $\|\cdot\|_{\mb{r}\otimes\mb{s}}$ (for $L=U$) and 
$\|\cdot\|_{\mb{r}}$ (for $L=W$). 
Let $J_L$ be the maximal entropy contribution for $L$. Recall that $J_U=m+n$ and $J_W=1$. 
%For $y\in Y$, let $r>0$ be smaller than injectivity radius at $y$. We will use the same notations $d_L$ and $d_{\mb{c}}$ on 
%$B_r^L \cdot y$ induced from $d_L$ and $d_{\mb{c}}$ on $L$, respectively.

Before stating the main result of this subsection, we fix the following notations.
Fix a ``cusp part'' $Q_\infty^0\subset X$ that is a connected subset such that 
$X\smallsetminus Q_\infty^0$ has compact closure. 
Set $Q_{\infty} = \pi^{-1}(Q_\infty^0)$ and denote by $r(Q_\infty)>0$ the infimum of injectivity radius on $Y\smallsetminus Q_\infty$. 
For any $D>J_L$, choose large enough $T_{D}\in\bN$ such that for all $i=1,\dots,\dim\mathfrak{l}$,
\eqlabel{eqct}{
\lceil e^{c_{i}T_{D}} \rceil \leq e^{c_{i}T_{D}}e^{\frac{D-J_L}{\dim \mathfrak{l}}}.
} 
For $r_0>0$ and $C_0 \geq 1$ from \eqref{eqbilip},
fix $0<r_D=r_D(Q_\infty^0)<\min(r_0,1/2)$ small enough so that
\eqlabel{eqsmallrad}{
B_{2^{\frac{1}{\min\mb{c}}}C_0 r_D^{\frac{1}{\max\mb{c}}}T_D}^{L,\mb{c}} \subset B_{\min(r_0,\frac{1}{2}r(Q_\infty))}^{L} \quad \text{and}\quad 
B_{r_D}^G (Y\smallsetminus Q_\infty) \subset Y(\frac{1}{2}r(Q_\infty)).
} 

\begin{lem}\label{CovLem}
For any $D>J_L$, we fix the above notations. Let $y\in Y\smallsetminus Q_\infty$ and $I=\{t\in\bN\ |\ a_ty\in Q_\infty\}$.
For any non-negative integer $T$, let
\[ E_{y,T} = \{ z\in B_{r_D}^{L}\cdot y \ |\ 
\forall t\in\{1,\dots,T\}\smallsetminus I,\, d_Y(a_ty,a_tz)\leq r_D\}.\]
The set $E_{y,T}$ can be covered by $Ce^{D|I\cap\{1,\dots,T\}|}$ $d_{\mb{c}}$-balls of radius 
$r_D^{\frac{1}{\max\mb{c}}}e^{-T}$,
where $C$ is a constant depending on $Q_\infty^0$ and $D$, but independent of $T$.
\end{lem}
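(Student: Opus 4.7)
The plan is to encode $E_{y,T}$ as a subset of $L$ via the orbit map $h\mapsto h\cdot y$, and then to translate the constraint $d_Y(a_ty,a_tz)\leq r_D$ for each $t\in\{1,\dots,T\}\smallsetminus I$ into a quasi-norm bound on $h$. Because $L$ is abelian in both cases $L=U$ and $L=W$, and because $\Ad_{a_t}$ acts on $\mathfrak{l}$ so that $\|\cdot\|_{\mb{c}}$ is dilated uniformly by $e^t$, all such constraints will turn out to be nested $d_{\mb{c}}$-balls, and the whole set $\tilde E_{y,T}$ will in fact lie in a single such ball.

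First, since $y\in Y\smallsetminus Q_\infty$ has injectivity radius at least $r(Q_\infty)$ and $r_D<r(Q_\infty)/2$ by \eqref{eqsmallrad}, the orbit map is an isometry on $B_{r_D}^L$, and $E_{y,T}$ is identified with a subset $\tilde E_{y,T}\subset B_{r_D}^L$. For any good time $t\in\{1,\dots,T\}\smallsetminus I$, the point $a_ty$ again lies in $Y\smallsetminus Q_\infty$, and the inclusion $B_{r_D}^G(Y\smallsetminus Q_\infty)\subset Y(\tfrac{1}{2}r(Q_\infty))$ from \eqref{eqsmallrad} gives $a_tz$ enough injectivity radius so that the element $g:=a_tha_{-t}\in L$ is uniquely determined and lies in $B_{r_D}^L$. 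Applying the bi-Lipschitz comparison \eqref{eqbilip} (valid since $r_D<r_0$) together with the dilation property of the quasi-norm yields
\[
e^t\|h\|_{\mb{c}}=\|g\|_{\mb{c}}\leq (C_0r_D)^{1/\max\mb{c}}.
\]
Setting $t^*=\max\bigl(\{0,1,\dots,T\}\smallsetminus I\bigr)$, the tightest of these nested constraints gives $\tilde E_{y,T}\subset B^{L,\mb{c}}_{(C_0r_D)^{1/\max\mb{c}}e^{-t^*}}$, a single $d_{\mb{c}}$-ball; moreover every index in $\{t^*+1,\dots,T\}$ belongs to $I$, so $T-t^*\leq|I\cap\{1,\dots,T\}|$.

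It remains to cover this $d_{\mb{c}}$-ball by $d_{\mb{c}}$-balls of radius $r_D^{1/\max\mb{c}}e^{-T}$. In coordinates on $\mathfrak{l}$, a $d_{\mb{c}}$-ball of radius $R$ is a box of side lengths $R^{c_i}$, so this requires at most $\prod_i\lceil(R/\rho)^{c_i}\rceil$ balls, where $R/\rho=C_0^{1/\max\mb{c}}e^{T-t^*}$. The calibration \eqref{eqct} of $T_D$ is used precisely here, to absorb ceiling overheads into an overall factor bounded by $e^{D-J_L}$, yielding a total count at most $C\cdot e^{J_L(T-t^*)+(D-J_L)}\leq C\cdot e^{D|I\cap\{1,\dots,T\}|}$ with $C=C(Q_\infty^0,D)$. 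The main technical point, and the reason the constants $r_D$ and $T_D$ in \eqref{eqct}--\eqref{eqsmallrad} are chosen as they are, is to guarantee both the bi-Lipschitz comparison between $d_G$ and $d_{\mb{c}}$ and the sufficient injectivity radius along the entire orbit segment simultaneously; once those calibrations are in place, the argument reduces to elementary scaling on the abelian Lie algebra $\mathfrak{l}$.
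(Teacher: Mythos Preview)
Your argument has a genuine gap at the sentence ``gives $a_tz$ enough injectivity radius so that the element $g:=a_tha_{-t}\in L$ is uniquely determined and lies in $B_{r_D}^L$.''  The injectivity radius at $a_ty$ only guarantees that there is a \emph{unique} $g'\in B^G_{\frac12 r(Q_\infty)}$ with $g'\cdot a_ty=a_tz$, and that $d_G(g',id)\leq r_D$.  To conclude $g'=a_tha_{-t}$ you would need to know \emph{in advance} that $a_tha_{-t}$ lies inside that injectivity ball.  But $a_t$ expands $L$, so from $h\in B^L_{r_D}$ alone the element $a_tha_{-t}$ can be arbitrarily large; if the good time $t$ is preceded by a long run of bad times, nothing you have written prevents $a_tha_{-t}$ from exiting $B^G_{\frac12 r(Q_\infty)}$, and then $a_tz$ may sit near $a_ty$ via a nontrivial element of the stabilizer of $a_ty$ rather than via $a_tha_{-t}$.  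The calibration \eqref{eqsmallrad} is set up to control the conjugation over a \emph{single} step of length $T_D$, not over an arbitrary gap between good times, so it does not rescue your direct argument.

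This is precisely the difficulty the paper's proof is designed to handle.  The paper runs an induction along an arithmetic progression $I_{s,k}(T_D)=\{s,s+T_D,\dots,s+kT_D\}$: at a good step the constraint together with \eqref{eqsmallrad} (valid because the step is only $T_D$) shows each current covering ball intersected with $E^s_{y,k}$ fits in a single ball of the next scale; at a bad step one just subdivides each ball at cost $e^{J_L(T_D-1)+D}$.  A pigeonhole choice of $s\in\{0,\dots,T_D-1\}$ then ensures $T_D\,|I\cap I_{s,k}(T_D)|\leq|I\cap\{1,\dots,T\}|$, which yields the stated bound.  Your single-ball shortcut would make both the induction and the pigeonhole step unnecessary, but it is exactly the failure of the injectivity identification across long bad stretches that forces this more elaborate route.  (Your final covering count from a ball of radius $(C_0 r_D)^{1/\max\mb c}e^{-t^*}$ to balls of radius $r_D^{1/\max\mb c}e^{-T}$ is fine modulo constants; the invocation of \eqref{eqct} there is not quite apt, since \eqref{eqct} is calibrated for a ratio $e^{T_D}$ rather than $e^{T-t^*}$, but for large ratios the ceiling overhead is harmless anyway.)
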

\begin{proof}
For $s\in\{0,\dots, T_{D}-1\}$ and $k\in\bZ_{\geq 0}$, let us denote by $I_{s,k}(T_{D})= \{s,s+T_{D},\dots,s+kT_{D}\}$ and
\[
E_{y,k}^{s}= \{z\in B_{r_D}^{L}\cdot y : \forall t \in I_{s,k}(T_{D}) \smallsetminus I, d_{Y}(a_{t}y,a_{t}z)\leq r_D \}.
\]
Following the proof of \cite[Lemma 2.4]{LSS} with $E_{y,k}^{s}$ instead of $E_{y,T}$, we obtain the following claim:\vspace{0.3cm}\\
\textbf{Claim}\; The set $E_{y,k}^{s}$ can be covered by $C_{s}e^{(J_L(T_{D}-1)+D)|I\cap I_{s,k}(T_{D})|}$ $d_{\mb{c}}$-balls
of radius $C_0 r_D^{\frac{1}{\max\mb{c}}}e^{-(s+kT_{D})}$, where $C_{s}$ is a constant depending on 
$Q_\infty^0$, $D$ and $s$, but independent of $k$. 
\begin{proof}[Proof of Claim]
We prove the claim by induction on $k$. Since the number of $d_{\mb{c}}$-balls of radius $C_0 r_D^{\frac{1}{\max\mb{c}}} e^{-s}$ 
needed to cover $B_{r_D}^L \cdot y$ is bounded by a constant $C_s$ depending on $Q_\infty^0$, $D$ and $s$, the claim holds for $k=0$.

Suppose that $E_{y,k-1}^s$ can be covered by $N_{k-1}=C_s e^{(J_L(T_{D}-1)+D)|I\cap I_{s,k-1}(T_{D})|}$ 
$d_{\mb{c}}$-balls $\{B_j:j=1,\dots, N_{k-1}\}$ of radius $C_0 r_D^{\frac{1}{\max\mb{c}}}e^{-(s+(k-1)T_{D})}$. 
By the inequality \eqref{eqct}, any $d_{\mb{c}}$-ball of radius 
$C_0 r_D^{\frac{1}{\max\mb{c}}}e^{-(s+(k-1)T_D)}$ can be covered by 
\[
\begin{split}
\prod_{i=1}^{\dim\mathfrak{l}} \left\lceil  \frac{e^{-(s+(k-1)T_D) c_i }}{e^{-(s+kT_D) c_i }} \right\rceil
&= \prod_{i=1}^{\dim\mathfrak{l}}\lceil e^{T_D c_i} \rceil 
\leq \prod_{i=1}^{\dim\mathfrak{l}} e^{c_{i}T_{D}}e^{\frac{D-J_L}{\dim \mathfrak{l}}}\\
&= e^{J_L T_D}e^{D-J_L}= e^{J_L(T_D-1)+D},
\end{split}
\] $d_{\mb{c}}$-balls of radius $C_0 r_D^{\frac{1}{\max\mb{c}}}e^{-(s+kT_D)}$. Thus if $s+kT_D \in I$, 
then $E_{y,k}^s$ can be covered by $N_k=e^{J_L(T_D-1)+D}N_{k-1}$ $d_{\mb{c}}$-balls of radius 
$C_0 r_D^{\frac{1}{\max\mb{c}}}e^{-(s+kT_D)}$.

Suppose that $s+kT_D \notin I$. 
Since $E_{y,k}^s \subset E_{y,k-1}^s$, the set $\{E_{y,k}^s \cap B_j : j=1,\dots, N_{k-1}\}$ covers $E_{y,k}^s$. 
We now claim that for any $x_1,x_2\in E_{y,k}^s \cap B_j$
$$d_{L,\mb{c}}(x_1,x_2)\leq 2^{\frac{1}{\min\mb{c}}}C_0 r_D^{\frac{1}{\max\mb{c}}}e^{-(s+kT_D)}.$$
Indeed, since $B_j$ is a $d_{L,\mb{c}}$-ball of radius $C_0 r_D^{\frac{1}{\max\mb{c}}}e^{-(s+(k-1)T_{D})}$
and $x_1,x_2 \in B_j \subset B_{r_D}^L \cdot y$, there are $h\in B_{r_D}^L$ and 
$h_1,h_2\in B_{C_0 r_D^{\frac{1}{\max\mb{c}}}e^{-(s+(k-1)T_{D})}}^{L,\mb{c}}$ such that $x_1=h_1 hy$ and $x_2=h_2 hy$.
It follows from $s+kT_d \notin I$ and $x_1,x_2\in E_{y,k}^s$ that $a^{s+kT_d}y \subset Y \smallsetminus Q_\infty$ and 
$d_Y(a^{s+kT_D} y, a^{s+kT_D} x_\ell)\leq r_D$ for $\ell=1,2$, hence by \eqref{eqsmallrad} we have 
$a^{s+kT_D} x_1 \in B_{r_D}^G (Y \smallsetminus Q_\infty) \subset Y(\frac{1}{2} r(Q_\infty))$
and $d_Y(a^{s+kT_D} x_1, a^{s+kT_D} x_2)\leq 2r_D$.
Observe that by \eqref{eqsmallrad},
\[
\begin{split}
a^{s+kT_D}h_1 h_2^{-1} a^{-(s+kT_D)} &\subset a^{s+kT_D}B_{2^{\frac{1}{\min\mb{c}}}C_0 r_D^{\frac{1}{\max\mb{c}}}e^{-(s+(k-1)T_{D})}}^{L,\mb{c}}a^{-(s+kT_D)}\\
&= B_{2^{\frac{1}{\min\mb{c}}}C_0 r_D^{\frac{1}{\max\mb{c}}}e^{T_{D}}}^{L,\mb{c}} \subset B_{\min(r_0,\frac{1}{2}r(Q_\infty))}^L.
\end{split}
\]
Thus it follows from \eqref{eqbilip} and above observations that
\[
\begin{split}
2r_D \geq d_Y (a^{s+kT_D}x_1,a^{s+kT_D}x_2) &= d_L(a^{s+kT_D}h_1h_2^{-1}a^{-(s+kT_D)},id) \\
&\geq \frac{1}{C_0} d_\infty (a^{s+kT_D}h_1h_2^{-1}a^{-(s+kT_D)},id)\\
&= \frac{1}{C_0} \max_{i=1,\dots,\dim\mathfrak{l}} e^{c_i(s+kT_D)} |(\log h_1h_2^{-1})_i|, 
\end{split}
\]
where $(\log h_1h_2^{-1})_i$ is the $i$-th coordinate of $\log h_1h_2^{-1}$ with respect to the standard basis $\{e_i: 1\leq i\leq \dim\mathfrak{l}\}$.
Since $L=U$ or $L=W$, i.e. commutative subgroups of $G$, for each $i=1,\dots,\dim\mathfrak{l}$, we have
\[
|(\log h_1h_2^{-1})_i|=|(\log h_1 - \log h_2)_i|\leq 2r_D C_0 e^{-c_i(s+kT_D)}.
\]
Note that
\[
d_{L,\mb{c}}(x_1,x_2)=d_{L,\mb{c}}(h_1,h_2) =\max_{i=1,\dots,\dim\mathfrak{l}} |(\log h_1 - \log h_2)_i|^{\frac{1}{c_i}}
\]
Therefore, we have
\[
d_{L,\mb{c}}(x_1,x_2) \leq  \max_{i=1,\dots,\dim\mathfrak{l}} (2r_D C_0)^{\frac{1}{c_i}}e^{-(s+kT_D)}\leq 
2^{\frac{1}{\min\mb{c}}}C_0 r_D^{\frac{1}{\max\mb{c}}}e^{-(s+kT_D)}. 
\]
It follows from the claim that $E_{y,k}^s \cap B_j$ is contained in a single $d_{L,\mb{c}}$-ball of radius $C_0 r_D^{\frac{1}{\max\mb{c}}} e^{-(s+kT_D)}$ 
for each $j=1,\dots,N_{k-1}$.
Hence $E_{y,k}^s$ can be covered by $N_k=N_{k-1}$ $d_{L,\mb{c}}$-balls of radius $C_0 r_D^{\frac{1}{\max\mb{c}}} e^{-(s+kT_D)}$.
\end{proof}

Now, for any non-negative integer $T$, we can find $s\in\{0,\dots,T_{D}-1\}$ and $k\in\bZ_{\geq 0}$ such that 
\eq{
T_{D}|I\cap I_{s,k}(T_{D})| \leq |I\cap \{1,\dots,T\}|\quad\text{and}\quad T-T_{D}<s+kT_{D}\leq T
} from the pigeon hole principle. By the above observation, $E_{y,T}\subset E_{y,k}^{s}$ can be covered by 
$C_{s}e^{(J_L(T_{D}-1)+D)|I\cap I_{s,k}(T_{D})|}$ $d_{\mb{c}}$-balls of radius $C_0 r_D^{\frac{1}{\max\mb{c}}} e^{-(s+kT_D)}$. 
Since $T-T_{D}<s+kT_{D}\leq T$ and $D>J_L$, $E_{y,T}$ can be covered by 
$(\max_{s}C_s) e^{D|I\cap \{1,\dots.T\}|}$ $d_{\mb{c}}$-balls of radius $C_0 e^{T_D} r_D^{\frac{1}{\max\mb{c}}} e^{-T}$. 
Hence there exists a constant $C>0$ depending on $Q_\infty^0$, $r$, and $D$, but independent of $T$
such that $E_{y,T}$ can be covered by 
$C e^{D|I\cap \{1,\dots.T\}|}$ $d_{\mb{c}}$-balls of radius $r_D^{\frac{1}{\max\mb{c}}} e^{-T}$.
\end{proof}

\section{Upper bound for Hausdorff dimension of $\mb{Bad}_{A}(\eps)$}\label{sec:entropyboundA}
In this section, we will prove Theorem \ref{thmEff1} by constructing $a$-invariant probability measure on $Y$ with large entropy. 
Here and in the next section, we will consider the dynamical entropy of $a$ instead of $a^{-1}$ contrary to Section \ref{sec2}.
Hence let us use the following notation:
For a given partition $\cQ$ of $Y$ and a integer $q\geq 1$, we denote by
\[
\cQ^{(q)}= \bigvee_{i=0}^{q-1} a^{-i}\cQ.
\]
\subsection{Constructing measure with entropy lower bound}\label{sec4.1}
Let us denote by $\ov{X}$ and $\ov{Y}$ the one-point compactifications of $X$ and $Y$, respectively.
Let $\cA$ be a given countably generated $\sigma$-algebra of $X$ or $Y$. We denote by $\overline{\cA}$ the $\sigma$-algebra generated by $\cA$ 
and $\set{\infty}$. The diagonal action $a_t$ is extended to the action on $\ov{X}$ and $\overline{Y}$ by $a_t(\infty)=\infty$ for $t\in\bR$. 
For a finite partition $\cQ=\set{Q_1,\cdots,Q_N,Q_\infty}$ of $Y$ which has only one non-compact element $Q_\infty$, denote by $\overline{\cQ}$ 
the finite partition $\set{Q_1, \cdots, Q_N, \overline{Q_\infty}\defn Q_\infty\cup\set{\infty}}$ of $\overline{Y}$. 
Note that $\overline{\cQ^{(q)}}=\overline{\cQ}^{(q)}$ for any $M\in\N$.
Denote by $\crly{P}(X)$ the space of probability measures on $X$, and use similar notations for $Y$, $\ov{X}$, and $\ov{Y}$.

In this subsection, we construct an $a$-invariant measure on $\overline{Y}$ with a lower bound on the conditional entropy for the proof of 
Theorem \ref{thmEff1}. Here, the conditional entropy will be computed with respect to the $\sigma$-algebras constructed in Section \ref{sec2}. 
If $x_A$ has no escape of mass, such measure was constructed in \cite[Proposition 2.3]{LSS}. 
The following proposition generalizes the measure construction for $x_A$'s with some escape of mass. 

\begin{prop}\label{prop5} For $A\in M_{m,n}(\R)$ fixed, 
let $$\eta_A=\sup\set{\eta:x_A \ \textrm{has} \ \eta\textrm{-escape of mass on average}}.$$ 
Then there exists $\mu_A\in\crly{P}(\overline{X})$ with $\mu_A(X)=1-\eta_A$ such that 
for any $\eps>0$, there exists an $a$-invariant measure $\ov{\mu}\in\crly{P}(\overline{Y})$ satisfying
\begin{enumerate}
  %  \item\label{ainv} $\mu$ is $a_t$-invariant,
    \item\label{supp'} $\Supp{\ov{\mu}}\subset \cL_\eps\cup (\overline{Y}\smallsetminus Y)$,
    \item\label{cusp'} $\pi_*\ov{\mu}=\mu_A$, in particular, there exists $a$-invariant measure $\mu\in\crly{P}(Y)$ such that 
    $$\ov{\mu}=(1-\eta_A)\mu+\eta_A\del_{\infty},$$ where $\del_\infty$ is the dirac delta measure on 
    $\overline{Y}\smallsetminus Y$.
    \item\label{entropy'} Let $\cA^W$ be as in Proposition \ref{algebracst} for $\mu$, $r_0$, and $L=W$, 
    and let $\cA^W_\infty$ be as in \eqref{eq:tailalg}.
Then we have
    $$h_{\ov{\mu}}(a|\overline{\cA^W_\infty})\ge 1-\eta_A-r_{1}(m- \dim_H \mb{Bad}_A(\eps)).$$
\end{enumerate}
\end{prop}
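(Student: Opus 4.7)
The plan is to build $\ov{\mu}$ as a weak-$*$ subsequential limit of empirical measures carried by $a$-orbits of points $y_{A,b}$ for $b$ ranging over well-chosen $e^{-N}$-separated subsets of $\mb{Bad}_A(\eps)$, and to read off its entropy from how many such orbits fall into the $W$-atoms of the $\sigma$-algebra $\cA^W$ from Proposition \ref{algebracst}.

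First, I would fix the target reference measure $\mu_A$. By the definition of $\eta_A$, there is a sequence $N_k\to\infty$ along which the empirical measure $\nu_{N_k}\defn\tfrac{1}{N_k}\sum_{t=0}^{N_k-1}\delta_{a_t x_A}$, viewed in $\crly{P}(\ov{X})$, converges to some $\mu_A$ with $\mu_A(X)=1-\eta_A$. Next, I would fix a compact $K\subset\bR^m$ and a threshold $T_0>0$ such that
$$B\defn\set{b\in K\cap \mb{Bad}_A(\eps)\,:\, a_t y_{A,b}\in\cL_\eps\text{ for all }t\ge T_0}$$
has Hausdorff dimension arbitrarily close to $\dim_H\mb{Bad}_A(\eps)$; this is possible by Proposition \ref{prop1} together with the monotone exhaustion of $\mb{Bad}_A(\eps)$ by these sets as $T_0\to\infty$. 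For each $k$, pick a maximal $e^{-N_k}$-separated subset $B_k\subset B$ with respect to the quasi-metric $d_{\mb{r}}$; Lemma \ref{relating dimensions}(3) yields
$$|B_k|\gtrsim e^{N_k(1-r_1(m-\dim_H\mb{Bad}_A(\eps))-o(1))}.$$
Set
$$\mu_{N_k}\defn\frac{1}{|B_k|}\sum_{b\in B_k}\frac{1}{N_k}\sum_{t=0}^{N_k-1}\delta_{a_t y_{A,b}}\in\crly{P}(Y)\subset\crly{P}(\ov{Y}),$$
and pass to a further subsequence so that $\ov{\mu}\defn\lim_k\mu_{N_k}$ exists in $\crly{P}(\ov{Y})$. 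The standard Krylov--Bogoliubov averaging forces $a$-invariance. Since every $y_{A,b}$ projects to $x_A$, we have $\pi_*\mu_{N_k}=\nu_{N_k}$, hence $\pi_*\ov{\mu}=\mu_A$; this gives \eqref{cusp'} and the decomposition $\ov{\mu}=(1-\eta_A)\mu+\eta_A\del_{\infty}$. Because $T_0$ is fixed while $N_k\to\infty$, the fraction of orbit times $t<T_0$ in the average goes to zero, so the mass of $\ov{\mu}$ remaining in $Y$ sits inside the closed set $\cL_\eps$, proving \eqref{supp'}.

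The main step is the entropy bound \eqref{entropy'}. Apply Proposition \ref{algebracst} to $\mu$ with $L=W$ to obtain $\cA^W$, and use Proposition \ref{algexiA} to rewrite $h_{\ov{\mu}}(a|\ov{\cA^W_\infty})$ as the empirical entropy $H_{\ov{\mu}}(\ov{\cA^W}|a\ov{\cA^W})$, which decomposes as $(1-\eta_A)H_\mu(\cA^W|a\cA^W)$ because the atom $\set{\infty}$ is $a$-fixed and contributes nothing to the conditional information. The geometric input is Proposition \ref{algebracst}\eqref{algebracstprop2}: the $\cA^W$-atom at $y\in Y(2^{-k})\setminus Y(2^{-k+2})$ has $W$-diameter $\lesssim 2^{-k}$, so, away from the deepest cusp, the $e^{-N_k}$-separated points $y_{A,b}$ ($b\in B_k$) sit in distinct atoms of $a^{N_k}\cA^W$ after $N_k$ iterations. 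The $\mu$-uniform dynamical $\del$-boundary control of Lemma \ref{Exceptional} keeps this separation stable under iteration, and the counting lemma (Lemma \ref{CovLem}) absorbs the entropic cost of cusp excursions at rate bounded by $J_W=1$. Combining these ingredients with Lemma \ref{relating dimensions}(3) yields the claimed bound. The main obstacle compared to \cite[Proposition 2.3]{LSS} is exactly that here $\mu$ need not be compactly supported: one cannot work with a single $\sigma$-algebra subordinate to a fixed compact part of $Y$, which is the reason the cusp-adapted partition of Lemma \ref{partitioncst'} and the $\mu$-uniform boundary estimate of Lemma \ref{Exceptional} were developed in Section \ref{sec2}.
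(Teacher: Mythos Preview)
Your construction of $\ov{\mu}$ and the verification of \eqref{supp'} and \eqref{cusp'} follow the paper's argument essentially verbatim, including the choice of the subsequence $N_k$ along which $\pi_*\mu_{N_k}\to\mu_A$ with the correct mass at infinity. One point you elide is the second diagonal limit: with a single fixed $T_0$ your final bound would contain $\dim_H B$ rather than $\dim_H\mb{Bad}_A(\eps)$; the paper handles this by first passing to $\mu^\gamma$ for each $\gamma>0$ and then letting $\gamma_j\to 0$.

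The entropy step \eqref{entropy'} has a genuine gap. You propose to invoke Proposition \ref{algexiA} to write $h_{\ov{\mu}}(a|\ov{\cA^W_\infty})=H_{\ov{\mu}}(\ov{\cA^W}|a\ov{\cA^W})$ and then bound the latter from below by counting how many points of $B_k$ fall into atoms of $a^{N_k}\cA^W$. But this quantity is an entropy for the \emph{limit} measure $\mu$, while your separation information lives on the approximating measures $\nu_{N_k}$; you give no mechanism to transfer the one to the other. Weak-$*$ convergence does not by itself imply lower semicontinuity of $H_{\cdot}(\cA^W|a\cA^W)$ (the $\sigma$-algebra $\cA^W$ is countably generated with $\mu$-dependent atoms, and no boundary condition is available). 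Moreover, since $a\cA^W\subset\cA^W$, the atoms of $a^{N_k}\cA^W$ are \emph{larger} than those of $\cA^W$, not finer, so your separation claim as stated does not even point the right way.

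The paper closes this gap by inserting an auxiliary \emph{finite} partition $\cQ$ of $Y$ (one compact-cusp atom $Q_\infty$, all others of diameter $<r_D$, all with $\mu^\gamma$-null boundary). One first proves the combinatorial bound $H_{\nu_i}(\cQ^{(k_i)})\geq\log|S_i|-D\beta k_i-O(1)$ via Lemma \ref{CovLem}, then shows the crucial identity $H_{\nu_i}(\cQ^{(k_i)}|\cA^W_\infty)=H_{\nu_i}(\cQ^{(k_i)})$ (using that $\nu_i$ is supported on finitely many points of a single $W$-orbit, so eventually lies in one atom of $(\cP^W)_\ell^\infty$; this is where Remark \ref{basepoint} and Proposition \ref{algebracst} enter). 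A standard subadditivity--concavity (Misiurewicz-type) argument then converts this into a bound on $\frac{1}{q}H_{\mu_i}(\cQ^{(q)}|\cA^W_\infty)$ for fixed $q$, and the null-boundary condition allows passage to $\frac{1}{q}H_{\mu^\gamma}(\overline{\cQ}^{(q)}|\overline{\cA^W_\infty})$. Only after sending $q\to\infty$, $D\to 1$, and shrinking $Q_\infty$ to the actual cusp does one recover the stated bound on $h_{\ov{\mu}}(a|\overline{\cA^W_\infty})$. None of the ingredients you cite (Proposition \ref{algebracst}\eqref{algebracstprop2}, Lemma \ref{Exceptional}) replaces this chain; in particular Lemma \ref{Exceptional} controls $\mu(E_\del)$, not $\nu_i(E_\del)$, and plays no role here --- it is used later in the proof of Theorem \ref{thmEff1}.
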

\begin{rem}\
\begin{enumerate}
\item Note that if $\eta_A >0$ then $x_{A}$ has $\eta_A$-escape of mass on average.
\item One can check that $\eta_A=0$ if and only if $x_{A}$ is \textit{heavy}, which is defined in \cite[Definition 1.1]{LSS}.
\end{enumerate}
\end{rem}

\begin{proof}
Since $x_{A}$ has $\eta_A$-escape of mass on average but no more than $\eta_A,$ we may fix an increasing sequence of integers 
$\set{k_i}_{i\geq 1}$ such that
$$\frac{1}{k_i}\displaystyle\sum_{k=0}^{k_i-1}\del_{a^k x_A}\wstar\mu_A\in\crly{P}(\overline{X})$$ with $\mu_A(X)= 1-\eta_A$.

Let us denote by $\bT^{m}= [0,1]^{m}/\!\sim$ the torus in $\bR^{m}$, where the equivalence relation is modulo $1.$ 
%\seon{should it be modulo the lattice $x_A$?} 
Consider the increasing family of sets
$$R^{A,T}:=\set{b\in \bT^m|\forall t\ge T,a_t y_{A,b}\in\cL_\eps}\cap\mb{Bad}_A(\eps).$$ 

By Proposition \ref{prop1} and Remark \ref{rmkdim}, $\displaystyle\bigcup_{T=1}^\infty R^{A,T}$ has Hausdorff dimension equal to $\dim_H \mb{Bad}_A(\eps)$. 
For any $\gamma >0$, it follows that there exists $T_\gamma \in \bN$ satisfying 
$\dim_H R^{A,T_\gamma}\geq \dim_H \mb{Bad}_A(\eps)-\gamma$.

Let $\phi_A:\bT^{m}\to Y$ be the map defined by $\phi_A(b)=y_{A,b}$. Note that $\phi_A$ is a one-to-one Lipschitz map between 
$\bT^m$ and $\phi_A(\bT^m)$, so we may consider a quasinorm on $\phi_A(\bT^m)$ induced from the $\mb{r}$-quasinorm on $\bR^m$ 
and denote it again by $\|\cdot\|_{\mb{r}}$.
%\seon{One-to-one if the torus is defined modulo $A$?}

For each $k_i\ge T_\gamma$, let $S_i$ be a maximal $e^{-k_i}$-separated subset of $R^{A,T_\gamma}$ with respect to the $\mb{r}$-quasinorm. 
By Lemma \ref{relating dimensions}(3),
\eqlabel{Sicount}{\displaystyle\liminf_{i\to\infty}\frac{\log|S_i|}{k_i}\ge \underline{\dim}_{\mb{r}}(R^{A,T_\gamma})\ge 1-r_{1}(m+\gamma- \dim_H \mb{Bad}_A(\eps)).}
Let $\nu_i\defn\frac{1}{|S_i|}\displaystyle\sum_{b\in S_i}\del_{y_{A,b}}$ be the normalized counting measure on the set 
$D_i:=\set{y_{A,b}: b\in S_i}\subset Y$. Extracting a subsequence if necessary, we may assume 
that $$\mu_i\defn\frac{1}{k_i}\displaystyle\sum_{k=0}^{k_i-1}a_*^k\nu_i\wstar\mu^\gamma\in\crly{P}(\overline{Y}).$$
The measure $\mu^{\gamma}$ is $a$-invariant since $a_* \mu_i -\mu_i$ goes to zero measure. 

Choose any sequence of positive real numbers $(\ga_j)_{j\geq 1}$ converging to zero and let $\set{\mu^{\gamma_j}}$ be a family of $a$-invariant probability measures on $\overline{Y}$ obtained from the above construction for each $\gamma_j$.
Extracting a subsequence again if necessary, we may take a $\text{weak}^*$-limit measure $\ov{\mu}\in \crly{P}(\overline{Y})$ of $\set{\mu^{\gamma_j}}$.
We prove that $\ov{\mu}$ is the desired measure. The measure $\ov{\mu}$ is clearly $a$-invariant.\\
%\seon{Explain why?}\\
(\ref{supp'}) We show that for all $\gamma>0$, $\mu^\gamma(Y\setminus\cL_\eps)=0$. For any $b\in S_i\subseteq R^{A,T_\gamma}$, $a_T y_{A,b}\in \cL_\eps$ holds for $T>T_\gamma$. Thus we have
\eq{
\mu_i(Y\setminus\cL_\eps)=\frac{1}{k_i}\displaystyle\sum_{k=0}^{k_i-1}a^k_{*}\nu_i(Y\setminus\cL_\eps)= \frac{1}{k_i}\displaystyle\sum_{k=0}^{T_\gamma}a^k_{*}\nu_i(Y\setminus\cL_\eps)
= \frac{1}{k_{i} |S_{i}|}\sum_{y\in D_{i}, 0\leq k \leq T_{\gamma}} \de_{a^{k}y}(Y\setminus\cL_\eps) \leq \frac{T_\gamma}{k_i}.
}
By taking $k_i\to \infty$, we have $\mu^{\ga}(Y\setminus\cL_\eps)=0$ for arbitrary $\ga>0$, hence
$$\ov{\mu}(Y\setminus\cL_\eps)=\lim_{j\to\infty}\mu^{\gamma_j}(Y\setminus\cL_\eps)=0.$$\\
(\ref{cusp'}) For all $\ga>0$, $\pi_*\mu^\gamma=\mu_A$ since $\pi_*\nu_i=\del_{x_{A}}$ for all $i\ge 1$. 
It follows that $\pi_*\ov{\mu}=\mu_A$. Hence,
$$\ov{\mu}(\overline{Y}\setminus Y)=\lim_{j\to\infty}\mu^{\gamma_j}(\overline{Y}\setminus Y)=\mu_A(\overline{X}\setminus X)=\eta_A,$$
so we have a decomposition $\overline{\mu}=(1-\eta_A)\mu+\eta_A\del_\infty$ for some $a$-invariant $\mu\in\crly{P}(Y)$.\\
(\ref{entropy'}) We first fix any $D>J_W =1$ and $Q^0_\infty \subset X$ such that $X\smallsetminus Q_\infty^0$ has compact closure. 
As in \cite[Proof of Theorem 4.2, Claim 2]{LSS}, we can construct a finite partition $\cQ$ of $Y$ satisfying:
%Suppose that $\cQ$ is any finite partition of $Y$ satisfying:
    \begin{itemize}
        \item $\cQ$ contains an atom $Q_\infty$ of the form $\pi^{-1}(Q_\infty^0)$,
        \item $\forall Q\in \cQ\smallsetminus\set{Q_\infty}$, $\diam Q<r_D=r_D(Q_\infty^0)$, where $r_D$ is from \eqref{eqsmallrad},
        \item $\forall Q\in\cQ,\forall j\geq 1,\; \mu^{\gamma_j}(\partial Q)=0$.
    \end{itemize}
Remark that for all $i\geq 1$, $D_i \subset \phi_A(\bT^m)$, which is a compact set in $Y$,
therefore we can choose $Q^0_{\infty}$ so that 
\eqlabel{eqcusp}{
Q_\infty \cap D_i = \varnothing.
}
    We claim that it suffices to show the following statement. For all $q\ge 1$,
    \eqlabel{sttlowbdd}{\frac{1}{q}H_{\ov{\mu}}(\overline{\cQ}^{(q)}|\overline{\cA^W_\infty})\ge 1-r_{1}(m-\dim_H \mb{Bad}_A(\eps))-D\ov{\mu}(\overline{Q_\infty}).}
    Indeed, by taking $q\to \infty$, we have
%    as in \cite[Proof of Theorem 4.2, Claim 2]{LSS}, we can construct a finite partition $\cQ$ of $Y$ satisfying the bullet-requirements above. Hence,
$$h_{\ov{\mu}}(a|\overline{\cA^W})\ge 1-r_{1}(m- \dim_H \mb{Bad}_A(\eps))-D\ov{\mu}(\overline{Q_\infty}).$$
%for any $Q_\infty$ of $\cQ$ satisfying the bullet-requirements. 
Taking $D\to 1$ and $Q_\infty^0\subset X$ such that $\ov{\mu}(\ov{Q_\infty}) \to \ov{\mu}(\overline{Y}\setminus Y)=\eta_A$ and $D\to 1$, we conclude \eqref{entropy'}.

In the rest of the proof, we show the inequality \eqref{sttlowbdd}.
It is clear if $\ov{\mu}(Q_\infty)=1$, so assume that $\ov{\mu}(Q_\infty)<1$, hence for all large enough $j\geq 1$, 
$\mu^{\ga_j}(Q_\infty)<1$. Now, we fix such $j\geq 1$ and write temporarily $\ga=\ga_j$.

Choose $\beta>0$ such that $\mu^\gamma(Q_\infty)<\beta<1$. For large enough $i\geq 1$, we have
\eq{
\mu_i(Q_\infty)=\frac{1}{k_i|S_i|}\displaystyle\sum_{y\in D_i, 0\le k<k_i}\de_{a^k y}(Q_\infty)
=\frac{1}{k_i}\displaystyle\sum_{0\le k<k_i}\de_{a^k x_{A}}(Q^0_\infty) < \beta.
}
In other words, there exist at most $\beta k_i$ number of $a^k x_A$'s in $Q^0_\infty$, thus for any $y\in D_{i}$, we have
\[
|\{k\in\{0,\dots,k_{i}-1\}: a^{k}y \in Q_{\infty}\}| < \beta k_{i}.
\] 
From Lemma \ref{CovLem} with $L=W$ and \eqref{eqcusp}, if $Q$ is any non-empty atom of $\cQ^{(k_i)}$, 
fixing any $y\in D_i\cap Q$, the set
\eq{
D_{i}\cap Q = D_{i}\cap [y]_{\cQ^{(k_i)}} \subset E_{y,k_{i}-1}
} 
can be covered $Ce^{D\beta k_{i}}$ many $r_D^{1/r_1}e^{-k_{i}}$-balls for $d_{\mb{r}}$, 
where $C$ is a constant depending on $Q_\infty^0$ and $D$, 
but not on $k_i$. Since $D_{i}$ is $e^{-k_{i}}$-separated with respect to $d_{\mb{r}}$ and $r_D^{1/r_{1}}<\frac{1}{2}$, we get 
\eqlabel{eq:count}{\mathrm{Card}(D_i \cap Q)\leq Ce^{D\beta k_i }.}

Now let $\cA^W=(\cP^W)_0^\infty=\bigvee_{i=0}^{\infty}a^i \cP^W$ be as in Proposition \ref{algebracst} for $\mu$, 
$r_0$ and $L=W$, and let $\cA^W_\infty$ be as in \eqref{eq:tailalg}.\vspace{0.3cm}\\
\textbf{Claim}\; $H_{\nu_i}(\cQ^{(k_i)}|\cA^W_\infty) = H_{\nu_i}(\cQ^{(k_i)})$ for all large enough $\ell\geq 1$.
\begin{proof}[Proof of Claim]
Using the continuity of entropy, we have 
\eq{H_{\nu_i}(\cQ^{(k_i)}|\cA^W_\infty)=\lim_{\ell\to\infty}H_{\nu_i}(\cQ^{(k_i)}|(\cP^W)_\ell^\infty).}
Now we show $H_{\nu_i}(\cQ^{(k_i)}|(\cP^W)_\ell^\infty) = H_{\nu_i}(\cQ^{(k_i)})$ for all large enough $\ell\geq 1$.
Let $E_\del$ be the dynamical $\del$-boundary of $\cP$ as in Lemma \ref{Exceptional} for $\mu$ and $r_0$.
As mentioned in Remark \ref{basepoint}, we may assume that there exists $y\in \phi_A(\bT^m)$ such that 
$y\notin \partial \cP$. Since $E_\del = \bigcup_{k=0}^\infty a^k \partial_{d_0e^{-k\al}\del}\cP$, 
there exists $\del>0$ such that $y\in Y \setminus E_{\del}$. For any $\ell \geq 1$, we have
$a^{-\ell}y \in Y\setminus a^{-\ell}E_\del \subset Y\setminus E_\del$.
Hence, it follows from \eqref{excEq} and Proposition \ref{algebracst} that
\[
[y]_{(\cP^W)_\ell^\infty} = a^\ell[a^{-\ell}y]_{(\cP^W)_0^\infty} = a^\ell[a^{-\ell}y]_{\cA^W} \supset a^{\ell} B_{\del}^{W} a^{-\ell} y \supset B_{d_0 e^{\alpha\ell}\del}^{W} y.
\]
Since the support of $\nu_i$ is a set of finite points on a single compact $W$-orbit $\phi_A(\bT^m)$, 
$\nu_i$ is supported on a single atom of $(\cP^W)_\ell^\infty$ for all large enough $\ell\geq 1$.
This proves the claim.
\end{proof}
Combining \eqref{eq:count} and \textbf{Claim}, it follows that
\eqlabel{staticbd3}{
H_{\nu_i}(\cQ^{(k_i)}|\cA^W_\infty)= H_{\nu_i}(\cQ^{(k_i)})
 \geq \log |S_i|-D\beta k_i-\log C.
}

For any $q\ge 1$, write the Euclidean division of large enough $k_i-1$ by $q$ as
$$k_i-1=qk'+s \ \textrm{with} \ s\in\set{0,\cdots,q-1}.$$
By subadditivity of the entropy with respect to the partition, for each $p\in\set{0,\cdots,q-1}$,
$$H_{\nu_i}(\cQ^{(k_i)}|\cA^W_\infty)\leq H_{a^{p}\nu_i}(\cQ^{(q)}|\cA^W_\infty)+\cdots+H_{a^{p+qk'}\nu_i}(\cQ^{(q)}|\cA^W_\infty)+2q\log |\cQ|.$$
Summing those inequalities for $p=0,\cdots,q-1$, and using the concave property of entropy with respect to the measure, we obtain
\eq{qH_{\nu_i}(\cQ^{(k_i)}|\cA^W_\infty)\leq\displaystyle\sum_{k=0}^{k_i-1}H_{a^k \nu_i}(\cQ^{(q)}|\cA^W_\infty)_0^M+2q^2\log |\cQ|\leq k_iH_{\mu_i}(\cQ^{(q)}|\cA^W_\infty)+2q^2\log |\cQ|,
}
and it follows from \eqref{staticbd3} that
\eq{\frac{1}{q}H_{\mu_i}(\cQ^{(q)}|\cA^W_\infty)\ge \frac{1}{k_i}H_{\nu_i}(\cQ^{(k_i)}|\cA^W_\infty)-\frac{2q\log |\cQ|}{k_i}
\ge \frac{1}{k_i}\Bigl(\log |S_i|-D\beta k_i -\log C - 2q\log|\cQ|\Bigr).
}
Now we can take $i\to\infty$ because the atoms $Q$ of $\overline{\cQ}$ and hence of $\overline{\cQ}^{(q)}$, satisfy $\mu^\gamma(\partial Q)=0$. Also, the constants $C$ and $|\cQ|$ are independent to $k_i$. Thus we obtain
\[
    \frac{1}{q}H_{\mu^\gamma}(\overline{\cQ}^{(q)}|\overline{\cA^W_\infty})
    \ge 1-r_{1}(m+\gamma- \dim_H \mb{Bad}_A(\eps))-D\beta,
\]
By taking $\beta \to \ov{\mu}(\overline{Q_\infty})$ and $\gamma=\gamma_j \to 0$, 
the inequality \eqref{sttlowbdd} follows. 
%we have 
%$$\frac{1}{q}H_{\mu^\gamma}(\overline{\cQ}^{(q)}|\overline{\cA^W_\infty})
%    \ge  1-r_{1}(m+\gamma- \dim_H \mb{Bad}_A(\eps))-D\mu^{\gamma}(\overline{Q_\infty}).$$
%Recall that $\ga=\ga_j$, and by taking $j\to \infty$ so that $\ga_j \to 0$, we finally have \eqref{sttlowbdd}, i.e.,
%$$\frac{1}{q}H_{\ov{\mu}}(\overline{\cQ}^{(q)}|\overline{\cA^W_\infty})
%    \ge  1-r_{1}(m- \dim_H \mb{Bad}_A(\eps))-D\ov{\mu}(\overline{Q_\infty}).$$
\end{proof}

\subsection{The proof of Theorem \ref{thmEff1}}
In this subsection, we will estimate the dimension upper bound in Theorem \ref{thmEff1} using $a$-invariant measure with large relative entropy constructed in Proposition \ref{prop5} and the effective variational principle in Proposition \ref{effEL}. To use the effective variational principle, we need the following lemma.

For $x\in X$ and $H\geq 1$ we set:
$$\textrm{ht}(x)\defn\sup\set{\|gv\|^{-1}: x=gSL_d(\bZ), v\in\bZ^d\setminus\set{0}},$$
$$X_{\leq H}\defn\set{x\in X: \textrm{ht}(x)\leq H },\quad Y_{\leq H}\defn\pi^{-1}(X_{\le H}).$$
Note that $\textrm{ht}(x) \geq 1$ for any $x\in X$ by Minkowski's theorem and $X_{\leq H}$ and $Y_{\leq H}$ are compact sets for all $H\geq 1$ by Mahler's compact criterion.

\begin{lem}\label{volest}
Let $\cA$ be a countably generated sub-$\sigma$-algebra of Borel $\sigma$-algbera which is $a^{-1}$-descending and $W$-subordinate. Let us fix $y\in Y_{\leq H}$ and suppose that $B^{W,\mb{r}}_{\del}\cdot y\subset [y]_{\cA}\subset B^{W,\mb{r}}_{r}\cdot y$ for some $0<\del<r$. For any $0<\eps<1$, if $j_1\ge \log((2dH^{d-1})^{\frac{1}{r_m}}\del^{-1})$ and $j_2\ge \log((dH^{d-1})^{\frac{1}{s_n}}\eps^{-\frac{n}{d}})$, then $\tau_y^{a^{j_1}\cA}(a^{-j_2}\cL_\eps)\leq 1-e^{-j_1-j_2}r^{-1}\eps^{\frac{m}{d}}$, where $\tau_{y}^{a^{j_1}\cA}$ is as in Subsection \ref{sec2.4}.
\end{lem}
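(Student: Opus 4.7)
The plan is to identify $a^{-j_2}\cL_\eps^c$ inside the $(a^{j_1}\cA)$-atom through a Dani-type Diophantine correspondence and to bound the Haar ratio using the two given bounds on $V_y$. Writing $y=\left(\begin{smallmatrix}g_0 & v_0\\ 0 & 1\end{smallmatrix}\right)\Gamma$ with $g_0\in\SL_d(\bR)$, $v_0\in\bR^d$, and parametrizing $[y]_{a^{j_1}\cA}=F\cdot y$ by $b\in F\subset \bR^m$ via $b\mapsto w_by$, the measure $\tau_y^{a^{j_1}\cA}$ becomes normalized Lebesgue on $F$. Unpacking the definition of $\cL_\eps$, the condition $a^{j_2}w_by\in\cL_\eps^c$ is equivalent to the existence of $p\in\bZ^d$ with
\[
|(g_0p+v_0)_i+b_i|<\eps^{mr_i/d}e^{-r_ij_2}\ (i\le m),\qquad |(g_0p+v_0)_{m+j}|<\eps^{ns_j/d}e^{s_jj_2}\ (j\le n).
\]

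First I would use the hypothesis $y\in Y_{\leq H}$: the unimodular lattice $g_0\bZ^d$ has sup-norm shortest vector $\geq 1/H$, so Minkowski's second theorem forces $\lambda_d\leq H^{d-1}$, giving a sup-norm covering radius at most $\tfrac{d}{2}H^{d-1}$. Pick $p_0\in\bZ^d$ with $\|g_0p_0+v_0\|_\infty\leq \tfrac{d}{2}H^{d-1}$. The $j_2$ assumption rearranges to $\eps^{ns_j/d}e^{s_jj_2}\geq dH^{d-1}$ for every $j$ (using $s_j\geq s_n$, $\eps<1$, and $dH^{d-1}\geq 1$), so the vertical Diophantine inequalities are automatic for $p_0$. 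The horizontal ones then carve out a bad box $\tilde B_{p_0}\subset \bR^m$ centred at $-(g_0p_0+v_0)_{1:m}$ with half-sides $\eps^{mr_i/d}e^{-r_ij_2}$ and Lebesgue measure $2^m\eps^{m/d}e^{-j_2}$; every $b\in\tilde B_{p_0}$ satisfies $a^{j_2}w_by\in\cL_\eps^c$.

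What remains is to verify $\tilde B_{p_0}\subset F$ and $m_W(F)\leq 2^mre^{j_1}$. The $j_1$ hypothesis rearranges to $(\delta e^{j_1})^{r_i}\geq 2dH^{d-1}$ for every $i$, so the centre $|b^*_i|\leq \tfrac{d}{2}H^{d-1}$ together with the tiny half-sides places $\tilde B_{p_0}$ inside the quasi-ball $B^{W,\mb r}_{\delta e^{j_1}}$. I would then show that $F\supset B^{W,\mb r}_{\delta e^{j_1}}$ by combining $V_y\supset B^{W,\mb r}_\delta$ with the $a^{-1}$-descending inclusion $a^{-j_1}V_ya^{j_1}\subset V_{a^{-j_1}y}$ and then conjugating back by $a^{j_1}$; dually, $V_y\subset B^{W,\mb r}_r$ together with the Haar scaling $m_W(F)=e^{j_1}m_W(V_{a^{-j_1}y})$ yields $m_W(F)\leq 2^mre^{j_1}$. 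Combining,
\[
\tau_y^{a^{j_1}\cA}\bigl(a^{-j_2}\cL_\eps^c\bigr)\geq \frac{m_W(\tilde B_{p_0})}{m_W(F)}\geq \frac{2^m\eps^{m/d}e^{-j_2}}{2^mre^{j_1}}=e^{-j_1-j_2}r^{-1}\eps^{m/d},
\]
which is equivalent to the claimed upper bound on $\tau_y^{a^{j_1}\cA}(a^{-j_2}\cL_\eps)$.

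The main technical obstacle is the paired shape-propagation in the third paragraph: quantitatively transferring the two bounds on $V_y$ to the matching two bounds on the larger shape $F$, using only the $a^{-1}$-descending property and the calibrated threshold $j_1\geq\log\bigl((2dH^{d-1})^{1/r_m}\delta^{-1}\bigr)$. This threshold is tuned precisely so that the enlarged lower bound on $F$ can accommodate boxes centred as far as $\tfrac{d}{2}H^{d-1}$ from the origin; once that is established, the rest is a routine Minkowski counting and the Dani dictionary.
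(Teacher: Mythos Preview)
Your overall strategy coincides with the paper's: parametrize the $a^{j_1}\cA$-atom by a shape $F\subset\bR^m$, use Minkowski's second theorem together with the height bound $y\in Y_{\le H}$ to produce a grid point within distance $O(H^{d-1})$ of the origin, verify via the $j_2$ threshold that the last $n$ coordinates already satisfy the Diophantine constraint, and then check that the resulting ``bad box'' in the first $m$ coordinates sits inside $F$. The paper does exactly this, writing $y=w(b_0)g\Gamma$ with $b_0$ in a fundamental parallelepiped of $g\bZ^d$ (so $\|b_0\|\le dH^{d-1}$), which is equivalent to your choice of $p_0$.

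Where your argument breaks is precisely the step you flagged as the main obstacle. From the descending inclusion $a^{-j_1}V_ya^{j_1}\subset V_{a^{-j_1}y}$, conjugating back by $a^{j_1}$ yields only
\[
F=a^{j_1}V_{a^{-j_1}y}a^{-j_1}\supset a^{j_1}\bigl(a^{-j_1}V_ya^{j_1}\bigr)a^{-j_1}=V_y\supset B^{W,\mb r}_\delta,
\]
not $F\supset B^{W,\mb r}_{\delta e^{j_1}}$. The enlarged radius $\delta e^{j_1}\ge (2dH^{d-1})^{1/r_m}$ is exactly what is needed to swallow a box centred at distance $\tfrac{d}{2}H^{d-1}$ from the origin, so this is fatal. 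Dually, your upper bound $m_W(F)\le 2^m r e^{j_1}$ requires $m_W(V_{a^{-j_1}y})\le 2^m r$; but the hypothesis bounds $V_y$, not $V_{a^{-j_1}y}$, and the descending inclusion goes the wrong way to transfer an upper bound to $a^{-j_1}y$. Neither side of the sandwich on $F$ can be obtained from the hypothesis at $y$ together with the descending property alone.

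What is actually needed is the two-sided shape bound at the \emph{translated} point: if $B^{W,\mb r}_\delta\cdot(a^{-j_1}y)\subset[a^{-j_1}y]_\cA\subset B^{W,\mb r}_r\cdot(a^{-j_1}y)$, then conjugating by $a^{j_1}$ immediately gives $B^{W,\mb r}_{\delta e^{j_1}}\subset F\subset B^{W,\mb r}_{re^{j_1}}$. The paper's proof asserts exactly these bounds on $[y]_{a^{j_1}\cA}$ in one line; in the downstream application the upper bound holds for every $y\in Y$ and the lower bound holds on the set $a^{-k}Z$, so the required input at the translated point is available there. Your proof would be repaired by taking the hypothesis at $a^{-j_1}y$ rather than trying to manufacture it from the descending property.
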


\begin{proof}
For $x=\pi(y)\in X_{\leq H}$, there exists $g\in SL_d(\bR)$ such that $x=gSL_d(\bZ)$ and $\displaystyle\inf_{v\in\bZ^d\setminus\set{0}}\|gv\|\ge H^{-1}$. By Minkowski's second theorem with a convex body $[-1,1]^d$, we can choose vectors $gv_1,\cdots,gv_d$ in $g\bZ^d$ so that $\displaystyle\prod_{i=1}^{d}\|gv_i\|\leq 1$. Then for any $1\leq i\leq d$, $$\|gv_i\|\leq \displaystyle\prod_{j\neq i}\|gv_j\|^{-1} \leq H^{d-1}.$$ Let $\Delta\subset \bR^d$ be the parallelepiped generated by $gv_1,\cdots, gv_d$, then $\|b\|\leq dH^{d-1}$ for any $b\in \Delta$. It follows that $\|b^+\|_{\mb{r}}\leq (dH^{d-1})^{\frac{1}{r_m}}$ and $\|b^-\|_{\mb{s}}\leq (dH^{d-1})^{\frac{1}{s_n}}$ for any $b=(b^+,b^-)\in\Del$, where $b^+\in\bR^m$ and $b^-\in\bR^n$. Note that the set $\pi^{-1}(x)\subset Y$ is parametrized as follows: $$\pi^{-1}(x)=\set{w(b)g\Gamma\in Y: b\in\Del}.$$ Write $y=w(b_0)g\Gamma$ for some $b_0=(b_0^+,b_0^-)\in \Del$. Denote by $V_y\subset W$ the shape of $\cA$-atom so that $V_y\cdot y=[y]_{a^{j_1}\cA}$, and $\Xi\subset\bR^m$ the corresponding set to $V_y$ containing $0$ given by the canonical bijection between $W$ and $\bR^m$. Since $a^{j_1}$ expands the $\mb{r}$-quasinorm with the ratio $e^{j_1}$, we have $B^{W,\mb{r}}_{e^{j_1}\del}\cdot y\subset [y]_{a^{j_1}\cA}\subset B^{W,\mb{r}}_{e^{j_1}r}\cdot y$, i.e.
$B^{\bR^m,\mb{r}}_{e^{j_1}\del}\subset \Xi\subset B^{\bR^m,\mb{r}}_{e^{j_1}r}.$ Then the atom $[y]_{a^{j_1}\cA}$ is parametrized as follows:
$$[y]_{a^{j_1}\cA}=\set{w(b)g \Gamma: b=(b^+,b^-_0), b^+\in b^+_0+\Xi},$$
and $\tau_y^{a^{j_1}\cA}$ can be considered as the normalized Lebesgue measure on the set $b^+_0+\Xi\subset \bR^m$.

\begin{figure}
\begin{tikzpicture}
\filldraw[red!20!white] (-0.2,-2) rectangle (0.2,3);
\draw[red,thick] (-0.2,-2) rectangle (0.2,3);
\draw[white,thick] (-0.2,-2) -- (0.2,-2);
\draw[->] (-2,0) -- (4,0) node[anchor=west] {\tiny{$\bR^{m}$}};
\draw[->] (0,-2) -- (0,4) node[anchor=south] {\tiny{$\bR^{n}$}};
\draw[gray] (0,0) -- (0.4,-0.4);
\draw[gray] (0,0) -- (2,2);
\draw[gray] (0.4,-0.4) -- (2.4,1.6);
\draw[gray] (2.4,1.6) -- (2,2);
\fill[blue] (1.3,1) circle (1.5pt);
\draw (1.3,1) node[anchor=north,blue] {\tiny{$b_{0}$}};
\draw[very thin,dashed] (2.4,0) node[anchor=north] {\tiny{$dH^{d-1}$}} -- (2.4,2.4);
\draw[very thin,dashed] (0,2.4) -- (2.4,2.4);
\draw (-0.7,3) node[anchor=south,red] {$\Theta^{+}\times\Theta^{-}$};
\draw[blue,very thin] (-0.5,1) -- (3.1,1) node[anchor=west] {$[y]_{a^{j_{1}}\mathcal{A}}$};

\end{tikzpicture}
\caption{Intersection of $\Theta^{+}\times\Theta^{-}$ and $[y]_{a^{j_{1}}\mathcal{A}}$}
\label{atx}
\end{figure}
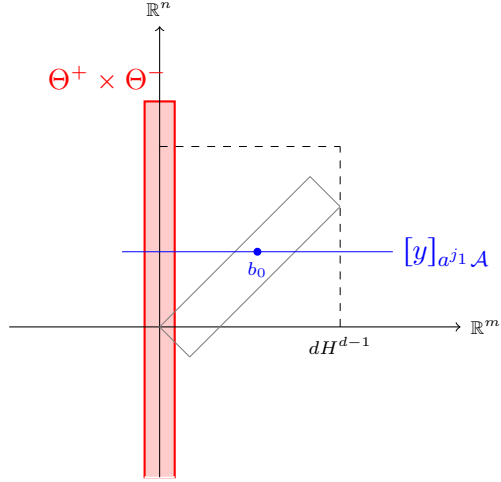

Let us consider the following sets: 
$$\Theta^+\defn\set{b^+\in\bR^m: \|b^+\|_{\mb{r}}\leq e^{-j_2}\eps^{\frac{m}{d}}}
\ \text{ and }\ \Theta^-\defn\set{b^-\in\bR^n: \|b^-\|_{\mb{s}}\leq e^{j_2}\eps^{\frac{n}{d}}}.$$
If $b=(b^+,b^-)\in\Theta^+\times\Theta^-$, then $\|e^{\mb{r}j_2}b^+\|_{\mb{r}}\leq \eps^{\frac{m}{d}}$ and 
$\|e^{-\mb{s}j_2}b^-\|_{\mb{s}}\leq \eps^{\frac{n}{d}}$, where $e^{\mb{r}j_2}b^+$ and $e^{-\mb{s}j_2}b^-$ 
denote the vectors such that $a^{j_2}b=(e^{\mb{r}j_2}b^+,e^{-\mb{s}j_2}b^-)$. It follows that $w(b)g\Gamma\notin a^{-j_2}\cL_\eps$ since 
$$a^{j_2}w(b^+,b^-)g\Gamma=w(e^{\mb{r}j_2}b^+,e^{-\mb{s}j_2}b^-)a^{j_2}g\Gamma\notin\cL_\eps$$ by the definition of $\cL_\eps$. 

Now we claim that the set $\Theta^+\times\set{b_0^{-}}$ is contained in the intersection of $(b_0^++\Xi)\times\set{b_0^{-}}$ and 
$\Theta^+\times\Theta^-$. See Figure \ref{atx}. 
It is enough to show that $\Theta^+ \subset b_0^+ + \Xi$ and $b_0^- \in \Theta^-$.
Since $\|b_0^-\|_s \leq (dH^{d-1})^{\frac{1}{s_n}}$, the latter assertion follows from the assumption 
$j_2\ge \log((dH^{d-1})^{\frac{1}{s_n}}\eps^{-\frac{n}{d}})$. To show the former assertion, fix any $b^+ \in \Theta^+$. 
By the quasi-metric property of $\|\cdot\|_{\mb{r}}$ as in \eqref{quasitriang}, it follows from the assumptions $j_1\ge \log((2dH^{d-1})^{\frac{1}{r_m}}\del^{-1})$ and $j_2\ge \log((dH^{d-1})^{\frac{1}{s_n}}\eps^{-\frac{n}{d}})$ that
\[
\begin{split}
\|b^+ - b_0^+\|_{\mb{r}} &\leq 2^{\frac{1-r_m}{r_m}}(\|b^+\|_{\mb{r}}+\|b_0^+\|_{\mb{r}})\leq 2^{\frac{1-r_m}{r_m}}(e^{-j_2}\eps^{\frac{m}{d}} + (dH^{d-1})^{\frac{1}{r_m}})\\
&\leq 2^{\frac{1-r_m}{r_m}}((dH^{d-1})^{-\frac{1}{s_n}}\eps + (dH^{d-1})^{\frac{1}{r_m}})
\leq 2^{\frac{1-r_m}{r_m}+1}(dH^{d-1})^{\frac{1}{r_m}} \\
&\leq e^{j_1}\del.
\end{split}
\] Thus we have $b^+ \in b_0^+ +B_{e^{j_1}\del}^{\bR^m,\mb{r}} \subset b_0^+ +\Xi $, which concludes the former assertion.

By the above claim, we obtain
\eq{
1-\tau_y^{a^{j_1}\cA}(a^{-j_2}\cL_\eps)=\tau_y^{a^{j_1}\cA}(Y\setminus a^{-j_2}\cL_\eps)
\geq \frac{m_{\bR^m}(\Theta^{+})}{m_{\bR^m}(b_0^{+}+\Xi)}
\geq \frac{m_{\bR^m}(B^{\bR^m,\mb{r}}_{e^{-j_2}\eps^{\frac{m}{d}}})}{m_{\bR^m}(B^{\bR^m,\mb{r}}_{e^{j_1}r})}
=\frac{e^{-j_2}\eps^{\frac{m}{d}}}{e^{j_1}r}.
}
This proves the lemma. 
\end{proof}

\begin{proof}[Proof of Theorem \ref{thmEff1}]
Suppose that $A\in M_{m,n}(\R)$ is not singular on average, and let 
$$\eta_A=\sup\set{\eta:x_A \ \textrm{has} \ \eta\textrm{-escape of mass}}<1.$$ 
By Proposition \ref{prop5}, there is an $a$-invariant measure $\ov{\mu}\in\crly{P}(\overline{Y})$ such that 
$$\Supp\ov{\mu}\subset\cL_\eps\cup(\overline{Y}\setminus Y),\; \pi_*\ov{\mu}=\mu_A\in\crly{P}(\overline{X}),\; 
\text{and}\; \ov{\mu}(\overline{Y}\setminus Y)=\mu_A(\overline{X}\setminus X)=\eta_A.$$
This measure can be represented by the linear combination $$\ov{\mu}=(1-\eta_A)\mu+\eta_A\del_\infty,$$ 
where $\del_\infty$ is the dirac delta measure on $\overline{Y}\setminus Y$ and $\mu\in\crly{P}(Y)$ is $a$-invariant.
There is a compact set $K\subset X$ such that $\mu_A(K)>0.99\mu_A(X)$. We can choose $0<r<1$ such that $Y(r)\supset \pi^{-1}(K)$ 
and $\mu(Y(r))>0.99$. Note that the choice of $r$ is independent of $\eps$ since $\mu_A$ is only determined by fixed $A$. 

Let $\cA^W$ be as in Proposition \ref{algebracst} for $\mu$, $r_0$, and $L=W$, 
and let $\cA^W_\infty$ be as in \eqref{eq:tailalg}.
It follows from \eqref{entropy'} of Proposition \ref{prop5} that
$$h_{\ov{\mu}}(a|\overline{\cA^W_\infty})\ge (1-\eta_A)-r_1(m-\dim_H \mb{Bad}_A(\eps)).$$ 
Since the entropy function is linear with respect to the measure, it follows that
\eq{h_{\mu}(a|\cA^W_\infty)= \frac{1}{1-\eta_A}h_{\ov{\mu}}(a|\overline{\cA^W_\infty})\ge 1-\frac{r_1}{1-\eta_A}(m-\dim_H \mb{Bad}_A(\eps)).}
By Proposition \ref{algexiA}, we obtain
\eqlabel{entest}{H_{\mu}(\cA^W|a\cA^W)\ge 1-\frac{r_1}{1-\eta_A}(m-\dim_H \mb{Bad}_A(\eps)).}

By Lemma \ref{Exceptional}, there exists $0<\del<\min((\frac{cr_0}{16d_0})^2,r)$ such that the dynamical $\del$-boundary 
has measure $\mu(E_\del)<0.01$. 
Note that since $r_0$ depends only on $G$, the constants $C_1,C_2>0$ in Lemma \ref{Exceptional}
depend only on $a$ and $G$, hence $\del$ is independent of $\eps$ even if the set $E_\del$ might depend on $\eps$. 
We write $Z=Y(r)\setminus E_\del$ for simplicity. Note that $\mu(Z)\ge\mu(Y(r))-\mu(E_\del)>0.98$.

To apply Lemma \ref{volest}, choose $H\geq 1$ such that \eqlabel{eqH}{Y(r) \subset Y_{\leq H}.} Note that the constant $H$ depends only on $r$.
Set
$$j_1=\lceil\log((2dH^{d-1})^{\frac{1}{r_m}}{\del'}^{-1})\rceil\quad\text{and}\quad 
j_2=\lceil\log((dH^{d-1})^{\frac{1}{s_n}}\eps^{-\frac{n}{d}})\rceil,$$
where $\del'>0$ will be determined below.

Let $\cA=a^{-k}\cA^W$ for $k=\lceil\log(2^{\frac{1}{r_m}}\eps^{-\frac{m}{d}})\rceil+j_2$. 
By Proposition \ref{algebracst}, $[y]_{\cA^W} \subset B_{r_0}^{W}\cdot y$ for all $y\in Y$, and 
$B_{\del}^{W}\cdot y \subset [y]_{\cA^W}$ for all $y\in Z$ since $\del < r$.
It follows from \eqref{eqbilip} that
\[
\forall y\in Y,\ [y]_{\cA^W}\subset B^{W,d_\infty}_{C_0 r_0}\cdot y \qquad\text{and}\qquad
\forall y\in Z,\ B^{W,d_\infty}_{\del/C_0}\cdot y\subset[y]_{\cA^W},
\]
where $B^{W,d_\infty}_r$ is the $d_\infty$-ball of radius $r$ around the identity in $W$.
For simplicity, we may assume that $r_0 < \frac{1}{C_0}$ by choosing $r_0$ small enough.
This implies that 
\eq{
\forall y\in Y,\ [y]_{\cA^W}\subset B^{W,\mb{r}}_{1}\cdot y \qquad\text{and}\qquad
\forall y\in Z,\ B^{W,\mb{r}}_{(\del/C_0)^{\frac{1}{r_m}}}\cdot y\subset[y]_{\cA^W}.
}
Thus, for any $y\in Y$,
\eqlabel{eqWatombd}{
[y]_{\cA}=a^{-k}[a^k y]_{\cA^W} \subset a^{-k}B^{W,\mb{r}}_{1} a^k \cdot y =B^{W,\mb{r}}_{e^{-k}}\cdot y \subset B^{W,\mb{r}}_{r'}\cdot y,
}
where $r'=2^{-\frac{1}{r_m}}e^{-j_2}\eps^{\frac{m}{d}}$.
Similary, it follows that for any $y\in a^{-k}Z$,
\eqlabel{eqWatombd2}{B^{W,\mb{r}}_{\del'}\cdot y\subset [y]_{\cA}\subset B^{W,\mb{r}}_{r'}\cdot y,}
where $\del'=e^{-1}(\del/C_0)^{\frac{1}{r_m}}r'$.
 
Now we will use Corollary \ref{effELcor} with $L=W$, $K=Y$, and $B=B^{W,\mb{r}}_{r'}$. 
Note that the maximal entropy contribution of $W$ for $a^{j_1}$ is $j_1$, and $\mu$ is supported on $a^{-j_2}\cL_\eps$ since $\Supp\mu\subseteq\cL_\eps$ and $\mu$ is $a$-invariant. 
%Since $(B^{W,\mb{r}}_{r'})^2 \subset B^{W,\mb{r}}_{2^{\frac{1}{r_m}}r'}= B^{W,\mb{r}}_{e^{-j_2}2^{-\frac{1}{r_m}}\eps^{\frac{m}{d}}}$ from \eqref{quasitriang}, 
Thus we have
\eqlabel{eqsuppbd}{
B^{W,\mb{r}}_{r'}\Supp \mu \subset B^{W,\mb{r}}_{r'}a^{-j_2}\cL_\eps
=a^{-j_2}B^{W,\mb{r}}_{e^{j_2}r'}\cL_\eps = a^{-j_2}B^{W,\mb{r}}_{2^{-\frac{1}{r_m}}\eps^{\frac{m}{d}}}\cL_\eps \subset a^{-j_2}\cL_{2^{-\frac{d}{mr_m}}\eps}}
by using the triangular inequality of $\mb{r}$-quasinorm as in \eqref{quasitriang} and the definition of $\cL_\eps$ for the last inclusion.
Using \eqref{eqWatombd}, it follows from \eqref{eqsuppbd} and Corollary \ref{effELcor} with $L=W$, $K=Y$, and $B=B^{W,\mb{r}}_{r'}$ that
\eqlabel{est1}{
H_\mu(\cA|a^{j_1}\cA)\leq j_1+\int_Y\log\tau^{a^{j_1}\cA}_y\left(a^{-j_2}\cL_{2^{-\frac{d}{mr_m}}\eps}\right)d\mu(y)}
Using \eqref{eqWatombd2}, it follows from Lemma \ref{volest} with $\del=\del'$ and $r=r'$ that 
for any $y\in a^{-k} Z\cap Y_{\leq H}$,
$$\tau^{a^{j_1}\cA}_y\left(a^{-j_2}\cL_{2^{-\frac{d}{mr_m}}\eps}\right)\leq 1-2^{-\frac{1}{r_m}}e^{-j_1-j_2}r'^{-1}\eps^{\frac{m}{d}}=1-e^{-j_1},$$
hence $-\log\tau^{a^{j_1}\cA}_y(a^{-j_2}\cL_{2^{-\frac{d}{mr_m}}\eps})\ge e^{-j_1}$.
Since $\mu(a^{-k} Z\cap Y_{\leq H})\geq \frac{1}{2}$, it follows from \eqref{est1} that
\eqlabel{est2}{
\begin{aligned}
1-H_\mu(\cA^W|a\cA^W)&=1-\frac{1}{j_1}H_\mu(\cA^W|a^{j_1}\cA^W)=1-\frac{1}{j_1}H_\mu(\cA|a^{j_1}\cA)\\
&\ge-\frac{1}{j_1}\int_{a^{-k} Z\cap Y_{\leq H}}\log\tau^{a^{j_1}\cA}_y(a^{-j_2}\cL_{2^{-\frac{d}{mr_m}}\eps})d\mu(y)
\ge \frac{e^{-j_1}}{2j_1}.
\end{aligned}}
Recall that $j_1$ is chosen by 
\begin{align*}
j_1=\lceil\log((2dH^{d-1})^{\frac{1}{r_m}} e(\del/C_0)^{-\frac{1}{r_m}}2^{\frac{1}{r_m}}e^{j_2}\eps^{-\frac{m}{d}})\rceil
&\leq\lceil\log((2dH^{d-1})^{\frac{1}{r_m}+\frac{1}{s_n}} e^2 (\del/C_0)^{-\frac{1}{r_m}}2^{\frac{1}{r_m}}
\eps^{-\frac{n}{d}}\eps^{-\frac{m}{d}})\rceil\\
&\leq \log((2dH^{d-1})^{\frac{1}{r_m}+\frac{1}{s_n}} e^3 (\del/C_0)^{-\frac{1}{r_m}}2^{\frac{1}{r_m}})
-\log\eps
\end{align*}
Here, the constants $H$ and $\del$ depend on fixed $A\in M_{m,n}(\bR)$, not on $\eps$. 
Combining \eqref{entest} and \eqref{est2}, we obtain
$$m-\dim_H \mb{Bad}_A(\eps)\geq c(A)\frac{\eps}{\log(1/\eps)},$$
where the constant $c(A)>0$ depends only on $d$, $\mb{r}$, $\mb{s}$, and $A\in M_{m,n}(\bR)$. It completes the proof.
\end{proof}

\section{Upper bound for Hausdorff dimension of $\mb{Bad}^{b}(\eps)$}\label{sec:entropyboundb}
In this section, as explained in the introduction, we only consider the unweighted setting, that is, 
\eq{
\mb{r}=(1/m,\dots,1/m)\quad\text{and}\quad \mb{s}=(1/n,\dots,1/n).
}

\subsection{Constructing measure with entropy lower bound}
Similar to Subsection \ref{sec4.1}, we will construct an $a$-invariant measure on $Y$ with a lower bound on the conditional entropy to the $\sigma$-algebra $\cA^U_\infty$ obtained in \eqref{eq:tailalg} and Proposition \ref{algebracst} with $L=U$. 
To control the amount of escape of mass for the desired measure, we need a modification of \cite[Theorem 1.1]{KKLM} as Proposition \ref{KKLM'} below.

%\begin{thm}\cite[Remark 2.1]{KKLM}\label{KKLM}
%For any $x\in X$, the set
%$$\set{u\in U| ux\ \textrm{has} \ \del\textrm{-escape of mass on average}}$$
%has Hausdorff dimension at most $mn-\frac{\del mn}{m+n}$.
%\end{thm}
%
%\begin{rem}
%For the definition of escape of mass, we follow the definition of singularity on average in \cite{DFSU}, Section 1.4. \cite{KKLM} uses $\lim$ instead of $\liminf$ for the definition of escape of mass.
%\end{rem}
For any compact set $\mathfrak{S}\subset X$ and positive integer $k>0$,
and any $0<\eta<1$, let
\begin{align*}
 %E_\eta&\defn\set{A\in \bT^{mn}\subset M_{m,n}(\bR): x_A \ \textrm{has} \ \eta\textrm{-escape of mass on average}}\\
 %F_{\eta,\mathfrak{S}}&\defn\set{A\in \bT^{mn}\subset M_{m,n}(\bR):\frac{1}{k}\displaystyle\sum_{i=0}^{k-1}\del_{a^i x_A}
 %(X\setminus \mathfrak{S})<\eta \  \textrm{for infinitely many} \  k},\\
 F_{\eta,\mathfrak{S}}^k&\defn\set{A\in \bT^{mn}\subset M_{m,n}(\bR):\frac{1}{k}\displaystyle\sum_{i=0}^{k-1}\del_{a^i x_A}
 (X\setminus \mathfrak{S})<\eta}.
\end{align*}
%We note that for any sequence of increasing compact sets $\set{\mathfrak{S}_j}$ exhausting $X$, $\bT^{mn}\setminus E_\eta=\displaystyle\bigcup_{j=1}^\infty F_{\eta,\mathfrak{S}_j}=\bigcup_{j=1}^{\infty}\limsup_{k\to\infty}F_{\eta,\mathfrak{S}_\eta}^k$ holds.

Given a compact set $\mathfrak{S}$ of $X$, $k\in\bN,\eta\in(0,1)$, and $t\in\bN$, define the set
$$Z(\mathfrak{S},k,t,\eta):=\set{A\in\bT^{mn}:\frac{1}{k}\displaystyle\sum_{i=0}^{k-1}\del_{a^{ti} x_A}
 (X\setminus \mathfrak{S})\ge\eta};$$
in other words, the set of $A\in\bT^{mn}$ such that up to time $k$, the proportion of times $i$ for which the orbit point $a^{ti}x_A$ is in the complement of $\mathfrak{S}$ is at least $\eta$. 
The following theorem is one of the main results in \cite{KKLM}.
%In \cite{KKLM}, Theorem \ref{KKLM} is actually deduced from the following covering result.

\begin{thm}\cite[Theorem 1.5]{KKLM}\label{KKLMcov}
There exists $t_0>0$ and $C>0$ such that the following holds. For any $t>t_0$ there exists a compact set $\mathfrak{S}:=\mathfrak{S}(t)$ of $X$ such that for any $k\in\bN$ and $\eta\in(0,1)$, the set $Z(\mathfrak{S},k,t,\eta)$ can be covered with $Ct^{3k}e^{(m+n-\eta)mntk}$ balls in $\bT^{mn}$ of radius $e^{-(m+n)tk}$.
\end{thm}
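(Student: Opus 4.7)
The plan is to reduce the covering estimate to a Diophantine counting problem through Dani's correspondence, and then to carry out a multi-scale lattice-point count. For $t$ sufficiently large, choose the compact set $\mathfrak{S}\subset X$ via Mahler's criterion so that $x\notin\mathfrak{S}$ precisely when the associated unimodular lattice contains a nonzero vector of norm at most some explicit constant $\beta=\beta(t)$. Under this choice, the Dani correspondence translates $a^{ti}x_A\notin\mathfrak{S}$ into the existence of a nonzero $(p_i,q_i)\in\bZ^m\times\bZ^n$ with $e^{ti/m}\|Aq_i-p_i\|_\infty\leq\beta$ and $e^{-ti/n}\|q_i\|_\infty\leq\beta$. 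Thus every $A\in Z(\mathfrak{S},k,t,\eta)$ produces a subset $S\subset\{0,\dots,k-1\}$ with $|S|\geq\eta k$ together with a witness sequence $v_i=(p_i,q_i)$ for $i\in S$.

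The heart of the argument is then a counting and covering estimate organized by the pair $(S,(v_i)_{i\in S})$. The number of admissible subsets $S$ is at most $\binom{k}{\lceil\eta k\rceil}\leq 2^k$, a subexponential contribution that gets absorbed into the $t^{3k}$ polynomial overhead. For a fixed $S$, one counts witness sequences inductively along the escape times. Between consecutive escape times $i<i'$ in $S$, the naive number of integer vectors satisfying the Dani-type size constraint at time $ti'$ that are ``reachable'' from $v_i$ under $a^{t(i'-i)}$ is of order $e^{(m+n)t(i'-i)}$; however, the short-vector constraint at time $ti'$ confines the candidate $v_{i'}$ to a box which is short in $mn$ out of the $(m+n)t$ logarithmic directions, cutting the count by a factor comparable to $\beta^{mn}$. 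Summing the logarithmic gain over the $\geq\eta k$ escape times yields the improvement $e^{-\eta mntk}$ relative to the unrestricted count $e^{(m+n)mntk}$.

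Finally, for each admissible witness sequence the set of $A$ compatible with it is pinned down up to an affine tube of diameter $e^{-(m+n)tk}$ in $\bT^{mn}$, as one can see by solving the linear system $Aq_i\equiv p_i\pmod{1}$ at the finest scale $i\approx k$; such a tube is covered by $O(1)$ balls of radius $e^{-(m+n)tk}$. Multiplying the three counts — at most $2^k$ subsets $S$, at most $C^kt^{3k}e^{(m+n-\eta)mntk}/2^k$ witness sequences, and $O(1)$ balls per sequence — produces the claimed covering bound.

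The main obstacle I anticipate is the induction in the middle step when successive witness vectors $v_i,v_{i'}$ fail to be in ``general position'' (for instance when they are parallel, or when the span of the $v_i$'s drops rank across several escape times). In those cases the naive lattice-point count overcounts and must be replaced by a flag-type stratification argument, tracking not just shortest vectors but the full filtration by short sublattices; this is also where the polynomial factor $t^{3k}$ and the integral-geometric volume estimates on the flag variety of $\bR^d$ enter. A clean bookkeeping of this induction, uniform in $k$, is the technical core of \cite{KKLM} and would take the most effort to reproduce carefully.
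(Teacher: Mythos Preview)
The paper does not contain a proof of this statement: Theorem~\ref{KKLMcov} is quoted verbatim as \cite[Theorem 1.5]{KKLM} and is used as a black box input to Proposition~\ref{KKLM'}. There is therefore nothing in the present paper to compare your proposal against.

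That said, your sketch does not match the actual method of \cite{KKLM} either. The argument there is not a direct Diophantine witness-counting scheme. Instead it proceeds via a Margulis-type height function built from covolumes of rational sublattices (in the spirit of Eskin--Margulis--Mozes) and an integral contraction inequality: one shows that averaging the height function over a horospherical box under $a^t$ yields a contraction factor plus an additive constant, and then iterates this along trajectories. The covering count and the polynomial overhead $t^{3k}$ come out of the iteration and a discretization of the horospherical box, not from enumerating integer witness vectors or stratifying by flag type. Your description of the ``main obstacle'' (degenerate configurations of witness vectors) is not where the difficulty lies in \cite{KKLM}; the height-function machinery handles all sublattice scales simultaneously and sidesteps any casework on rank drops. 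Your proposal as written is too heuristic to be a proof: the step counting ``reachable'' $v_{i'}$ from $v_i$ and the claimed factor $\beta^{mn}$ per escape time are not justified, and the final step asserting that a witness sequence pins $A$ to a tube of diameter $e^{-(m+n)tk}$ is also unclear when the $q_i$'s need not be linearly independent.
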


The following proposition is a slightly stronger variant of \cite[Theorem 1.1]{KKLM} which will be needed later. We prove this using Theorem \ref{KKLMcov}.

\begin{prop}\label{KKLM'}
There exists a family of compact sets $\set{\mathfrak{S}_\eta}_{0<\eta< 1}$ of $X$ such that the following is true. For any $0<\eta\leq 1$,
\eqlabel{KKLM''}{\dim_H(\bT^{mn}\setminus \limsup_{k\to\infty}\displaystyle\bigcap_{\eta'\ge\eta}F^k_{\eta',\mathfrak{S}_{\eta'}})\leq mn-\frac{\eta mn}{2(m+n)}.}
\end{prop}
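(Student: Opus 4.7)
The plan is to construct a countable, piecewise-constant family $\{\mathfrak{S}_\eta\}$, reduce the bad set via pigeonhole to a finite union of limsup-sets, and bound each through Theorem~\ref{KKLMcov} together with a Borel--Cantelli argument. Set $\eta_j:=2^{-j}$ for $j\geq 0$; for each $j$ pick an integer $t_j>t_0$ large enough that $3\log t_j/t_j \leq \eta_{j+1}mn/2$; and with $\mathfrak{S}(t_j)$ supplied by Theorem~\ref{KKLMcov}, define
\[
\mathfrak{S}^{(j)} := \bigcup_{s=0}^{t_j-1} a^s \mathfrak{S}(t_j),
\]
a compact subset of $X$. Declare $\mathfrak{S}_{\eta'}:=\mathfrak{S}^{(j)}$ for every $\eta'\in[\eta_{j+1},\eta_j)$. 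Fix $\eta\in(0,1]$ and let $j^*$ satisfy $\eta\in[\eta_{j^*+1},\eta_{j^*})$. If $A$ lies in the complement of $\limsup_k\bigcap_{\eta'\geq\eta}F^k_{\eta',\mathfrak{S}_{\eta'}}$, then for all large $k$ there exists $\eta'(k)\geq\eta$ with $A\notin F^k_{\eta'(k),\mathfrak{S}_{\eta'(k)}}$. Since $[\eta,1)$ meets only the finitely many pieces indexed by $0\leq j\leq j^*$, pigeonhole over $k$ produces some $j$ for which $\eta'(k)$ lies in the $j$-th piece for infinitely many $k$; for those $k$, $\mathfrak{S}_{\eta'(k)}=\mathfrak{S}^{(j)}$ and $\tfrac{1}{k}\sum_{i<k}\delta_{a^i x_A}(X\setminus \mathfrak{S}^{(j)})\geq \max(\eta,\eta_{j+1})$. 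Thus the bad set is contained in a finite union, over $j=0,\dots,j^*$, of sets of the form $\limsup_k\{A:\tfrac{1}{k}\sum_{i<k}\delta_{a^i x_A}(X\setminus\mathfrak{S}^{(j)})\geq\max(\eta,\eta_{j+1})\}$.

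Next I convert each such limsup into the form handled by Theorem~\ref{KKLMcov}. Restricting to $k=t_j\ell$ and partitioning $\{0,\dots,t_j\ell-1\}$ into $\ell$ consecutive blocks of length $t_j$, the defining inequality rewrites as $\sum_{p=0}^{\ell-1}N_p\geq t_j\ell\max(\eta,\eta_{j+1})$ where $N_p:=\#\{i\in[pt_j,(p+1)t_j):a^i x_A\notin\mathfrak{S}^{(j)}\}\in\{0,\dots,t_j\}$. Since each $N_p$ is at most $t_j$, pigeonhole yields at least $\ell\max(\eta,\eta_{j+1})$ indices $p$ with $N_p\geq 1$, equivalently with $a^{pt_j}x_A\notin\bigcap_{s=0}^{t_j-1}a^{-s}\mathfrak{S}^{(j)}$. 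The choice of $\mathfrak{S}^{(j)}$ gives $\mathfrak{S}(t_j)\subseteq \bigcap_{s=0}^{t_j-1}a^{-s}\mathfrak{S}^{(j)}$: for any $y\in\mathfrak{S}(t_j)$ and $s\in[0,t_j)$ one has $a^s y\in a^s\mathfrak{S}(t_j)\subseteq\mathfrak{S}^{(j)}$. Hence $a^{pt_j}x_A\notin\mathfrak{S}(t_j)$ for at least $\ell\max(\eta,\eta_{j+1})$ indices $p$, i.e.\ $A\in Z(\mathfrak{S}(t_j),\ell,t_j,\max(\eta,\eta_{j+1}))$.

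Finally, by Theorem~\ref{KKLMcov} and with $\eta^{(j)}:=\max(\eta,\eta_{j+1})$, the set $Z(\mathfrak{S}(t_j),\ell,t_j,\eta^{(j)})$ is covered by $Ct_j^{3\ell}e^{(m+n-\eta^{(j)})mnt_j\ell}$ balls of radius $e^{-(m+n)t_j\ell}$, so the total $(mn-s)$-Hausdorff sum over $\ell$ is a geometric series with ratio $t_j^3\exp\{t_j[(m+n)s-\eta^{(j)}mn]\}$. This converges whenever $s<\eta^{(j)}mn/(m+n)-3\log t_j/((m+n)t_j)$. The size requirement $3\log t_j/t_j\leq\eta_{j+1}mn/2\leq\eta^{(j)}mn/2$ then permits $s$ up to $\eta^{(j)}mn/(2(m+n))\geq\eta mn/(2(m+n))$, yielding $\dim_H\leq mn-\eta mn/(2(m+n))$ for each $j$-limsup, hence for their finite union. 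The main obstacle lies in the second step: the $a^i$-orbit averages in $F^k_{\eta',\mathfrak{S}_{\eta'}}$ must be matched to the sparse $a^{t_j i}$-averages that Theorem~\ref{KKLMcov} controls, and the enlargement $\mathfrak{S}^{(j)}=\bigcup_s a^s\mathfrak{S}(t_j)$ is tailored precisely so that this conversion preserves the $\eta$-density on passage to $Z(\mathfrak{S}(t_j),\ell,t_j,\cdot)$.
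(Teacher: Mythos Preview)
Your approach is essentially the same as the paper's: a piecewise-constant family $\{\mathfrak{S}_\eta\}$, the thickening $\bigcup_s a^s\mathfrak{S}(t_j)$ to convert time-$1$ averages to time-$t_j$ averages, a pigeonhole reduction to finitely many levels, and then Theorem~\ref{KKLMcov} plus Borel--Cantelli. The paper indexes levels by the integers $l=t_\eta$ rather than dyadically, and uses a symmetric window $\bigcup_{|t|\le t_\eta}a^t\mathfrak{S}''_\eta$, but these are cosmetic.

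There is one small gap. After pigeonholing to a fixed $j$, you have infinitely many $k$ with $\tfrac{1}{k}\sum_{i<k}\delta_{a^i x_A}(X\setminus\mathfrak{S}^{(j)})\geq\eta^{(j)}$, and you then write ``Restricting to $k=t_j\ell$''. But nothing guarantees that any of these $k$ are multiples of $t_j$, so the passage to the block decomposition is not justified as stated. The fix is easy: for each such $k$ set $\ell=\lceil k/t_j\rceil$; then the count of bad times in $[0,t_j\ell)$ is at least $\eta^{(j)}k>\eta^{(j)}t_j(\ell-1)$, so the block argument yields $A\in Z(\mathfrak{S}(t_j),\ell,t_j,\eta^{(j)}(1-1/\ell))$ for infinitely many $\ell$. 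The extra factor $e^{\eta^{(j)}mnt_j}$ this introduces in the covering count is independent of $\ell$ and does not affect convergence of the geometric series, so the dimension bound is unchanged. The paper handles this in the same way, losing a harmless factor (there $\tfrac{9}{10}$) in the density when passing from $k$ to $\lceil k/t_\eta\rceil$.
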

\begin{proof}
For $\eta\in(0,1)$, let $t_\eta\ge 4$ be the smallest integer such that $\frac{3\log t_\eta}{t_\eta}\leq \frac{\eta mn}{10}$, and $\mathfrak{S}'_\eta$ be the set $\mathfrak{S}(t_\eta)$ of Theorem \ref{KKLMcov}. For $l\ge 4$, denote by $\eta_l>0$ the smallest real number such that $t_{\eta_l}=l$. Then $\eta_l\ge\frac{3\eta_{l-1}}{4}$ for any $l\ge 5$. We note that $\mathfrak{S}'_{\eta'} \subseteq \mathfrak{S}'_\eta$ for any $0<\eta\leq\eta'$. 
For $\eta'\in[\eta_{l},\eta_{l-1})$ let us define $\mathfrak{S}''_{\eta'} = \mathfrak{S}'_{\eta_l}$.
% a family of compact sets $\mathfrak{S}''_\eta$ such that $\mathfrak{S}'_{\eta_l} \subseteq \mathfrak{S}''_{\eta'}$ for any $l\ge 4$ and $\eta_{l}\leq\eta'< \eta_{l-1}$. 
For any $\eta\in(0,1)$, we set $\mathfrak{S}_\eta = \bigcup_{-t_{\eta}\leq t \leq t_\eta} a^t \mathfrak{S}_{\eta}''$
so that for any $-t_\eta \leq t\leq t_\eta$ and $x\in\mathfrak{S}''_\eta$, $a^tx\in\mathfrak{S}_\eta$.

Now we prove that this family of compact sets $\set{\mathfrak{S}_\eta}_{0<\eta<1}$ satisfies \eqref{KKLM''}. 
Suppose $A\notin F_{\eta,\mathfrak{S}_\eta}^k$, which implies $\frac{1}{k}\displaystyle\sum_{i=0}^{k-1}\del_{a^i x_A}
 (X\setminus \mathfrak{S}_\eta)\ge \eta$.
For sufficiently large $k$,  $$\frac{1}{\lceil\frac{k}{t_\eta}\rceil}\displaystyle\sum_{i=0}^{\lceil\frac{k}{t_\eta}\rceil-1}\del_{a^{t_\eta i} x_A}
 (X\setminus \mathfrak{S}''_\eta)\ge \frac{1}{t_\eta \lceil\frac{k}{t_\eta}\rceil}\displaystyle\sum_{i=0}^{t_\eta(\lceil\frac{k}{t_\eta}\rceil-1)}\del_{a^{i} x_A}
 (X\setminus \mathfrak{S}_\eta)\ge\frac{9}{10}\eta.$$
Hence $\bT^{mn}\setminus F_{\eta,\mathfrak{S}_{\eta}}^{ k}\subseteq Z(\mathfrak{S}''_\eta,\lceil\frac{k}{t_\eta}\rceil,t_\eta,\frac{9}{10}\eta)$ for any $0<\eta<1$ and sufficiently large $k\in\bN$. 
 
 For any $\eta_{l+1}< \eta'\leq \eta_l$, we have $t_{\eta'}=l$ and the set $Z(\mathfrak{S}''_{\eta'},\lceil\frac{k}{t_\eta}\rceil,t_{\eta'},\frac{9}{10}\eta')$ is contained in $Z(\mathfrak{S}'_{\eta_l},\lceil\frac{k}{t_{\eta_l}}\rceil,l,\frac{9}{10}\eta_l)$. It follows that for any $0<\eta<1$
 $$\bT^{mn}\setminus\displaystyle\bigcap_{\eta'\ge\eta}F_{\eta',\mathfrak{S}_{\eta'}^k}^{k}\subseteq \displaystyle\bigcup_{\eta'\ge\eta}Z(\mathfrak{S}''_{\eta'},\lceil\frac{k}{t_{\eta'}}\rceil,t_{\eta'},\frac{9}{10}\eta')\subseteq \displaystyle\bigcup_{l=4}^{t_\eta}Z(\mathfrak{S}'_{\eta_l},\lceil\frac{k}{l}\rceil,l,\frac{9}{10}\eta_l),$$
 hence
 $$\bT^{mn}\setminus\limsup_{k\to\infty}\displaystyle\bigcap_{\eta'\ge\eta}F^{k}_{\eta',\mathfrak{S}_{\eta'}}\subseteq \displaystyle\bigcup_{k_0\ge 1}\displaystyle\bigcap_{k=k_0}^{\infty}\displaystyle\bigcup_{l=4}^{t_\eta}Z(\mathfrak{S}'_{\eta_l},\lceil\frac{k}{l}\rceil,l,\frac{9}{10}\eta_l).$$
 By Theorem \ref{KKLMcov}, the set $\displaystyle\bigcup_{l=4}^{t_\eta}Z(\mathfrak{S}'_{\eta_l},\lceil\frac{k}{l}\rceil,l,\frac{9}{10}\eta_l)$ can be covered with
 \eq{\begin{aligned}
\displaystyle\sum_{l=4}^{t_\eta}Cl^{3\lceil\frac{k}{l}\rceil}e^{(m+n-\frac{9}{10}\eta_l)mn\lceil\frac{k}{l}\rceil l}&\leq  \displaystyle\sum_{l=4}^{t_\eta} Ct_\eta^3e^{\frac{3\log l}{l}k}e^{(m+n-\frac{9}{10}\eta_l)mn(k+t_\eta)}\\
&\leq\displaystyle\sum_{l=4}^{t_\eta}Ct_\eta^3e^{(m+n)mnt_\eta}e^{(m+n-\frac{8}{10}\eta_l)mnk}\\
&\leq Ct_\eta^4e^{(m+n)mnt_\eta}e^{(m+n-\frac{\eta}{2})mnk}
 \end{aligned}}
 balls in $\bT^{mn}$ of radius $e^{-(m+n)k}$. Here we used $\eta_{t_{\eta}}\ge\frac{3\eta}{4}$ which follows from $\eta_l\ge\frac{3\eta_{l-1}}{4}$ for any $l\ge 5$. Thus, for any sufficiently large $k_0\in\bN$,
 \eq{\begin{aligned}
 \dim_H&\left(\displaystyle\bigcap_{k=k_0}^{\infty}\displaystyle\bigcup_{l=4}^{t_\eta}Z(\mathfrak{S}'_{\eta_l},\lceil\frac{k}{l}\rceil,l,\eta_l)\right)\leq\limsup_{k\to\infty}\frac{\log(Ct_\eta^4e^{(m+n)mnt_\eta}e^{(m+n-\frac{\eta}{2})mnk})}{-\log(e^{-(m+n)k})}\\
 &=\limsup_{k\to\infty}\frac{\log(Ct_\eta^4e^{(m+n)mnt_\eta})+(m+n-\frac{\eta}{2})mnk}{(m+n)k}=mn-\frac{\eta mn}{2(m+n)},
 \end{aligned}}
hence we get $\dim_H(\bT^{mn}\setminus\displaystyle\limsup_{k\to\infty}\displaystyle\bigcap_{\eta'\ge\eta}F^{k}_{\eta',\mathfrak{S}_{\eta'}})\leq mn-\frac{\eta mn}{2(m+n)}$.
\end{proof}

In the rest of this subsection, we will prove the following proposition which gives the bound of $\dim_H \mb{Bad}^b(\eps)$. The construction of the $a$-invariant measure with large relative entropy roughly follows the construction in Proposition \ref{prop5}. However, the situation is significantly different, as fixing $b$ does not determine the amount of excursion in the cusp.
The additional step using Proposition \ref{KKLM'} is necessary to control the measure near the cusp allowing a small amount of escape of mass. 

\begin{prop}\label{prop2}
Let $\set{\mathfrak{S}_\eta}_{0<\eta< 1}$ be the family of compact sets of $X$ as in Proposition~\ref{KKLM'}.  For fixed $b\in \bR^m$ and $\eps>0$, assume that $\dim_H \mb{Bad}^{b}(\eps)>\dim_H \mb{Bad}^{0}(\eps)$. Let $\eta_0:=2(m+n)(1-\frac{\dim_H \mb{Bad}^{b}(\eps)}{mn})$. Then there exist an $a$-invariant measure $\ov{\mu}\in \crly{P}(\overline{Y})$ such that
\begin{enumerate}
  %  \item\label{ainv} $\mu^b$ is $a_t$-invariant,
    \item\label{supp} $\Supp{\ov{\mu}}\subseteq \cL_\eps\cup (\overline{Y}\setminus Y)$,
    \item\label{cusp} $\pi_*\ov{\mu}(\overline{X}\setminus\mathfrak{S}_{\eta'})\leq \eta'$ for any $\eta_0\leq\eta'<1$, in particular, there exist $\mu\in\crly{P}(Y)$ and $0\leq \widehat{\eta}\leq\eta_0$ such that
    $$\ov{\mu}=(1-\widehat{\eta})\mu+\widehat{\eta}\del_\infty,$$
    where $\del_\infty$ is the dirac delta measure on $\overline{Y}\setminus Y$.
    \item\label{entropy} Let $\cA^U$ be as in Proposition \ref{algebracst} for $\mu$, $r_0$, and $L=U$, 
    and let $\cA^U_\infty$ be as in \eqref{eq:tailalg}.
Then we have
    $$h_{\ov{\mu}}(a|\overline{\cA^U_\infty})\ge(1-\widehat{\eta}^{\frac{1}{2}})(d-\frac{1}{2}\eta_0-d\widehat{\eta}^{\frac{1}{2}}).$$
\end{enumerate}
\end{prop}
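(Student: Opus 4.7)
The plan is to adapt the construction of Proposition~\ref{prop5} in two ways: first, since we now vary $A$ while fixing $b$, the expanding subgroup $L=W$ is replaced by $L=U$ (with maximal entropy contribution $J_U=d$); second, since fixing $b$ leaves the cusp excursions of $\{a^k x_A\}$ undetermined, we must feed Proposition~\ref{KKLM'} into the choice of base points $A$ to secure the cusp bound \eqref{cusp}.

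I would begin by setting, in analogy with Proposition~\ref{prop5},
\[
R^{b,T}:=\set{A\in\bT^{mn}:\forall\, t\geq T,\;a_t y_{A,b}\in\cL_\eps}\cap\mb{Bad}^b(\eps).
\]
By Proposition~\ref{prop1} and Remark~\ref{rmkdim}, the hypothesis $\dim_H\mb{Bad}^b(\eps)>\dim_H\mb{Bad}^0(\eps)$ allows one to discard the rational subset, so $\bigcup_T R^{b,T}$ has Hausdorff dimension equal to $\dim_H\mb{Bad}^b(\eps)=mn-\tfrac{\eta_0 mn}{2(m+n)}$. Fix small $\gamma,\delta>0$, select $T_\gamma$ with $\dim_H R^{b,T_\gamma}\geq\dim_H\mb{Bad}^b(\eps)-\gamma$, and put $\eta=\eta_0+\delta$. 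Since $mn-\tfrac{\eta mn}{2(m+n)}<\dim_H R^{b,T_\gamma}$ for $\gamma,\delta$ small, Proposition~\ref{KKLM'} forces
\[
R^{b,T_\gamma}\cap\limsup_{k\to\infty}\bigcap_{\eta'\geq\eta}F^k_{\eta',\mathfrak{S}_{\eta'}}
\]
to have Hausdorff dimension close to $\dim_H\mb{Bad}^b(\eps)$, so along a subsequence $k_i\to\infty$ the sets $R_i:=R^{b,T_\gamma}\cap\bigcap_{\eta'\geq\eta}F^{k_i}_{\eta',\mathfrak{S}_{\eta'}}$ retain comparable dimension. I would then pick a maximal $e^{-k_i}$-separated subset $S_i\subset R_i$ with respect to $d_{\mb{r}\otimes\mb{s}}$, so that Lemma~\ref{relating dimensions}\,(2), together with $r_1+s_1=\tfrac{m+n}{mn}$ in the unweighted case, gives
\[
\liminf_{i\to\infty}\tfrac{1}{k_i}\log|S_i|\;\geq\;(m+n)-\tfrac{1}{2}\eta_0-O(\gamma+\delta).
\]
Setting $\nu_i:=|S_i|^{-1}\sum_{A\in S_i}\delta_{y_{A,b}}$, $\mu_i:=\tfrac{1}{k_i}\sum_{k=0}^{k_i-1}a^k_*\nu_i$, and taking a weak-$*$ subsequential limit $\overline{\mu}\in\crly{P}(\overline{Y})$ (first in $i$, then in $\gamma,\delta\to 0$) gives the candidate $a$-invariant measure. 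Property~\eqref{supp} is immediate from $\mu_i(Y\setminus\cL_\eps)\leq T_\gamma/k_i\to 0$. Property~\eqref{cusp} is precisely where the KKLM constraint enters: for every $\eta'\geq\eta$ and $A\in S_i$, the inclusion $A\in F^{k_i}_{\eta',\mathfrak{S}_{\eta'}}$ directly forces $\pi_*\mu_i(X\setminus\mathfrak{S}_{\eta'})<\eta'$, and this persists in the limit once $\overline{\mu}$-null boundaries are chosen; letting $\eta'\searrow\eta_0$ bounds the atom at infinity by $\widehat{\eta}\leq\eta_0$.

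For the entropy bound \eqref{entropy} I would follow the scheme of Proposition~\ref{prop5}\,\eqref{entropy'} with $L=U$: fix $D>d$ and a compact $Q_\infty^0\subset X$ whose preimage $Q_\infty=\pi^{-1}(Q_\infty^0)$ sits in a finite partition $\cQ$ with $\overline{\mu}$-null boundaries. Lemma~\ref{CovLem} with $L=U$ then covers any nonempty atom $Q\in\cQ^{(k_i)}$ by $Ce^{D\beta k_i}$ many $d_{\mb{r}\otimes\mb{s}}$-balls of radius $r_D^{(m+n)/(mn)}e^{-k_i}$, where $\beta$ is an upper bound on the cusp-visit frequency of $\{a^k x_A\}_{A\in S_i}$ in $Q_\infty^0$. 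Combined with the $e^{-k_i}$-separation of $S_i$ this yields $|D_i\cap Q|\leq Ce^{D\beta k_i}$, and the conditioning on $\cA^U_\infty$ is absorbed just as in the \emph{Claim} inside the proof of Proposition~\ref{prop5}, since $\Supp\nu_i$ lies on a single compact $U$-orbit. Averaging gives
\[
\tfrac{1}{q}H_{\mu_i}(\cQ^{(q)}\mid\cA^U_\infty)\;\geq\;\tfrac{1}{k_i}\log|S_i|-D\beta-o_q(1),
\]
and letting $i\to\infty$, $q\to\infty$, $D\searrow d$, $\beta\searrow\widehat{\eta}^{1/2}$, and $\gamma,\delta\to 0$ delivers the stated lower bound.

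The hard part is calibrating $\beta$ to produce exactly the $\widehat{\eta}^{1/2}$ exponent in the final estimate. The cusp constraint \eqref{cusp} only bounds the total mass $\pi_*\overline{\mu}(\overline{X}\setminus\mathfrak{S}_{\eta'})$ against the threshold $\eta'$, not $\widehat{\eta}=\overline{\mu}(\{\infty\})$ alone, so the frequency bound inherited by the individual $\mu_i$-orbits need not be close to $\widehat{\eta}$ even in the limit. A Chebyshev-type averaging against the family $\{\pi_*\overline{\mu}(\overline{X}\setminus\mathfrak{S}_{\eta'})\leq\eta'\}_{\eta'\geq\eta_0}$ lets one choose $Q_\infty^0$ with $\pi_*\overline{\mu}(\overline{Q_\infty^0})\lesssim\widehat{\eta}^{1/2}$ on a subset of $\overline{\mu}$-mass at least $1-\widehat{\eta}^{1/2}$, and redistributing the entropy estimate across this good/bad dichotomy is what produces the product $(1-\widehat{\eta}^{1/2})(d-\tfrac{1}{2}\eta_0-d\widehat{\eta}^{1/2})$. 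Getting this bookkeeping precisely right, rather than the slightly weaker linear bound $d-\tfrac{1}{2}\eta_0-O(\widehat{\eta})$ one obtains from a naive argument, is the delicate point of the proof.
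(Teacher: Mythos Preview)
Your construction of $\overline{\mu}$ and the verification of \eqref{supp} and \eqref{cusp} match the paper's argument. The gap is in the entropy step \eqref{entropy}, specifically in how the exponent $\widehat{\eta}^{1/2}$ arises.

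You write that Lemma~\ref{CovLem} covers each atom of $\cQ^{(k_i)}$ by $Ce^{D\beta k_i}$ balls, ``where $\beta$ is an upper bound on the cusp-visit frequency of $\{a^k x_A\}_{A\in S_i}$''. But the bound $\mu_i(Q_\infty)<\beta$ only controls the \emph{average} cusp-visit frequency over $A\in S_i$; individual orbits may spend a much larger fraction of time in $Q_\infty$, and Lemma~\ref{CovLem} needs a per-orbit bound. So the step ``$|D_i\cap Q|\leq Ce^{D\beta k_i}$'' is not justified as written, and consequently the limit ``$\beta\searrow\widehat{\eta}^{1/2}$'' is not the right bookkeeping either.

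The paper's mechanism for the square root is a Markov inequality on the finite set $S_i$, not on the family $\{\mathfrak{S}_{\eta'}\}$. Since the average of the individual frequencies $\frac{1}{k_i}|\{k<k_i:a^k y_{A,b}\in Q_\infty\}|$ over $A\in S_i$ is at most $\beta$, the subset $S_i'\subset S_i$ of points whose individual frequency is at most $\beta^{1/2}$ satisfies $|S_i'|\geq(1-\beta^{1/2})|S_i|$. One then applies Lemma~\ref{CovLem} with exponent $D\beta^{1/2}$ only on $D_i'=\{y_{A,b}:A\in S_i'\}$, obtaining $H_{\nu_i'}(\cQ^{(k_i)})\geq\log|S_i'|-D\beta^{1/2}k_i-\log C$, and relates $H_{\nu_i}$ to $H_{\nu_i'}$ via the elementary comparison $H_{\nu_i}(\cQ)\geq(1-\beta^{1/2})(H_{\nu_i'}(\cQ)-\tfrac{2}{e})$. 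Finally one takes $\beta\searrow\overline{\mu}(\overline{Q_\infty})$ and then $Q_\infty^0$ shrinking so that $\overline{\mu}(\overline{Q_\infty})\to\widehat{\eta}$; this is where the product form $(1-\widehat{\eta}^{1/2})(d-\tfrac{1}{2}\eta_0-d\widehat{\eta}^{1/2})$ comes from. Your suggested route through the family $\{\pi_*\overline{\mu}(\overline{X}\setminus\mathfrak{S}_{\eta'})\leq\eta'\}_{\eta'\geq\eta_0}$ does not address the per-orbit issue and would not by itself produce this bound.
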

\begin{rem}
We remark that this proposition is valid for the weighted setting except for the construction of 
$\set{\mathfrak{S}_\eta}_{0<\eta< 1}$ since it depends on the unweighted result (Theorem \ref{KKLMcov}) in \cite{KKLM}.
So, we keep the notations $\mb{r}$ and $\mb{s}$ for weights in the following proof.
\end{rem}
%In particular, by taking a sequence of compact sets $\mathfrak{S}_i$ exhausting $X$ and a sequence of positive numbers $\gamma_i$ which converges to $0$,
% $$h_{\mu} (a|\overline{\cA^U}) \geq \frac{m+n}{mn} \dim_H \mb{Bad}^{b}(\eps).$$
% Note that the right handside is the maximal entropy if $\dim_H \mb{Bad}^{b}(\eps) = mn.$
\begin{proof}
%Take a sequence of increasing compact sets $\set{\mathfrak{S}_j}$ exhausting $X$.
%Since $E_{0.01}=\displaystyle\bigcap_{j=1}^{\infty} (\bT^{mn}\setminus F_{0.01,\mathfrak{S}_j})$, $\dim_H E_{0.01}<mn-\frac{0.01mn}{m+n}$, and $\set{F_{0.01,\mathfrak{S}_j}}_{j=1}^{\infty}$ is an increasing sequence, we can find $j_0\in\bN$ such that 
%\eqlabel{0.01dim}{\dim_H (\bT^{mn}\setminus F_{0.01,\mathfrak{S}_{j_0}})<mn-\frac{0.005mn}{m+n}.} We set $\mathfrak{S}=\mathfrak{S}_{j_0}$ and remark that $\mathfrak{S}'$ is chosen independently to $\eps$.

For $\eps>0$, denote by $R$ the set $\mb{Bad}^{b}(\eps)\setminus \mb{Bad}_{0}^{b}(\eps)$, and let 
$$R^{T}:=\set{A\in R \cap \bT^{mn} \subset M_{m,n}(\bR) |\forall t\ge T, a_t x_{A,b}\in \cL_\eps}.$$ The sequence $\set{R^T}_{T\ge 1}$ is increasing, and $R=\displaystyle\bigcup_{T=1}^\infty R^{T}$ by Proposition~\ref{prop1}. Since $\dim_H \mb{Bad}^{b}(\eps)>\dim_H \mb{Bad}^{0}(\eps)\ge\dim_H \mb{Bad}_{0}^{b}(\eps)$, it follows that $\dim_H R=\dim_H \mb{Bad}^{b}(\eps)$. Thus for any $\gamma>0$, there exists $T_\gamma\ge 1$ satisfying
\eqlabel{RTdim}{\dim_H R^{T_\gamma}>\dim_H \mb{Bad}^{b}(\eps)-\gamma.}

Let $\eta=2(m+n)(1-\frac{\dim_H \mb{Bad}^{b}(\eps)-\gamma}{mn})$. 
If $0<\gamma<\frac{mn}{2(m+n)}-(mn-\dim_H \mb{Bad}^{b}(\eps))$, then $0<\eta<1$.
For $k\in\bN$, write $\widetilde{F}_\eta^k:=\displaystyle\bigcap_{\eta'\ge\eta}F^k_{\eta',\mathfrak{S}_{\eta'}}$ for simplicity. Recall that we have 
\eqlabel{Fetadim}{\dim_H (\bT^{mn}\setminus\displaystyle\limsup_{k\to\infty}\widetilde{F}_\eta^k) \leq mn-\frac{\eta mn}{2(m+n)}=\dim_H \mb{Bad}^{b}(\eps)-\gamma} by Theorem~\ref{KKLM'}. It follows from \eqref{RTdim} and \eqref{Fetadim} that $$\dim_H(R^{T_\gamma}\cap \limsup_{k\to\infty}\widetilde{F}_\eta^k)>\dim_H \mb{Bad}^{b}(\eps)-\gamma.$$
%Since $R^{T_\gamma}\cap \displaystyle\limsup_{k\to\infty}\widetilde{F}_\eta^k=\displaystyle\bigcap_{N=1}^\infty \displaystyle\bigcup_{k=N}^{\infty}(R^{T_\gamma}\cap \widetilde{F}_\eta^k)$, 
Thus there is an increasing sequence of positive integers $\set{k_i}\to \infty$ such that
$$\dim_H (R^{T_\gamma}\cap \widetilde{F}_{\eta}^{k_i})> \dim_H \mb{Bad}^{b}(\eps)-\gamma.$$

For each $k_i\ge T_\gamma$ let $S_i$ be a maximal $e^{-k_i}$-separated subset of $R^{T_\gamma}\cap \widetilde{F}_{\eta}^{k_i}$ with respect to the quasi-distance $d_{\mb{r}\otimes\mb{s}}$. By Lemma \ref{relating dimensions},
\eqlabel{eqn1}{\begin{aligned}
  \displaystyle\liminf_{i\to\infty}\frac{\log |S_i|}{k_i}
  \ge\underline{\dim}_{\mb{r}\otimes\mb{s}} (R^{T_\gamma}\cap \widetilde{F}_\eta^{k_i})
  &> m+n-(r_1+s_1)(mn-\dim_H \mb{Bad}^{b}(\eps)+\gamma)\\
  &= m+n-\frac{m+n}{mn}(mn-\dim_H \mb{Bad}^{b}(\eps)+\gamma)\\
  &=\frac{m+n}{mn}(\dim_H \mb{Bad}^{b}(\eps)-\gamma).
\end{aligned}}

Let $\nu_i\defn \frac{1}{|S_i|}\displaystyle\sum_{y\in D_i}\delta_{y}= \frac{1}{|S_i|}\displaystyle\sum_{A\in S_i}\delta_{y_{A,b}}$ be the normalized counting measure on the set $D_i:=\set{y_{A,b}:A\in S_i}\subset Y$ and let $\mu^{\gamma}$ be a weak*-limit of $\mu_i$:
$$\mu_i\defn \frac{1}{k_i}\displaystyle\sum_{k=0}^{k_i-1}a^k_{*}\nu_i \wstar \mu^{\gamma}\in\crly{P}(\overline{Y})$$
By extracting a subsequence if necessary, we may assume that $\mu^{\gamma}$ is a weak*-accumulation point of $\set{\mu_i}$. The measure $\mu^{\gamma}$ is clearly an $a$-invariant measure since $a_*\mu_i-\mu_i$ goes to zero measure.

Choose any sequence of positive real numbers $(\gamma_j)_{j\ge 1}$ converging to zero and $(\eta_j)_{j\ge 1}$ be the corresponding sequence such that $$\eta_j=2(m+n)(1-\frac{\dim_H \mb{Bad}^{b}(\eps)-\gamma_j}{mn}).$$ Let $\set{\mu^{\gamma_j}}$ be a family of $a$-invariant probability measures on $\overline{Y}$ obtained from the above construction for each $\gamma_j$. Extracting a subsequence again if necessary, we may take a weak$^*$-limit measure $\ov{\mu}\in\crly{P}(\overline{Y})$ of $\set{\mu^{\gamma_j}}$. We prove that $\ov{\mu}$ is the desired measure. The measure $\ov{\mu}$ is clearly $a$-invariant.\\ 
(\ref{supp}) We show that for any $\gamma$, $\mu^\gamma(Y\setminus\cL_\eps)=0$. For any $A\in S_i\subseteq R^{T_\gamma}$, $a^T y_{A,b}\in \cL_\eps$ holds for $T>T_\gamma$. Thus
$$\mu_i(Y\setminus\cL_\eps)=\frac{1}{k_i}\displaystyle\sum_{k=0}^{k_i-1}(a^k)_{*}\nu_i(Y\setminus\cL_\eps)=\frac{1}{k_i}\displaystyle\sum_{k=0}^{T_\gamma}(a^k)_{*}\nu_i(Y\setminus\cL_\eps)\leq\frac{T_\gamma}{k_i}.$$ By taking limit for $k_i\to \infty$, we have $\mu^\gamma(Y\setminus\cL_\eps)=0$ for arbitrary $\gamma$, hence,
$$\ov{\mu}(Y\setminus\cL_\eps)=\lim_{j\to\infty}\mu^{\gamma_j}(Y\setminus\cL_\eps)=0.$$\\
(\ref{cusp}) For any $\gamma=\gamma_j$, if $A\in S_i\subset \widetilde{F}_{\eta_j}^{k_i}=\displaystyle\bigcap_{\eta'\ge\eta_j}F_{\eta',\mathfrak{S}_{\eta'}}^{k_i}$, then for all $i \in \mathbb{N}$ and $\eta_j \leq\eta'\leq 1$,  $\frac{1}{k_i}\displaystyle\sum_{k=0}^{k_i-1}\delta_{a^k x_A}(X\setminus \mathfrak{S}_{\eta'})<\eta'$. Therefore for all $i\in\bN$ and $\eta_j \leq\eta'\leq 1$,
\begin{align*}
    \pi_*\mu_i(X\setminus \mathfrak{S}_{\eta'})
    %&=\frac{1}{|S_i|}\displaystyle\sum_{y\in D_i} \frac{1}{k_i}\displaystyle\sum_{k=0}^{k_i-1} \pi_*\delta_{a^k y}(X\setminus \mathfrak{S}_{\eta'})\\
    &=\frac{1}{|S_i|}\displaystyle\sum_{A\in S_i} \frac{1}{k_i}\displaystyle\sum_{k=0}^{k_i-1} \delta_{a^k x_A}(X\setminus \mathfrak{S}_{\eta'})
    <\eta',
\end{align*}
hence $\pi_*\mu^{\gamma_j}(\overline{X}\setminus \mathfrak{S}_{\eta'})=\displaystyle\lim_{i\to\infty}\pi_*\mu_i(X\setminus \mathfrak{S}_{\eta'})\leq\eta'$. 
Since $\eta_j$ converges to $\eta_0$ as $j\to \infty$, we have
$$\pi_*\ov{\mu}(\overline{X}\setminus \mathfrak{S}_{\eta'})\leq \eta'$$ for any $\eta' \geq \eta_0$. Hence,
$$\ov{\mu}(\overline{Y}\setminus Y)\leq \lim_{\eta'\to\eta_0}\pi_*\ov{\mu}(\overline{X}\setminus \mathfrak{S}_{\eta'})\leq \eta_0,$$
so we have a decomposition $\ov{\mu}=(1-\widehat{\eta})\mu+\widehat{\eta}\del_\infty$ for some $\mu\in\crly{P}(Y)$ and $0\leq\widehat{\eta}\leq\eta_0$.

For the rest of the proof, let us check the condition (\ref{entropy}).\\
(\ref{entropy})
We first fix any $D>J_U =m+n$. As in the proof of Proposition \ref{prop5}, there exists a finite partition $\cQ$ of $Y$ satisfying:
    \begin{itemize}
        \item $\cQ$ contains an atom $Q_\infty$ of the form $\pi^{-1}(Q_\infty^0)$, where $X\smallsetminus Q_\infty^0$ has compact closure.
        \item $\forall Q\in \cQ\smallsetminus\set{Q_\infty}$, $\diam Q<r_D=r_D(Q_\infty^0)$, where $r_D$ is as in Subsection \ref{sec3.3},
        \item $\forall Q\in\cQ,\forall j\geq 1,\; \mu^{\gamma_j}(\partial Q)=0$.
    \end{itemize}
Remark that for all $i\geq 1$, $D_i \subset \{y_{A,b}:A\in[0,1]^{mn}, b\in[0,1]^m\}$, which is a compact set in $Y$,
therefore we can choose $Q^0_{\infty}$ so that 
\eqlabel{eqcsp}{
Q_\infty \cap D_i = \varnothing.
}
To prove \eqref{entropy}, it suffices to prove that for all $q\ge 1$,
    \eqlabel{sttlowbdd'}{\frac{1}{q}H_{\ov{\mu}}(\overline{\cQ}^{(q)}|\overline{\cA^U_\infty})
    \ge (1-\ov{\mu}(\overline{Q_\infty})^{\frac{1}{2}})\left(\frac{m+n}{mn}\dim_H \mb{Bad}^{b}(\eps)
    -D\ov{\mu}(\overline{Q_\infty})^{\frac{1}{2}}\right).}
Indeed, taking $D\to m+n$ and $Q^0_\infty\subset X$ such that $\ov{\mu}(\ov{Q_\infty}) \to \widehat{\eta}$, it follows that
\eq{
h_{\ov{\mu}}(a|\overline{\cA^U_\infty})\ge(m+n)(1-\widehat{\eta}^{\frac{1}{2}})(\frac{1}{mn}\dim_H \mb{Bad}^{b}(\eps)-\widehat{\eta}^{\frac{1}{2}})
=(1-\widehat{\eta}^{\frac{1}{2}})(d-\frac{1}{2}\eta_0-d\widehat{\eta}^{\frac{1}{2}}).
}

It remains to prove \eqref{sttlowbdd'}.
It is trivial if $\ov{\mu}(\ov{Q_\infty})=1$, so assume that $\ov{\mu}(\ov{Q_\infty})<1$, hence for all large enough $j\ge1$, $\mu^{\gamma_j}(\ov{Q_\infty})<1$. Now we fix such $j\ge 1$ and write temporarily $\gamma=\gamma_j$.

Choose $\beta>0$ such that $\mu^\gamma(\ov{Q_\infty}) < \beta < 1$. Then for large enough $i$,
$$\mu_i(Q_\infty)=\frac{1}{k_i|S_i|}\displaystyle\sum_{y\in D_i, 0\le k<k_i}\de_{a^k y}(Q_\infty) < \beta.$$ In other words, there exist at most $\beta k_i|S_i|$ number of $a^k y$'s in $Q_\infty$ with $y\in D_i$ and $0\leq k<k_i$.

Let $S'_i\subset S_i$ be the set of $A\in S_i$'s such that
\eqlabel{eq:cuspcount}{|\{0\leq k< k_i: a^ky_{A,b} \in Q_\infty \} |\leq \beta^{\frac{1}{2}}k_i.}
Thus we have $|S_i\setminus S_i'|\leq \beta^{\frac{1}{2}}|S_i|$, hence 
\eqlabel{Sicount}{|S'_i|\ge (1-\beta^{\frac{1}{2}})|S_i|.} 
Let $\nu'_i\defn \frac{1}{|S'_i|}\displaystyle\sum_{y\in S'_i}\de_y$ be the normalized counting measure on $D'_i$, where $D'_i:=\set{y_{A,b}:A\in S'_i}\subset Y$. By definition, 
$\nu_i(Q)\ge \frac{|S'_i|}{|S_i|}\nu'_i(Q)$ for all measurable set $Q\subseteq Y$. 
Thus, for any arbitrary countable partition $\cQ$ fo $Y$, 
\eqlabel{eqn2}{
\begin{aligned}
H_{\nu_i}(\cQ)&=-\displaystyle\sum_{\nu_i(Q)\leq\frac{1}{e}}\log(\nu_i(Q))\nu_i(Q)-\displaystyle\sum_{\nu_i(Q)>\frac{1}{e}}\log(\nu_i(Q))\nu_i(Q)\\&\ge -\displaystyle\sum_{\nu_i(Q)\leq\frac{1}{e}}\log(\frac{|S'_i|}{|S_i|}\nu'_i(Q))\frac{|S'_i|}{|S_i|}\nu'_i(Q)
\\&=-\frac{|S'_i|}{|S_i|}\displaystyle\sum_{\nu_i(Q)\leq\frac{1}{e}}\log(\nu'_i(Q))\nu'_i(Q)-\frac{|S'_i|}{|S_i|}\log{\frac{|S'_i|}{|S_i|}}\displaystyle\sum_{\nu_i(Q)\leq\frac{1}{e}}\nu'_i(Q)
\\&\geq \frac{|S'_i|}{|S_i|}\Bigl\{H_{\nu'_i}(\cQ)+\displaystyle\sum_{\nu_i(Q)>\frac{1}{e}}\log(\nu'_i(Q))\nu'_i(Q)\Bigr\}\\&\ge (1-\beta^{\frac{1}{2}})(H_{\nu'_i}(\cQ)-\frac{2}{e}).
\end{aligned}
}
In the last inequality, we use the fact that $\nu'_i$ is a probability measure, thus there can be at most two elements $Q$ of the partition for which $\nu'_i (Q) > \frac{1}{e}$.

To compute $H_{\nu_i'}(\cQ^{(k_i)})$ note that for any $y \in D_i'$, $y \notin Q_\infty$.
From Lemma \ref{CovLem} with $L=U$, \eqref{eqcsp}, and \eqref{eq:cuspcount}, if $Q$ is any non-empty atom of $\cQ^{(k_i)}$, fixing any $y\in D'_{i}\cap Q$, the set
\eq{
D'_{i}\cap Q = D'_{i}\cap [y]_{\cQ^{(k_i)}} \subset E_{y,k_{i}-1}
} 
can be covered by $Ce^{D\sqrt{\beta} k_{i}}$ $d_{\mb{r}\otimes\mb{s}}$-balls of radius $r_D^{\frac{1}{r_1 +s_1}}e^{-k_{i}}$,
where $C$ is a constant depending on $Q^0_\infty$ and $D$, but not on $k_i$. 
Since $D'_{i}$ is $e^{-k_{i}}$-separated with respect to $d_{\mb{r}\otimes\mb{s}}$ and $r_D^{\frac{1}{r_1 +s_1}}<\frac{1}{2}$, 
we get 
$$|S_i'|\nu_i'(Q) = \mathrm{Card}(D'_i \cap Q)\leq Ce^{D\sqrt{\beta} k_i },$$
hence we have
\eqlabel{staticbdU2}{
H_{\nu_i'}(\cQ^{(k_i)})\ge\log |S_i'|-D\beta^{\frac{1}{2}} k_i-\log C.}

Now let $\cA^U=(\cP^U)_0^\infty=\bigvee_{i=0}^{\infty}a^i \cP^U$ be as in Proposition \ref{algebracst} for $\mu$, 
$r_0$ and $L=U$, and let $\cA^U_\infty$ be as in \eqref{eq:tailalg}.\vspace{0.3cm}\\
\textbf{Claim}\; $H_{\nu_i}(\cQ^{(k_i)}|\cA^U_\infty) = H_{\nu_i}(\cQ^{(k_i)})$ for all large enough $\ell\geq 1$.
\begin{proof}[Proof of Claim]
Using the continuity of entropy, we have 
\eq{H_{\nu_i}(\cQ^{(k_i)}|\cA^U_\infty)=\lim_{\ell\to\infty}H_{\nu_i}(\cQ^{(k_i)}|(\cP^U)_\ell^\infty).}
Now we show $H_{\nu_i}(\cQ^{(k_i)}|(\cP^U)_\ell^\infty) = H_{\nu_i}(\cQ^{(k_i)})$ for all large enough $\ell\geq 1$.
Let $\cP$ and $E_{\del}$ be as in Lemma \ref{Exceptional} for $\mu$ and $r_0$.
As mentioned in Remark \ref{basepoint}, we may assume that there exists $y\in \{y_{A,b}:A\in \bT^{mn}\subset M_{m,n}(\bR)\}$
such that $y\notin \partial \cP$.
Since $E_\del = \bigcup_{k=0}^\infty a^k \partial_{d_0e^{-k\al}\del}\cP$, $y\in Y \setminus E_{\del}$
for some small enough $\del>0$, which implies that $a^{-\ell}y\in Y\setminus a^{-\ell}E_\del \subset Y\setminus E_\del$. 
Hence, it follows from \eqref{excEq} and Proposition \ref{algebracst} that
\[
[y]_{(\cP^U)_\ell^\infty} = a^\ell[a^{-\ell}y]_{(\cP^U)_0^\infty} = a^\ell[a^{-\ell}y]_{\cA^U} \supset a^{\ell} B_{\del}^{U} a^{-\ell} y \supset B_{d_0 e^{\alpha\ell}\del}^{U} y.
\]
Since the support of $\nu_i$ is a set of finite points on a single compact $U$-orbit, 
$\nu_i$ is supported on a single atom of $(\cP^W)_\ell^\infty$ for all large enough $\ell\geq 1$.
This proves the claim.
\end{proof}

Combining \eqref{Sicount}, \eqref{eqn2}, \eqref{staticbdU2}, and \textbf{Claim}, we have 
\eqlabel{sttbdd2}{\begin{split}
H_{\nu_i}(\cQ^{(k_i)}|\cA^U_\infty)&= H_{\nu_i}(\cQ^{(k_i)})
\ge (1-\beta^{\frac{1}{2}})(H_{\nu_i'}(\cQ^{(k_i)})-\frac{2}{e})\\
&\ge (1-\beta^{\frac{1}{2}})(\log |S_i|-D\beta^{\frac{1}{2}} k_i-\log C-\frac{2}{e}+\log(1-\beta^{\frac{1}{2}})).
\end{split}}

%For any $q\ge 1$, write the Euclidean division of large enough $k_i-1$ by $q$ as
%$$k_i-1=qk'+s \ \textrm{with} \ s\in\set{0,\cdots,q-1}.$$
%By subadditivity of the entropy with respect to the partition, for each $p\in\set{0,\cdots,q-1}$,
%$$H_{\nu_i}(\cQ^{(k_i)}|\cA^U_\infty)\leq H_{a^{p}\nu_i}(\cQ^{(q)}|\cA^U_\infty)+\cdots+H_{a^{p+qk'}\nu_i}(\cQ^{(q)}|\cA^U_\infty)+2q\log |\cQ|.$$
%Summing those inequalities for $p=0,\cdots,q-1$, and using the concave property of entropy with respect to the measure, we obtain
%\begin{align*}
%  qH_{\nu_i}(\cQ^{(k_i)}|\cA^U_\infty)
%  &\leq\displaystyle\sum_{k=0}^{k_i-1}H_{a^k \nu_i}(\cQ^{(q)}|\cA^U_\infty)+2q^2\log |\cQ|\\
%  &\leq k_i H_{\mu_i}(\cQ^{(q)}|\cA^U_\infty)+2q^2\log |\cQ|
%\end{align*}
As in the proof of Proposition \ref{prop5}, it follows from \eqref{sttbdd2} that
\begin{align*}
    \frac{1}{q}&H_{\mu_i}(\cQ^{(q)}|\cA^U_\infty)
    \ge \frac{1}{k_i}H_{\nu_i}(\cQ^{(k_i)}|\cA^U_\infty)-\frac{2q\log |\cQ|}{k_i}\\
    &\ge \frac{1}{k_i}\Bigl((1-\beta^{\frac{1}{2}})(\log|S_i|-D\beta^{\frac{1}{2}}k_i-\log{C}-\frac{2}{e}+\log(1-\beta^{\frac{1}{2}}))
-2q\log|\cQ|\Bigr).
\end{align*}
Now we can take $i\to\infty$ because the atoms $Q$ of $\cQ$ and hence of $\cQ^{(q)}$, satisfy $\mu^\gamma(\partial Q)=0$. Also, the constants $C$, $\beta$, and $|\cQ|$ are independent to $k_i$. Thus it follows from the inequality \eqref{eqn1} that
\begin{align*}
    \frac{1}{q}H_{\mu^\gamma}(\overline{\cQ}^{(q)}|\overline{\cA^U_\infty})
    &\ge (1-\beta^{\frac{1}{2}})\left(\frac{m+n}{mn}(\dim_H \mb{Bad}^{b}(\eps)-\gamma)-D\beta^{\frac{1}{2}}\right).
\end{align*}
%and by taking $\rho\to 0$, we have
%$$\frac{1}{q}H_{\mu^\gamma}(\overline{\cQ}^{(q)}|\overline{\cA^U_\infty})
%    \ge (1-\mu^\gamma(\overline{Q_\infty})^{\frac{1}{2}})\left(\frac{m+n}{mn}(\dim_H \mb{Bad}^{b}(\eps)-\gamma)
%    -D\mu^\gamma(\overline{Q_\infty})^{\frac{1}{2}}\right).$$
By taking $\beta\to\ov{\mu}(\overline{Q_\infty})$ and $\gamma=\gamma_j \to 0$, the inequality \eqref{sttlowbdd'} follows.
%$$\frac{1}{q}H_{\ov{\mu}}(\overline{\cQ}^{(q)}|\overline{\cA^U_\infty})
%    \ge (1-\ov{\mu}(\overline{Q_\infty})^{\frac{1}{2}})\left(\frac{m+n}{mn}\dim_H \mb{Bad}^{b}(\eps)
%    -D\ov{\mu}(\overline{Q_\infty})^{\frac{1}{2}}\right).$$
\end{proof}

\subsection{Effective equidistribution and the proof of Theorem \ref{corb1}}
In this subsection, we recall some effective equidistribution results which are necessary for the proof of Theorem \ref{corb1}. Let $\mathfrak{g}=\operatorname{Lie}\,G(\bR)$ and choose an orthonormal basis for $\mathfrak{g}$. Define the (left) differentiation action of $\mathfrak{g}$ on $C_c^\infty(X)$ by $Zf(x)=\frac{d}{dt}f(\textrm{exp}(tZ)x)|_{t=0}$ for $f\in C_c^\infty(X)$ and $Z$ in the orthonormal basis. This also defines for any $l\in\bN$, $L^2$-Sobolev norms $\cS_l$ on $C^\infty_c(Y)$:
\eqlabel{Sobol}{\cS_l(f)^2\defn\displaystyle\sum_{\cD}\|\textrm{ht}\circ \pi ^{l}\cD(f)\|^2_{L^2},}
where $\cD$ ranges over all the monomials in the chosen basis of degree $\leq l$ and $\textrm{ht } \circ \pi$ is the function assigning 1 over the smallest length of a vector in the lattice corresponding to the given grid.
Let us define the function $\zeta : (\bT^{d}\setminus \bQ^d)\times\bR^{+}\to\bN$ measuring the Diophantine property of $b$:
\eq{
\zeta(b,T):= \min\left\{N\in\bN : \min_{1\leq q\leq N}\|qb\|_{\bZ}\leq\frac{T^2}{N} \right\}.
}
Then there exists a sufficiently large $l\in\bN$ such that the following equidistribution theorems hold.

\begin{thm}\cite[Theorem 1.3]{Kim21}\label{Teffirr}
Let $K$ be a bounded subset in $\SL_d(\bR)$ and $V\subset U$ be a fixed neighborhood of the identity in $U$ with smooth boundary and compact closure. Then, for any $t\ge 0$, $f\in C_c^\infty(Y)$, and $y=gw(b)\Gamma$ with $g\in K$ and $b\in\bT^d\setminus\bQ^d$, there exists a constant $\alpha_1>0$ depending only on $d$ and $V$ so that
\eqlabel{effirr}{\frac{1}{m_U(V)}\int_V f(a_tuy)dm_U(u)=\int_Y fdm_Y+O(\cS_l(f)\zeta(b,e^{\frac{t}{2m}})^{-\alpha_1}).}
The implied constant in \eqref{effirr} depends only on $d$, $V$, and $K$. 
\end{thm}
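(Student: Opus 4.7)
The plan is to prove this via Fourier decomposition on the torus fibers of the projection $\pi\colon Y\to X$. The fiber over $h\SL_d(\bZ)$ is canonically the torus $\bR^d/h\bZ^d$, so any $f\in C_c^\infty(Y)$ admits a pointwise expansion
\[
f\bigl((h,hu)\Ga\bigr)=f_0(h\SL_d(\bZ))+\sum_{\xi\in\bZ^d\sm\{0\}}\phi_\xi(h)\,e^{2\pi i\langle\xi,u\rangle},
\]
where $f_0$ descends to $X$ with $\int_X f_0\,dm_X=\int_Y f\,dm_Y$ and the higher coefficients are equivariant for the $\SL_d(\bZ)$-action on $\bZ^d$, so that the sum is well-defined on $Y$. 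A direct computation in the affine group gives $a_tuy=(a_tug,a_tugb)\Ga$, and evaluating a non-trivial fiber character on this point produces the phase $e^{2\pi i\langle\xi,b\rangle}$, which is independent of both $t$ and $u$. The integral therefore splits as
\[
\frac{1}{m_U(V)}\int_V f_0\bigl(a_tug\SL_d(\bZ)\bigr)\,dm_U(u)+\sum_{\xi\neq0}e^{2\pi i\langle\xi,b\rangle}\cdot\frac{1}{m_U(V)}\int_V\phi_\xi(a_tug)\,dm_U(u).
\]

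For the zero mode I would appeal to the classical effective equidistribution of expanding $U$-translates on $X$ (of Kleinbock-Margulis / Einsiedler-Margulis-Venkatesh type), which gives
\[
\frac{1}{m_U(V)}\int_V f_0\bigl(a_tug\SL_d(\bZ)\bigr)\,dm_U(u)=\int_X f_0\,dm_X+O\bigl(\cS_l(f)\,e^{-\kappa t}\bigr)
\]
with some $\kappa>0$ depending only on $d$ and $V$, uniformly for $g$ in the bounded set $K$. Since $\zeta(b,T)\leq T$ by definition, this exponential error is dominated by $\cS_l(f)\,\zeta(b,e^{t/(2m)})^{-\alpha_1}$ for any sufficiently small $\alpha_1>0$, producing the main term $\int_Y f\,dm_Y$ with an acceptable remainder.

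The higher-mode sum is the technical heart. I would split it at a frequency threshold $N\sim\zeta(b,T)$ with $T=e^{t/(2m)}$. The tail $\|\xi\|>N$ is controlled by the Sobolev decay $|\phi_\xi(h)|\ll\cS_l(f)\|\xi\|^{-l}$, and summing yields $O(\cS_l(f)\,N^{-(l-d)})$, which is acceptable provided $l$ is chosen large relative to $d$. For the main range $\|\xi\|\leq N$, the defining inequality $\min_{1\leq q\leq N}\|qb\|_\bZ>T^2/N$ of $\zeta$ says precisely that the phases $\{e^{2\pi i\langle\xi,b\rangle}\}_{\xi}$ are quantitatively separated from the trivial character; combining this separation with Kleinbock-Margulis non-divergence for the translates $a_tug$ and a van der Corput / Weyl-differencing argument on the unipotent $V$-integral yields the required cancellation. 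The main obstacle is precisely carrying out this small-frequency estimate cleanly: one must match the frequency threshold with the expansion rate of $a_t$ on the full unstable group (including the affine direction $W$), which is what forces the exponent $1/(2m)$ in $T=e^{t/(2m)}$, and one must preserve uniformity in $K$ and $V$ throughout.
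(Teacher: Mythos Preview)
The paper does not actually prove this theorem: it is quoted verbatim from \cite{Kim21} and used as a black box. So there is no ``paper's own proof'' to compare against, only the second author's separate article.

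Your overall framework is the natural one, and two of the three pieces are fine in outline: the zero Fourier mode reduces to effective equidistribution of $U$-translates on $X=\SL_d(\bR)/\SL_d(\bZ)$, and the tail $\|\xi\|>N$ is killed by Sobolev decay. Your computation that the fiber character at $a_tuy$ evaluates to the constant phase $e^{2\pi i\langle\xi,b\rangle}$ is also correct and is the key structural observation. (One small correction: Dirichlet only gives $\zeta(b,T)\ll T^{2d/(d-1)}$, not $\zeta(b,T)\leq T$, but this still lets you absorb $e^{-\kappa t}$ into $\zeta^{-\alpha_1}$ after shrinking $\alpha_1$.)

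The genuine gap is in the low-frequency range $0<\|\xi\|\leq N$. Two things go wrong simultaneously. First, precisely because the phase $e^{2\pi i\langle\xi,b\rangle}$ is \emph{constant} in $u$, there is no oscillation left inside the $V$-integral for a fixed $\xi$; a van der Corput or Weyl-differencing argument on the $u$-variable has nothing to act on. Second, the Diophantine input encoded in $\zeta(b,T)$ is the \emph{one-parameter} condition $\|qb\|_{\bZ}>T^2/N$ for scalar $q\leq N$; it says nothing directly about $\langle\xi,b\rangle\bmod 1$ for a general lattice vector $\xi\in\bZ^d$. So ``the phases are quantitatively separated from the trivial character'' is not what $\zeta$ provides, and your proposed mechanism for cancellation does not connect to the hypothesis.

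What is missing is an intermediate step that reorganizes the nonzero modes by $q=\gcd(\xi)$, i.e.\ by $\SL_d(\bZ)$-orbits on $\bZ^d\smallsetminus\{0\}$. For fixed $q$ the resulting sum is naturally a function on (a twist of) the congruence quotient $\SL_d(\bR)/\Gamma_q\simeq X_q$ appearing just below in the paper, and its $V$-average can be controlled by effective equidistribution on that cover (with spectral gap uniform in $q$), while the dependence on $b$ enters through $qb\bmod\bZ^d$ --- exactly the quantity $\|qb\|_{\bZ}$ that $\zeta$ measures. Summing over $q$ and optimizing the cutoff is what produces the exponent $\alpha_1$ and the specific scale $e^{t/(2m)}$. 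Your sketch stops one idea short of this reorganization; without it the low-frequency bound cannot be closed.
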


For $q\in\bN$, define 
\begin{align*}
X_q &:= \left\{gw(\mb{p}/q)\Gamma\in Y : g\in \SL_{d}(\bR), \mb{p}\in\bZ^d, \gcd(\mb{p},q)=1\right\},\\
\Gamma_q &:= \{\gamma\in SL_{d}(\bZ): \gamma e_1 \equiv e_1 \;(\bmod\; q)\}.
\end{align*}
\begin{lem}\label{IdLem}
The subspace $X_q\subset Y$ can be identified with the quotient space $\SL_d(\bR)/\Gamma_{q}$. In particular, this identification is locally bi-Lipschitz.
\end{lem}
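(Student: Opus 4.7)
The plan is to exhibit an explicit algebraic identification and then verify its metric properties. Define $\Phi:\SL_d(\bR)\to Y$ by $\Phi(g)=gw(e_1/q)\Ga$. I will show (i) $\Phi$ descends to an injection $\SL_d(\bR)/\Ga_q\hookrightarrow Y$, (ii) its image equals $X_q$, and (iii) the induced map is a local isometry.

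For (i), I use the relation $g\,w(v)=w(gv)\,g$ valid for all $g\in\SL_d(\bR)$, $v\in\bR^d$, which follows directly from the block matrix description of $\ASL_d(\bR)$. Thus, for $g,g'\in\SL_d(\bR)$, writing $h=g^{-1}g'$,
\[
w(-e_1/q)\,h\,w(e_1/q) \;=\; w\bigl((h-I)e_1/q\bigr)\,h .
\]
This element lies in $\Ga=\SL_d(\bZ)\ltimes\bZ^d$ if and only if $h\in\SL_d(\bZ)$ and $he_1\equiv e_1\pmod q$, i.e.\ $h\in\Ga_q$. So $\Phi(g)=\Phi(g')$ iff $g^{-1}g'\in\Ga_q$, and $\Phi$ descends to an injective map $\bar\Phi:\SL_d(\bR)/\Ga_q\to Y$.

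For (ii), I would invoke the standard fact that the reduction $\SL_d(\bZ)\to\SL_d(\bZ/q\bZ)$ is surjective and that $\SL_d(\bZ/q\bZ)$ acts transitively on the set of vectors in $(\bZ/q\bZ)^d$ whose entries generate $\bZ/q\bZ$. Given $\mb{p}\in\bZ^d$ with $\gcd(\mb{p},q)=1$, choose $\ga\in\SL_d(\bZ)$ with $\ga\mb{p}\equiv e_1\pmod q$. Then $\ga\mb{p}/q=e_1/q+\mb{k}$ for some $\mb{k}\in\bZ^d$, and using $w(\mb{k})\in\Ga$ together with the conjugation formula above,
\[
w(\mb{p}/q)\,\Ga \;=\; \ga^{-1}w(\ga\mb{p}/q)\,\Ga \;=\; \ga^{-1}w(e_1/q)\,\Ga.
\]
Hence $gw(\mb{p}/q)\Ga=(g\ga^{-1})w(e_1/q)\Ga$ lies in the image of $\Phi$, proving $X_q\subseteq\bar\Phi(\SL_d(\bR)/\Ga_q)$. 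The reverse inclusion is immediate because $\gcd(e_1,q)=1$.

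For (iii), the map $\Phi$ factors as the right translation $g\mapsto g\cdot w(e_1/q)$ on $\ASL_d(\bR)$ (an isometry for the right-invariant metric $d_G$) followed by the quotient projection $\ASL_d(\bR)\to Y$, which is an isometry on balls smaller than the local injectivity radius. Since the restriction of $d_G$ to $\SL_d(\bR)$ is exactly the right-invariant metric one uses to define the quotient metric on $\SL_d(\bR)/\Ga_q$, and since $\Ga_q$ is discrete in $\SL_d(\bR)$, $\bar\Phi$ is a local isometry, hence in particular locally bi-Lipschitz. The only mildly nontrivial input is the transitivity statement in (ii); everything else is a direct computation with the semidirect product and an application of right-invariance.
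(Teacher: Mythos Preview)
Your proof is correct and follows the same orbit--stabilizer strategy as the paper: compute the stabilizer of $w(e_1/q)\Gamma$ in $\SL_d(\bR)$ and show the action on $X_q$ is transitive. The only difference is in the transitivity step, where you invoke surjectivity of $\SL_d(\bZ)\to\SL_d(\bZ/q\bZ)$ and transitivity on primitive vectors mod $q$, while the paper writes down an explicit matrix in $\SL_d(\bZ)$ sending $e_1$ to $\mb{p}\pmod q$; both are standard and equally valid.
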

\begin{proof}
The action $\SL_d(\bR)$ on $X_q$ by the left multiplication is transitive and $\operatorname{Stab}_{\SL_d(\bR)}(w(e_1 /q)\Gamma)=\Gamma_q$. To see the transitivity, it is enough to show the transitivity on each fiber, i.e., $$\SL_d(\bZ)e_1 \equiv \{\mb{p}\in\bZ^{d}: \gcd(\mb{p},q)=1\} \;(\bmod\; q).$$ Write $D=\gcd(\mb{p})$ and $\mb{p}'=\mb{p}/D$. Since $\gcd(D,q)=1$, there are $a,b\in\bZ$ such that $aD+bq=1$. Take $A\in M_{d,d}(\bZ)$ such that $\det(A)=D$ and $Ae_1=\mb{p}$. If we set $\mb{u}=b\mb{p}'+(a-1)Ae_2$, then by direct calculation, we have $\mb{p}+q\mb{u}=(A+\mb{u}\times{^{t}}(qe_1 + e_2))e_1$ and $A+\mb{u}\times{^{t}}(qe_1 + e_2)\in \SL_d(\bZ)$, which concludes the transitivity. Bi-Lipshitz property of the identification follows trivially since both $X_q$ and $\SL_d(\bR)/\Gamma_q$ are locally isometric to $\SL_d(\R)$.
\end{proof}

\begin{thm}\cite[Theorem 2.3]{KM12}\label{Teffrat}
For $q\in\bN$, let $\SL_d(\bR)/\Gamma_q\simeq X_q\subset Y$. Let $K$ and $V$ be as in Theorem \ref{Teffirr}. Then, for any $t\ge 0$, $f\in C_c^\infty(Y)$, and $y=gw(\frac{\bp}{q})\Gamma$ with $g\in K$ and $\bp\in\bZ^d$, there exists a constant $\alpha_2>0$ depending only on $d$ and $V$ so that
\eqlabel{effrat}{\frac{1}{m_U(V)}\int_V f(a_tuy)dm_U(u)=\int_{X_q} fdm_{X_q}+O(\cS_l(f)[\Gamma_1:\Gamma_q]^{\frac{1}{2}}e^{-\alpha_2 t}).}
The implied constant in \eqref{effrat} depends only on $d$, $V$, and $K$.
\end{thm}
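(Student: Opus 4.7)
The plan is to reduce the claim to the effective equidistribution of expanding $U$-translates on the congruence quotient $\SL_d(\bR)/\Gamma_q$, as in \cite[Theorem 2.3]{KM12}, by transporting the statement through the identification provided by Lemma \ref{IdLem}. The underlying dynamical input—effective decay of matrix coefficients on $L^2(\SL_d(\bR)/\Gamma_q)$—is fully inherited from \cite{KM12}; the genuine work consists of setting up the transfer correctly and tracking the $[\Gamma_1:\Gamma_q]^{1/2}$ factor in the Sobolev norm.

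First, since $X_q$ is a single $\SL_d(\bR)$-orbit in $Y$ (this is the content of Lemma \ref{IdLem}), and both $a_t$ and $V\subset U$ lie in $\SL_d(\bR)$, the orbit segment $\{a_tuy : u\in V\}$ remains inside $X_q$. Hence the integral on the left-hand side of \eqref{effrat} depends only on $f|_{X_q}$. Using the transitivity argument in the proof of Lemma \ref{IdLem}, I would write $y = g\gamma\cdot w(e_1/q)\Gamma$ for some $\gamma\in \SL_d(\bZ)$, so that $y$ corresponds to $\tilde y = g\gamma\Gamma_q\in \SL_d(\bR)/\Gamma_q$ under the identification, and let $\tilde f$ denote the function on $\SL_d(\bR)/\Gamma_q$ corresponding to $f|_{X_q}$. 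Because the identification is $\SL_d(\bR)$-equivariant and bi-Lipschitz, it sends the left-hand side and the main term of \eqref{effrat} to their analogues on $\SL_d(\bR)/\Gamma_q$, and since $\tilde y$ projects to the bounded set $K\cdot\SL_d(\bZ)\subset X$, the base point $\tilde y$ is geometrically controlled in terms of $K$ alone, independently of $q$.

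Next, I would apply \cite[Theorem 2.3]{KM12} on $\SL_d(\bR)/\Gamma_q$ to $\tilde f$ and $\tilde y$, producing an error term of the shape $\cS_l(\tilde f)\,e^{-\alpha_2 t}$ with $\alpha_2$ and the implied constant depending only on $d$, $V$, $K$. It remains to compare $\cS_l(\tilde f)$ with $\cS_l(f)$. The $L^2$ in $\cS_l(\tilde f)$ is taken against the Haar measure on $\SL_d(\bR)/\Gamma_q$ of total mass $[\Gamma_1:\Gamma_q]$, while $\cS_l(f)$ in \eqref{Sobol} uses the probability Haar measure on $Y$; combining the Sobolev embedding $\|f\|_\infty\leq C\cS_{l'}(f)$ for sufficiently large $l'$ with the crude inequality $\|h\|_{L^2(\SL_d/\Gamma_q)}\leq [\Gamma_1:\Gamma_q]^{1/2}\|h\|_\infty$ applied to $h=\tilde f$ and its derivatives yields $\cS_l(\tilde f)\leq C[\Gamma_1:\Gamma_q]^{1/2}\cS_{l+l'}(f)$. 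The main technical obstacle is this final Sobolev comparison: one must verify that the differentiation operators entering \eqref{Sobol} control the $\SL_d$-invariant derivatives of $\tilde f$ on $X_q$ uniformly in $q$, so that the entire $q$-dependence of the error is absorbed into the clean volume factor $[\Gamma_1:\Gamma_q]^{1/2}$ with no hidden deterioration.
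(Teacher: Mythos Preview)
Your approach is essentially the same as the paper's: both reduce to the effective equidistribution result of \cite{KM12} on a congruence quotient, transported through the identification of Lemma~\ref{IdLem}. The paper's own proof is in fact shorter than your proposal---it simply cites \cite[Theorem~2.3]{KM12} for $q=1$ and refers to \cite[Theorem~5.4]{KM21} for the modification to general $q$.

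There is one point you should sharpen. You write that the effective decay of matrix coefficients on $L^2(\SL_d(\bR)/\Gamma_q)$ is ``fully inherited from \cite{KM12}'', and then apply \cite[Theorem~2.3]{KM12} on $\SL_d(\bR)/\Gamma_q$ with $\alpha_2$ and implied constants independent of $q$. But \cite[Theorem~2.3]{KM12} is stated only for $q=1$; obtaining the same exponent $\alpha_2$ uniformly over all congruence levels $q$ requires the uniform spectral gap for the family $\{\Gamma_q\}$ (property $(\tau)$, or Kazhdan's property (T) when $d\geq 3$). This is precisely why the paper cites \cite{KM21} rather than \cite{KM12} alone for the general-$q$ case. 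Your Sobolev-norm comparison correctly isolates the $[\Gamma_1:\Gamma_q]^{1/2}$ volume factor, but that is separate from---and does not substitute for---the uniform spectral input. Once you add a reference for the uniform gap, your sketch is a complete and more explicit version of the paper's citation.
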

\begin{proof}
This result was obtained in \cite[Theorem 2.3]{KM12} in the case $q=1$. For general $q$, we refer the reader to \cite[Theorem 5.4]{KM21} which gave a sketch of required modification. \cite[Theorem 5.4]{KM21} is actually stated for different congruence subgroups from our $\Gamma_q$, but the modification still works.
\end{proof}

Since we assume the unweighted setting, $\cL_{\eps}=\{y\in Y : \forall v\in \Lambda_{y},\ \|v\|\geq \eps^{1/d} \}$.

\begin{lem}\label{bvol}
For any small enough $\eps>0$ and $q\in\bN$, $m_Y(Y_{\leq\eps^{-1}}\setminus\cL_\eps)\asymp\eps$ and $m_{X_q}(Y_{\leq\eps^{-1}}\setminus\cL_\eps)\gg q^{-d}\eps$.
\end{lem}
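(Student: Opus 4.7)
The plan is to disintegrate $m_Y$ along the fibration $\pi:Y\to X$ (whose fiber over $x$ is the torus $\bR^d/x$ carrying its Haar probability measure), and to exploit the identification $X_q\simeq\SL_d(\bR)/\Gamma_q$ from Lemma~\ref{IdLem} as a finite cover $p:X_q\to X$ of degree $N=[\Gamma:\Gamma_q]\asymp q^d$. Since $p_*m_{X_q}=m_X$ by uniqueness of $\SL_d(\bR)$-invariant probability measures, one has for any measurable $E\subset X_q$ the disintegration
\[
m_{X_q}(E)=\frac{1}{N}\int_X\#\bigl(p^{-1}(x)\cap E\bigr)\,dm_X(x).
\]

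For the estimate on $m_Y$, I would observe that a grid $y=x+w$ with $w\in\bR^d/x$ lies outside $\cL_\eps$ precisely when $w$ is in the image in $\bR^d/x$ of the sup-norm ball $B_\infty(0,\eps^{1/d})$, of Lebesgue volume $2^d\eps$. This yields $m_Y(Y_{\leq\eps^{-1}}\setminus\cL_\eps)\leq 2^d\eps$ at once. For the matching lower bound, I would fix once and for all a compact set $K\subset X_{\leq 1}$ of positive Haar measure; for $\eps$ small enough that $2\eps^{1/d}<1$, the ball $B_\infty(0,\eps^{1/d})$ embeds injectively in each fiber $\bR^d/x$ for $x\in K$, so its image has measure exactly $2^d\eps$, and integration yields $m_Y(Y_{\leq\eps^{-1}}\setminus\cL_\eps)\geq 2^d\eps\cdot m_X(K)\gg\eps$.

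For $m_{X_q}$, the key observation is that whenever a lattice $x\in X$ has shortest vector $w_0$ (which is automatically primitive in $x$) of sup-norm in $[\eps,\min(1,q\eps^{1/d}))$, the point $w_0/q\in\bR^d/x$ is a primitive $q$-torsion point of sup-norm $<\eps^{1/d}$, and the corresponding grid $x+w_0/q\in X_q$ lies in $Y_{\leq\eps^{-1}}\setminus\cL_\eps$; thus $\#(p^{-1}(x)\cap(Y_{\leq\eps^{-1}}\setminus\cL_\eps))\geq 1$ on this set of lattices. Using the classical estimate $m_X(\{x:\text{shortest vector of $x$ is}<T\})\asymp T^d$ for $T\leq 1$, the measure of such lattices is $\asymp\min(q^d\eps,1)-\eps^d\asymp\min(q^d\eps,1)$, and dividing by $N\asymp q^d$ gives $m_{X_q}(Y_{\leq\eps^{-1}}\setminus\cL_\eps)\gg q^{-d}\min(q^d\eps,1)\geq q^{-d}\eps$, since $\min(q^d\eps,1)\geq\eps$ in both regimes $q^d\eps\leq 1$ and $q^d\eps>1$.

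The main technical point will be the interplay of the three competing scales $\eps$, $\eps^{1/d}$, and $q\eps^{1/d}$: for $q<\eps^{-1/d}$ the useful lattices live in the cusp of $X$ (of measure $\sim q^d\eps$), whereas for $q\geq\eps^{-1/d}$ they fill a fixed bulk region of positive measure. Packaging both regimes into the single threshold $T=\min(1,q\eps^{1/d})$ treats them uniformly and avoids an explicit case split.
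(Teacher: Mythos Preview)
Your argument is correct in spirit and largely complete, with one small slip: you take $K\subset X_{\leq 1}$, but since Minkowski's theorem forces $\mathrm{ht}(x)\geq 1$ for every $x\in X$, the set $X_{\leq 1}=\{x:\mathrm{ht}(x)=1\}$ has measure zero. Replacing $X_{\leq 1}$ by $X_{\leq C}$ for any fixed $C>1$ (and requiring $2\eps^{1/d}<1/C$ and $\eps<1/C$ so that $K\subset X_{\leq\eps^{-1}}$) repairs this immediately.

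Your route differs from the paper's in both halves. For the $m_Y$ estimate, the paper obtains the upper bound from the Siegel integral formula on the space of grids and the lower bound by quoting Athreya's inequality $m_Y(\cL_\eps)<(1+2^d\eps)^{-1}$ together with a Siegel-type cusp-volume bound $m_Y(Y_{>\eps^{-1}})\leq 2^d\eps^d$; your direct fiber disintegration is more elementary and avoids both external inputs. For the $m_{X_q}$ estimate, both arguments produce, over each lattice with a short primitive vector $w_0$, the grid $x+w_0/q\in X_q\setminus\cL_\eps$; the paper restricts to lattices with shortest vector in $[\eps,\eps^{1/d})$ (measure $\asymp\eps$) and divides by a crude fiber bound $|\pi_q^{-1}(x)|\leq q^d$, whereas you widen the window to $[\eps,\min(1,q\eps^{1/d}))$, yielding the intermediate bound $\gg q^{-d}\min(q^d\eps,1)$, which is sharper for large $q$ though it collapses to the same $\gg q^{-d}\eps$ in the end.
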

\begin{proof}
Using Siegel integral formula \cite[Lemma 2.1]{MM11} with $f=\mathds{1}_{B_{\eps^{1/d}}(0)}$, which is the indicator function on $\eps^{1/d}$-ball centered at $0$ in $\bR^d$, we have $m_Y(Y_{\leq\eps^{-1}}\setminus\cL_\eps)\ll\eps$. On the other hands, by \cite[Theorem 1]{A15} with $A=B_{\eps^{1/d}}(0)$, we have $m_{Y}(\cL_\eps)< \frac{1}{1+2^d \eps}$. It follows from Siegel integral formula on $X$ that $m_{Y}(Y_{>\eps^{-1}})=m_{X}(X_{>\eps^{-1}})\leq 2^d\eps^d$. Since $d\geq 2$, we have
\eq{
m_Y (Y_{\leq\eps^{-1}}\setminus\cL_\eps) \geq m_Y (Y\setminus\cL_\eps)-m_Y (Y_{>\eps^{-1}}) > \frac{2^d \eps}{1+2^d \eps}-2^d \eps^d \gg \eps
} for small enough $\eps >0$, which concludes the first assertion.

To prove the second assertion, observe that for any $x\in X_{>\eps^{-1/d}}$, 
there exists $g\in \SL_d(\bR)$ such that $x=g\SL_d(\bZ)$ and $\|ge_1\|\leq \eps^{1/d}$.
Then $gw(\frac{e_1}{q})\Gamma \in \pi_{q}^{-1}(x)\cap (Y\setminus\cL_\eps)$, where $\pi_q : X_q \to X$ is the natural projection. Since $|\pi_{q}^{-1}(x)|\leq q^{d}$ and $m_X (x\in X: \eps^{-1/d}<\textrm{ht}(x)\leq \eps^{-1}) \asymp \eps$, we have 
\eq{
m_{X_q}(Y_{\leq\eps^{-1}}\setminus\cL_\eps) \geq \frac{|\pi_{q}^{-1}(x)\cap (Y\setminus\cL_\eps)|}{|\pi_{q}^{-1}(x)|} m_X (x\in X: \eps^{-1/d}<\textrm{ht}(x)\leq \eps^{-1}) \gg q^{-d}\eps.
}

\end{proof}

\begin{prop}\label{btauest}
Let $\cA$ be a countably generated sub-$\sigma$-algebra of the Borel   
$\sigma$-algebra which is $a^{-1}$-descending and $U$-subordinate. Fix a compact set $K\subset Y$. 
Let $1<R'<R$, $k=\lfloor\frac{mn\log R'}{4d}\rfloor$. Suppose that $y\in a^{4k}K$ satisfies 
$B^{U,d_\infty}_{R'}\cdot y\subset[y]_\cA\subset B^{U,d_\infty}_{R}\cdot y$, where $B^{U,d_\infty}_r$ is the 
$d_\infty$-ball of radius $r$ around the identity in $U$.
For $\eps>0$, let $\Om\subset Y$ be a set satisfying $\Om\cup a^{-3k}\Om\subseteq\cL_{\frac{\eps}{2}}$. There exist $M,M'>0$ such that the following holds. 
If $R'\ge\eps^{-M'}$, then $$1-\tau^{\cA}_y(\Om)\gg \left(\frac{R'}{R}\right)^{mn}\eps^{dM+1},$$ 
where the implied constant depends only on $K$. 
\end{prop}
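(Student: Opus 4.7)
The approach is to combine the hypothesis $a^{-3k}\Om\subseteq\cL_{\eps/2}$ with effective equidistribution (Theorems~\ref{Teffirr} and~\ref{Teffrat}). The parameter $k=\lfloor mn\log R'/(4d)\rfloor$ is chosen precisely so that conjugation by $a^{-3k}$ shrinks $B^{U,d_\infty}_{R'}$ to $B^{U,d_\infty}_{\tilde R'}$ with $\tilde R':=e^{-3kd/(mn)}R'\asymp R'^{1/4}$, while a further conjugation by $a^{-k}$ brings $B^{U,d_\infty}_{\tilde R'}$ down to the unit $U$-ball; meanwhile the basepoint shifts from $y$ to $\tilde y:=a^{-3k}y=a^{k}x$ with $x:=a^{-4k}y\in K$ bounded. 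This aligns everything with the setting of Theorems~\ref{Teffirr} and~\ref{Teffrat}, which require a fixed $V$ and fixed $K$.

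First I push $\tau_y^\cA$ forward by $a^{-3k}$. Since $a^{-3k}\Om\subseteq\cL_{\eps/2}$, i.e.\ $\Om\subseteq a^{3k}\cL_{\eps/2}$, one has $\mathds{1}_{Y\setminus\Om}(z)\geq\mathds{1}_{Y\setminus\cL_{\eps/2}}(a^{-3k}z)$, so
\[
1-\tau_y^\cA(\Om)\;\geq\;(a^{-3k})_{*}\tau_y^\cA\bigl(Y\setminus\cL_{\eps/2}\bigr).
\]
The pushed measure is normalized Haar on $\tilde V\cdot\tilde y$, where $\tilde V=a^{-3k}V_ya^{3k}$ satisfies $B^{U,d_\infty}_{\tilde R'}\subseteq\tilde V\subseteq B^{U,d_\infty}_{\tilde R}$ with $\tilde R'/\tilde R=R'/R$. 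Restricting to the inscribed sub-ball gives
\[
1-\tau_y^\cA(\Om)\;\geq\;\left(\frac{R'}{R}\right)^{mn}\Psi,\quad \Psi:=\frac{1}{m_U(B^{U,d_\infty}_{\tilde R'})}\int_{B^{U,d_\infty}_{\tilde R'}}\mathds{1}_{Y\setminus\cL_{\eps/2}}(u'\tilde y)\,dm_U(u'),
\]
so it remains to prove $\Psi\gg\eps^{dM+1}$.

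For the lower bound on $\Psi$, I substitute $u'=a^{k}ua^{-k}$ (using $\tilde y=a^{k}x$) to rewrite $\Psi$ as the $a^{k}$-translated $B^{U,d_\infty}_{1}$-average at $x\in K$. I then approximate $\mathds{1}_{Y\setminus\cL_{\eps/2}}$ from below by a smooth nonnegative bump $f_\eps\in C_c^\infty(Y)$ supported in $Y_{\leq\eps^{-1}}\setminus\cL_\eps$ with $\cS_l(f_\eps)\ll\eps^{-C}$, so that Lemma~\ref{bvol} gives $\int f_\eps\,dm_Y\gg\eps$ and $\int f_\eps\,dm_{X_q}\gg q^{-d}\eps$. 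Writing $x=gw(b)\Gamma$ with $g$ in a bounded set, I distinguish two cases on $b$.

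If $b$ has a rational approximation $p/q$ with $q\leq\eps^{-M}$ (so that $x$ is effectively close to a point in $X_q$), I apply Theorem~\ref{Teffrat} at that rational point and obtain
\[
\Psi\;\geq\;\int f_\eps\,dm_{X_q}-O\bigl(\cS_l(f_\eps)[\Gamma_1:\Gamma_q]^{1/2}e^{-\alpha_2 k}\bigr)\;\gg\;q^{-d}\eps\;\gg\;\eps^{dM+1}.
\]
Otherwise $\zeta(b,e^{k/(2m)})$ is sufficiently large and Theorem~\ref{Teffirr} yields $\Psi\geq\int f_\eps\,dm_Y-O(\cS_l(f_\eps)\zeta^{-\alpha_1})\gg\eps\gg\eps^{dM+1}$. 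The assumption $R'\geq\eps^{-M'}$ guarantees $e^{k/(2m)}\gtrsim R'^{n/(8d)}\geq\eps^{-nM'/(8d)}$ is large enough that both error terms are absorbed, and the interplay between the Sobolev blow-up $\eps^{-C}$, the rates $e^{-\alpha_2 k}$ and $\zeta^{-\alpha_1}$, the index bound $[\Gamma_1:\Gamma_q]^{1/2}\leq q^{d/2}$, and the volume loss $q^{-d}$ of Lemma~\ref{bvol} fixes admissible values of $M$ and $M'$. The main delicate point is this exponent balancing together with the quantitative form of the ``approximately rational'' dichotomy needed to reduce the non-generic case to Theorem~\ref{Teffrat}; the geometric reduction via $a^{-3k}$ is essentially forced by the definition of $k$.
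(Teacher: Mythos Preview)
Your overall structure—rewriting $1-\tau_y^\cA(\Om)$ as a $U$-average, extracting the factor $(R'/R)^{mn}$ by restricting to an inscribed ball, smoothing the indicator of $Y\setminus\cL_{\eps/2}$ via Lemma~\ref{bvol}, and splitting into a Diophantine dichotomy handled by Theorems~\ref{Teffirr} and~\ref{Teffrat}—is exactly the paper's. The gap is in the step you flag as ``delicate'': with your setup the exponent balancing is not merely delicate, it is impossible.

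By pushing forward by $a^{-3k}$ \emph{before} the case split you commit to applying equidistribution at time $t=k$ in both branches. In the generic branch, Theorem~\ref{Teffirr} at $t=k$ has error governed by $\zeta(b,e^{k/(2m)})$, so your dichotomy threshold must live there. But when $\zeta(b,e^{k/(2m)})$ is small, the rational approximation it produces has $\|b-p/q\|$ only of order $q\,e^{-k/(2m)}$ (this is the same implication the paper uses, with $T=e^{k/(2m)}$ in place of $e^{2k/m}$). The Lipschitz comparison from $x=gw(b)\Gamma$ to $gw(p/q)\Gamma$ at time $k$ costs a factor $e^{k/m}$, so the comparison error is of order $q\,e^{k/(2m)}$, which grows with $R'$; no choice of $M,M'$ absorbs it. In short, at time $k$ the two cases ``$\zeta$ large enough for Theorem~\ref{Teffirr}'' and ``$b$ close enough to a rational for the comparison $+$ Theorem~\ref{Teffrat}'' do not cover $\bT^d$.

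The paper avoids this by \emph{not} shifting up front: it restricts to $a^{4k}Va^{-4k}\subset V_y$ and writes the average at time $4k$. In the generic branch it uses the hypothesis $\Om\subseteq\cL_{\eps/2}$ (which you never invoke) and applies Theorem~\ref{Teffirr} at $t=4k$, so the dichotomy is on $\zeta(b_0,e^{2k/m})$. Failure of that threshold now yields $\|b_0-p/q\|\lesssim q\,e^{-2k/m}$. Only then, in the rational branch, is the hypothesis $a^{-3k}\Om\subseteq\cL_{\eps/2}$ used to drop from $t=4k$ to $t=k$; at that scale the comparison error is $\lesssim q\,e^{-k/m}$, which is small. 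Thus both inclusions in $\Om\cup a^{-3k}\Om\subseteq\cL_{\eps/2}$ are genuinely needed, at different times, and this is precisely what your global $a^{-3k}$ push-forward throws away.
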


\begin{proof}
Denote by $V_y\subset U$ the shape of $\cA$-atom of $y$ so that $V_y\cdot y=[y]_{\cA}$. Set $V=B^{U,d_\infty}_{1}$. 
Since $\frac{mn\log R'}{d}-4\leq4k\leq \frac{mn\log R'}{d}$, we have
$$B^{U,d_\infty}_{e^{-\frac{4d}{mn}}R'} \subseteq a^{4k}Va^{-4k} = B_{e^{\frac{d}{mn}4k}}^{U,d_\infty}\subseteq 
B_{R'}^{U,d_\infty} \subseteq V_y.$$ 
It follows that
\eqlabel{tauest1}{
\begin{aligned}
1-&\tau_y^{\cA}(\Om)=\frac{1}{m_U(V_y)}\int_{V_y} \mathds{1}_{Y\setminus\Om}(uy)dm_U(u)\geq \frac{1}{m_U(B_R^{U,d_\infty})}\int_{a^{4k}Va^{-4k}} \mathds{1}_{Y\setminus\Om}(uy)dm_U(u)\\
&\geq e^{-4d}\left(\frac{R'}{R}\right)^{mn}\left(\frac{1}{m_U(a^{4k}Va^{-4k})}\int_{a^{4k}Va^{-4k}} \mathds{1}_{Y\setminus\Om}(uy)dm_U(u)\right)\\
&=e^{-4d}\left(\frac{R'}{R}\right)^{mn}\left(\frac{1}{m_U(V)}\int_{V} \mathds{1}_{Y\setminus\Om}(a^{4k}ua^{-4k}y)dm_U(u)\right).
\end{aligned}
}

It remains to show that 
\eqlabel{equidisest}{\frac{1}{m_U(V)}\int_{V} \mathds{1}_{Y\setminus\Om}(a^{4k}ua^{-4k}y)dm_U(u) \gg \eps^{dM+1}.}
We will approximate the characteristic function in the above integrand by a smooth function $\psi$ and use effective equidistribution results from Theorem \ref{Teffirr} and \ref{Teffrat}. Since $\pi(K) \subset X$ is compact, we can choose $g_0 \in SL_d(\bR)$ such that $\|g_0\|<C_K$ with a constant $C_K>0$ 
depending only on $K$, and $a^{-4k}y=g_0w(b_0)\Gamma$ with $b_0 \in \bR^d$.
For the constants $\alpha_1$ in Theorem \ref{Teffirr} and $\alpha_2$ in Theorem \ref{Teffrat}, 
let $\alpha=\min(\alpha_1,\alpha_2)$ and $M=\frac{1}{\alpha}\left(2+l+\frac{\dim G}{2d}\right)$. 
By \cite[Lemma 2.4.7(b)]{KM96} with $r=C\eps^{\frac{1}{d}}<1$, we can take the approximation function 
$\theta\in C_{c}^{\infty}(G)$ of the identity such that $\theta\ge 0$, $\Supp \theta\subseteq B^{G}_r(id)$, $\int_G \theta=1$, 
and $\cS_l(\theta)\ll \eps^{-\frac{1}{d}(l+\frac{\dim G}{2})}$. 
Let $\psi=\theta*\mathds{1}_{Y_{\leq \eps^{-1}}\setminus\cL_{\frac{\eps}{4}}}$, then we have 
$\mathds{1}_{Y_{\leq (2\eps)^{-1}}\setminus\cL_{\frac{\eps}{8}}}\leq\psi\leq\mathds{1}_{Y_{\leq 2\eps^{-1}}
\setminus\cL_{\frac{\eps}{2}}}$. Moreover, using Young's inequality, its Sobolev norm is bounded as follows:
\eqlabel{psiSobol}{
\begin{aligned}
\cS_l(\psi)^2&=\displaystyle\sum_{\cD}\|(\textrm{ht}\circ\pi)^l\cD(\psi)\|^2_{L^2}
\ll\eps^{-l}\displaystyle\sum_{\cD}\|\cD(\theta)*\mathds{1}_{Y_{\leq \eps^{-1}}\setminus\cL_{\frac{\eps}{4}}}\|^2_{L^2}\\
&\ll\eps^{-l}\|\mathds{1}_{Y_{\leq \eps^{-1}}\setminus\cL_{\frac{\eps}{4}}}\|^2_{L^1}\displaystyle\sum_{\cD}\|\cD(\theta)\|^2_{L^2}\ll\eps^{-l}\cS_l(\theta)^2,
\end{aligned}}
hence $\cS_l(\psi)\ll \eps^{-\frac{l}{2}}\cS_l(\theta)\leq \eps^{-(l+\frac{\dim G}{2d})}$.

We will prove \eqref{equidisest} applying Theorem \ref{Teffirr} and \ref{Teffrat} 
to the following two cases, respectively:
\[
\textbf{Case (i)}\quad \zeta(b_0,e^{\frac{2k}{m}})\ge\frac{r_0}{C_K C_0}\eps^{-M}\qquad\text{and}\qquad
\textbf{Case (ii)}\quad \zeta(b_0,e^{\frac{2k}{m}})<\frac{r_0}{C_K C_0}\eps^{-M}.\qquad
\]

%\begin{enumerate}[label=(\roman*)]x
%\item[\textbf{Case (i)}] $\zeta(b_0,e^{\frac{2k}{m}})\ge\frac{r_0}{C_K C_0}\eps^{-M}$,
%\item[\textbf{Case (ii)}] $\zeta(b_0,e^{\frac{2k}{m}})<\frac{r_0}{C_K C_0}\eps^{-M}$.
%\end{enumerate}

\textbf{Case (i):}
Applying Theorem \ref{Teffirr}, we have
\eq{
\begin{aligned}
&\frac{1}{m_U(V)}\int_V\mathds{1}_{Y\setminus\Om}(a^{4k}ua^{-4k}y)dm_U(u)
\geq\frac{1}{m_U(V)}\int_V \psi(a^{4k}ua^{-4k}y)dm_U(u)\\
&=\frac{1}{m_U(V)}\int_V \psi(a^{4k}ug_0w(b_0)\Gamma)dm_U(u)
=\int_Y\psi dm_Y+O(\cS_l(\psi)\zeta(b_0,e^{\frac{2k}{m}})^{-\alpha})\\
&\geq m_Y(Y_{\leq (2\eps)^{-1}}\setminus\cL_{\frac{\eps}{8}})+O(\eps^{-(l+\frac{\dim G}{2d})}\eps^{M\alpha}).
\end{aligned}}
It follows from Lemma \ref{bvol} and $M\alpha=2+(l+\frac{\dim G}{2d})$ that
\eq{
\frac{1}{m_U(V)}\int_{V} \mathds{1}_{Y\setminus\Om}(a^{4k}ua^{-4k}y)dm_U(u)\geq 
m_Y(Y_{\leq (2\eps)^{-1}}\setminus\cL_{\frac{\eps}{2}})+O(\eps^2)\asymp \eps \geq \eps^{dM+1}.}
%Hence, $1-\tau^{\cA}_y(\Om)\gg\eps\left(\frac{R'}{R}\right)^{mn}$ by \eqref{tauest1} and \eqref{tauest3}.

\textbf{Case (ii):}
The assumption $\zeta(b_0,e^{\frac{2k}{m}})<\frac{r_0}{C_K C_0}\eps^{-M}$ implies that there exists 
$q\leq \frac{r_0}{C_K C_0} \eps^{-M}$ such that $\|qb_0\|_{\bZ}\leq q^{2}e^{-\frac{2k}{m}}$, whence 
\eqlabel{eqdistbq}{\|b_0-\frac{\bp}{q}\|\leq qe^{-\frac{2k}{m}}\leq \frac{r_0}{C_K C_0}\eps^{-M}e^{-\frac{2k}{m}}}
for some $\bp\in\bZ^d$. Let $y'=a^{4k}g_0w(\frac{\bp}{q})\Gamma$. Then for any $u\in V$,
\eq{\begin{aligned}
d_Y(a^kua^{-4k}y, &a^kua^{-4k}y') \leq d_G(a^k u g_0 w(b_0), a^k u g_0 w(\frac{\mb{p}}{q}))=d_G \left(\left(\begin{matrix} I_d & a^k u g_0 (b_0 - \frac{\mb{p}}{q}) \\ & 1 \\ \end{matrix}\right), id\right)\\
& \leq C_0 d_\infty \left(\left(\begin{matrix} I_d & a^k u g_0 (b_0 - \frac{p}{q}) \\ & 1 \\ \end{matrix}\right), id\right)
\leq C_0 e^{\frac{k}{m}} \|g_0\| \|b_0-\frac{\mb{p}}{q}\| \leq r_0 \eps^{-M}e^{-\frac{k}{m}}.
\end{aligned}}
by \eqref{eqbilip} and \eqref{eqdistbq}.
%Since by \eqref{eqdistbq},
%$$d_\infty \left(\left(\begin{matrix} I_d & a^k u g_0 (b_0 - \frac{p}{q}) \\ & 1 \\ \end{matrix}\right), id\right)  = e^{\frac{k}{m}}\|g_0\|
%\|b_0-\frac{\mb{p}}{q}\| < \frac{r_0}{C_0},$$
%it follows from \eqref{eqbilip}, \eqref{eqdistbq}, and \eqref{eqredtoG} that
%\eq{
%\begin{split}
%d_Y(a^kua^{-4k}y,a^kua^{-4k}y')
%&\leq C_0 d_\infty \left(\left(\begin{matrix} I_d & a^k u g_0 (b_0 - \frac{p}{q}) \\ & 1 \\ \end{matrix}\right), id\right)\\
%&\leq C_0 e^{\frac{k}{m}} \|g_0\| \|b_0-\frac{\mb{p}}{q}\| \leq r_0 \eps^{-M}e^{-\frac{k}{m}}.
%\end{split}
%}
Hence, we have
\eqlabel{psidiff}{\begin{aligned}
|\psi(a^kua^{-4k}y)-\psi(a^kua^{-4k}y')|&\ll\cS_l(\psi)d_Y(a^kua^{-4k}y,a^kua^{-4k}y')\ll \cS_l(\psi)\eps^{-M}e^{-\frac{k}{m}}.
\end{aligned}}
It follows from the assumption $a^{-3k}\Om\subseteq\cL_{\frac{\eps}{2}}$, \eqref{psidiff}, and Theorem \ref{Teffrat} that
\[
\begin{split}
\frac{1}{m_U(V)}\int_V&\mathds{1}_{Y\setminus\Om}(a^{4k}ua^{-4k}y)dm_U(u)
=\frac{1}{m_U(V)}\int_V\mathds{1}_{Y\setminus a^{-3k}\Om}(a^{k}ua^{-4k}y)dm_U(u)\\
&\geq\frac{1}{m_U(V)}\int_V \psi(a^{k}ua^{-4k}y)dm_U(u)\\
&=\frac{1}{m_U(V)}\int_V \psi(a^{k}ua^{-4k}y')dm_U(u)+O(\cS_l(\psi)\eps^{-M}e^{-\frac{k}{m}})\\
&=\int_{X_q}\psi dm_Y+O(\cS_l(\psi)q^{\frac{d}{2}}e^{-\alpha k}+\cS_l(\psi)\eps^{-M}e^{-\frac{k}{m}})\\
&\geq m_{X_q}(Y_{\leq (2\eps)^{-1}}\setminus\cL_{\frac{\eps}{8}})+O(\eps^{-(l+\frac{\dim G}{2d})-\frac{dM}{2}}e^{-\alpha k}
+\eps^{-(l+\frac{\dim G}{2d})-M}e^{-\frac{k}{m}}).
\end{split}
\]
%We are using \eqref{psidiff} for the fourth line, and Theorem \ref{Teffrat} for the fifth line.
Let $M'=\min\left(\frac{4d}{\alpha}(l+\frac{\dim G}{2d}+\frac{3dM}{2}+2), 4dm(l+\frac{\dim G}{2d}+(d+1)M+2)\right)$. 
If $R'>\eps^{-M'}$, then $e^{-4dk}<e^{4d}\eps^{M'}$, so $\eps^{-(l+\frac{\dim G}{2d})-\frac{dM}{2}}e^{-\alpha k}\ll \eps^{dM+2}$
and $\eps^{-(l+\frac{\dim G}{2d})-M}e^{-\frac{k}{m}}\ll \eps^{dM+2}$. Combining this with Lemma \ref{bvol}, it follows that
\eq{
\frac{1}{m_U(V)}\int_V\mathds{1}_{Y\setminus\Om}(a^{4k}ua^{-4k}y)dm_U(u)\gg q^{-d}\eps+O(\eps^{dM+2})
\gg \eps^{dM+1}+O(\eps^{dM+2})\gg \eps^{dM+1}.
}
\end{proof}

\begin{proof}[Proof of Theorem \ref{corb1}]
 For fixed $b$, let $\eta_0=2(m+n)(1-\frac{\dim_H \mb{Bad}^b(\eps)}{mn})$ as in Proposition \ref{prop2}. 
 It is enough to consider the case when $\mb{Bad}^b(\eps)$ is sufficiently close to the full dimension $mn$, 
 so we may assume $\dim_H\mb{Bad}^b(\eps)>\dim_H\mb{Bad}^0(\eps)$ and $\eta_0\leq 0.01$. 
 By Proposition \ref{prop2}, there is an $a$-invariant measure $\ov{\mu}\in\crly{P}(\overline{Y})$ such that 
 $\Supp\ov{\mu}\subseteq \cL_\eps\cup(\overline{Y}\setminus Y)$, and 
 $\pi_*\ov{\mu}(\overline{X}\setminus\mathfrak{S}_{\eta'})\leq \eta'$ for any $\eta_0\leq\eta'\leq 1$. 
 We also have $a$-invariant $\mu\in\crly{P}(Y)$ and $0\leq\widehat{\eta}\leq\eta_0$ such that
 $$\ov{\mu}=(1-\widehat{\eta})\mu+\widehat{\eta}\del_\infty.$$
 In particular, for $\eta'=0.01$, we have $\mu(\pi^{-1}(\mathfrak{S}_{0.01}))\geq 0.99$. 
 We can choose $0<r<1$ such that $Y(r)\supset\pi^{-1}(\mathfrak{S}_{0.01})$. 
 Note that the choice of $r$ is independent of $\eps$ and $b$ since $\mathfrak{S}_{0.01}$ is constructed in 
 Proposition \ref{KKLM'} independent to $\eps$ and $b$.

Let $\cA^U$ be as in Proposition \ref{algebracst} for $\mu$, $r_0$, and $L=U$, 
and let $\cA^U_\infty$ be as in \eqref{eq:tailalg}.
It follows from (\ref{entropy}) of Proposition \ref{prop2} that
\eq{h_{\ov{\mu}}(a|\overline{\cA^U_\infty})\ge(1-\widehat{\eta}^{\frac{1}{2}})(d-\frac{1}{2}\eta_0-d\widehat{\eta}^{\frac{1}{2}}).}
By the linearlity of the entropy function with respect to the measure, we have
\eqlabel{entlow}{
h_\mu(a|\cA^U_\infty)\ge(1+\widehat{\eta}^{\frac{1}{2}})^{-1}(d-\frac{1}{2}\eta_0-d\widehat{\eta}^{\frac{1}{2}})
\ge d-2d\widehat{\eta}^{\frac{1}{2}}-\frac{1}{2}\eta_0.
}

On the other hand, we shall get an upper bound of $h_{\mu}(a|\cA^U_\infty)$ from Proposition \ref{algexiA} and 
Corollary \ref{effELcor}. By Lemma \ref{Exceptional}, there exists $0<\del<\min((\frac{cr_0}{16d_0})^2,r)$ such that 
$\mu(E_\del)<0.01$. Note that since $r_0$ depends only on $G$, the constants $C_1,C_2>0$ in Lemma \ref{Exceptional}
depend only on $a$ and $G$, hence $\del$ is independent of $\eps$ even if the set $E_\del$ depends on $\eps$. 
We write $Z=Y(r)\setminus E_\del$ for simplicity. Note that $\mu(Z)\ge\mu(Y(r))-\mu(E_\del)>0.98$. 

By Proposition \ref{algebracst}, $[y]_{\cA^U}\subset B^U_{r_0}\cdot y$ for all $y\in Y$, and
$B_\del^U\cdot y\subset[y]_{\cA^U}$ for all $y\in Z$ since $\del <r$. 
It follows from \eqref{eqbilip} that 
\eqlabel{eqUatombound}{\forall y \in Y,\ [y]_{\cA^U}\subset B^{U,d_\infty}_{C_0 r_0}\cdot y \qquad\text{ and }
\qquad \forall y \in Z,\ B^{U,d_\infty}_{\del/C_0}\cdot y\subset[y]_{\cA^U},} 
%\[
%B^{U,d_\infty}_{\del/C_0}\cdot y\subset[y]_{\cA^U}\subset B^{U,d_\infty}_{C_0 r_0}\cdot y,
%\]
where $B^{U,d_\infty}_r$ is the $d_\infty$-ball of radius $r$ around the identity in $U$.
For simplicity, we may assume that $r_0 < \frac{1}{C_0}$ by choosing $r_0$ small enough.

Let $M$ and $M'$ be the constants in Proposition \ref{btauest}, $r'=1-\frac{1}{2^{1/d}}$, $R'=\eps^{-M'}$, 
$R=e^{\frac{mn}{d}}\frac{C_0}{\del}R'$, and $k=\lfloor\frac{mn\log R'}{4d}\rfloor$. Let $\cA_1=a^{-j_1}\cA^U$ and 
$\cA_2=a^{j_2}\cA^U$, where
\[
j_1=\lceil-\frac{mn}{d}\log r' \rceil \qquad\text{and}\qquad 
j_2=\lceil-\frac{mn}{d}\log\frac{\del}{C_0 R'}\rceil.
\]
%\eq{\begin{aligned}
%j_1&=\lceil-\frac{mn}{d}\log\left(r^{-1}(1-2^{\frac{1}{d}})\eps^{\frac{1}{d}}\right)\rceil,\\ j_2&=\lceil-\frac{mn}{d}\log(\del\eps^{M'})\rceil.
%\end{aligned}}
By \eqref{eqUatombound}, we have that for any $y\in Y$,
\eqlabel{eqUatomupper}{
[y]_{\cA_1}=a^{-j_1}[a^{j_1}y]_{\cA^U} \subset a^{-j_1}B_{1}^{U,d_\infty}a^{j_1} \cdot y \subset 
B_{r'}^{U,d_\infty}\cdot y.
}
%Hence, we have $[y]_{\cA_1} \subset B_{r'}^{U,d_\infty} \cdot y$ for any $y\in a^{-j_1}Z$.
Similarly, it follows from \eqref{eqUatombound} that 
%Then for $y\in Z$, the atoms with respect to $\cA_1$ and $\cA_2$ satisfy
%$$[y]_{\cA_1}\subset B^{U}_{r'} \cdot y,$$
$B^{U,d_\infty}_{R'}\cdot y\subset[y]_{\cA_2}\subset B^{U,d_\infty}_R\cdot y$ for any $y\in a^{j_2}Z$.

%Observe that for any $z\in Z$,
%\[
%[a^{-j_1}z]_{\cA_1}=a^{-j_1}[z]_{\cA^U} \subset a^{-j_1}B_{1}^{U,d_\infty}a^{j_1} \cdot a^{-j_1}z \subset 
%B_{r'}^{U,d_\infty}\cdot a^{-j_1}z.
%\]
%Hence, we have $[y]_{\cA_1} \subset B_{r'}^{U,d_\infty} \cdot y$ for any $y\in a^{-j_1}Z$.
%Similarly, it follows that 
%%Then for $y\in Z$, the atoms with respect to $\cA_1$ and $\cA_2$ satisfy
%%$$[y]_{\cA_1}\subset B^{U}_{r'} \cdot y,$$
%$B^{U,d_\infty}_{R'}\cdot y\subset[y]_{\cA_2}\subset B^{U,d_\infty}_R\cdot y$ for any $y\in a^{j_2}Z$.

Let $\Om=B^{U,d_\infty}_{r'}\Supp\mu$. 
For any $v\in \bR^d$ with $\|v\|\geq \eps^{1/d}$ and $u \in B^{U,d_\infty}_{r'}$, 
$$\|uv\| \geq \|v\| - \|(u-id)v\| \geq (1-r')\eps^{1/d} = (\eps/2)^{1/d},$$ hence
$\Om\subseteq B^{U,d_\infty}_{r'} \cL_\eps\subseteq\cL_{\frac{\eps}{2}}$. 
Since $\Supp\mu$ is an $a$-invariant set, we also have
$$a^{-3k}\Om=(a^{-3k}B^{U,d_\infty}_{r'}a^{3k})a^{-3k}\Supp\mu\subseteq(a^{-3k}B^{U,d_\infty}_{r'}a^{3k})
\cL_\eps\subseteq\cL_{\frac{\eps}{2}}.$$
Applying Proposition \ref{btauest} with $K=Y(r)$, $\cA=\cA_2$, and the same $R'$, $R$, $\Om$ as we just defined, 
for any $\eps>0$ and $y\in a^{4k}Y(r)\cap a^{j_2}Z$,
\eqlabel{taulow}{1-\tau_y^{\cA_2}(\Om)\gg \eps^{dM+1}}
since $\frac{R'}{R}$ is bounded below by a constant independent of $\eps$. 
 
By Proposition \ref{algexiA}, we have
\eqlabel{eqest1}{
(j_1+j_2)(d-h_\mu(a|\cA^U_\infty)) = (j_1+j_2)(d-H_\mu(\cA^U | a\cA^U))
= (j_1+j_2)d-H_\mu (\cA_1 | \cA_2).
}
Note that the maximal entropy contribution of $U$ for $a^{j_1+j_2}$ is $(j_1+j_2)d$.
Using \eqref{eqUatomupper},
it follows from Corollary \ref{effELcor} with $\cA=\cA_1$, $K=Y$, and $B=B_{r'}^{U,d_\infty}$ that 
\eqlabel{eqest2}{
(j_1+j_2)d-H_\mu (\cA_1 | \cA_2) \geq - \int_Y \log\tau_y^{\cA_2}(\Om)d\mu(y)
}
Combining \eqref{taulow}, \eqref{eqest1}, and \eqref{eqest2}, since $\mu(a^{4k}Y(r)\cap a^{j_2}Z)\geq \frac{1}{2}$, we have
\eq{
(j_1+j_2)(d-h_\mu(a|\cA^U_\infty))
\ge\int_{a^{4k}Y(r) \cap a^{j_2}Z}(1-\tau_y^{\cA_2}(\Om))d\mu(y)
\gg \frac{1}{2}\eps^{dM+1}.
}
It follows from \eqref{entlow} and $j_1+j_2\asymp \log(1/\eps)$ that
\eq{\eta_0^{\frac{1}{2}}\gg 2d\widehat{\eta}^{\frac{1}{2}}+\frac{1}{2}\eta_0 \geq d-h_\mu(a|\cA^U_\infty)\gg\eps^{dM+2}.}
Since $\eta_0=2(m+n)(1-\frac{\dim_H \mb{Bad}^b(\eps)}{mn})$, we have
$$mn-\dim_H \mb{Bad}'(\eps)\ge c_0\eps^{2(dM+2)}$$
for some constant $c_0>0$ depending only on $d$.
\end{proof}

\section{Characterization of singular on average property and Dimension esitimates}\label{sec6}
In this section, we will show (\ref{S3})$\implies$(\ref{S1}) in Theorem \ref{thmA1}.
Let $A\in M_{m,n}$ and consider two subgroups
\eq{
G(A)\defn A\bZ^n + \bZ^m \subset \bR^m \quad\text{and}\quad G({^{t}A})\defn {^{t}A}\bZ^m + \bZ^n \subset \bR^n.
}
If we view alternatively $G(A)$ as a subgroup of classes modulo $\bZ^m$, lying in the $m$-dimensional torus $\bT^m$,
Kronecker's theorem asserts that $G(A)$ is dense in $\bT^m$ if and only if the group $G({^{t}A})$ has maximal rank $m+n$ over $\bZ$ (See \cite[Chapter \rom{3}, Theorem \rom{4}]{Cas57}). Thus, if $\text{rank}_\bZ (G({^{t}A}))<m+n$, then $\text{Bad}_A(\eps)$ has full Hausdorff dimension for any $\eps>0$. Hence, throughout this section, we consider only matrices $A$ for which $\text{rank}_\bZ (G({^{t}A}))=m+n$.
\subsection{Best approximations}
We set up a weighted version of the best approximations following \cite{CGGMS}. (See also \cite{BL} and \cite{BKLR} for the unweighted setting.)

Given $A\in M_{m,n}$, we denote 
\[ M(\mb{y})= \inf_{\mb{q}\in\bZ^n} \|{^{t}A}\mb{y}-\mb{q}\|_\mb{s}.\]
Our assumption that $\text{rank}_\bZ (G({^{t}A}))$ equals $m+n$ guarantees that $M(\mb{y})>0$ for all non-zero $\mb{y}\in\bZ^m$. One can construct a sequence of $\mb{y}_i\in\bZ^n$ called \textit{a sequence of weighted best approximations to $^{t}A$}, which satisfies the following properties:
\begin{enumerate}
\item\label{Best1} Setting $Y_i=\|\mb{y}_i\|_\mb{r}$ and $M_i=M(\mb{y}_i)$, we have \[ Y_1<Y_2<\cdots\quad \text{and}\quad  M_1>M_2>\cdots, \]
\item\label{Best2} $M(\mb{y})\geq M_i$ for all non-zero $\mb{y}\in\bZ^m$ with $\|\mb{y}\|_\mb{r}<Y_{i+1}$.
\end{enumerate}

The sequence $(Y_i)_{i\geq1}$ has at least geometric growth. 

\begin{lem}\cite[Proof of Lemma 4.3]{CGGMS}\label{lem:CGGMS}
There exists a positive integer $V$ such that for all $i\geq 1,$ \[ Y_{i+V}\geq 2Y_i.\] 
\end{lem}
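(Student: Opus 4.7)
The plan is to combine a weighted Dirichlet theorem with a Minkowski-type lattice point count, following the strategy of \cite[Proof of Lemma 4.3]{CGGMS}. Throughout I would work with the unimodular lattice
\[
\Lambda_A=\{(\mb{y},{^{t}A}\mb{y}-\mb{q}):\mb{y}\in\bZ^m,\ \mb{q}\in\bZ^n\}\subset\bR^{m+n}.
\]

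First I would establish the weighted Dirichlet bound $Y_{i+1}M_i\leq 1$ by applying Minkowski's convex body theorem to the symmetric convex set
\[
C_T=\{(\mb{x},\mb{z})\in\bR^{m+n}:\|\mb{x}\|_\mb{r}\leq T,\ \|\mb{z}\|_\mb{s}\leq 1/T\}.
\]
Its Lebesgue volume computes to $\prod_\alpha 2T^{r_\alpha}\cdot\prod_\beta 2T^{-s_\beta}=2^{m+n}T^{\sum r_\alpha-\sum s_\beta}=2^{m+n}$ since $\sum r_\alpha=\sum s_\beta=1$, so Minkowski produces a non-zero lattice point $(\mb{y},{^{t}A}\mb{y}-\mb{q})\in\Lambda_A\cap C_T$. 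Letting $T\nearrow Y_{i+1}$ and invoking property (\ref{Best2}) applied to this $\mb{y}$ forces $M_i\leq M(\mb{y})\leq 1/T$, hence $Y_iM_i\leq Y_{i+1}M_i\leq 1$.

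Next I would argue by contradiction: suppose $Y_{i+V}<2Y_i$. For each $0\leq j\leq V$ pick $\mb{q}_{i+j}\in\bZ^n$ realizing $\|{^{t}A}\mb{y}_{i+j}-\mb{q}_{i+j}\|_\mb{s}=M_{i+j}\leq M_i$; then the $V+1$ distinct non-zero lattice points $v_j=(\mb{y}_{i+j},{^{t}A}\mb{y}_{i+j}-\mb{q}_{i+j})$ all lie in the symmetric convex box
\[
B_i=\{(\mb{x},\mb{z}):\|\mb{x}\|_\mb{r}<2Y_i,\ \|\mb{z}\|_\mb{s}\leq M_i\},
\]
whose Lebesgue volume is $2^{m+n+1}Y_iM_i\leq 2^{m+n+1}$ by the Dirichlet step above. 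I would then invoke the scaling monotonicity $\|k\mb{y}\|_\mb{r}\geq|k|\,\|\mb{y}\|_\mb{r}$ for integer $|k|\geq 1$ (which holds because $1/r_\alpha\geq 1$ from $r_\alpha\leq 1$): this rules out $\mb{y}_{i+j'}=k\mb{y}_{i+j}$ with $|k|\geq 2$, since otherwise $Y_{i+j'}\geq 2Y_{i+j}\geq 2Y_i>Y_{i+V}$. Combined with Minkowski's second theorem applied to $B_i$ and $\Lambda_A$, so that the $v_j$'s cannot cluster along a single short direction of $\Lambda_A$, a standard count bounds $V+1$ by a constant $V_0=V_0(m+n)$, and taking $V=V_0$ yields the lemma.

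The main obstacle will be this last step, namely the lattice-point count: for a generic unimodular lattice in $\bR^{m+n}$ the number of lattice points in a symmetric convex body of bounded volume is not bounded in terms of the volume alone, because very small successive minima produce long arithmetic progressions of lattice points along a short direction. What rescues the argument here is the best-approximation structure itself, through the scaling-monotonicity of $\|\cdot\|_\mb{r}$: it prevents such degenerate collinear configurations among the $v_j$'s, so Minkowski's second theorem delivers a bound on $V$ depending only on the dimension $d=m+n$.
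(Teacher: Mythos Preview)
Your setup is on the right track: the weighted Dirichlet bound $Y_{i+1}M_i\leq 1$ is correct, and under $Y_{i+V}<2Y_i$ you correctly place $V+1$ lattice points $v_0,\ldots,v_V$ of $\Lambda_A$ in a symmetric box $B_i$ of volume at most $2^{m+n+1}$. The gap is in the final counting step, and your own diagnosis of the obstacle is accurate---but your proposed remedy does not close it.

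A bounded-volume symmetric body can contain arbitrarily many points of a unimodular lattice whenever the first successive minimum $\lambda_1$ is small; the offending points then lie in the span of the short directions, which may well be two-dimensional or higher. Your scaling-monotonicity observation only rules out one $\mb{y}_{i+j}$ being an integer multiple of another, i.e.\ it excludes collinear configurations through the origin. It says nothing about the $v_j$ accumulating in a plane spanned by two independent short lattice vectors, and Minkowski's second theorem by itself gives only $\prod_k\lambda_k\asymp 1$, not a lower bound on any individual $\lambda_k$.

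The missing ingredient is that the best-approximation property itself forces $\lambda_1>\tfrac12$. If some nonzero $v=(\mb{y},\mb{z})\in\Lambda_A$ lay in $\tfrac12 B_i$, then either $\mb{y}=0$, forcing $\mb{z}\in\bZ^n$ with each $|z_\beta|<M_i^{s_\beta}<1$ and hence $\mb{z}=0$; or $\mb{y}\neq 0$ with $\|\mb{y}\|_\mb{r}\leq Y_i<Y_{i+1}$ and $M(\mb{y})\leq\|\mb{z}\|_\mb{s}<M_i$, contradicting property~(\ref{Best2}). Once $\lambda_1>\tfrac12$, any two distinct lattice points in $B_i$ differ by a vector outside $\tfrac12 B_i$, and a standard packing argument bounds $|\Lambda_A\cap B_i|$ by a constant depending only on $m+n$. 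Your non-proportionality step plays no role.
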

 In particular, there exist $c>0$ and $\ga>1$ such that  for all $i\geq1$ $Y_{i}\geq c\ga^i$.

\begin{rem}\label{rem:DT} 
%\item The lemma in the above lemma can be found in the proof of \cite[Lemma 4.3]{CGGMS}.
From the weighted Dirichlet's Theorem (see \cite[Theorem 2.2]{Kle98}), one can check that $M_{k}Y_{k+1}\leq 1 $ for all $k\geq1$.
\end{rem}

\subsection{Characterization of singular on average property}

In this section, we will characterize the singular on average property in terms of best approximations. At first, we will show $A$ is singular on average if and only if $^{t}A$ is singular on average. To do this, following \cite[Chapter \rom{5}]{Cas57}, we prove a transference principle between two homogeneous approximations with weights. See also \cite{GE15,Ger20}.

\begin{defi}\label{Parall}
Given positive numbers $\lam_1,\dots,\lam_d$, consider the parallelepiped 
\eq{
\cP=\left\{ \mb{z}=(z_1,\dots,z_d)\in \bR^d:|z_i|\leq \lam_i,\ i=1,\dots,d \right\}.
} We call the parallelepiped 
\eq{
\cP^{*}=\left\{\mb{z}=(z_1,\dots,z_d)\in \bR^d:|z_i|\leq\frac{1}{\lam_i}\prod_{j=1}^{d}\lam_j,\ i=1,\dots,d \right\}
} the \textit{pseudo-compound} of $\cP$.
\end{defi}

\begin{thm}\label{TranThm}\cite{GE15}
Let $\cP$ be as in Definition \ref{Parall} and let $\Lambda$ be a full-rank lattice in $\bR^d$. Then 
\eq{
\cP^{*}\cap \Lambda^{*} \neq \{\mb{0}\} \implies c\cP\cap\Lambda \neq\{\mb{0}\},
} where $c=d^{\frac{1}{2(d-1)}}$ and $\Lambda^{*}$ is the dual lattice of $\Lambda$, i.e., $\Lambda^*=\{x\in\bR^d : x\cdot y 
\in \bZ \text{ for all } y\in\Lambda\}.$
\end{thm}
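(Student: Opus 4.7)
The plan is to derive this transference from Minkowski's convex body theorem applied inside the hyperplane cut out by a short dual lattice vector, with Vaaler's cube-slicing inequality supplying the sharp constant. First I would pick a primitive non-zero $\mathbf{y}\in\cP^{*}\cap\Lambda^{*}$, which exists by hypothesis, and consider the linear form $\phi(\mathbf{x})=\mathbf{y}\cdot\mathbf{x}$. Since $\mathbf{y}$ is primitive in $\Lambda^{*}$, $\phi$ surjects onto $\bZ$ when restricted to $\Lambda$, so $\Lambda_{0}:=\Lambda\cap\ker\phi$ is a rank-$(d-1)$ sublattice inside the hyperplane $H:=\ker\phi$, with covolume in $H$ equal to $\det(\Lambda)\|\mathbf{y}\|$.

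The crux is a lower bound for $\mathrm{vol}_{d-1}(c\cP\cap H)$. The diagonal change of variables $x_{i}=\lambda_{i}u_{i}$ sends $\cP$ to $[-1,1]^{d}$ and $H$ to the hyperplane $H_{u}$ with normal $\mathbf{z}=(\lambda_{i}y_{i})_{i}$; a short Cauchy--Binet computation shows that it multiplies $(d-1)$-volumes of central slices by the factor $V\|\mathbf{y}\|/\|\mathbf{z}\|$, where $V:=\prod_{j}\lambda_{j}$. Invoking Vaaler's inequality $\mathrm{vol}_{d-1}([-1,1]^{d}\cap H_{u})\geq 2^{d-1}$ and using the hypothesis $|y_{i}|\leq V/\lambda_{i}$ to bound $\|\mathbf{z}\|^{2}=\sum\lambda_{i}^{2}y_{i}^{2}\leq dV^{2}$, I would deduce
\[
\mathrm{vol}_{d-1}(c\cP\cap H)\;=\;c^{d-1}\,\mathrm{vol}_{d-1}(\cP\cap H)\;\geq\; \frac{c^{d-1}\cdot 2^{d-1}\|\mathbf{y}\|}{\sqrt{d}}.
\]

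To conclude, I would apply Minkowski's convex body theorem in $H$ to the symmetric body $c\cP\cap H$ and the lattice $\Lambda_{0}$: it produces a non-zero point of $\Lambda_{0}\subset\Lambda$ in $c\cP$ as soon as the lower bound above is at least $2^{d-1}\det(\Lambda)\|\mathbf{y}\|$, which reduces to $c^{d-1}\geq \sqrt{d}\,\det(\Lambda)$. Under the normalization $\det(\Lambda)\leq 1$ implicit in the statement (the conclusion can fail for very sparse lattices such as $\Lambda=N\bZ^{d}$ with large $N$), the choice $c=d^{1/(2(d-1))}$ gives precisely this. The main obstacle is obtaining the sharp constant $\sqrt{d}$ in the slice estimate: a plain Brunn--Minkowski argument bounding the central slice by the average slice yields only a factor of $2$, too weak for the announced $c$, so Vaaler's cube-slicing inequality (or an equivalent sharp hyperplane-section estimate for axis-aligned boxes) appears indispensable.
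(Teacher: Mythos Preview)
The paper does not prove this theorem; it is quoted from German--Evdokimov \cite{GE15} without proof and used as a black box in the subsequent corollary. Your argument---slice by the hyperplane orthogonal to a short primitive dual vector, bound the $(d-1)$-volume of the section via Vaaler's cube-slicing inequality, then apply Minkowski's theorem to the rank-$(d-1)$ sublattice in that hyperplane---is a correct and efficient route to the sharp constant $c=d^{1/(2(d-1))}$, and is in fact essentially the approach taken in \cite{GE15} itself. Your volume computation and the Cauchy--Binet scaling factor $V\|\mb{y}\|/\|\mb{z}\|$ are accurate, and the bound $\|\mb{z}\|^{2}\le dV^{2}$ is exactly where the hypothesis $\mb{y}\in\cP^{*}$ enters.

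You have also correctly spotted that the statement as recorded in the paper is incomplete: the implication can fail when $\det(\Lambda)>1$, as your example $\Lambda=N\bZ^{d}$ with large $N$ shows (take all $\lambda_i=1$, so $\cP=\cP^{*}=[-1,1]^{d}$; then $\cP^{*}\cap\Lambda^{*}$ is nontrivial but $c\cP\cap\Lambda=\{0\}$ once $N>c$). The result holds under the normalization $\det(\Lambda)\le 1$, which is exactly what your Minkowski step requires. In the paper's only application the lattice is $\left(\begin{smallmatrix} I_m & A\\ 0 & I_n \end{smallmatrix}\right)\bZ^{d}$, which is unimodular, so the omission is harmless there.
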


\begin{cor}\label{TranCor1}
For positive integer $m,n$ let $d= m+n$ and let $A\in M_{m,n}$ and $0<\eps<1$ be given. For all large enough $X\geq 1$, if there exists a nonzero $\mb{q}\in \bZ^n$ such that
\eqlabel{Asol}{
\idist{A\mb{q}}_{\mb{r}}\leq \eps T^{-1} \quad \text{and}\quad \|\mb{q}\|_{\mb{s}} \leq T,
} then there exists a nonzero $\mb{y}\in \bZ^m$ such that 
\eqlabel{TranAsol}{
\idist{^{t}A\mb{y}}_{\mb{s}} \leq c^{(\frac{1}{r_m}+\frac{1}{s_n})}\eps^{\frac{r_m s_n}{s_n +r_1 (1-s_n)}}T_1^{-1} \quad\text{and}\quad \|\mb{y}\|_\mb{r} \leq T_1,
} where $c$ is as in Theorem \ref{TranThm} and $T_1=c^{\frac{1}{r_m}}\eps^{-\frac{r_m (1-s_n)}{s_n+r_1 (1-s_n)}}T$.
\end{cor}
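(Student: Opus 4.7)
The plan is to lift \eqref{Asol} to a geometry-of-numbers statement about the unimodular lattice $\Lambda = M\bZ^d$ with $M = \smallmat{I_m & A \\ 0 & I_n}$, transfer to the dual $\Lambda^* = (M^T)^{-1}\bZ^d = \smallmat{I_m & 0 \\ -{}^tA & I_n}\bZ^d$ via Theorem \ref{TranThm}, and then read off \eqref{TranAsol}. The hypothesis supplies the nonzero vector $(A\mb q - \mb p,\,\mb q) \in \Lambda$ lying in the parallelepiped $\cP$ with side lengths $\lam_i = (\eps/T)^{r_i}$ for $i \leq m$ and $\lam_{m+j} = T^{s_j}$ for $j \leq n$, so that $\prod_i \lam_i = \eps$. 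Symmetrically, any nonzero $(\mb y,\,-{}^tA\mb y+\mb r) \in \Lambda^*$ with $|y_i| \leq T_1^{r_i}$ and $|(-{}^tA\mb y+\mb r)_j| \leq Q^{s_j}$, where $Q := c^{1/s_n}\eps^{r_m/\al}/T$ and $\al := s_n + r_1(1-s_n)$, yields exactly \eqref{TranAsol}.

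I would then apply Theorem \ref{TranThm} with the lattice $\Lambda^*$ in place of $\Lambda$ --- noting $(\Lambda^*)^* = \Lambda$ --- to a parallelepiped $\cQ$ whose $c$-dilate is the target box just described; concretely, take $\mu_i := T_1^{r_i}/c$ for $i \leq m$ and $\mu_{m+j} := Q^{s_j}/c$ for $j \leq n$. The theorem then yields
\[
\cQ^* \cap \Lambda \neq \{\mb 0\} \ \implies\ c\cQ \cap \Lambda^* \neq \{\mb 0\},
\]
so it suffices to verify the containment $\cP \subseteq \cQ^*$. Setting $P := T_1$ and computing the side lengths of $\cQ^*$ from those of $\cQ$, this containment is equivalent to
\[
(\eps/T)^{r_i} \leq P^{1-r_i}Q/c^{d-1} \ (i\leq m), \qquad T^{s_j} \leq PQ^{1-s_j}/c^{d-1} \ (j\leq n).
\]

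The choice of $\al$ is what forces both inequalities to be (nearly) tight at the extremal indices $i=m$, $j=n$: in those cases the $\eps$-exponents on the two sides cancel and the inequality reduces to the arithmetic fact $\tfrac{1}{r_m} + \tfrac{1}{s_n} \geq d$, which holds since $r_m \leq 1/m$ and $s_n \leq 1/n$. For the remaining indices $(i,j)$ with $i<m$ or $j<n$, substitution shows the $\eps$-exponent on the right-hand side is a nonpositive linear combination of $(r_i - r_m)$, $(s_j - s_n)$, and $(r_1 - r_m)$, so the inequality holds whenever $\eps$ is small enough --- which is precisely what the ``large enough $X$'' hypothesis ensures. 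Once the containment is secured, Theorem \ref{TranThm} produces a nonzero $\mb w = (\mb y,\,-{}^tA\mb y+\mb r) \in c\cQ \cap \Lambda^*$; for small $\eps$ the second-block side lengths are less than $1$, so $\mb y \neq \mb 0$ (otherwise $\mb r \in \bZ^n$ would have all entries of absolute value less than one, forcing $\mb r = \mb 0$). The main obstacle is the bookkeeping of weighted exponents in the verification above: one must track contributions coordinate by coordinate using $r_1 \geq \dots \geq r_m$ and $s_1 \geq \dots \geq s_n$, and observe that $\al = s_n + r_1(1-s_n)$ is precisely the weight forced by tightness at the extremal pair $(m,n)$.
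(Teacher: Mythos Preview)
Your approach is correct and essentially identical to the paper's: both set up the source parallelepiped from the hypothesis and a target parallelepiped whose $c$-dilate gives \eqref{TranAsol}, verify the containment of the source in the pseudo-compound of the target by checking that the weighted $\eps$-exponents have the right sign (using $r_1\ge\dots\ge r_m$, $s_1\ge\dots\ge s_n$, and $\tfrac{1}{r_m}+\tfrac{1}{s_n}\ge d$), and then apply Theorem~\ref{TranThm} to the dual pair of lattices $\smallmat{I_m & A\\0 & I_n}\bZ^d$ and $\smallmat{I_m & 0\\-{}^tA & I_n}\bZ^d$. The only cosmetic difference is that the paper parametrizes the target directly via $Z=\eps^{-r_m(1-s_n)/\alpha}T$ and $\delta=\eps^{r_ms_n/\alpha}$ rather than dividing the desired box by $c$, and it leaves your final observation (that large $T$ forces $\mb y\neq 0$) implicit.
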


\begin{proof}
Consider the following two parallelepipeds:
\eq{\begin{split}
\cQ&=\left\{\mb{z}=(z_1,\dots,z_d)\in \bR^{d}: \begin{split} &|z_i| \leq \eps^{r_i}T^{-r_i}, \quad i=1,\dots,m \\ &|z_{m+j}| \leq T^{s_j}, \quad j=1,\dots,n \end{split} \right\},\\
\cP&=\left\{\mb{z}=(z_1,\dots,z_d)\in \bR^{d}: \begin{split} &|z_i| \leq Z^{r_i}, \quad i=1,\dots,m \\ &|z_{m+j}| \leq \de^{s_j}Z^{-s_j}, \quad j=1,\dots,n \end{split} \right\},
\end{split}
} where \eq{
\de=\eps^{\frac{r_m s_n}{s_n +r_1 (1-s_n)}}\quad\text{and}\quad Z=\eps^{-\frac{r_m (1-s_n)}{s_n +r_1 (1-s_n)}}T.
}
Observe that the pseudo-compound of $\cP$ is given by 
\eq{
\cP^{*}=\left\{\mb{z}=(z_1,\dots,z_d)\in \bR^{d}: \begin{split} &|z_i| \leq \de Z^{-r_i}, \quad i=1,\dots,m \\ &|z_{m+j}| \leq \de^{1-s_j}Z^{s_j}, \quad j=1,\dots,n \end{split} \right\}
} and that $\cQ \subset \cP^{*}$ since $\eps^{r_i}T^{-r_i}\leq \de Z^{-r_i}$ and $T^{s_j}\leq \de^{1-s_j}Z^{s_j}$ for all $i=1,\dots,m$ and $j=1,\dots,n$.

Now, the existence of a nonzero solution $\mb{q}\in R_v^n$ of the inequalities \eqref{Asol} implies that 
$\left(\begin{matrix} I_m & A \\ & I_n \\ \end{matrix}\right) \bZ^d$ intersects $\cQ$, thus $\cP^{*}$.
%\eq{
%\cQ\cap \left(\begin{matrix} I_m & A \\ & I_n \\ \end{matrix}\right) \bZ^d \neq \{\mb{0}\},
%} which implies that
%\eq{
%\cP^{*}\cap \left(\begin{matrix} I_m & A \\ & I_n \\ \end{matrix}\right) \bZ^d \neq \{\mb{0}\}.
%} 
By Theorem \ref{TranThm}, $\left(\begin{matrix} I_m &  \\ -{^{t}A} & I_n \\ \end{matrix}\right) \bZ^d $ intersects
$c\cP$,
%\eq{
%c\cP\cap \left(\begin{matrix} I_m &  \\ -{^{t}A} & I_n \\ \end{matrix}\right) \bZ^d \neq \{\mb{0}\},
%} 
which concludes the proof of Corollary \ref{TranCor1}.

\end{proof}

\begin{cor}\label{eqCor}
Let $m,n$ be positive integers and $A\in M_{m,n}$. Then $A$ is singular on average if and only if $^{t}A$ is singular on average.
\end{cor}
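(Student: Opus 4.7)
The plan is to deduce this equivalence directly from the weighted transference inequality in Corollary \ref{TranCor1}. By the symmetry of that statement in $(A,\mb{r},\mb{s})$ versus $({}^t\!A,\mb{s},\mb{r})$, it suffices to prove one direction, say: $A$ singular on average implies ${}^t\!A$ singular on average. (The reverse is obtained by applying the same argument with the roles of $\mb{r}$ and $\mb{s}$ swapped.)

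To set up the quantitative translation, I would fix an arbitrary $\eps'>0$ and set
\[
\alpha \defn \frac{r_m s_n}{s_n + r_1(1-s_n)}, \qquad \beta \defn \frac{r_m(1-s_n)}{s_n + r_1(1-s_n)} \geq 0.
\]
Then choose $\eps>0$ small enough that $4 c^{\,1/r_m + 1/s_n}\eps^{\alpha} < \eps'$, where $c=d^{1/(2(d-1))}$ is the constant from Theorem \ref{TranThm}. By the hypothesis that $A$ is singular on average, the set
\[
L_A(\eps) \defn \bigl\{l\in\bN : \exists\, q\in\bZ^n\setminus\{0\},\ \idist{Aq}_{\mb{r}} < \eps\, 2^{-l},\ \|q\|_{\mb{s}} < 2^l \bigr\}
\]
has density $1$ in $\bN$.

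The core step is then: for each sufficiently large $l\in L_A(\eps)$, feed the corresponding $q$ into Corollary \ref{TranCor1} with $T=2^l$. This produces a nonzero $y\in\bZ^m$ with
\[
\|y\|_{\mb{r}} \leq T_1 = c^{\,1/r_m}\eps^{-\beta}\,2^l, \qquad \idist{{}^t\!Ay}_{\mb{s}} \leq c^{\,1/r_m + 1/s_n}\eps^{\alpha} T_1^{-1}.
\]
Setting $\Delta \defn \lceil \log_2 ( c^{\,1/r_m}\eps^{-\beta}) \rceil + 1$, an absolute constant depending only on $\eps$ and the weights, one has $2^{l+\Delta} > T_1$, hence $\|y\|_{\mb{r}} < 2^{l+\Delta}$, and
\[
\idist{{}^t\!Ay}_{\mb{s}} < 4 c^{\,1/r_m + 1/s_n}\eps^{\alpha}\, 2^{-(l+\Delta)} < \eps'\, 2^{-(l+\Delta)}.
\]
Consequently $l + \Delta$ belongs to the analogous level set $L_{{}^t\!A}(\eps')$ (defined for ${}^t\!A$ with $\mb{r}$ and $\mb{s}$ interchanged).

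Since translation by the fixed integer $\Delta$ preserves density, $L_{{}^t\!A}(\eps')$ inherits density $1$ from $L_A(\eps)$; as $\eps'>0$ was arbitrary, ${}^t\!A$ is singular on average. I do not expect a real obstacle in carrying this out: the argument is little more than bookkeeping on top of Corollary \ref{TranCor1}. The one point that needs mild care is converting the non-strict inequalities coming out of the transference into the strict inequalities demanded by the definition of singular on average, which is handled by the extra factor of $4$ in the choice of $\eps$ and the $+1$ in the definition of $\Delta$.
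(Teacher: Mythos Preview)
Your proof is correct and follows essentially the same approach as the paper: the paper's proof is the single sentence ``It follows from Corollary \ref{TranCor1}'', and your argument supplies exactly the bookkeeping needed to turn that transference inequality into the density statement, including the care with strict versus non-strict inequalities.
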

\begin{proof}
It follows from Corollary \ref{TranCor1}.
\end{proof}

Now, we will characterize the singular on average property in terms of best approximation. Let $A\in M_{m,n}$ be a matrix and $(\mb{y}_k)_{k\geq 1}$ be a sequence of weighted best approximations to $^{t}A$ and write 
\[Y_k=\|\mb{y}_k\|_\mb{r},\quad M_k=\inf_{\mb{q}\in\bZ^n} \|{^{t}A}\mb{y}_k-\mb{q}\|_\mb{s}. \]

\begin{prop}\label{propA1}
Let $A\in M_{m,n}$ be a matrix and let $(\mb{y}_k)_{k\geq 1}$ be a sequence of best approximations to $^{t}A$. Then the following are equivalent:
\begin{enumerate}
\item\label{SS3} $^{t}A$ is singular on average.
\item\label{SS2} For all $\eps>0$, \eq{\lim\limits_{k\to\infty}\frac{1}{\log Y_{k}}\left|\{i\leq k:M_{i}Y_{i+1}>\eps\}\right|=0.} 
\end{enumerate}
\end{prop}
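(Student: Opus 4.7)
The plan is to translate both conditions into counting statements for the sequence $(M_{k(l)} 2^l)_{l \geq 1}$, where, for each $l\in\bN$, $k(l)$ is the unique index satisfying $Y_{k(l)} \leq 2^l < Y_{k(l)+1}$. Writing $A_i := \{l \in \bN : k(l) = i\}$, a block of consecutive integers in $[\log_2 Y_i, \log_2 Y_{i+1})$, property \eqref{Best2} identifies $M_{k(l)}$ with the minimum of $M(\mb{y})$ over nonzero $\mb{y}\in\bZ^m$ with $\|\mb{y}\|_\mb{r} < 2^l$ (up to harmless boundary cases when $2^l = Y_i$). Hence ``${}^tA$ is singular on average'' is equivalent to
\[
\frac{1}{N}\,|L_\eps \cap [1,N]| \longrightarrow 0 \ \text{ for every } \eps>0,\qquad L_\eps := \{l : M_{k(l)} 2^l \geq \eps\},
\]
and the goal is to compare the natural density of $L_\eps$ with the logarithmic density of $I_\eps := \{i : M_i Y_{i+1} > \eps\}$.

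For (\ref{SS2}) $\Rightarrow$ (\ref{SS3}), I would slice $L_\eps$ according to the value of $k(l)$. Since $l \in A_i \cap L_\eps$ requires both $2^l < Y_{i+1}$ and $2^l \geq \eps/M_i$, the slice is empty unless $M_i Y_{i+1} \geq \eps$, in which case
\[
|A_i \cap L_\eps| \leq \log_2\!\bigl(M_i Y_{i+1}/\eps\bigr) + O(1) \leq \log_2(1/\eps) + O(1),
\]
the last step invoking the weighted Dirichlet bound $M_i Y_{i+1} \leq 1$ from Remark \ref{rem:DT}. Summing over $i \leq k^\ast(N) := \max\{k : Y_k \leq 2^N\}$,
\[
|L_\eps \cap [1,N]| \leq (\log_2(1/\eps) + O(1))\cdot |I_{\eps/2} \cap [1, k^\ast(N)]|.
\]
Since $\log Y_{k^\ast(N)} \leq N\log 2$ and $k^\ast(N) \to \infty$ with $N$, applying (\ref{SS2}) at $\eps/2$ yields $|L_\eps \cap [1,N]|/N \to 0$, which is (\ref{SS3}).

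For the converse I would argue by contrapositive. Suppose (\ref{SS2}) fails: there exist $\eps,\de>0$ and infinitely many $k$ with $|I_\eps \cap [1,k]| > \de \log Y_k$. To each $i\in I_\eps$ associate $l_i := \lfloor \log_2 Y_{i+1} \rfloor$, so that $Y_{i+1}/2 < 2^{l_i} \leq Y_{i+1}$. Since $k(l_i) \leq i$ and $(M_j)$ is decreasing, $M_{k(l_i)} \geq M_i$, whence
\[
M_{k(l_i)}\,2^{l_i} \geq M_i\cdot Y_{i+1}/2 > \eps/2,
\]
and therefore $l_i \in L_{\eps/2}$. Lemma \ref{lem:CGGMS} gives $Y_{i+1+V} \geq 2 Y_{i+1}$, which forces $l_{i+V} > l_i$ and makes the map $i \mapsto l_i$ at most $V$-to-$1$. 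Restricting to $i \in I_\eps \cap [1,k-1]$ guarantees $l_i \leq \log_2 Y_k$, so for $k$ large,
\[
|L_{\eps/2} \cap [1,\lfloor \log_2 Y_k \rfloor]| \geq \frac{|I_\eps \cap [1,k-1]|}{V} \geq \frac{\de}{2V}\log Y_k,
\]
exhibiting a subsequence $N_k := \lfloor \log_2 Y_k \rfloor$ along which the density of $L_{\eps/2}$ is bounded below by $\de(\log 2)/(2V)$, contradicting (\ref{SS3}).

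The main technical obstacle is the asymmetry between indices $i$ and scales $l$: the interval $[\log_2 Y_i,\log_2 Y_{i+1})$ can be arbitrarily long, so a single $i$ may be responsible for very many values of $l$. The bound ``each $i$ contributes $O(\log(1/\eps))$ many $l$'s to $L_\eps$'' hinges on the Dirichlet inequality $M_i Y_{i+1} \leq 1$, while the reverse control ``at most $V$ consecutive $i$'s share the same $l_i$'' rests on the quasi-geometric growth from Lemma \ref{lem:CGGMS}; keeping both inequalities operational, and restricting to $i \leq k-1$ so that the associated $l_i$'s stay inside $[1, \log_2 Y_k]$ rather than the possibly much larger $[1, \log_2 Y_{k+1}]$, is the delicate point in the argument.
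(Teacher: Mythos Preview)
Your proposal is correct and follows essentially the same approach as the paper. Both arguments pivot on the same translation: an integer scale $l$ is ``bad'' for the singular-on-average count precisely when $M_{k(l)}2^l$ is bounded away from $0$, and then one compares the density of bad $l$'s with the logarithmic density of indices $i$ with $M_iY_{i+1}>\eps$, using $M_iY_{i+1}\le 1$ (Remark~\ref{rem:DT}) to cap each index's contribution at $O(\log(1/\eps))$ scales, and Lemma~\ref{lem:CGGMS} to ensure the associated scales $l_i\approx\log_2 Y_{i+1}$ are at most $V$-to-$1$. The only cosmetic difference is that the paper proves (\ref{SS3})$\Rightarrow$(\ref{SS2}) directly, enumerating $\{j(k)\}=\{i:M_iY_{i+1}>\delta\}$ and showing $k/\log Y_{j(k)}\to 0$, whereas you run the contrapositive; the substance is identical.
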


\begin{proof}
($\ref{SS3})\implies(\ref{SS2})$ : Let $0<\eps<1$. Observe that for each integer $X$ with $Y_{k} \leq T < Y_{k+1}$, the inequalities 
\eqlabel{inequal}{
\|{^{t}A}\mb{p}-\mb{q}\|_\mb{s} \leq \eps T^{-1}\quad \text{and}\quad 0 < \|\mb{p}\|_\mb{r} \leq T
}
have a solution if and only if $T\leq \frac{\eps}{M_k}$. Thus, for each integer $\ell\in[\log_2{Y_k},\log_2{Y_{k+1}})$ the inequalities \eqref{inequal} have no solutions for $T=2^\ell$ if and only if 
\eqlabel{NoSol}{
\log_2{\eps}-\log_2{M_k}<\ell<\log_2{Y_{k+1}}.
}

Now we assume that $^{t}A$ is singular on average. For given $\de>0$, if the set $\{k\in\bN:M_k Y_{k+1}>\de\}$ is finite, then it is done. Suppose the set $\{k\in\bN:M_k Y_{k+1}>\de\}$ is infinite and let \eq{\{k\in\bN:M_{k}Y_{k+1}>\de \}=\left\{j(1)<j(2)<\cdots<j(k)<\cdots:k\in\bN\right\}.} 
Set $\eps=\de/2$ and fix a positive integer $V$ in Lemma \ref{lem:CGGMS}. 
For an integer $\ell$ in $[\log_2{Y_{j(k)+1}}-1,\log_2{Y_{j(k)+1}})$, observe that
\[
\log_2{\eps}-\log_2{M_{j(k)}} < \log_2{Y_{j(k)+1}}-1.
\]
Hence the inequalities \eqref{inequal} have no solutions for $T=2^\ell$ by \eqref{NoSol}.
By Lemma \ref{lem:CGGMS}, $\log_2{Y_{j(k)+1+V}}-1\geq\log_2{Y_{j(k)+1}}$. So, we have $\log_2{Y_{j(k+V)+1}}-1\geq\log_2{Y_{j(k)+1}}$. 
Now fix $i=0,\cdots,V-1$. Then the intervals 
\[[\log_2{Y_{j(i+sV)+1}}-1,\log_2{Y_{j(i+sV)+1}}), \quad s=1,\cdots,k \]
are disjoint. Thus, for an integer $N\in[\log_2{Y_{j(i+kV)+1}},\log_2{Y_{j(i+(k+1)V)+1}})$, the number of $\ell$ in $\left\{1,\cdots,N\right\}$ such that \eqref{inequal} have no solutions for $T=2^\ell$ is at least $k$. Since $^{t}A$ is singular on average, 
\[
\frac{k}{\log_{2}{Y_{j(i+(k+1)V)+1}}}\leq\frac{1}{N}\left|\left\{\ell\in\left\{1,\cdots,N\right\}:\eqref{inequal}\text{ have no solutions for }T=2^\ell \right\}\right|
\]
tends to $0$ with $k$, which gives $\frac{i+1+kV}{\log_{2}{Y_{j(i+1+kV)}}}$ tends to $0$ with $k$ for all $i=0,\cdots,V-1$. Thus, we have $\frac{k}{\log_{2}{Y_{j(k)}}}$ tends to $0$ with $k$. 

For any $k\geq 1$, there is an unique positive integer $s_k$ such that \[j(s_k)\leq k < j(s_k +1),\] and observe that $s_k=|\{i\leq k : M_i Y_{i+1}>\de \}|$. Thus, by the monotonicity of $Y_k$, we have
\eq{
\lim_{k\to\infty}\frac{1}{\log_{2}Y_{k}}|\{i\leq k:M_{i}Y_{i+1}>\de\}|\leq \lim_{k\to\infty}\frac{s_k}{\log_{2} Y_{j(s_k)}}=0.
}
($\ref{SS2})\implies(\ref{SS3})$ : Given $0<\eps<1$, the number of integers $\ell$ in $[\log_2{Y_k},\log_2{Y_{k+1}})$ such that \eqref{inequal} have no solutions for $T=2^\ell$ is at most
\[
\lceil \log_2{M_{k}Y_{k+1}}-\log_2{\eps} \rceil \leq \log_2{M_{k}Y_{k+1}}-\log_2{\eps}+1.
\]
Thus, for an integer $N$ in $[\log_2{Y_k},\log_2{Y_{k+1}})$, we have
\begin{align*}
    \frac{1}{N} |\{\ell\in\{1,\cdots,N\}&:\eqref{inequal}\ \text{have no solutions for } T=2^\ell\}| \\
    &\leq\frac{1}{N}\sum_{i=1}^{k}\max\left(0,\log_2{M_{i}Y_{i+1}}-\log_2{\eps}+1\right)\\
    &\leq\frac{1}{\log_2{Y_k}}\sum_{i=1}^{k}\max\left(0,\log_2{M_{i}Y_{i+1}}-\log_2{\eps}+1\right).
\end{align*}
Since $M_{i}Y_{i+1}\leq1$ for each $i\geq1$,
\begin{align*}
    \frac{1}{\log_2{Y_k}}\sum_{i=1}^{k}&\max\left(0,\log_2{M_{i}Y_{i+1}}-\log_2{\eps}+1\right)\\
    &\leq\frac{1}{\log_2{Y_k}}\left(-\log_2{\eps}+1\right)|\{i\leq k : M_i Y_{i+1}>\eps/2\}|.  
\end{align*}
Therefore, $^{t}A$ is singular on average.

\end{proof}

\subsection{Modified Bugeaud-Laurent sequence}
%Let us start with the original version of the Bugeaud-Laurent sequence.
%\begin{lem}\cite[Section 5]{BL}\label{lem:BL}
%Let $A$ be an $m\times n$ matrix and let $(\mb{y}_k)_{k\geq 1}$ be a sequence of weighted best approximations to $^{t}A$.
%For each $R>1$, there exists an increasing function $\vphi:\bZ_{\geq1}\to\bZ_{\geq1}$ satisfying $\vphi(1)=1$ and, for any integer $i\geq 1$,
%\eqlabel{BLS}{
%Y_{\vphi(i+1)}\geq R^{1/2}Y_{\vphi(i)}\quad \text{and}\quad M_{\vphi(i)}Y_{\vphi(i+1)}\leq R.
%}
%\end{lem}

%In fact, instead of the second condition $M_{\vphi(i)}Y_{\vphi(i+1)}\leq R$, the condition $Y_{\vphi(i)+1}\geq R^{-1}Y_{\vphi(i+1)}$ is stated in \cite{BL}, but since $M_{k}Y_{k+1}\leq 1$, the second condition follows.
In this subsection we construct the following modified Bugeaud-Laurent sequence assuming the singular on average property. We refer the reader to \cite[Section 5]{BL} for the original version of the Bugeaud-Laurent sequence.

\begin{prop}\label{lem:MBL}
Let $A\in M_{m,n}$ be such that $^{t}A$ is singular on average and let $(\mb{y}_k)_{k\geq 1}$ be a sequence of weighted best approximations to $^{t}A$. For each $S>R>1$, there exists an increasing function $\vphi:\bZ_{\geq1}\to\bZ_{\geq1}$ satisfying the following properties:
\begin{enumerate}
    \item for any integer $i\geq1$, 
    \eqlabel{Mgrow}{
    Y_{\vphi(i+1)}\geq RY_{\vphi(i)}\quad \text{and} \quad M_{\vphi(i)}Y_{\vphi(i+1)}\leq R.
    }
    \item
    \eqlabel{Mdensity}{
    \limsup_{k\to\infty}\frac{k}{\log{Y_{\vphi(k)}}}\leq\frac{1}{\log{S}}.
    }
\end{enumerate}
\end{prop}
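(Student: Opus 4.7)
The plan is to build $\varphi$ inductively from the sequence of weighted best approximations, exploiting the logarithmic density of \emph{good} indices provided by the singular on average hypothesis.

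First I would apply Corollary~\ref{eqCor} to conclude that $^{t}A$ is also singular on average, and Proposition~\ref{propA1} to $^{t}A$ to obtain that for every $\eta>0$, the set
\[
G_\eta:=\{\,k\geq 1:M_{k}Y_{k+1}\leq\eta\,\}
\]
satisfies $|\{i\leq k:i\notin G_\eta\}|=o(\log Y_{k})$ as $k\to\infty$. Fixing $\eta\in(0,R/S)$ is the right choice: for $\varphi(k)\in G_\eta$, the inequality $M_{\varphi(k)}Y_{\varphi(k)+1}\leq\eta$ together with $Y_{\varphi(k)+1}>Y_{\varphi(k)}$ yields $R/M_{\varphi(k)}>(R/\eta)Y_{\varphi(k)}>S\,Y_{\varphi(k)}$, so the target window $[RY_{\varphi(k)},R/M_{\varphi(k)}]$ has endpoint ratio at least $S/R>1$.

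Starting with $\varphi(1)$ chosen sufficiently large in $G_\eta$, I would set
\[
\varphi(k+1)=\max\{\,j>\varphi(k):Y_{j}\leq R/M_{\varphi(k)}\,\},
\]
which makes $M_{\varphi(k)}Y_{\varphi(k+1)}\leq R$ automatic, while $Y_{\varphi(k)+1}\leq\eta/M_{\varphi(k)}<R/M_{\varphi(k)}$ forces $\varphi(k+1)\geq\varphi(k)+1$. The decisive output is that, by maximality, $Y_{\varphi(k+1)+1}>R/M_{\varphi(k)}$, so Remark~\ref{rem:DT} applied at $\varphi(k+1)$ gives $M_{\varphi(k+1)}<M_{\varphi(k)}/R$; maintaining the inductive condition $\varphi(k+1)\in G_\eta$ upgrades this to $M_{\varphi(k+1)}<\eta M_{\varphi(k)}/R=M_{\varphi(k)}/S$. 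Iterating, $1/M_{\varphi(k)}$ grows geometrically at rate at least $S$ along good steps; combined with the fact that $Y_{\varphi(k+1)}$ is comparable to $R/M_{\varphi(k)}$ whenever the target window actually contains a best-approximation height, this produces $\log Y_{\varphi(k)}\geq k\log S-o(k)$, which is precisely \eqref{Mdensity}.

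The main obstacle I anticipate is establishing the lower bound $Y_{\varphi(k+1)}\geq RY_{\varphi(k)}$ in \eqref{Mgrow}: the construction guarantees $Y_{\varphi(k+1)}\leq R/M_{\varphi(k)}$ and $Y_{\varphi(k+1)+1}>R/M_{\varphi(k)}$, but not that $Y_{\varphi(k+1)}$ itself exceeds $RY_{\varphi(k)}$, since the best-approximation sequence could a priori ``jump past'' the window $[RY_{\varphi(k)},R/M_{\varphi(k)}]$. The resolution will combine two ingredients: such a jump at some index $j^{*}$ forces $M_{j^{*}}<M_{\varphi(k)}/R$ by Dirichlet, which is a structural constraint tying these events to the sparse set $G_\eta^{c}$; together with Lemma~\ref{lem:CGGMS} (the geometric growth $Y_{i+V}\geq 2Y_{i}$) and the logarithmic-density-zero statement for $G_\eta^{c}$, this ensures the bad events are rare enough that the inductive rule can be augmented to skip over them, preserving both the hypothesis $\varphi(k)\in G_\eta$ and the target rate $S$ in \eqref{Mdensity} with the contribution of the skipped indices absorbed into the $o(k)$ term.
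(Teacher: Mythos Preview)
Your inductive rule $\varphi(k+1)=\max\{j>\varphi(k):Y_{j}\leq R/M_{\varphi(k)}\}$ secures $M_{\varphi(k)}Y_{\varphi(k+1)}\leq R$ but does \emph{not} secure $Y_{\varphi(k+1)}\geq RY_{\varphi(k)}$, and your proposed resolution does not close this gap. The claim that a jump past the window $[RY_{\varphi(k)},R/M_{\varphi(k)}]$ is ``tied to the sparse set $G_{\eta}^{c}$'' is unjustified: if $Y_{j^{*}}<RY_{\varphi(k)}$ and $Y_{j^{*}+1}>R/M_{\varphi(k)}$, Dirichlet gives $M_{j^{*}}<M_{\varphi(k)}/R$, but this says nothing about whether $j^{*}\in G_{\eta}$ (membership in $G_{\eta}$ is governed by the product $M_{j^{*}}Y_{j^{*}+1}$, which can be anywhere in $(0,1]$). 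More fundamentally, even if the bad events were rare, \eqref{Mgrow} must hold for \emph{every} $i$, so absorbing failures into the $o(k)$ term of \eqref{Mdensity} is irrelevant to property~(1). The suggestion to ``augment the inductive rule to skip over them'' is not a proof: you would need to specify the modification and then redo the verification of both inequalities in \eqref{Mgrow} at the modified indices, which is exactly where the difficulty lies. There is also a secondary issue: nothing in your rule forces $\varphi(k+1)\in G_{\eta}$, so the inductive hypothesis you rely on for the geometric decay of $M_{\varphi(k)}$ is not maintained.

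The paper takes the dual route. It defines a \emph{forward} auxiliary sequence $\psi(i+1)=\min\{j:Y_{j}\geq SY_{\psi(i)}\}$, which makes the growth lower bound automatic, and then corrects $\psi(i)$ to either $\psi(i)$ or $\psi(i+1)-1$ depending on whether $M_{\psi(i)}Y_{\psi(i+1)}\leq R/S$; the threshold for the good set is $R/S^{3}$ (not $R/S$), and this extra slack is what makes the four-subcase verification of both inequalities in \eqref{Mgrow} go through. When the complement of the good set is infinite, the paper interleaves this construction on long good blocks with a Bugeaud--Laurent-type rule on the gaps. Your backward approach could perhaps be made to work, but it would require a comparable case analysis that you have not supplied.
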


%the assumption is equivalent to the following statement by Theorem \ref{thm:sing}: For all $\eps>0$, \eq{\lim\limits_{k\to\infty}\frac{1}{\log_{2}Y_{k}}\left|\{i\leq k:M_{i}Y_{i+1}>\eps\}\right|=0,}
%if $\lim\limits_{k\to\infty}Y_{k}^{1/k}=\infty$, the lemma follows from \cite[Theorem 2.2]{BKLR}.

\begin{proof}
The function $\vphi$ is constructed in the following way. Fix a positive integer $V$ in Lemma \ref{lem:CGGMS} and let $\cJ=\{j\in\bZ_{\geq 1}:M_j Y_{j+1}\leq R/S^3\}$. Since $^{t}A$ is singular on average, by Proposition \ref{propA1} with $\eps=R/S^3$, we have  \eqlabel{assumption}{\lim_{k\to\infty}\frac{1}{\log Y_{k}}\left|\{i\leq k:i\in\cJ^{c}\}\right|=0.}

If the set $\cJ$ is finite, then we have $\lim\limits_{k\to\infty}Y_{k}^{1/k}=\infty$ by \eqref{assumption}, hence the proof of \cite[Theorem 2.2]{BKLR} implies that there exists a function $\vphi:\bZ_{\geq 1}\to \bZ_{\geq 1}$ for which
\[
Y_{\vphi(i+1)}\geq RY_{\vphi(i)}\quad\text{and}\quad Y_{\vphi(i)+1}\geq R^{-1}Y_{\vphi(i+1)}.
\]  
The fact that $M_{i}Y_{i+1}\leq 1$ for all $i\geq 1$ implies $M_{\vphi(i)}Y_{\vphi(i+1)}\leq R$. Equation \eqref{Mdensity} follows from $\lim\limits_{k\to\infty}Y_{k}^{1/k}=\infty$, which concludes the proof of Proposition \ref{lem:MBL}. 

Now, suppose that $\cJ$ is infinite. Then there are two possible cases:
\begin{enumerate}[label=(\roman*)]
\item $\cJ$ contains all sufficiently large positive integers.
\item There are infinitely many positive integers in $\cJ^{c}$. 
\end{enumerate}
\textbf{Case (i).} Assume the first case and let $\psi(1)=\min\{j:\cJ\supset\bZ_{\geq j}\}$. Define the auxiliary increasing sequence $(\psi(i))_{i\geq 1}$ by
\eq{
\psi(i+1)=\min\{j\in\bZ_{\geq 1}:SY_{\psi(i)}\leq Y_j\},
} which is well defined since $(Y_i)_{i\geq 1}$ is increasing. Note that $\psi(i+1) \leq \psi(i)+\lceil\log_2{S}\rceil V$ since $Y_{\psi(i)+\lceil\log_2{S}\rceil V}\geq SY_{\psi(i)}$ by Lemma \ref{lem:CGGMS}. Let us now define the sequence $(\vphi(i))_{i\geq 1}$ by, for each $i\geq 1$, 
\[ \vphi(i) = 
\begin{cases}
\psi(i) &\quad \text{if } M_{\psi(i)}Y_{\psi(i+1)} \leq R/S,\\
\psi(i+1)-1 &\quad \text{otherwise}.
\end{cases}
\]
Then the sequence $(\vphi(i))_{i\geq 1}$ is increasing and $\vphi \geq \psi$. 

Now we claim that for each $i \geq 1$,
\eqlabel{MSgrow}{
Y_{\vphi(i+1)}\geq SY_{\vphi(i)}\quad\text{and}\quad M_{\vphi(i)}Y_{\vphi(i+1)}\leq R,
} which implies Equation \eqref{Mdensity} since $Y_{\vphi(k)}\geq S^{k-1}Y_{\vphi(1)}$ for all $k\geq 1$. Thus, the claim concludes the proof of Proposition \ref{lem:MBL}.

\begin{proof}[Proof of Equation \eqref{MSgrow}]
There are four possible cases on the values of $\vphi(i)$ and $\vphi(i+1)$. 

\medskip
$\bullet$~ Assume that $\varphi(i)=\psi(i)$ and $\varphi(i+1)
=\psi(i+1)$. By the definition of $\psi(i+1)$, we have
$$
Y_{\varphi(i+1)}=Y_{\psi(i+1)}\geq SY_{\psi(i)}=S Y_{\varphi(i)}.
$$
If $\psi(i)\neq \psi(i+1)-1$, then by the definition of $\varphi(i)$,
we have
$$
M_{\varphi(i)} Y_{\varphi(i+1)}=M_{\psi(i)} Y_{\psi(i+1)}\leq R/S \leq R.
$$
If $\psi(i)= \psi(i+1)-1$, then $\varphi(i+1)= \varphi(i)+1$, hence
$$
M_{\varphi(i)}Y_{\varphi(i+1)}=M_{\varphi(i)}Y_{\varphi(i)+1}\leq 1 \leq R.
$$
This proves Equation \eqref{MSgrow}.

\medskip
$\bullet$~ Assume that $\varphi(i)=\psi(i)$ and $\varphi(i+1)=
\psi(i+2)-1$. By the definition of $\psi(i+1)$, we have
$$
Y_{\varphi(i+1)}=Y_{\psi(i+2)-1}\geq Y_{\psi(i+1)}\geq SY_{\psi(i)}= S Y_{\varphi(i)}.
$$
It follows from the minimality of $\psi(i+2)$ that $SY_{\psi(i+1)}> Y_{\psi(i+2)-1}$. If $\psi(i+1)>\psi(i)+1$, then $M_{\psi(i)}
Y_{\psi(i+1)}\leq R/S$ by the definition of $\varphi(i)$. Hence, we have
$$
M_{\varphi(i)}Y_{\varphi(i+1)}= M_{\psi(i)}Y_{\psi(i+2)-1}\leq SM_{\psi(i)}Y_{\psi(i+1)}\leq R.
$$
If $\psi(i+1)=\psi(i)+1$, then $M_{\psi(i)} Y_{\psi(i)+1} \leq R/S^3$ since $\psi(i)\in\cJ$. Hence,
$$
M_{\varphi(i)}Y_{\varphi(i+1)}= M_{\psi(i)} Y_{\psi(i+2)-1}\leq SM_{\psi(i)} Y_{\psi(i)+1} \leq R/S^2 \leq R.
$$
This proves Equation \eqref{MSgrow}.

\medskip
$\bullet$~ Assume that $\varphi(i)=\psi(i+1)-1$ and $\varphi(i+1)=
\psi(i+1)$. Since $\psi(i+1)-1\in\cJ$, we have
$$
M_{\varphi(i)}Y_{\varphi(i+1)}=M_{\psi(i+1)-1}Y_{\psi(i+1)}\leq R/S^3 \leq R.
$$
If $\psi(i+1)-1=\psi(i)$, then by the definition of $\psi(i+1)$, we
have
$$
\frac{Y_{\varphi(i+1)}}{Y_{\varphi(i)}}=\frac{Y_{\psi(i+1)}}{Y_{\psi(i+1)-1}}
=\frac{Y_{\psi(i+1)}}{Y_{\psi(i)}}\geq S.
$$
If $\psi(i+1)-1>\psi(i)$, then we have $M_{\psi(i)}Y_{\psi(i+1)}>R/S$ by the definition of $\varphi(i)$, and we have $Y_{\psi(i+1)-1} < SY_{\psi(i)} \leq SY_{\psi(i)+1}$ from the minimality of $\psi(i+1)$. We also have $M_{\psi(i)}Y_{\psi(i)+1} \leq R/S^3$ since $\psi(i)\in\cJ$. Therefore
$$
\frac{Y_{\varphi(i+1)}}{Y_{\varphi(i)}} =
\frac{Y_{\psi(i+1)}}{Y_{\psi(i+1)-1}}=
\frac{M_{\psi(i)}Y_{\psi(i+1)}}{M_{\psi(i)}Y_{\psi(i+1)-1}}
\geq\frac{R/S}{SM_{\psi(i)}Y_{\psi(i)+1}}
\geq\frac{R/S}{R/S^2}=S.
$$
This proves Equation \eqref{MSgrow}.

\medskip
$\bullet$~ Assume that $\varphi(i)=\psi(i+1)-1$ and $\varphi(i+1)=\psi(i+2)-1$.
By the previous case computations, we have
$$
\frac{Y_{\varphi(i+1)}}{Y_{\varphi(i)}}
=\frac{Y_{\psi(i+2)-1}}{Y_{\psi(i+1)-1}}
\geq\frac{Y_{\psi(i+1)}}{Y_{\psi(i+1)-1}}\geq S.
$$
We have $SY_{\psi(i+1)}> Y_{\psi(i+2)-1}$ from the minimality of $\psi(i+2)$. Thus since $\psi(i+1)-1\in\cJ$, we have
$$
M_{\varphi(i)}Y_{\varphi(i+1)}=M_{\psi(i+1)-1}Y_{\psi(i+2)-1}=M_{\psi(i+1)-1}Y_{\psi(i+1)}
\left(\frac{Y_{\psi(i+2)-1}}{Y_{\psi(i+1)}}\right) \leq R.
$$
This proves Equation \eqref{MSgrow}.
\end{proof}
\noindent \textbf{Case (ii).} Now we assume the second case and let $j_0 =\min\cJ$. Partition $\bZ_{\geq j_0}$ into disjoint subset
\[
\bZ_{\geq j_0}= C_1 \sqcup D_1 \sqcup C_2 \sqcup D_2 \sqcup \cdots
\]
where $C_i\subset\cJ$ and $D_j\subset\cJ^{c}$ are sets of consecutive integers with 
\[
\max C_i < \min D_i \leq \max D_i < \min C_{i+1}
\]
for all $i\geq 1$. We consider the following two subcases.
\\ 
\textbf{(ii) - 1.} If there is $i_0 \geq 1$ such that $|C_i|< 3\lceil\log_2{S}\rceil V$ for all $i\geq i_0$, then we have, for $k_0 = \min C_{i_0}$,  
\eq{
\frac{k}{\log Y_k}\leq \frac{k_0+\left(3\lceil\log_2{S}\rceil V +1\right)|\{i\leq k: i\in\cJ^{c}\}|}{\log Y_{k}},
}
since there exists an element of $\cJ^c$ in any finite sequence of $3\lceil\log_2{S}\rceil V +1$ consecutive integers at least $k_0$. Therefore $\lim\limits_{k\to\infty}Y_{k}^{1/k}=\infty$ by \eqref{assumption} and this concludes the proof of Proposition \ref{lem:MBL} following the proof when $\cJ$ is finite at the beginning. 
\\
\textbf{(ii) - 2.} The remaining case is that the set
\eq{
\{i:|C_{i}|\geq 3\lceil\log_2{S}\rceil V \}=\{i(1)<i(2)<\cdots<i(k)<\cdots:k\in\bN\}
}
is infinte. 

For each $k\geq 1$, let us define an increasing finite sequence $(\psi_k (i))_{1\leq i \leq m_k +1}$ of positive integers by setting $\psi_k (1)=\min C_{i(k)}$ and by induction 
\[
\psi_k (i+1) = \min \{j\in C_{i(k)}: SY_{\psi_k (i)} \leq Y_j \},
\] as long as this set is nonempty. Since $C_{i(k)}$ is a finite sequence of consecutive positive integers with length at least $3\lceil\log_2{S}\rceil V$ and $Y_{i+\lceil\log_2{S}\rceil V}\geq SY_{i}$ for every $i\geq 1$ by Lemma \ref{lem:CGGMS}, there exists an integer $m_k \geq 2$ such that $\psi_k (i)$ is defined for $i=1,\dots,m_k +1$. Note that $\psi_k (i)$ belongs to $\cJ$ since $C_{i(k)}\subset \cJ$.

As in \textbf{Case (i)}, let us define an increasing finite sequence $(\vphi_k (i))_{1\leq i\leq m_k}$ of positive integers by
\[
\vphi_k (i) =\begin{cases}
\psi_k (i) &\quad \text{if } M_{\psi_k (i)}Y_{\psi_k (i+1)} \leq R/S,\\
\psi_k (i+1)-1 &\quad \text{otherwise}.
\end{cases}
\]
Following the proof of \textbf{Case (i)}, we have for each $i=1,\dots,m_k -1$,
\eqlabel{kMSgrow}{
Y_{\vphi_k (i+1)}\geq S Y_{\vphi_k (i)} \quad\text{and}\quad M_{\vphi_k (i)}Y_{\vphi_k (i+1)} \leq R.
}

Note that $\vphi_k (m_k)<\vphi_{k+1}(1)$. Let us define an increasing finite sequence $(\vphi_k'(i))_{1\leq i\leq n_k +1}$ of positve integers to interpolate between $\vphi_k (m_k)$ and $\vphi_{k+1}(1)$. Let $j_0 =\vphi_{k+1}(1)$. If the set $\{j\in\bZ_{\geq \vphi_k (m_k)}: Y_{j_0}\geq RY_j\}$ is empty, then we set $n_k =0$ and $\vphi_k'(1)=j_0 = \vphi_{k+1}(1)$. Otherwise, follwing \cite[Theorem 2.2]{BKLR}, by decreasing induction, let $n_k \in\bZ_{\geq 1}$ be the maximal positive integer such that there exists $j_1,\dots, j_{n_k}\in\bZ_{\geq 1}$ such that for $\ell=1,\dots,n_k$, the set $\{j\in\bZ_{\geq \vphi_k (m_k)}: Y_{j_{\ell-1}}\geq RY_j\}$ is nonempty and for $\ell=1,\dots,n_k +1$, the integer $j_\ell$ is its largest element. Set $\vphi_k'(i)=j_{n_k +1 -i}$ for $i=1,\dots, n_k +1$. Then the sequence $(\vphi_k'(i))_{1\leq i\leq n_k +1}$ is contained in $[\vphi_k(m_k),\vphi_{k+1}(1)]$ and satisfies that for $i=1,\dots,n_k$,
\eqlabel{kMRgrow}{
Y_{\vphi_k' (i+1)}\geq RY_{\vphi_k'(i)} \quad\text{and}\quad M_{\vphi_k' (i)}Y_{\vphi_k'(i+1)}\leq R
} from the proof of \cite[Theorem 2.2]{BKLR}.

Now, putting alternatively together the sequences $(\vphi_k (i))_{1\leq i\leq m_k -1}$ and $(\vphi_k' (i))_{1\leq i\leq r_k}$ as $k$ ranges over $\bZ_{\geq 1}$, we define $N_k = \sum_{\ell =1}^{k-1}(m_\ell -1 +n_\ell)$ and 
\[
\vphi (i) =\begin{cases}
\vphi_k (i-N_k) &\quad \text{if } 1+N_k \leq i\leq m_k -1 +N_k,\\
\vphi_k' (i+1-m_k -N_k) &\quad \text{if } m_k +N_k \leq i\leq r_k -1 +m_k +N_k.
\end{cases}
\]
Here, we use the standard convention that an empty sum is zero.
With Equation \eqref{kMSgrow} for $i=1,\dots,m_k -2$ and Equation \eqref{kMRgrow} for $i=1,\dots,n_k$, since $\vphi_k'(n_k +1) = \vphi_{k+1}(1)$, it is enough to show the following lemma to prove that the map $\vphi$ satisfies Equation \eqref{Mgrow}.

\begin{lem}\label{lem:kMgrow}
For every $k\in\bZ_{\geq 1}$, we have
\eqlabel{kMgrow}{
Y_{\vphi_k'(1)}\geq RY_{\vphi_{k}(m_k -1)}\quad\text{and}\quad M_{\vphi_k(m_k-1)}Y_{\vphi_k'(1)}\leq R.
}

\end{lem}
\begin{proof}
Since $\vphi_k'(1)\geq \vphi_k(m_k)$ and Equation \eqref{kMSgrow} with $i=m_k-1$, we have
\[
Y_{\vphi_k'(1)}\geq Y_{\vphi_k(m_k)} \geq SY_{\vphi_k(m_k-1)} \geq RY_{\vphi_{k}(m_k -1)},
\] which prove the left hand side of Equation \eqref{kMgrow}.
If $\vphi_k'(1)=\vphi_k(m_k)$, then Equation \eqref{kMSgrow} with $i=m_k-1$ gives the right hand side of Equation \eqref{kMgrow}.

Now assume that $\vphi_k'(1)>\vphi_k(m_k)$. By the maximality of $n_k$, we have $Y_{\vphi_k'(1)}\leq RY_{\vphi_k(m_k)}$. First, we will prove that $\vphi_k(m_k)=\psi_k (m_k)$. For a contradiction, assume that $\vphi_k(m_k)=\psi_k(m_k+1)-1>\phi_k(m_k)$. Following the third subcase of the proof of Equation \eqref{MSgrow}, we have 
\[
\frac{Y_{\psi_k (m_k+1)}}{Y_{\psi_k(m_k+1)-1}}=\frac{M_{\psi_k(m_k)}Y_{\psi_k (m_k+1)}}{M_{\psi_k(m_k)}Y_{\psi_k(m_k+1)-1}}\geq S.
\]   
Hence by the construction of $\vphi_k'(1)$, we have $\vphi_k'(1)=\vphi_k(m_k)$, which is a contradiction to our assumption $\vphi_k'(1)>\vphi_k(m_k)$.

To show the right hand side of Equation \eqref{kMgrow}, we consider two possible values of $\vphi_k(m_k-1)$.

Assume that $\vphi_k(m_k-1)=\psi_k(m_k-1)$. If $\psi_k(m_k-1)>\psi_k(m_k)-1$, then by the definition of $\vphi_k(m_k-1)$, we have $M_{\psi_k(m_k-1)}Y_{\psi_k(m_k)}\leq R/S$. If $\psi_k(m_k-1)=\psi_k(m_k)-1$, then $M_{\psi_k(m_k-1)}Y_{\psi_k(m_k)}\leq R/S^3 \leq R/S$ since $\psi_k(m_k)-1\in \cJ$. Since $\vphi_k(m_k)=\psi_k(m_k)$, we have
\[
M_{\vphi_k(m_k-1)}Y_{\vphi_k'(1)} = M_{\psi_k(m_k-1)}Y_{\psi_k(m_k)}\left(\frac{Y_{\vphi_k'(1)}}{Y_{\vphi_k(m_k)}}\right) \leq R,
\]
which proves the right hand side of Equation \eqref{kMgrow}.

Assume that $\vphi_k(m_k-1)=\psi_k(m_k)-1$. Since $\vphi_k(m_k)=\psi_k(m_k)$ and $\psi_k(m_k)-1\in \cJ$, we have
\[
M_{\vphi_k(m_k-1)}Y_{\vphi_k'(1)} = M_{\psi_k(m_k)-1}Y_{\psi_k(m_k)}\left(\frac{Y_{\vphi_k'(1)}}{Y_{\vphi_k(m_k)}}\right) \leq R,
\]
which proves the right hand side of Equation \eqref{kMgrow}, and concludes the proof of Lemma \ref{lem:kMgrow}.
\end{proof}

Finally, we will show Equation \eqref{Mdensity} for the map $\vphi$. Since there exists an element of $\cJ^c$ in any finite sequence of $3\lceil\log_2{S}\rceil V +1$ consecutive integers in the complement of $\bigcup_{k\geq 1} C_{i(k)}$, there exists $c_0 \geq 0$ such that for every $k\geq 1$, we have
\[
\frac{|\{j\leq \vphi(k): j\notin \bigcup_{k\geq 1} C_{i(k)} \}|}{\log Y_{\vphi(k)}} \leq 
\frac{c_0+\left(3\lceil\log_2{S}\rceil V +1\right)|\{j\leq \vphi(k): j \in\cJ^{c}\}|}{\log Y_{\vphi(k)}},
\] which converges to $0$ as $k\to +\infty$ by \eqref{assumption}. Let us define 
$$n(k)= |\{i\leq k : Y_{\vphi(i)} \geq SY_{\vphi(i+1)}\}|.$$
For each integer $\ell\geq 1$, since $Y_{i+\lceil\log_2{S}\rceil V}\geq SY_{i}$ for every $i\geq 1$ by Lemma \ref{lem:CGGMS}, and by the maximality of $m_\ell$ in the construction of $(\vphi_\ell (i))_{1\leq i\leq m_\ell}$, we have $|\{j\in C_{i(\ell)}: j\geq \vphi_{\ell}(m_\ell)\}|\leq 2\lceil\log_2{S}\rceil V$. If $\vphi(i)$ belongs to $C_{i(\ell)}$ but $\vphi(i+1)$ does not, then $\vphi(i)\geq \vphi_\ell (m_\ell)$. If $\vphi(i)$ and $\vphi(i+1)$ belong to $C_i(\ell)$, then $\vphi$ and $\vphi_\ell$ coincide on $i$ and $i+1$. Thus, by Equation \eqref{kMSgrow}, we have
\[
\begin{split}
k-n(k)&=|\{i\leq k : Y_{\vphi(i)} < SY_{\vphi(i+1)}\}|\\
&\leq \left(2\lceil\log_2{S}\rceil V\right) \big|\{j\leq \vphi(k): j\notin \bigcup_{k\geq 1} C_{i(k)}\}\big|.
\end{split}
\]
Therefore, we have
\[
\begin{split}
\limsup_{k\to\infty}\frac{k}{\log Y_{\vphi(k)}}&=\limsup_{k\to\infty}\frac{n(k)+k-n(k)}{\log Y_{\vphi(k)}} = \limsup \frac{n(k)}{\log Y_{\vphi(k)}}\\
&\leq \limsup_{k\to\infty}\frac{n(k)}{\log S^{n(k)-1}Y_{\vphi(1)}}=\frac{1}{\log S}.
\end{split}
\]
This proves Equation \eqref{Mdensity} and concludes the proof of Proposition \ref{lem:MBL}.

\end{proof}

\subsection{Dimension estimates}
Following the notation in \cite{BHKV10}, given a sequence $\{\mb{y}_i\}$ in $\bZ^m\setminus \{\mb{0}\}$ and $\al\in(0,1/2)$, let
\[
\text{Bad}_{\{\mb{y}_i\}}^\al \defn\{\mb{\theta}\in\bR^m:|\mb{\theta}\cdot\mb{y}_{i}|_{\bZ}\geq\al\ \text{for all}\ i\geq1\}.
\]

\begin{prop}\cite{CGGMS}\label{prop:CGGMS}
Let $A\in M_{m,n}$ be a matrix and let $(\mb{y}_k)_{k\geq 1}$ be a sequence of weighted best approximations to $^{t}A$ and let $R>1$ and $\al\in(0,1/2)$ be given. 
Suppose that there exists an increasing function $\vphi:\bZ_{\geq1}\to\bZ_{\geq1}$ such that for any integer $i\geq1$ 
\[
M_{\vphi(i)}Y_{\vphi(i+1)}\leq R.
\] 
Then $\textup{Bad}_{\{\mb{y}_{\vphi(i)}\}}^\al$ is a subset of $\textup{Bad}_A(\eps)$ where $\eps=\frac{1}{R}\left(\frac{\al^2}{4mn}\right)^{1/\del}$ and $\de=\min\{r_i,s_j:1\leq i\leq m, 1\leq j\leq n\}$.
\end{prop}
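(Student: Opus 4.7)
The plan is a direct contradiction argument exploiting the best-approximation duality between $A$ and ${}^tA$. Suppose $\theta\in\mathrm{Bad}^\alpha_{\{\mathbf{y}_{\varphi(i)}\}}$ is not in $\mathrm{Bad}_A(\varepsilon)$. Then there exist $\mathbf{p}\in\bZ^m$ and $\mathbf{q}\in\bZ^n$ with $Q:=\|\mathbf{q}\|_{\mathbf{s}}$ arbitrarily large and $\xi:=\|A\mathbf{q}-\theta-\mathbf{p}\|_{\mathbf{r}}$ satisfying $Q\xi<\varepsilon$. For each best approximation $\mathbf{y}_{\varphi(i)}$, let $\mathbf{r}_i\in\bZ^n$ be the integer vector achieving $\|{}^tA\mathbf{y}_{\varphi(i)}-\mathbf{r}_i\|_{\mathbf{s}}=M_{\varphi(i)}$. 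Writing $\theta=-(A\mathbf{q}-\theta-\mathbf{p})+A\mathbf{q}-\mathbf{p}$ and pairing with $\mathbf{y}_{\varphi(i)}$ shows that, modulo $\bZ$,
\[
\theta\cdot\mathbf{y}_{\varphi(i)}\equiv -(A\mathbf{q}-\theta-\mathbf{p})\cdot\mathbf{y}_{\varphi(i)}+({}^tA\mathbf{y}_{\varphi(i)}-\mathbf{r}_i)\cdot\mathbf{q},
\]
since $\mathbf{r}_i\cdot\mathbf{q}$ and $\mathbf{y}_{\varphi(i)}\cdot\mathbf{p}$ are integers. This reduces the problem to making the two inner products on the right-hand side simultaneously small.

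The key step is to pick $i$ well. I will choose $i$ to be the largest index with $Y_{\varphi(i)}\leq Q(\alpha/(2m))^{1/\delta}/\varepsilon$; existence is guaranteed once $Q$ is large. Maximality gives $Y_{\varphi(i+1)}> Q(\alpha/(2m))^{1/\delta}/\varepsilon$, which combined with the hypothesis $M_{\varphi(i)}Y_{\varphi(i+1)}\leq R$ yields $M_{\varphi(i)}Q< R\varepsilon/(\alpha/(2m))^{1/\delta}$. The defining choice $\varepsilon=\frac{1}{R}(\alpha^2/(4mn))^{1/\delta}$ is precisely what makes the right-hand side equal to $(\alpha/(2n))^{1/\delta}$, so that $M_{\varphi(i)}Q<(\alpha/(2n))^{1/\delta}$ as well.

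The quasi-norm bookkeeping is then routine. Using $|y_k|\leq Y_{\varphi(i)}^{r_k}$, $|(A\mathbf{q}-\theta-\mathbf{p})_k|\leq \xi^{r_k}$, and $Y_{\varphi(i)}\xi<(\alpha/(2m))^{1/\delta}\leq 1$, the Cauchy--Schwarz-style estimate gives
\[
|(A\mathbf{q}-\theta-\mathbf{p})\cdot\mathbf{y}_{\varphi(i)}|\leq\sum_{k=1}^m(Y_{\varphi(i)}\xi)^{r_k}< m\bigl(\alpha/(2m)\bigr)^{r_k/\delta}\leq m\cdot\alpha/(2m)=\alpha/2,
\]
where the last inequality uses $r_k\geq\delta$ and $\alpha/(2m)\leq 1$. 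An entirely parallel calculation with the $\mathbf{s}$-quasi-norm bounds the second term by $\alpha/2$. Summing yields $|\theta\cdot\mathbf{y}_{\varphi(i)}|_{\bZ}<\alpha$, contradicting $\theta\in\mathrm{Bad}^\alpha_{\{\mathbf{y}_{\varphi(i)}\}}$.

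There is no serious obstacle here; the content of the argument is the transference between approximating $A\mathbf{q}\approx\theta\pmod{\bZ^m}$ and approximating ${}^tA\mathbf{y}\approx\bZ^n$, mediated by the duality $\mathbf{y}\cdot A\mathbf{q}=({}^tA\mathbf{y})\cdot\mathbf{q}$. The only mildly delicate part is tracking the weighted exponents $r_k,s_j\geq\delta$ to extract a single power $1/\delta$ in $\varepsilon$; this is what forces the constant $\alpha^2/(4mn)$ rather than something sharper. The requirement $\alpha<1/2$ is used implicitly only to keep $(\alpha/(2m))^{1/\delta}\leq 1$, which is what allows us to convert quasi-norm sums into the single bound $\alpha/2$.
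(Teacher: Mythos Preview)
Your argument is correct and is precisely the transference argument underlying \cite[Theorem~1.11]{CGGMS}, which is what the paper cites; the paper's own proof is simply that citation together with the remark that the hypothesis $M_{\varphi(i)}Y_{\varphi(i+1)}\leq R$ can replace the condition $Y_{\varphi(i)+1}\geq R^{-1}Y_{\varphi(i+1)}$ used in \cite{CGGMS}, and your proof uses the former directly. The only cosmetic slip is in the displayed chain $\sum_k(Y_{\varphi(i)}\xi)^{r_k}< m(\alpha/(2m))^{r_k/\delta}$: the index $k$ should no longer appear on the right; what you mean is that each summand is $\leq(\alpha/(2m))^{r_k/\delta}\leq\alpha/(2m)$, so the sum is $<\alpha/2$.
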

\begin{proof}
In the proof of \cite[Theorem 1.11]{CGGMS}, the condition $Y_{\vphi(i)+1}\geq R^{-1}Y_{\vphi(i+1)}$ is used. However, the assumption $M_{\vphi(i)}Y_{\vphi(i+1)}\leq R$ also implies the same conclusion. 
\end{proof}

\begin{prop}\cite{CGGMS}\label{prop:CGGMS_2}
For any $\al\in(0,1/2)$, there exists $R(\al)>1$ with the following property. Let $(\mb{y}_k)_{k\geq1}$ be a sequence in $\bZ^m\setminus\{\mb{0}\}$ such that $\|\mb{y}_{k+1}\|_{\mb{r}}/\|\mb{y}_{k}\|_{\mb{r}}\geq R(\al)$ for all $k\geq1$. Then 
\[
\textup{dim}_{H}\left(\textup{Bad}_{\{\mb{y}_i\}}^\al\right) \geq m-C\limsup_{k\to\infty}\frac{k}{\log{\|\mb{y}_{k}\|_{\mb{r}}}}.
\]
for some positive constant $C=C(\al)$. 
\end{prop}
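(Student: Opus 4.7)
The plan is to build a Cantor-like subset of $\mathrm{Bad}_{\{\mb{y}_i\}}^\alpha$ by iteratively removing $\alpha$-neighborhoods of the hyperplanes $H_k=\{\theta\in\bR^m:\theta\cdot\mb{y}_k\in\bZ\}$ and then applying the mass distribution principle, working in the $\mb{r}$-quasi-metric geometry so that the scales match the growth of $\|\mb{y}_k\|_{\mb{r}}$. The role of the fast-growth hypothesis is to guarantee that between scales $\|\mb{y}_k\|_{\mb{r}}$ and $\|\mb{y}_{k+1}\|_{\mb{r}}$ there is enough room to fit many sub-boxes in each surviving parent box, so that the cost of removing those which meet the next bad slab is only a multiplicative factor $(1-c\alpha)$.

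Concretely, I would fix a starting cube $B_0\subset[0,1]^m$, and at stage $k$ cover its interior by an essentially disjoint family of $\mb{r}$-weighted boxes of side lengths $\|\mb{y}_k\|_{\mb{r}}^{-r_1}\times\cdots\times\|\mb{y}_k\|_{\mb{r}}^{-r_m}$; each such box has $d_{\mb{r}}$-diameter $\asymp \|\mb{y}_k\|_{\mb{r}}^{-1}$, and a stage-$k$ box contains $\asymp(\|\mb{y}_{k+1}\|_{\mb{r}}/\|\mb{y}_k\|_{\mb{r}})$ stage-$(k+1)$ sub-boxes. The set $H_{k+1}\cap[0,1]^m$ consists of parallel Euclidean slabs of thickness $2\alpha/\|\mb{y}_{k+1}\|_2$ normal to $\mb{y}_{k+1}$; expressed relative to the $\mb{r}$-box geometry, their total $\mb{r}$-volume inside a stage-$k$ box is bounded by $c\alpha$ times the volume of that box (this is where one uses that the $\mb{r}$-quasinorm of $\mb{y}_{k+1}$ controls its Euclidean size coordinate-by-coordinate in a compatible way). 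Choosing $R(\alpha)$ large enough so that this proportion stays $\le c\alpha<\tfrac12$ and so that a positive fraction $(1-c\alpha)$ of stage-$(k+1)$ sub-boxes lies entirely outside $H_{k+1}$, I declare the surviving sub-boxes to be the children in a tree $\cT$; the resulting limit set $K=\bigcap_k\bigcup_{B\in\cT_k}B$ is contained in $\mathrm{Bad}_{\{\mb{y}_i\}}^\alpha$ by construction.

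Next, I would put the natural probability measure $\mu$ on $K$ that splits mass uniformly among the children of each node. By a standard Frostman-type estimate at scale $\|\mb{y}_k\|_{\mb{r}}^{-1}$, any $d_{\mb{r}}$-ball of that radius carries mass $\mu$-bounded by $\prod_{i<k}(1-c\alpha)^{-1}\cdot\|\mb{y}_k\|_{\mb{r}}^{-1}$, which yields
\[
\underline{\dim}_{\mb{r}}K\ \ge\ 1\ -\ \limsup_{k\to\infty}\frac{-k\log(1-c\alpha)}{\log\|\mb{y}_k\|_{\mb{r}}}.
\]
Converting to the standard metric via Lemma~\ref{relating dimensions}(3), this gives
\[
\dim_H\mathrm{Bad}_{\{\mb{y}_i\}}^\alpha\ \ge\ \dim_H K\ \ge\ m\ -\ \frac{-\log(1-c\alpha)}{r_1}\cdot \limsup_{k\to\infty}\frac{k}{\log\|\mb{y}_k\|_{\mb{r}}},
\]
establishing the claim with $C(\alpha)=-\log(1-c\alpha)/r_1$.

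The main obstacle is the geometric counting step: one must verify that in the $\mb{r}$-weighted box geometry, the oblique slab $H_{k+1}$ captures only a proportion $\le c\alpha$ of stage-$(k+1)$ sub-boxes inside each stage-$k$ box, uniformly in the direction of $\mb{y}_{k+1}$. This requires balancing the weighted side lengths against the Euclidean slab thickness $\alpha/\|\mb{y}_{k+1}\|_2$ and the relation $\|\mb{y}_{k+1}\|_2\asymp\max_i|y_{k+1,i}|\lesssim \|\mb{y}_{k+1}\|_{\mb{r}}^{r_1}$; the threshold $R(\alpha)$ must then be taken large enough to absorb the implicit multiplicative constants so that boxes are much larger than slab thickness in every coordinate direction that matters. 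Once this uniform counting is secured, the rest of the argument is the standard Cantor/Frostman routine.
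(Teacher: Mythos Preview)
Your overall strategy is the right one and matches the Cantor-set construction carried out in \cite{CGGMS}: build a nested family of $\mb r$-weighted boxes, remove at each stage the sub-boxes meeting the slab $\{|\theta\cdot\mb y_{k+1}|_{\bZ}<\alpha\}$, and use the lacunarity hypothesis $\|\mb y_{k+1}\|_{\mb r}\ge R(\alpha)\|\mb y_k\|_{\mb r}$ to ensure a fixed fraction of children survives. That part is sound.

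There is, however, a genuine gap at the dimension-conversion step. Lemma~\ref{relating dimensions}(3) states $\underline{\dim}_{\mb r} S'\ge 1-r_1(m-\dim_H S')$, which rearranges to $\dim_H S'\le m-(1-\underline{\dim}_{\mb r}S')/r_1$; it gives an \emph{upper} bound on $\dim_H$ in terms of $\underline{\dim}_{\mb r}$, not a lower bound. From $\underline{\dim}_{\mb r}K\ge 1-\delta$ you can conclude nothing about $\dim_H K$ via that lemma. This is not a cosmetic issue: in the paper the lemma is used precisely in the opposite direction (to turn Hausdorff lower bounds into $\mb r$-Minkowski lower bounds in Propositions~\ref{prop5} and~\ref{prop2}), and no converse inequality is recorded because none holds in general for arbitrary sets.

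The repair is to bypass the $\mb r$-dimension altogether and apply the mass distribution principle directly in the Euclidean metric. For a Euclidean ball $B(x,s)$, locate the unique $k$ with $\|\mb y_{k+1}\|_{\mb r}^{-r_m}\le s<\|\mb y_k\|_{\mb r}^{-r_m}$ and count how many stage-$k$ boxes (Euclidean rectangles of sides $\|\mb y_k\|_{\mb r}^{-r_1}\times\cdots\times\|\mb y_k\|_{\mb r}^{-r_m}$) meet $B(x,s)$; then bound $\mu(B(x,s))$ by that count times the mass of a stage-$k$ box. This is exactly how \cite{CGGMS} handles the rectangular geometry and yields $\dim_H K\ge m-C(\alpha)\limsup_k k/\log\|\mb y_k\|_{\mb r}$ directly. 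The Frostman estimate you wrote at scale $\|\mb y_k\|_{\mb r}^{-1}$ in the $\mb r$-metric is the right computation; it just needs to be reinterpreted for Euclidean balls rather than passed through Lemma~\ref{relating dimensions}.
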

\begin{proof}
The proof of \cite[Theorem 6.1]{CGGMS} concludes this proposition.
\end{proof}

The two propositions are used in \cite[Theorem 5.1]{BKLR} in the unweighted setting.

%\begin{proof}[Proof of Theorem \ref{thmF1}]
%By Lemma \ref{lem:BL}, Proposition \ref{prop:CGGMS}, and Proposition \ref{prop:CGGMS_2}, for each $R\geq R(\al)>1$,
%\begin{align*}
%    \textup{dim}_{H}\left(\textup{Bad}_{\mb{r},\mb{s}}(A)\right)	
%    &\geq \textup{dim}_{H}\left(\textup{Bad}_{\mb{r},\mb{s}}^\eps (A)\right) \\
%    &\geq \textup{dim}_{H}\left(\textup{Bad}_{\{\mb{y}_{\vphi(i)}\}}^\al\right)\\
%    &\geq m- C\limsup_{k\to\infty}\frac{k}{\log{Y_{\vphi(k)}}}\\
%    &\geq m- C\limsup_{k\to\infty}\frac{k}{\log{(R^{(k-1)/2}Y_{\vphi(1)})}}\\
%    &=m-\frac{2C}{\log{R}} 
%\end{align*}
%where $\eps=\frac{1}{R}\left(\frac{\al^2}{4mn}\right)^{1/\del}$. Taking $R\to\infty$, we have $\textup{dim}_{H}\left(\textup{Bad}_{\mb{r},\mb{s}}(A)\right)=m$. 
%\end{proof}

\begin{proof}[Proof of Theorem \ref{thmA1} (\ref{S3})$\implies$(\ref{S1})]

Suppose $A$ is singular on average. By Corollary \ref{eqCor}, $^{t}A$ is also singular on average. Let $(\mb{y}_k)_{k\geq 1}$ be a sequence of weighted best approximations to $^{t}A$. Then, by Proposition \ref{lem:MBL}, Proposition \ref{prop:CGGMS}, and Proposition \ref{prop:CGGMS_2}, for each $S>R(\al)>1$, we have
\begin{align*}
    \textup{dim}_{H}\left(\textup{Bad}_A(\eps)\right) 
    &\geq \textup{dim}_{H}\left(\textup{Bad}_{\{\mb{y}_{\vphi(i)}\}}^\al\right)\\
    &\geq m- C\limsup_{k\to\infty}\frac{k}{\log{Y_{\vphi(k)}}}\\
    &\geq m- \frac{C}{\log{S}}     
\end{align*}
where $\eps=\frac{1}{R(\al)}\left(\frac{\al^2}{4mn}\right)^{1/\del}$. Taking $S\to\infty$, we have $\textup{dim}_{H}\left(\textup{Bad}_A(\eps)\right)=m$ for $\eps=\frac{1}{R(\al)}\left(\frac{\al^2}{4mn}\right)^{1/\del}$.

\end{proof}

\section*{Acknowledgement}   %%d
We would like to thank Manfred Einsiedler and Fr\'{e}d\'{e}ric Paulin for helpful discussions and valuable comments.
SL is an associate member of KIAS. SL and TK were supported by Samsung Science and Technology Foundation under Project No. SSTF-BA1601-03 and the National Research Foundation of Korea under Project
Number NRF-2020R1A2C1A01011543.
TK was supported by 
the National Research Foundation of Korea under Project Number NRF-2021R1A6A3A13039948. WK was supported by the Korea Foundation for
Advanced Studies.

%\bibliographystyle{amsalpha}
%\bibliography{uribib}
\def\cprime{$'$} \def\cprime{$'$} \def\cprime{$'$}
\providecommand{\bysame}{\leavevmode\hbox to3em{\hrulefill}\thinspace}
\providecommand{\MR}{\relax\ifhmode\unskip\space\fi MR }
% \MRhref is called by the amsart/book/proc definition of \MR.
\providecommand{\MRhref}[2]{%
  \href{http://www.ams.org/mathscinet-getitem?mr=#1}{#2}
}
\providecommand{\href}[2]{#2}

\end{document}